\begin{document}

\newtheorem{defn}{Definition}[section]
\newtheorem{definitions}[defn]{Definitions}
\newtheorem{lem}[defn]{Lemma}
\newtheorem{construction}[defn]{Construction}
\newtheorem{prop}[defn]{Proposition}
\newtheorem*{prop*}{Proposition}
\newtheorem{thm}[defn]{Theorem}
\newtheorem{cor}[defn]{Corollary}
\newtheorem{claim}{Claim}[defn]
\newtheorem*{claim*}{Claim}
\newtheorem{algo}[defn]{Algorithm}
\theoremstyle{remark}
\newtheorem{rem}[defn]{Remark}
\theoremstyle{remark}
\newtheorem{remarks}[defn]{Remarks}
\theoremstyle{remark}
\newtheorem{notation}[defn]{Notation}
\theoremstyle{remark}
\newtheorem{exmp}[defn]{Example}
\theoremstyle{remark}
\newtheorem{examples}[defn]{Examples}
\theoremstyle{remark}
\newtheorem{dgram}[defn]{Diagram}
\theoremstyle{remark}
\newtheorem{fact}[defn]{Fact}
\theoremstyle{remark}
\newtheorem{illust}[defn]{Illustration}
\theoremstyle{remark}
\newtheorem{que}[defn]{Question}
\numberwithin{equation}{section}

\author[Sinha et al.]{Vinit Sinha, Amit Kuber, Annoy Sengupta and Bhargav Kale}
\address{Department of Mathematics and Statistics\\Indian Institute of Technology, Kanpur\\ Uttar Pradesh, India}
%\affil{}
\email{vinitsinha20@iitk.ac.in, askuber@iitk.ac.in, annoysgp20@iitk.ac.in, kalebhargav@gmail.com}
\title[Hammocks for non-domestic string algebras]{{Hammocks for non-domestic string algebras}}
\keywords{string algebra, non-domestic, hammock, finite description linear order}
\subjclass[2020]{16G20}

\begin{abstract}
We show that the order type of the simplest version of a hammock for string algebras lies in the class of \emph{finite description} linear orders--the smallest class of linear orders containing $\mathbf 0$, $\mathbf 1$, and that is closed under isomorphisms, finite order sum, anti-lexicographic product with $\omega$ and $\omega^*$, and shuffle of finite subsets--using condensation (localization) of linear orders as a tool. We also introduce two finite subsets of the set of bands and use them to describe the location of left $\mathbb N$-strings in the completion of hammocks.
\end{abstract}

\maketitle

\newcommand{\hgt}{\mathrm{ht}}
\newcommand{\suc}{\mathfrak{succ}}
\newcommand{\pred}{\mathfrak{pred}}
\newcommand\A{\mathcal{A}}
\newcommand\B{\bb}
\newcommand\C{\mathcal{C}}
\newcommand\Pp{\mathcal{P}}
\newcommand\D{\mathcal{D}}
\newcommand\Hamm{\hat{H}}
\newcommand\hh{\mathfrak{h}}
\newcommand\HH{\mathcal{H}}
\newcommand\RR{\mathcal{R}}
\newcommand\Red[1]{\mathrm{R}_{#1}}
\newcommand\HRed[1]{\mathrm{HR}_{#1}}
\newcommand\K{\mathcal{K}}
\newcommand\LL{\mathcal{L}}
\newcommand\M{\mathcal{M}}
\newcommand\Q{\mathcal{Q}}
\newcommand\SD{\mathcal{SD}}
\newcommand\MD{\mathcal{MD}}
\newcommand\SMD{\mathcal{SMD}}
\newcommand\T{\mathcal{T}}
\newcommand\TT{\mathfrak T}
\newcommand\ii{\mathcal I}
\newcommand\UU{\mathcal{U}}
\newcommand\VV{\mathcal{V}}
\newcommand\ZZ{\mathcal{Z}}
%notations for rings of numbers
\newcommand{\N}{\mathbb{N}} 
\newcommand{\R}{\mathbb{R}}
\newcommand{\Z}{\mathbb{Z}}
\newcommand{\bb}{\mathfrak b}
\newcommand{\qq}{\mathfrak q}
\newcommand{\ch}{\circ_H}
\newcommand{\cg}{\circ_G}
\newcommand{\bua}[1]{\mathfrak b^{\alpha}(#1)}
\newcommand{\falpha}{{\mathfrak{f}\alpha}}
\newcommand{\fgamma}{\gamma^{\mathfrak f}}
\newcommand{\fbeta}{{\mathfrak{f}\beta}}
\newcommand{\bub}[1]{\mathfrak b^{\beta}(#1)}
\newcommand{\bla}[1]{\mathfrak b_{\alpha}(#1)}
\newcommand{\blb}[1]{\mathfrak b_{\beta}(#1)}
\newcommand{\lmin}{\lambda^{\mathrm{min}}}
\newcommand{\lmax}{\lambda^{\mathrm{max}}}
\newcommand{\xmin}{\xi^{\mathrm{min}}}
\newcommand{\xmax}{\xi^{\mathrm{max}}}
\newcommand{\lbmin}{\bar\lambda^{\mathrm{min}}}
\newcommand{\lbmax}{\bar\lambda^{\mathrm{max}}}
\newcommand{\ff}{\mathfrak f}
\newcommand{\cc}{\mathfrak c}
\newcommand{\dd}{\mathfrak d}
\newcommand{\sqsf}{\sqsubset^\ff}
\newcommand{\rr}{\mathfrak r}
\newcommand{\pp}{\mathfrak p}
\newcommand{\uu}{\mathfrak u}
\newcommand{\vv}{\mathfrak v}
\newcommand{\ww}{\mathfrak w}
\newcommand{\xx}{\mathfrak x}
\newcommand{\yy}{\mathfrak y}
\newcommand{\zz}{\mathfrak z}
\newcommand{\MM}{\mathfrak M}
\newcommand{\mm}{\mathfrak m}
\newcommand{\sbq}{\mathfrak s}
\newcommand{\tbq}{\mathfrak t}
\newcommand{\Spec}{\mathbf{Spec}}
\newcommand{\Br}{\mathbf{Br}}
\newcommand{\sk}[1]{\{#1\}}
\newcommand{\Prime}{\mathbf{Pr}}
\newcommand{\Parent}{\mathbf{Parent}}
\newcommand{\Uncle}{\mathbf{Uncle}}
\newcommand{\Cousin}{\mathbf{Cousin}}
\newcommand{\Nephew}{\mathbf{Nephew}}
\newcommand{\Sibling}{\mathbf{Sibling}}
\newcommand{\uc}{\mathrm{uc}}
\newcommand{\MCP}{\mathrm{MCP}}
\newcommand{\MSCP}{\mathrm{MSCP}}
\newcommand{\TTT}{\widetilde{\T}}
%Notations for terms
\newcommand{\la}{l}
\newcommand{\ra}{r}
\newcommand{\lb}{\bar{l}}
\newcommand{\rb}{\bar{r}}
\newcommand{\tBa}{\varepsilon^{\mathrm{Ba}}}
\newcommand{\brac}[2]{\langle #1,#2\rangle}
\newcommand{\braket}[3]{\langle #1\mid #2:#3\rangle}
\newcommand{\fin}{fin}
\newcommand{\inff}{inf}
%Notation for Ziegler spectrum, quiver etc.
\newcommand{\Zg}{\mathrm{Zg}(\Lambda)}
\newcommand{\Zgs}{\mathrm{Zg_{str}}(\Lambda)}
\newcommand{\STR}[1]{\mathrm{Str}(#1)}
\newcommand{\dmod}{\mbox{-}\operatorname{mod}}
\newcommand{\HQ}{\mathcal{HQ}^\mathrm{Ba}}
\newcommand{\bHQ}{\overline{\mathcal{HQ}}^\mathrm{Ba}}
\newcommand\Af{\mathcal{A}^{\ff}}
\newcommand\AAf{\bar{\mathcal{A}}^{\ff}}
\newcommand\Hf{\mathcal{H}^{\ff}}
\newcommand\Rf{\mathcal{R}^{\ff}}
\newcommand\Tf{\T^{\ff}}
\newcommand\Uf{\mathcal{U}^{\ff}}
\newcommand\Sf{\mathcal{S}^{\ff}}
\newcommand\Vf{\mathcal{V}^{\ff}}
\newcommand\Zf{\mathcal{Z}^{\ff}}
\newcommand\bVf{\overline{\mathcal{V}}^{\ff}}
\newcommand\bTf{\overline{\mathcal{T}}^{\ff}}
\newcommand{\fmin}{\xi^{\mathrm{fmin}}}
\newcommand{\fmax}{\xi^{\mathrm{fmax}}}
\newcommand{\xif}{\xi^\ff}

\newcommand{\LOfp}{\mathrm{LO}_{\mathrm{fp}}}
\newcommand\Tl{\mathbf{T}}
\newcommand\Tla{\mathbf{T}_{\la}}
\newcommand\Tlb{\mathbf{T}_{\lb}}
\newcommand\Ml{\mathbf{M}}
\newcommand\Mla{\mathbf{M}_{\la}}
\newcommand\Mlb{\mathbf{M}_{\lb}}
\newcommand\OT{\mathcal{O}}
\newcommand\LO{\mathbf{LO}}
\newcommand\rad{\mathrm{rad}_\Lambda} 
\newcommand{\rk}[1]{\mathrm{rk}(#1)}
\newcommand{\wid}[1]{\mathrm{wd}(#1)}
\newcommand{\fork}[1]{\mathrm{Str}_{\text{Fork}}^{\la}(#1)}
\newcommand{\BaB}{\mathsf{Ba(B)}}
\newcommand{\CycB}{\mathsf{Cyc(B)}}
\newcommand{\ExtB}{\mathsf{Ext(B)}}
\newcommand{\EExtB}{\mathsf{EExt(B)}}
\newcommand{\St}{\mathsf{St}(\Lambda)}
\newcommand{\StB}[1]{\mathsf{St}_{#1}(\sB)}
\newcommand{\STB}[1]{\mathsf{St}_{#1}(\xx_0,i;\sB)}
\newcommand{\stb}[3]{\mathsf{St}(#1,#2;#3)}
\newcommand{\BST}[1]{\mathsf{BSt}_{#1}(\xx_0,i;\sB)}
\newcommand{\CSt}[1]{\mathsf{CSt}_{#1}(\sB)}
\newcommand{\CST}[1]{\mathsf{CSt}_{#1}(\xx_0,i;\sB)}
\newcommand{\ASt}[1]{\mathsf{ASt}_{#1}(\sB)}
\newcommand{\AST}[1]{\mathsf{ASt}_{#1}(\xx_0,i;\sB)}
\newcommand{\OSt}[1]{\overline{\mathsf{St}}_{#1}(\sB)}
\newcommand{\OST}[1]{\overline{\mathsf{St}}_{#1}(\xx_0,i;\sB)}
\newcommand{\OsT}[3]{\overline{\mathsf{St}}(#1,#2;#3)}

\newcommand{\lB}{\ell_{\sB}}
\newcommand{\lbB}{\overline{\ell}_{\sB}}
\newcommand{\LB}{l_{\sB}}
\newcommand{\LbB}{\overline{l}_{\sB}}

\newcommand{\BalB}{\mathsf{Ba}_l(\sB)}
\newcommand{\BalbB}{\mathsf{Ba}_{\lb}(\sB)}

\newcommand{\sB}{\mathsf{B}}
\newcommand{\Str}[1]{\mathrm{Str}_{#1}(\xx_0,i;\sB)}
\newcommand{\Strd}[1]{\mathrm{Str}'_{#1}(\xx_0,i;\sB)}
\newcommand{\Strdd}[1]{\mathrm{Str}''_{#1}(\xx_0,i;\sB)}
\newcommand{\Cent}{\mathrm{Cent}(\xx_0,i;\sB)}
\newcommand{\Start}{\mathrm{Start}(\xx_0,i;\sB)}
\newcommand{\End}{\mathrm{End}(\xx_0,i;\sB)}
\newcommand{\braclB}{\brac{1}{\lB}}
\newcommand{\braclbB}{\brac{1}{\lbB}}
\newcommand{\QBa}{\Q^{\mathrm{Ba}}}
\newcommand{\Ba}{\mathcal Q_0^\mathrm{Ba}}
\newcommand{\Cyc}{\mathsf{Cyc}(\Lambda)}

\newcommand{\dLOfd}{\mathrm{dLO_{fd}}}
\newcommand{\dLOfdb}[2]{\mathrm{dLO}_{\mathrm{fd}}^{{#1}{#2}}}
\newcommand{\dLOfpb}[2]{\mathrm{dLO}_{\mathrm{fp}}^{{#1}{#2}}}
\newcommand{\lex}
{\operatorname{<_\mathrm{lex}}}
\newcommand{\LOfd}{\mathrm{LO}_{\mathrm{fd}}}

\newcommand{\Hbar}{\overline{H}_l^i(\xx_0)}
\newcommand{\Hhat}{\widehat{H}_l^i(\xx_0)}
\newcommand{\HOST}[1]{\widehat{\overline{\mathsf{St}}}_{#1}(\xx_0,i;\sB)}
\newcommand{\HHlix}{\widehat{H}_l^i(\xx_0)}
\newcommand{\HHlx}{\widehat{H}_l(\xx_0)}

\newcommand{\com}{\mathcal C}
\newcommand{\gap}{\mathcal G}
\newcommand{\EStB}{\N\mbox-\mathsf{St(B)}}
\newcommand{\ESTB}{\N\mbox-\mathsf{St}(\xx_0,i;\sB)}
\newcommand{\EOSt}{\N\mbox-\overline{\mathsf{St}}(\sB)}
\newcommand{\EOST}{\N\mbox-\overline{\mathsf{St}}(\xx_0,i;\sB)}
\newcommand{\EST}[3]{\N\mbox-\mathsf{St}(#1,#2;#3)}
\newcommand{\esTB}[1]{\N\mbox-\mathsf{St}_{#1}(\xx_0,i;\sB)}
\newcommand{\EOsT}[3]{\N\mbox-\overline{\mathsf{St}}(#1,#2;#3)}
\newcommand{\StN}{\N\mbox-\mathsf{St}(\Lambda)}

\section{Introduction}\label{sec:intro}
Let $\Lambda$ be a string algebra over an algebraically closed field $\mathcal K$. Brenner \cite{brenner1986combinatorial} introduced certain partially ordered sets known as hammocks to study factorization of maps between finite dimensional indecomposable right $\Lambda$-modules. The simplest version of hammocks introduced by Schr\"oer \cite[\S~3]{schroer2000infinite} in the context of string algebras are bounded discrete linear orders--this is the only type of hammock we will deal with in this paper. We compute the order type of a hammock for $\Lambda$ in terms of some standard order types, thus generalizing the main result of Sardar and the second author from \cite{SardarKuberHamforDom} that only dealt with the case when $\Lambda$ is domestic.

The algebra $\Lambda$ is domestic if and only if there are only finitely many bands for it. These bands are vertices of a finitary combinatorial gadget known as the bridge quiver \cite[\S~4]{schroer2000infinite}--its slight modification was used for the explicit computation of the order type in the domestic case. To extend the concept of a (finitary) bridge quiver to the non-domestic setting, a finite subset \cite[Theorem~3.1.6]{GKS20} of the set of bands was introduced in \cite[Definition~3.1.1]{GKS20}, whose elements are called \emph{prime bands}. We partition the set of bands using an equivalence relation in such a way that each equivalence class contains at least one prime band to obtain a finite set $\QBa$ of equivalence classes that is equipped with a natural reachability partial order $\preceq$. We classify the elements of $\QBa$ as domestic or non-domestic depending on whether the equivalence class is finite or infinite. The existence of a non-domestic element in $\QBa$ characterizes the non-domesticity of the algebra $\Lambda$. In this regard, the study of non-domestic string algebras is a combination of domestic and meta-$\bigcup$-cyclic string algebras (\cite[\S~3.4]{GKS20}), where the latter type of algebras are characterized as those with $\QBa$ consisting only of non-domestic elements such that no two distinct elements are $\preceq$-related.

Let $\rad$ denote the radical of the category of finite length right $\Lambda$-modules. Schr\"oer characterized \cite[Theorem~2]{schroer2000infinite} domestic string algebras as those whose radical is nilpotent. In fact, when $\Lambda$ is domestic, he showed that $\rad^{\omega\cdot(n+2)}=0$, where $n$ is the maximum length of a path in its bridge quiver, bypassing the computation of the order types of hammocks. It is conjectured \cite[Conjecture~4.4.1]{GKS20} that the stable rank of a non-domestic string algebra is strictly bounded above by $\omega^2$. The results in this paper, especially those in \S~\ref{sec: main thm},\ref{sec:completionhammock}, will be used in a future work to settle this conjecture in the affirmative.

Yet another characterization of a non-domestic (string) algebra was given by Prest \cite[Proposition~0.6]{prest1998morphisms} in terms of the existence of a \emph{factorizable system} in its radical--such a factorizable system is indexed by a bounded interval in rationals with non-empty interior.

To explain the main result of the paper, we need to set up some order-theoretic notations and conventions, for which we mostly follow Rosenstein \cite{rosenstein1982linear}.  The notations $\N$ and $\N^+$ stand for the sets of natural numbers and positive natural numbers respectively. For $n\in\N$, the notation $\mathbf{n}$ stands for the order type of the finite linear order with $n$ elements. The notation $\omega$ stands for the order type of $\N$, $\omega^*$ for its dual, $\zeta$ for the order type of the set of integers, $\eta$ for the order type of rationals, $\lambda$ for the order type of reals and $\vartheta$ for the order type of irrationals. For linear orders $L_1$ and $L_2$, the notations $L_1+L_2$ and $L_1\cdot L_2$ stand for their order sum and their anti-lexicographic product respectively. The notation $\sum_{i\in(I,<)}L_i$ stands for the order sum of linear orders $L_i$ indexed by a (possibly infinite) linear order $(I,<)$. A linear order $(L,<)$ is said to be \emph{scattered} if there is no embedding of $\eta$ in it.

In a model-theoretic study of linear orders, L\"auchli and Leonard \cite{leonard1968elementary} introduced two classes $\mathcal M_0\subset\mathcal M$ of linear orders (see \cite[Definitions~7.6, 7.19]{rosenstein1982linear}) to understand graded versions of elementary equivalence described via Ehrenfeucht-Fra\"iss\'e games. The class $\mathcal{M}_0$ is a subclass of the class of scattered linear orders whereas each linear order in $\mathcal M\setminus\mathcal M_0$ is not scattered. The class $\mathcal M_0$ appeared in \cite{SardarKuberHamforDom} as the class $\LOfp$ of \emph{finitely presented linear orders}. Its subclass $\dLOfpb{1}{1}$ consisting of bounded discrete finitely presented linear orders was characterized as the class of order types of hammocks for domestic string algebras \cite[Corollary~10.14, Proposition~10.18]{SardarKuberHamforDom}.

The main result of this paper (Theorem \ref{main}) shows that the order type of a hammock for a (non-domestic) string algebra lies in a subclass of the class $\mathcal M$ of L\"{a}uchli and Leonard. We will refer to the orders in $\mathcal M$ as \emph{finite description linear orders}, and thus use a more suggestive notation $\LOfd$ instead of $\mathcal M$; its subclass consisting of bounded discrete orders will be denoted by $\dLOfdb{1}{1}$. Each finite description order is constructed using only finitely many order-theoretic operations on a fixed finite set of linear orders (Definition \ref{defn: LOFD}). However, in contrast to the domestic case, we do not expect that every linear order in $\dLOfdb{1}{1}$ is isomorphic to a hammock for a string algebra (Question \ref{mainconversefail}).

The technique used to prove the main theorem is ``recursive reconstruction'' (Lemma \ref{recdlofdb}) of a hammock--to explain this method better we need the concept of condensation. A \emph{condensation} of a linear order $L$ is a monotone surjective map $c:L\twoheadrightarrow L'$. When a linear order is thought of as a (small) category then each of its condensations is equivalent to its localization with respect to an appropriate choice of weak equivalences. It is possible to reconstruct $L$ from the knowledge of $L'$ and all fibers of the map $c$ as $$L\cong\sum_{x\in L'}c^{-1}(x).$$ 

If $L$ is the hammock under consideration, we choose a suitable $\sB\in\QBa$ to define a split condensation $c_\sB:L\twoheadrightarrow L_\sB$ in such a way that $L_\sB\in\dLOfdb{1}{1}$, each fiber of $c_\sB$ is itself a hammock (Lemma \ref{hammockordersum}) and there are only finitely many distinct order types of fibers. Depending on whether $\sB$ is domestic or non-domestic, the condensed order $L_\sB$ is isomorphic to a finite order sum of copies of $\omega+\omega^*$ or $\omega+\zeta\cdot\eta+\omega^*$ respectively (Corollary \ref{wstrordertype}). Finiteness of $\QBa$ helps to inductively prove that the fibers of $c_\sB$ are indeed in $\dLOfdb{1}{1}$. The ``finite description'' of the order type of hammock needs several other supporting finiteness results sprinkled throughout the paper (Corollaries \ref{finitely many band free strings rel to x0,i}, \ref{OST-STB is finite}, \ref{STB+-1 is finite}, Remark \ref{rem: fin many B equiv classes} and Propositions \ref{rem: STBpm1-Cent is Bbandfree}, \ref{[mix0,y1] is finite}).

We show in Corollary \ref{neighbourcondenscommute} that for an element in $L_\sB$, the condensation of its immediate successor (resp. predecessor) in $L$ is its immediate successor (resp. predecessor) in $L_\sB$. We also identify a subset of $L_\sB$ that is in bijection with its finite condensation (see \cite[\S~4.2]{rosenstein1982linear})--such a subset is finite if and only if $\sB$ is domestic. In case $\sB$ is non-domestic, we further identify its cofinite subset, the elements of which will be called $\sB$-centers, whose order type (as a suborder of $L$) is $\eta$ (Corollary \ref{Centers are dense in a beam}).

The completion of the hammock is obtained by adding to it the so-called left $\N$-strings (Proposition \ref{completion of hammock}). Recall that every left $\N$-string in a domestic string algebra is almost periodic \cite[Proposition~2]{ringalgcom95}; this statement fails in a non-domestic string algebra. Each interval in the hammock isomorphic to $\omega$ or $\omega^*$ contributes to the completion an almost periodic left $\N$-string of the form $^\infty\bb\uu$, where $\uu$ is a string and $\bb$ is a special type of prime band (Definition \ref{defn: la, lb bands}). The remaining left $\N$-strings, which also include some almost periodic left $\N$-strings, occupy irrational locations in $\lambda$ (Proposition \ref{correspondence between diff forms of left N strings and gaps}).

Though Theorem \ref{main} generalizes \cite[Corollary~10.14]{SardarKuberHamforDom}, which computes the order type of hammocks for domestic string algebras, the former employs a recursive algorithm and thus is computationally more complex than the latter.

A finite dimensional $\mathcal K$-algebra that is presented as a bound quiver algebra has only a finite amount of data. We believe that for such algebras if one identifies a finite poset that plays the same role as $\QBa$ for string algebras then the method of ``condensation away from" elements of this poset could be used to recursively reconstruct hammocks. The class $\LOfd$ seems to be the natural class of countably infinite linear orders which admit description using only a finite amount of data.

The rest of the paper is organized as follows. The class $\LOfd$ of finite description linear orders is defined in \S~\ref{sec:linear order}; the highlight of this section is the recursive reconstruction lemma (Lemma \ref{recdlofdb}). After describing the completion of a linear order in \S~\ref{Completion of linear orders}, completions of some orders in $\dLOfdb{1}{1}$ are computed. The notations and conventions for string algebras are set up in \S~\ref{sec:String algebra} and the finiteness of the poset $(\QBa,\preceq)$ is proved in \S~\ref{sec: a finite poset}. Before describing the condensation operator $c_\sB$ in \S~\ref{sec: Condensation}, the condensed hammock is introduced in \S~\ref{sec:finiteness results}. Lemma \ref{hammockordersum} helps to decompose a hammock as an order sum of smaller hammocks, which enables setting up the induction for the computation of the order type. After introducing two special subsets of the set of prime bands in \S~\ref{sec: Neighbours}, the description of the immediate neighbours of strings in the condensed hammock is completed in \S~\ref{sec: Extending neighbours}. The definition and classification of $\sB$-centers into finitely many classes is achieved in \S~\ref{sec: Centers}. Combining all tools gathered thus far, the main result is proved in \S~\ref{sec: main thm}, where the potential impossibility of its converse is also discussed. Finally, in \S~\ref{sec:completionhammock}, based on the description of the completion of the hammock from Proposition \ref{completion of hammock}, the set of left $\N$-strings is classified into three classes and their locations in the completion are described.

\subsection*{Acknowledgements}
The third author thanks the \emph{Council of Scientific and Industrial Research (CSIR)} India - Research Grant No. 09/092(1090)/2021-EMR-I for the financial support. All authors would like to thank Naivedya Amarnani, Dawood Bin Mansoor and Nupur Jain for discussions on the topic. They would also like to thank Suyash Srivastava for careful reading of the manuscript.

\subsection*{Data Availability Statement}
Data sharing not applicable to this article as no datasets were generated or analysed during the current study.

\section{Finite description linear orders}\label{sec:linear order}
We introduced basic notations, conventions and operations on linear orders in \S~\ref{sec:intro}. We need two more finitary operations defined on linear orders. If $L_1$ and $L_2$ are non-empty linear orders, define $L_1\dotplus L_2$ to be the linear order obtained by identifying in $L_1+L_2$ the maximum element of $L_1$ with the minimum element of $L_2$, if they both exist; otherwise setting $L_1\dotplus L_2:=L_1+L_2$.

The other finitary operation is that of the shuffle of a finite set of linear orders, which we recall below from \cite{rosenstein1982linear} for the convenience of the reader. This operation will be used in the construction of finite description linear orders.

Cantor proved that $\eta$ is the only countably infinite dense linear ordering without maximum and minimum elements up to isomorphism (see \cite[Theorem~2.8]{rosenstein1982linear}). The technique used to prove this result is known as the back-and-forth method, which can also be used to prove the following. 
\begin{prop}\label{partition of Q}
\cite[Theorems~7.11, 7.13]{rosenstein1982linear} For each $n\in\omega$, there is a partition of $\eta$ into sets $\{D_i\mid1\leq i\leq n\}$ such that each $D_i$ is dense in $\eta$. Such a partition is unique in the following sense: If $(A,<)$ and $(A',<')$ are countable, unbounded, dense linear orders, $A$ is partitioned into $n$ subsets $\{D_i\mid1\leq i\leq n\}$ each of which is dense in $A$, and $A'$ is partitioned into $n$ subsets $\{D'_i\mid1\leq i\leq n\}$ each of which is dense in $A'$ then there is an order isomorphism $f:(A,<)\to(A',<')$ such that $f(D_i)=D'_i$ for each $1\leq i\leq n$.
\end{prop}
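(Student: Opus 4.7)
The plan is to handle existence and uniqueness separately: existence by an explicit colouring of $\mathbb Q$, and uniqueness by a ``coloured'' variant of Cantor's back-and-forth argument.

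For existence, I note first that naive constructions via an anti-lexicographic product, such as $\mathbf n \cdot \eta$ or $\eta \cdot \mathbf n$, fail because slices sharing a coordinate develop immediate successors and the product is no longer dense. Instead I would enumerate $\mathbb Q = \{q_k : k \in \N\}$ together with the pairs $\{(a_k, b_k) \in \mathbb Q^2 : a_k < b_k\}$, and then define a colouring $c : \mathbb Q \to \{1, \dots, n\}$ in stages. At stage $k$ I would first assign colour $1$ to $q_k$ if it is not yet coloured, and then, for each $i \in \{1, \dots, n\}$ with no element of $(a_k, b_k)$ so far bearing colour $i$, pick an as-yet uncoloured rational in $(a_k, b_k)$ (possible since there are infinitely many choices and only finitely many constraints) and colour it $i$. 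After $\omega$ stages every rational has a colour, and every non-empty open interval contains an element of each colour, so $D_i := c^{-1}(i)$ is the required dense partition of $\eta$.

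For uniqueness, I would enumerate $A = \{a_k : k \in \N\}$ and $A' = \{a'_k : k \in \N\}$ and inductively build an ascending chain $f_0 \subset f_1 \subset \cdots$ of finite partial isomorphisms preserving both order and colour, i.e.\ $a \in D_i$ iff $f_k(a) \in D'_i$ for every $a \in \mathrm{dom}(f_k)$. At an even stage $2k$ I would adjoin to the domain the $A$-element of least index not yet handled; at an odd stage $2k+1$ I would adjoin to the range the $A'$-element of least index not yet hit. In either case the new element lies in a ``gap'' determined by the finite current domain or range, that is, an open interval in $A$ or $A'$, possibly bounded, half-bounded, or the whole order. The extension requires realising this gap by a point of the \emph{matching} colour on the opposite side, which is always possible because each $D'_i$ (respectively $D_i$) is dense in the unbounded order $A'$ (respectively $A$) and so meets every non-empty open interval of every shape. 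The union $f = \bigcup_k f_k$ is then bijective by alternation and order- and colour-preserving by construction.

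The only substantive point requiring care is the extension step in the back-and-forth: the strong form of the density hypothesis (each colour class dense in the full unbounded order, not merely dense-in-itself) is precisely what covers every gap shape that can arise, including gaps unbounded on one or both sides. Once that is granted, the rest is Cantor's argument with the single bookkeeping addition of tracking colours, and, as a by-product, uniqueness applied against the explicit model constructed above reproves existence inside any prescribed $(A, <)$.
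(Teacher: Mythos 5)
Your proposal is correct, and it follows essentially the route the paper intends: the paper gives no proof of its own but cites Rosenstein (Theorems 7.11, 7.13) and notes that the back-and-forth method applies, which is exactly your colour-preserving back-and-forth for uniqueness together with the standard stage-by-stage colouring of $\mathbb{Q}$ for existence. Your emphasis on density of each colour class in the full unbounded order being what handles unbounded gaps in the extension step is precisely the right point of care.
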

The uniqueness of the partition described in the above result enables us to combine the data of a finite set of linear orders into a single linear order as described below.
\begin{defn}\label{defn: shuffle}
\cite[Definition~7.14]{rosenstein1982linear}
Let $n\in\N$. Suppose $L_1,\cdots,L_n$ is a finite set of linear orders. If $n\in\N^+$, let $\{D_1,\cdots,D_n\}$ be a partition of $\eta$ guaranteed by Proposition \ref{partition of Q}, where each $D_i$ is dense in $\eta$. Define the \emph{shuffle}, denoted $\Xi$, of linear orders $L_1,\cdots,L_n$ as $$\Xi(L_1,\cdots,L_n):=\begin{cases}\mathbf 0&\text{if }n=0,\\\sum_{i\in\eta}L'_i \text{, where $L'_i=L_j$ when $i\in D_j$}&\text{otherwise}.\end{cases}$$
\end{defn}
The shuffle operator is a generalization of anti-lexicographic product with $\eta$, i.e., $\Xi(L_1)\cong L_1\cdot\eta$, and it ignores repetitions, $\mathbf 0$ and permutations, i.e., $$\Xi(L_1,L_1,L_2,\cdots,L_n)\cong\Xi(L_1,L_2,\cdots,L_n)\cong\Xi(L_1,L_2,\cdots,L_n,\mathbf 0)\cong\Xi(L_{\pi(1)},L_{\pi(2)},\cdots,L_{\pi(n)})$$ for a permutation $\pi$ of $\{1,2,\cdots,n\}$.

Since $\eta+\mathbf1+\eta=\eta$, for any $1\leq j\leq n$, we also get
\begin{equation}\label{shuffle property finite}
\Xi(L_1,L_2,\cdots,L_n)+L_j+\Xi(L_1,L_2,\cdots,L_n)\cong\Xi(L_1,L_2,\cdots,L_n).
\end{equation}
If $L_j=L_{j1}+L_{j2}$ then using $\eta\cdot\zeta=\eta$ and the above identity we get
\begin{equation}\label{shuffle property infinite}
(L_{j2}+\Xi(L_1,L_2,\cdots,L_n)+L_{j1})\cdot\zeta\cong\Xi(L_1,L_2,\cdots,L_n).    
\end{equation}

\begin{defn}\label{defn: LOFD}
\cite[Definition~7.19]{rosenstein1982linear} The class $\LOfd$ of \emph{finite description} linear orders is defined as the smallest subclass of linear orders closed under isomorphisms such that
\begin{enumerate}
    \item $\mathbf{0},\mathbf{1}\in\LOfd$;
    \item if $L_1,L_2\in\LOfd$ then $L_1+L_2\in\LOfd$;
    \item if $L\in\LOfd$ then $ L\cdot\omega, L\cdot\omega^*\in\LOfd$;
    \item if $L_1,L_2,\cdots, L_n\in\LOfd$ for $n\in\N^+$ then $\Xi(L_1,L_2,\cdots, L_n)\in\LOfd$.
\end{enumerate}
\end{defn}
The class $\LOfp$ of finitely presented linear orders is the subclass of $\LOfd$ whose definition omits clause $(4)$ in the above. The notation $\dLOfd$ denotes the subclass of $\LOfd$ containing only discrete linear orders. The class $\dLOfd$ of discrete finite description linear orders can be further partitioned into four subclasses, viz. $\dLOfdb{i}{j}$ for $i,j\in\{0,1\}$, where $L\in\dLOfdb{i}{j}$ only if it has $i$ minimum elements and $j$ maximum elements. In particular, $\dLOfdb{1}{1}$ is the class of \emph{bounded discrete finite description linear orders}. The orders $(\omega+\Xi(\underbrace{\zeta,\zeta,\cdots\zeta}_{n\text{ times}})+\omega^*)$ for $n\in\N$ form a simple family of examples of orders in $\dLOfdb{1}{1}$. We similarly partition $\dLOfpb{}{}$ into four subclasses.

We will use the method of recursive reconstruction described in the introduction to construct complex orders in $\dLOfdb11$. An indispensable tool to prove the main result (Theorem \ref{main}) is the following lemma which shows that, under suitable conditions, if $L$ admits a condensation $c:L\twoheadrightarrow (\omega+\Xi(\underbrace{\zeta,\zeta,\cdots\zeta}_{n\text{ times}})+\omega^*)$ with  fibers in $\dLOfdb{1}{1}$ then $L\in\dLOfdb{1}{1}$.
\begin{lem}\label{recdlofdb}
Fix $n\in\N$. Given any $(n+2)$ functions $L_j:\zeta\to\dLOfdb{1}{1}$, for $j\in\{0,1,\cdots,n+1\}$, satisfying
\begin{itemize}
    \item $L_0(-k)=L_{n+1}(k)=\mathbf{0}$ for every $k>0$;
    \item for each $j\in\{0,1,\cdots,n\}$, there exist $s_j\geq 0$ and $p_j>0$ such that $L_j(s_j+p_j+k)\cong L_j(s_j+k)$ for every $k\in\N$;
    \item for each $j\in\{1,\cdots,n+1\}$, there exist $s'_j\leq 0$ and $p'_j>0$ such that $L_j(s'_j-p'_j-k)\cong L_j(s'_j-k)$ for every $k\in\N$,
\end{itemize} we have $$L:=\sum_{k\in\zeta}L_0(k)+\Xi(\sum_{k\in\zeta}L_1(k),\cdots,\sum_{k\in\zeta}L_n(k))+\sum_{k\in\zeta}L_{n+1}(k)\in\dLOfdb{1}{1}.$$
\end{lem}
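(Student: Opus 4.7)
The plan is to reduce each $\zeta$-indexed sum to an explicit $\LOfd$ expression via eventual periodicity, assemble the pieces using the closure properties of $\LOfd$, and then verify that the result is in fact bounded and discrete.

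First, from $L_0(-k)=L_{n+1}(k)=\mathbf{0}$ for $k>0$, the two outer sums reduce to one-sided sums $\sum_{k\geq 0}L_0(k)$ and $\sum_{k\leq 0}L_{n+1}(k)$. The right-periodicity of $L_0$ with parameters $(s_0,p_0)$ then yields
\[
\sum_{k\geq 0}L_0(k)\cong\left(\sum_{k=0}^{s_0-1}L_0(k)\right)+\left(\sum_{k=0}^{p_0-1}L_0(s_0+k)\right)\cdot\omega,
\]
and symmetrically
\[
\sum_{k\leq 0}L_{n+1}(k)\cong\left(\sum_{k=0}^{p'_{n+1}-1}L_{n+1}(s'_{n+1}-p'_{n+1}+k)\right)\cdot\omega^*+\sum_{k=s'_{n+1}}^{0}L_{n+1}(k).
\]
Each bracketed finite sum lies in $\dLOfdb{1}{1}$ because that class is closed under finite $+$, so clauses (2) and (3) of Definition \ref{defn: LOFD} place both outer sums in $\LOfd$. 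The same idea, applied at both ends with splitting points $k=s'_j$ and $k=s_j$, expresses each middle sum as
\[
B_j:=\sum_{k\in\zeta}L_j(k)\cong\left(\sum_{k=0}^{p'_j-1}L_j(s'_j-p'_j+k)\right)\cdot\omega^*+\sum_{k=s'_j}^{s_j-1}L_j(k)+\left(\sum_{k=0}^{p_j-1}L_j(s_j+k)\right)\cdot\omega\in\LOfd.
\]
Clause (4) then lands $\Xi(B_1,\ldots,B_n)\in\LOfd$, and two applications of clause (2) give $L\in\LOfd$.

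To upgrade to $L\in\dLOfdb{1}{1}$, I verify that $L$ is discrete with a unique minimum and a unique maximum. The minimum of $L$ is inherited from the leading finite prefix of the first outer sum (or from its $\omega$-tail when the prefix is empty), and dually for the maximum. For discreteness I examine each potential trouble spot: within each bounded discrete $L_j(k)$ it is immediate; across the $\zeta$-indexed fibres of a given $B_j$ it follows since each $L_j(k)$ is bounded discrete, so adjacent non-zero fibres connect via a max-to-min pair; within the shuffle $\sum_{i\in\eta}B'_i$, each element of a block $B'_i$ has its immediate neighbours inside that same block, because each non-trivial $B_j$ inherits from its non-degenerate $\omega^*$ and $\omega$ tails the property of being unbounded on both sides; and at the interfaces between the outer sums and $\Xi$, the trailing $\omega$-tail of the first outer sum and the leading $\omega^*$-tail of the second ensure that elements adjacent to the shuffle still find their immediate neighbours inside the outer sum itself, avoiding a gap across the interface.

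The main obstacle I expect is this last step: the discreteness argument around and inside the shuffle uses that each $B_j$ is unbounded on both sides, which in turn requires the relevant periods to be non-zero. In the degenerate sub-cases where some period collapses to $\mathbf 0$, the would-be unbounded tail disappears and the adjacent finite prefix becomes the effective max or min of the block; these configurations require a short separate check, handled by a uniform bookkeeping of which of the four tails is non-trivial for each block, after which the discreteness and bounded-extremes properties of $L$ follow in each case.
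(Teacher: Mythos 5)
Your decomposition is exactly the paper's: each $\zeta$-indexed sum is rewritten as a finite prefix together with periodic tails of the form (finite sum)$\cdot\omega$ or (finite sum)$\cdot\omega^*$, so that the outer pieces land in $\dLOfdb{1}{0}$ and $\dLOfdb{0}{1}$ and each middle piece in $\dLOfdb{0}{0}$, after which closure of $\LOfd$ under finite sums, $\cdot\,\omega$, $\cdot\,\omega^*$ and $\Xi$ gives $L\in\dLOfdb{1}{1}$; the discreteness and endpoint checks you spell out are precisely what the paper dismisses as ``trivially seen''. One caveat about your final paragraph: the degenerate situation you flag (a periodic block collapsing to $\mathbf 0$) cannot occur, because the middle functions take values in $\dLOfdb{1}{1}$, hence in non-empty orders with both endpoints -- only $L_0$ and $L_{n+1}$ are permitted the value $\mathbf 0$, and only on the half-lines that truncate the outer sums -- and it is just as well, since if such a collapse were allowed the conclusion would actually fail rather than need ``a short separate check'': for instance if some $\sum_{k\in\zeta}L_j(k)$ reduced to a single point, the shuffle would contain a copy of $\Xi(\mathbf 1)\cong\eta$ and $L$ would not be discrete.
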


\begin{proof}
Set
$$H:=\sum_{k\in\zeta}L_0(k)\cong L_0(0)+\cdots+L_0(s_0-1)+(L_0(s_0)+\cdots+L_0(s_0+p_0-1))\cdot\omega,$$
$$R:=\sum_{k\in\zeta}L_{n+1}(k)\cong (L_{n+1}(s'_{n+1}-p'_{n+1}+1)+\cdots+L_{n+1}(s'_{n+1}))\cdot\omega^*+L_{n+1}(s'_{n+1}+1)+\cdots+L_{n+1}(0),$$ and for each $1\leq j\leq n$,
$$M_j:=\sum_{k\in\zeta}L_j(k)\cong(L_j(s'_j-p'_j+1)\cdot\omega^*+\cdots+L_j(s'_j))+L_j(s'_j+1)+\cdots+L_j(s_j-1)+(L_j(s_j)+\cdots+L_j(s_j+p_j-1))\cdot\omega.$$
It is trivially seen that $H\in\dLOfdb{1}{0}, R\in\dLOfdb{0}{1}$ and $M_j\in\dLOfdb{0}{0}$ for each $1\leq j\leq n$. Hence it follows that $L=H+\Xi(M_1,\cdots,M_n)+R\in\dLOfdb{1}{1}$.
\end{proof}

\begin{cor}
Using the notations of the above proposition, if we have $n=0$ and the images of $L_0$ and $L_1$ lie in $\dLOfpb{1}{1}$ then $L\in\dLOfpb{1}{1}$.
\end{cor}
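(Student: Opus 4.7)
The plan is to specialize the proof of Lemma~\ref{recdlofdb} to the case $n = 0$ and to observe that no application of the shuffle operator $\Xi$ is required in that case; all remaining constructions lie among the defining clauses of $\LOfp$.

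First I would use the convention $\Xi() = \mathbf{0}$ from Definition~\ref{defn: shuffle} to collapse the expression for $L$ to $L = H + R$, where $H = \sum_{k\in\zeta} L_0(k)$ and $R = \sum_{k\in\zeta} L_1(k)$ are as in the proof of Lemma~\ref{recdlofdb}. Next, the periodicity hypotheses on $L_0$ and $L_1$ give, by the same manipulation as in that proof, the explicit rewritings
$$H \cong L_0(0) + L_0(1) + \cdots + L_0(s_0 - 1) + \bigl(L_0(s_0) + \cdots + L_0(s_0 + p_0 - 1)\bigr)\cdot\omega,$$
$$R \cong \bigl(L_1(s'_1 - p'_1 + 1) + \cdots + L_1(s'_1)\bigr)\cdot\omega^* + L_1(s'_1 + 1) + \cdots + L_1(0).$$
These expressions involve only clauses (1)--(3) of Definition~\ref{defn: LOFD}: the constants $\mathbf{0}, \mathbf{1}$, finite order sum, and anti-lexicographic product with $\omega$ and $\omega^*$. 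Since each $L_0(k)$ and $L_1(k)$ lies in $\dLOfpb{1}{1} \subseteq \LOfp$ by hypothesis, and $\LOfp$ is by definition closed under precisely these clauses, we obtain $H, R \in \LOfp$, and hence $L = H + R \in \LOfp$.

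Finally, discreteness and boundedness are inherited directly from the proof of Lemma~\ref{recdlofdb}: the displayed formulas show that $H$ is a discrete linear order with a minimum element (supplied by the leftmost non-empty $L_0(k)$) and no maximum, so $H \in \dLOfpb{1}{0}$; dually $R \in \dLOfpb{0}{1}$. Their order sum is therefore a bounded discrete linear order in $\LOfp$, i.e., $L \in \dLOfpb{1}{1}$. I do not anticipate a substantive obstacle here; the corollary is essentially the observation that the single use of $\Xi$ in the proof of Lemma~\ref{recdlofdb} is the only reason one is forced out of the subclass $\LOfp$, and with $n = 0$ that step is vacuous.
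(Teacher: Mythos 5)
Your proposal is correct and follows exactly the route the paper intends: the corollary is stated without proof precisely because, when $n=0$, the shuffle term in Lemma \ref{recdlofdb} is $\Xi()=\mathbf 0$ and the displayed rewritings of $H$ and $R$ use only finite sums and products with $\omega$, $\omega^*$, which keep you inside $\LOfp$, giving $H\in\dLOfpb{1}{0}$, $R\in\dLOfpb{0}{1}$ and $L=H+R\in\dLOfpb{1}{1}$.
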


\section{Completions of linear orders}\label{Completion of linear orders}
Recall from \cite[Definition~2.19]{rosenstein1982linear} that a linear order $L$ is \emph{complete} if each of its suborders that is bounded above has a least upper bound. Completeness of a linear order is an order-theoretic property, i.e., it is preserved and reflected by order isomorphisms \cite[Lemma~2.21]{rosenstein1982linear}. A \emph{Dedekind cut} \cite[Definition~2.22]{rosenstein1982linear} of a linear order $L$ is a pair $(X,Y)$ of non-empty intervals of $L$ whose union is $L$ such that each element of $X$ precedes every element of $Y$. A Dedekind cut $(X,Y)$ is called a \emph{gap} in $L$ if $X$ does not have a maximum element and $Y$ does not have a minimum element. Denote the set of all gaps of $L$ by $\gap(L)$. An equivalent criterion \cite[Lemma~2.23]{rosenstein1982linear} for a linear order $L$ to be complete is that $L$ is Dedekind complete, i.e., $\gap(L)=\emptyset$.

A \emph{completion} of $L$ \cite[Definition~2.31]{rosenstein1982linear}, denoted $\com(L)$, is a complete linear order containing $L$ such that no proper suborder of $\com(L)$ containing $L$ is complete. A completion $\com(L)$ of $L$ exists, the construction of one involves ``filling up'' its gaps (see the proof of \cite[Theorem~2.32(1)]{rosenstein1982linear}), and is unique up to order isomorphism \cite[Theorem~2.32(2)]{rosenstein1982linear}. The set $\gap(L)$ being a subset of $\com(L)$ inherits an order structure from $\com(L)$.

In order to identify a gap $(X,Y)$ of a linear order $L$, it suffices to find a cofinal sequence of elements of $X$ and a coinitial sequence of elements of $Y$, where $X'\subseteq X$ is \emph{cofinal} in $X$ if for every $a\in X$, there is $b\in X'$ such that $a\leq b$, and dually, $Y'$ is said to be \emph{coinitial} in $Y$ if for every $a\in Y$, there is $b\in Y'$ such that $b\leq a$.

\begin{exmp}
It is trivial to note that $\com(\omega)\cong\omega,\ \com(\omega^*)\cong\omega^*,\ \com(\zeta)\cong\zeta$. Moreover, reals are constructed as the completion of $\eta$ using cofinal/coinitial sequences, i.e., $\com(\eta)\cong\lambda.$
\end{exmp}
We will use the technique of finding cofinal/coinitial sequences to compute the completion of certain linear orders in Propositions \ref{completion of hammock} and \ref{gaps of hammocks are gaps of OST}.

The main goal of this section is to compute the completions of two classes of order types in $\dLOfdb{1}{1}$ which are important in the context of this paper, namely $(\omega+\omega^*)\cdot\mathbf n$ and $(\omega+\Xi(\zeta)+\omega^*)\cdot\mathbf n\cong(\omega+\zeta\cdot\eta+\omega^*)$. The computation of the completion of the former class of order types is easy.
\begin{exmp}\label{C(OT(domestic OST))}
$\com((\omega+\omega^*)\cdot\mathbf n)\cong(\omega+\mathbf1+\omega^*)\cdot\mathbf n.$
\end{exmp}

Given $n\in\N^+$ and non-empty linear orders $L_1,\cdots,L_n$, recall the construction of the shuffle $\Xi(L_1,\cdots,L_n)$ from Definition \ref{defn: shuffle}. Using those notations, it is easily verified that the following four types of Dedekind cuts are elements in $\gap(\Xi(L_1,\cdots,L_n))$.
\begin{enumerate}[label=G\arabic*.]
    \item $(\sum_{r\in(-\infty,r_0)\cap\mathbb Q}L'_r,\sum_{r\in(r_0,\infty)\cap\mathbb Q}L'_r)$ for $r_0\in\mathbb R\setminus\mathbb Q$;

    \item $(\sum_{r\in(-\infty,r_0)\cap\mathbb Q}L'_r,\sum_{r\in[r_0,\infty)\cap\mathbb Q}L'_r)$ for $r_0\in D_j$ if $L_j$ does not have a minimum;

    \item $(\sum_{r\in(-\infty,r_0]\cap\mathbb Q}L'_r,\sum_{r\in(r_0,\infty)\cap\mathbb Q}L'_r)$ for $r_0\in D_j$ if $L_j$ does not have a maximum;
    
    \item $(\sum_{r\in(-\infty,r_0)\cap\mathbb Q}L'_r+L_j^1,L_j^2+\sum_{r\in(r_0,\infty)\cap\mathbb Q}L'_r)$ for $r_0\in D_j$ if $(L_j^1,L_j^2)\in\gap(L_j)$.
\end{enumerate}

The following result says that in fact these are all the gaps.
\begin{prop}
Given $n\in\N^+$ and non-empty linear orders $L_1,\cdots,L_n$, if $(X,Y)\in\gap(\Xi(L_1,\cdots,L_n))$ then $(X,Y)$ is of one of the four types listed above.
\end{prop}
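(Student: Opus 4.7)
The plan is to exploit the structure of $\Xi(L_1,\ldots,L_n)$ as the ordered sum $\sum_{r \in \eta} L'_r$ and show that any Dedekind cut $(X,Y)$ is determined by a cut of the indexing copy of $\eta$ together with, at most, a single cut within one summand $L'_{r_0}$.

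First I would fix a gap $(X,Y)$ and for each $r \in \eta$ classify the summand $L'_r$ as \emph{left} (contained in $X$), \emph{right} (contained in $Y$), or \emph{split} (meeting both $X$ and $Y$). Because $(X,Y)$ is a Dedekind cut and the summands are totally ordered by their index, the set of left indices forms an initial segment $S \subseteq \eta$ and the right indices form the complementary final segment $T$; moreover at most one summand can be split, and if one $L'_{r_0}$ is split then $S = (-\infty,r_0)\cap\mathbb{Q}$ and $T = (r_0,\infty)\cap\mathbb{Q}$. This reduces the problem to four mutually exclusive cases depending on (a) whether a summand is split, and (b) the nature of the cut $(S,T)$ of $\eta$.

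Next I would handle each case using the gap condition---that $X$ has no maximum and $Y$ has no minimum---to extract the correct type:
\begin{itemize}
\item If some $L'_{r_0}$ (with $r_0 \in D_j$) is split into $(L_j^1,L_j^2)$, then $X$ ends with $L_j^1$ and $Y$ begins with $L_j^2$; the gap condition forces $L_j^1$ to have no maximum and $L_j^2$ to have no minimum, so $(L_j^1,L_j^2) \in \gap(L_j)$, yielding type G4.
\item If no summand is split and the cut $(S,T)$ of $\eta$ is itself a gap of $\eta$, then using the standard identification $\gap(\eta) \cong \mathbb{R}\setminus\mathbb{Q}$ we obtain type G1.
\item If $S$ has a maximum $r_0 \in D_j$, the fact that $X$ has no maximum forces $L_j = L'_{r_0}$ to have no maximum, yielding type G3. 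Symmetrically, if $T$ has a minimum, we get type G2.
\end{itemize}
Density of $\eta$ rules out the possibility that $S$ has a maximum and $T$ simultaneously has a minimum, so these cases are exhaustive and mutually exclusive.

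The main subtlety I anticipate is the coinitial/cofinal bookkeeping that justifies ``at most one summand is split'' and that identifies cuts of $\eta$ with $\mathbb{R}\setminus\mathbb{Q}$; however, both follow from elementary properties of ordered sums indexed by a dense linear order, so no deep technique is needed. The density of each $D_j$ in $\eta$ guaranteed by Proposition \ref{partition of Q} is invoked only implicitly---it is used earlier to ensure that types G2, G3, G4 are genuine gaps, but plays no role in the converse direction carried out here.
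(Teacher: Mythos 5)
Your proposal is correct and follows essentially the same route as the paper's proof: the paper's projection map $\mathrm{proj}$ onto the indexing copy of $\eta$ and its case split on whether $\mathrm{proj}(X)\cap\mathrm{proj}(Y)$ is empty or a singleton is exactly your left/right/split classification of summands, followed by the same three subcases on the cut of $\eta$. If anything, your assignment of cases to G2 and G3 matches the listed types precisely, whereas the paper's proof text swaps those two labels (an inessential slip).
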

\begin{proof}
Define a map $\mathrm{proj}:\sum_{r\in\mathbb Q}L'_r\to\mathbb Q$ by $\mathrm{proj}(x)=r$ if $x\in L'_r$. Thus if $(X,Y)\in\gap(\Xi(L_1,\cdots,L_n))$ then $\mathrm{proj}(X)\cup\mathrm{proj}(Y)=\mathbb Q$ and $r_1\leq r_2$ whenever $r_1\in\mathrm{proj}(X)$ and $r_2\in\mathrm{proj}(Y)$. Hence $\mathrm{proj}(X)\cap\mathrm{proj}(Y)$ is either empty or singleton. If $\mathrm{proj}(X)\cap\mathrm{proj}(Y)=\{r_0\}$ for some $r_0\in\mathbb Q$ then $(X\cap L'_{r_0},Y\cap L'_{r_0})$ is a gap in $L'_{r_0}$; this gap is of the form described in G4.

Now assume that $\mathrm{proj}(X)\cap\mathrm{proj}(Y)=\emptyset$. There are three cases.
\begin{itemize}
    \item If $\mathrm{proj}(X)$ does not have a maximum element and $\mathrm{proj}(Y)$ does not have a minimum element then there exists $r_0\in\mathbb R\setminus\mathbb Q$ such that $r_1<r_0<r_2$ for every $r_1\in\mathrm{proj}(X)$ and every $r_2\in\mathrm{proj}(Y)$. This gap is of the form described in G1.

    \item If $\mathrm{proj}(X)$ has a maximum element, say $r_0$, then $(X,Y)$ is a gap if and only if $L'_{r_0}$ does not have a maximum element. This gap is of the form described in G2.

    \item If $\mathrm{proj}(Y)$ has a minimum element, say $r_0$, then $(X,Y)$ is a gap if and only if $L'_{r_0}$ does not have a minimum element. This gap is of the form described in G3.
\end{itemize}
\end{proof}

As a consequence, we have the following result, which computes the completion of $\Xi(L_1,\cdots,L_n)$.

\begin{cor}\label{completion of shuffle}
Given $n\in\N^+$ and non-empty linear orders $L_1,\cdots,L_n$, using notations of Definition \ref{defn: shuffle}, $$\com(\Xi(L_1,\cdots,L_n))\cong\sum_{r\in\mathbb R} T_r,\text{ where }T_r:=
\begin{cases}
    \mathbf 1\dotplus\com(L_j)\dotplus\mathbf 1&\text{ if }r\in D_j\text{ for some }j\in\{1,2,\cdots,n\},\\
    \mathbf 1&\text{ otherwise}.
\end{cases}$$
\end{cor}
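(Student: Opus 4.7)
The plan is to leverage the classification of gaps just established and invoke the standard description (from the proof of \cite[Theorem~2.32(1)]{rosenstein1982linear}) of the completion of $L$ as $L\cup\gap(L)$ with the evident induced order. Once the set of new elements is understood, realizing it as $\sum_{r\in\mathbb R}T_r$ is essentially a bookkeeping exercise.

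First, I would write $\Xi(L_1,\dots,L_n)=\sum_{r\in\mathbb Q}L'_r$ as in Definition \ref{defn: shuffle}, so that each old element of the completion already sits inside a well-defined slot $L'_r$ with $r\in\mathbb Q$. Then, using the preceding proposition's gap classification, I would distribute the newly added gap-elements over a real index $r_0$:
\begin{itemize}
\item For $r_0\in\mathbb R\setminus\mathbb Q$, the unique G1 gap at $r_0$ contributes a single element strictly between $\sum_{r<r_0}L'_r$ and $\sum_{r>r_0}L'_r$, so the aggregate at position $r_0$ is $T_{r_0}=\mathbf1$.
\item For $r_0\in D_j$, the old slot $L'_{r_0}\cong L_j$ is present; the G4 gaps at $r_0$ are in bijection with $\gap(L_j)$ and fill exactly the internal gaps of $L_j$, producing $\com(L_j)$ inside this slot; the G2 gap exists if and only if $L_j$ lacks a minimum, in which case it contributes a new element immediately below $L'_{r_0}$; and dually for G3 above.
\end{itemize}

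Second, I would identify the resulting block at a rational $r_0\in D_j$ with $\mathbf1\dotplus\com(L_j)\dotplus\mathbf1$. The key observation is that $\com(L_j)$ has a minimum (resp.\ maximum) exactly when $L_j$ does, because no gap of $L_j$ is unbounded below or above. Hence $\dotplus$ correctly absorbs the appended $\mathbf1$'s precisely when the extrema already exist, and inserts them otherwise; this gives exactly the prescription $T_{r_0}=\mathbf1\dotplus\com(L_j)\dotplus\mathbf1$.

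Third, I would verify that the inter-block order matches $\sum_{r\in\mathbb R}T_r$: by construction G2/G3/G4 gap-elements at $r_0\in\mathbb Q$ are placed flush against $L'_{r_0}$, while G1 gap-elements at irrational $r_0$ slot in according to the usual order on $\mathbb R$, so the global order is the order sum indexed by $\mathbb R$. The main obstacle, and the only step requiring care, is ensuring that the candidate $\sum_{r\in\mathbb R}T_r$ is actually the completion rather than some larger complete extension. I would handle this either by appealing directly to the explicit construction $\com(L)=L\cup\gap(L)$ from \cite[Theorem~2.32(1)]{rosenstein1982linear} (in which case minimality is automatic) or, as a sanity check, by invoking uniqueness \cite[Theorem~2.32(2)]{rosenstein1982linear} after confirming Dedekind completeness of $\sum_{r\in\mathbb R}T_r$ (which follows from completeness of $\mathbb R$ and the fact that each $T_r$ is a complete order with both extrema, so any bounded set projects to a subset of $\mathbb R$ whose supremum is attained either inside some $T_r$ or as the minimum of the next block).
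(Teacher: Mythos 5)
Your proposal is correct and follows essentially the same route as the paper, which states this corollary as an immediate consequence of the G1--G4 gap classification and the construction $\com(L)=L\cup\gap(L)$; your bookkeeping (one point per irrational, $\gap(L_j)$ filling the slot to give $\com(L_j)$, and the G2/G3 elements absorbed or inserted exactly as $\dotplus$ prescribes since $\com(L_j)$ has a minimum or maximum precisely when $L_j$ does) is exactly the argument left implicit there.
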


Using the standard embedding of $\eta$ in $\lambda$, we compute the completion of a standard order type in $\dLOfdb{1}{1}$.
\begin{cor}\label{completion nondomestic beam}
Suppose $\mathcal O:=\omega+\zeta\cdot\eta+\omega^*$. Then $$\com(\mathcal O)\cong\omega+\mathbf{1}+\left(\sum_{r\in\lambda}T_r\right)+\mathbf{1}+\omega^*,\text{ where }T_r=\begin{cases}
    \mathbf 1+\zeta+\mathbf 1&\text{ if $r\in\eta$},\\
    \mathbf 1&\text{ otherwise}.
\end{cases}$$
\end{cor}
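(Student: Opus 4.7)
The plan is to apply Corollary \ref{completion of shuffle} to the middle summand and then account for the two boundary gaps introduced by concatenating with $\omega$ and $\omega^*$.

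First I would observe that $\zeta\cdot\eta\cong\Xi(\zeta)$ by the remark following Definition \ref{defn: shuffle}. Applying Corollary \ref{completion of shuffle} with $n=1$ and $L_1=\zeta$ (so that $D_1=\eta$ under the standard embedding $\eta\hookrightarrow\lambda$), and using $\com(\zeta)\cong\zeta$ together with the fact that $\zeta$ has neither a minimum nor a maximum (so the $\dotplus$ operations reduce to $+$), I get $T_r=\mathbf 1+\zeta+\mathbf 1$ for $r\in\eta$ and $T_r=\mathbf 1$ otherwise, and hence $\com(\zeta\cdot\eta)\cong\sum_{r\in\lambda}T_r$.

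Next I would reduce the computation of $\com(\mathcal O)$ to this intermediate step by classifying the gaps of $\mathcal O=\omega+\zeta\cdot\eta+\omega^*$. Any gap $(X,Y)$ either lies entirely in one of the three summands or straddles one of the two boundaries. Since $\omega$ and $\omega^*$ are already complete, there are no gaps internal to them; the gaps internal to $\zeta\cdot\eta$ are exactly those enumerated by the previous proposition and collectively account for $\sum_{r\in\lambda}T_r$. The only remaining work is the two boundary gaps: since $\omega$ has no maximum and $\zeta\cdot\eta$ (whose index runs over $\eta$, which has no minimum) has no minimum, $(\omega,\zeta\cdot\eta+\omega^*)$ is a gap contributing a single point $\mathbf 1$; symmetrically, $(\omega+\zeta\cdot\eta,\omega^*)$ is a gap contributing another $\mathbf 1$. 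Assembling these pieces in order gives the claimed isomorphism $\com(\mathcal O)\cong\omega+\mathbf 1+\sum_{r\in\lambda}T_r+\mathbf 1+\omega^*$.

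The only mildly delicate point is verifying that the result of inserting these boundary $\mathbf 1$'s is actually complete, i.e., that no further gaps appear when stitching together the three already-complete pieces $\omega$, $\mathbf 1+\sum_{r\in\lambda}T_r+\mathbf 1$, and $\omega^*$. For this I would check that the inserted $\mathbf 1$'s now serve as the required sup/inf at the seams: the first $\mathbf 1$ is the supremum of the $\omega$-part and simultaneously the infimum of $\sum_{r\in\lambda}T_r$ (the latter being true because for every $r_0\in\lambda$ we can find a cofinal sequence in $(-\infty,r_0)\cap\lambda$ of elements of the sum converging down to the new point), and dually for the second $\mathbf 1$. This confirms there are no fresh gaps and hence the displayed order is $\com(\mathcal O)$ by the uniqueness of the completion.
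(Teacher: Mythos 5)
Your proposal is correct and follows essentially the route the paper intends for this corollary: identify $\zeta\cdot\eta\cong\Xi(\zeta)$, apply Corollary \ref{completion of shuffle} with $n=1$, $L_1=\zeta$ (so $T_r=\mathbf 1\dotplus\com(\zeta)\dotplus\mathbf 1=\mathbf 1+\zeta+\mathbf 1$ for $r\in\eta$ and $\mathbf 1$ for irrational $r$, under the standard embedding of $\eta$ in $\lambda$), and then account for the two boundary gaps $(\omega,\zeta\cdot\eta+\omega^*)$ and $(\omega+\zeta\cdot\eta,\omega^*)$, which contribute the two extra points $\mathbf 1$. The closing completeness/minimality check is slightly more than the paper records, but it is sound and consistent with the gap-filling construction of the completion.
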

This result will be useful in \S\ref{sec:completionhammock} along with the partition of $\gap(\mathcal O)$ into three subsets given below.
\begin{align*}
\gap^+(\mathcal O)&:=\{x\in\gap(\mathcal O)\mid x\text{ is minimum of }\gap(\mathcal O)\text{ or }x\text{ is maximum of }T_r\text{ when }r\in\eta\},\\
\gap^-(\mathcal O)&:=\{x\in\gap(\mathcal O)\mid x\text{ is maximum of }\gap(\mathcal O)\text{ or }x\text{ is minimum of }T_r\text{ when }r\in\eta\},\\
\gap^0(\mathcal O)&:=\gap(\mathcal O)\setminus(\gap^+(\mathcal O)\cup\gap^-(\mathcal O))=\{T_r\mid r\in\vartheta\}.    
\end{align*}

\section{Fundamentals of string algebras}\label{sec:String algebra}
Fix an algebraically closed field $\K$. A string algebra is a $\mathcal K$-algebra $\Lambda:=\K\Q/\langle\rho\rangle$ presented as a certain quotient of the path algebra of a finite quiver $\mathcal{Q}=(Q_0,Q_1,s,t)$, where $Q_0$ is a finite set of vertices, $Q_1$ is a finite set of arrows, and $s,t:Q_1\to Q_0$ are the source and target functions respectively, by the ideal generated by a set $\rho$ of monomial relations. For technical reasons, we also choose and fix a pair of maps $\sigma,\varepsilon:Q_1\to\{1,-1\}$ satisfying certain conditions. The reader is referred to \cite[\S~2.1]{GKS20} for the definition of a string algebra as well as for notations and conventions associated with certain combinatorial entities called strings and bands. We use the notation $\St$ to denote the set of strings for $\Lambda$ and $\mathsf{Ba}(\Lambda)$ to denote the set of bands up to a cyclic permutation of its syllables. Let $\Ba$ be a fixed set of representatives in $\mathsf{Ba}(\Lambda)$. Call a cyclic permutation of an element in $\Ba$ a cycle. Denote the set of all cycles in $\Lambda$ by $\Cyc$. We also fix $\xx_0\in\St$ and a parity $i\in\{-1,1\}$ and use them whenever required. 

Strings are read from right to left. For example, if $\xx=\alpha_3\alpha_2\alpha_1$ then $\alpha_1$ is the first syllable of $\xx$ and $\alpha_3$ is the last syllable of $\xx$. For strings $\xx$ and $\yy$, we say that $\xx$ is a \emph{left substring} (resp. \emph{proper left substring}) of $\yy$, denoted $\xx \sqsubseteq_l \yy$ (resp. $\xx \sqsubset_l \yy$) if $\yy=\uu\xx$ for some (resp. positive length) string $\uu$. Dually say that $\xx$ is a \emph{right substring} (resp. \emph{proper right substring}) of $\yy$, denoted $\xx \sqsubseteq_r \yy$ (resp. $\xx \sqsubset_r \yy$) if $\yy=\xx\uu$ for some (resp. positive length) string $\uu$.

Suppose $\xx\in\St$ and $|\xx|>0$. Its \emph{sign}, denoted $\theta(\xx)\in\{1,-1\}$, is defined by $\theta(\xx)=1$ if and only if the first syllable of $\xx$ is inverse. To identify if $\xx$ has any sign changes, we define
$$\delta(\xx):=
\begin{cases}
    1&\text{ if all syllables of }\xx\text{ are inverse},\\
    -1&\text{ if all syllables of }\xx\text{ are direct},\\
    0&\text{ otherwise}.
\end{cases}$$

Hammocks are partially ordered sets introduced by Brenner \cite{brenner1986combinatorial} to study factorizations of maps between finite dimensional indecomposable modules. The simplest version of hammocks introduced by Schr\"oer \cite[\S~3]{schroer2000infinite} in the context of string algebras are linear orders. We recall the definition of this version of hammocks below, which is the main object of study in this paper.
\begin{defn}\label{hammock defn}
The left and right hammock sets of the string $\xx_0$ are defined as
$$H_l(\xx_0):=\{\xx\in\St\mid\xx=\uu\xx_0\text{ for some string }\uu\},\ H_r(\xx_0):=\{\xx\in\St\mid\xx=\xx_0\uu\text{ for some string }\uu\}.$$
The left hammock $H_l(\xx_0)$ can be equipped with a linear order $<_l$, where for $\xx,\yy\in\St$ we have $\xx<_l\yy$ if one of the following holds:
\begin{itemize}
    \item $\yy=\uu\alpha\xx$ for some string $\uu$ and $\alpha\in Q_1^-$;
    \item $\xx=\vv\beta\yy$ for some string $\vv$ and $\beta\in Q_1$;
    \item $\xx=\vv\beta\ww$ and $\yy=\uu\alpha\ww$ for some $\alpha\in Q_1^-$, $\beta\in Q_1$ and strings $\uu,\vv,\ww$.
\end{itemize}
The ordering $<_r$ on $H_r(\xx_0)$ is defined as $\xx<_r\yy$ if and only if $\xx^{-1}<_l\yy^{-1}$ in $(H_l(\xx_0^{-1}),<_l)$.
\end{defn}

We will only study the left hammock in this paper--the dual results will hold for the right hammock.

For $\xx,\yy\in H_l(\xx_0)$, denote by $\xx\sqcap_l\yy$ the maximal common left substring of $\xx$ and $\yy$. If $\xx=\ww(\xx\sqcap_l\yy)$ with $|\ww|>0$ then define $\theta(\xx\mid\yy):=\theta(\ww)$ and $\delta(\xx\mid\yy):=\delta(\ww)$.

Almost all strings in the left hammock have an immediate successor as well as an immediate predecessor.
\begin{prop}\cite[\S~2.5]{SchHam98}
The linear order $(H_l(\xx_0),<_l)$ is a bounded discrete linear order. Its minimum element, denoted $\mm_{-1}(\xx_0)$, is the longest string in $H_l(\xx_0)$ satisfying either $\delta(\mm_{-1}(\xx_0)\mid\xx_0)=-1$ or $\mm_{-1}(\xx_0)=\xx_0$, whereas its maximum element is the longest string, denoted $\MM_1(\xx_0)$, satisfying either $\delta(\MM_1(\xx_0)\mid\xx_0)=1$ or $\MM_1(\xx_0)=\xx_0$.
\end{prop}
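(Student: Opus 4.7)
The statement packages three separate claims: that $(H_l(\xx_0),<_l)$ is linearly ordered, that it is bounded with extrema exactly as described, and that it is discrete. I would verify these in order.

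For linearity, given distinct $\xx,\yy\in H_l(\xx_0)$, I would set $\ww:=\xx\sqcap_l\yy$, which is well-defined and contains $\xx_0$ as a left substring since both $\xx$ and $\yy$ do. If exactly one of $\xx,\yy$ equals $\ww$, the sign of the first syllable of the proper extension places the pair in clause~(a) or~(b) of Definition~\ref{hammock defn}. If both properly extend $\ww$, the maximality of $\ww$ forces the first extending syllables in $\xx$ and $\yy$ to differ; the string-algebra axioms permit at most one direct and one inverse syllable to be prepended to any given string, so these two syllables must have opposite signs and clause~(c) applies. Irreflexivity, asymmetry, and transitivity then follow by a short case analysis against the same three clauses.

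For boundedness and the identification of the extrema, I would observe that iteratively prepending a direct syllable to $\xx_0$ (whenever permitted by the quiver and the relations) yields a strictly $\sqsubseteq_l$-increasing chain in $H_l(\xx_0)$. Finite-dimensionality of $\Lambda$ bounds the length of any string composed solely of direct syllables (such a string underlies a directed path in the quiver of the same length), so this chain terminates at a longest element, which is precisely $\mm_{-1}(\xx_0)$. For an arbitrary $\yy\in H_l(\xx_0)$, either $\yy$ is itself an all-direct extension of $\xx_0$, so $\yy\sqsubseteq_l\mm_{-1}(\xx_0)$ by maximality and hence $\mm_{-1}(\xx_0)\leq_l\yy$ by clause~(b); or $\yy$ first diverges from $\mm_{-1}(\xx_0)$ via an inverse syllable (either past the end of $\mm_{-1}(\xx_0)$ or strictly before it, replacing a direct one), giving $\mm_{-1}(\xx_0)<_l\yy$ by clause~(a) or~(c). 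Thus $\mm_{-1}(\xx_0)$ is the minimum; the argument for the maximum $\MM_1(\xx_0)$ is strictly dual.

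For discreteness, given $\xx\in H_l(\xx_0)$ with $\xx>_l\mm_{-1}(\xx_0)$, I would construct the immediate predecessor $\pred(\xx)$ by the classical hook/cohook recipe: locate the shortest right substring $\ww'$ of $\xx$ with $\xx=\vv'\alpha\ww'$, $\alpha$ inverse, such that some direct $\beta$ makes $\beta\ww'$ a valid string, and then set $\pred(\xx):=\uu\beta\ww'$, where $\uu$ is the longest all-direct extension of $\beta\ww'$. Clause~(c) gives $\pred(\xx)<_l\xx$ immediately. For the immediacy, I would argue that any hypothetical $\zz$ with $\pred(\xx)<_l\zz<_l\xx$ leads to a contradiction: if $\zz$ shares with $\xx$ a right substring properly containing $\ww'$, then applying the definition of $<_l$ to $(\zz,\xx)$ yields a shorter valid choice than $\ww'$, contradicting minimality; if $\zz$ has the form $\uu''\beta\ww'$, the ordering $\pred(\xx)<_l\zz$ forces $\uu''$ to be a longer all-direct extension than $\uu$, contradicting maximality; the remaining configurations fall similarly. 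The construction of $\suc(\xx)$ is dual. I expect this final case analysis to be the main obstacle: each of the three clauses of $<_l$ admits several distinct shapes for an intermediate string, and each must be ruled out using the string-algebra axioms---this is the delicate but standard hook/cohook reasoning that underlies all discreteness results for Schr\"oer hammocks.
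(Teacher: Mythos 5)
The paper itself gives no proof of this proposition (it is quoted from Schr\"oer's work), so your argument has to stand on its own. Your first two parts do: the trichotomy via the maximal common left substring $\xx\sqcap_l\yy$, using that at most one direct and at most one inverse syllable can be prepended to a given string, and the identification of $\mm_{-1}(\xx_0)$ (resp.\ $\MM_1(\xx_0)$) as the maximal all-direct (resp.\ all-inverse) extension of $\xx_0$, are correct and standard.

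The discreteness step, however, is genuinely wrong as written. You set $\pred(\xx):=\uu\beta\ww'$, where $\xx=\vv'\alpha\ww'$ with $\alpha\in Q_1^-$, $\ww'$ is the \emph{shortest} such truncation admitting a direct $\beta$ with $\beta\ww'$ a string, and $\uu$ is the longest \emph{all-direct} extension of $\beta\ww'$. If $\ww'$ does not contain $\xx_0$, this candidate does not even lie in $H_l(\xx_0)$ (its syllable following $\ww'$ is $\beta$, not the syllable of $\xx_0$). If it does, then by the second clause of Definition~\ref{hammock defn} any string of the form $\uu\beta\ww'$ with $\beta\in Q_1$ satisfies $\uu\beta\ww'<_l\beta\ww'<_l\ww'$ (indeed, extending in the all-direct direction makes your candidate the \emph{minimum} of $H_l(\beta\ww')$), while $\ww'<_l\xx$ by the first clause; so $\beta\ww'$ and $\ww'$ lie strictly between your $\pred(\xx)$ and $\xx$, and the concluding immediacy analysis cannot go through. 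The correct construction is unavoidably two-cased, exactly as recalled in \S~4 of the paper: if some $\beta\in Q_1$ makes $\beta\xx$ a string, then $\pred(\xx)=\lb(\xx)=\ww'\beta\xx$, where $\beta$ is prepended to \emph{all} of $\xx$ and $\ww'$ is the longest all-\emph{inverse} (not all-direct) extension; if no such $\beta$ exists and $\xx\neq\mm_{-1}(\xx_0)$, then $\pred(\xx)$ is \emph{shorter} than $\xx$, namely the truncation $\ww$ determined by $\xx=\vv\alpha\ww$ with $\alpha\in Q_1^-$ and $\vv$ all-direct or trivial, i.e.\ the \emph{longest} left substring of $\xx$ followed in $\xx$ by an inverse syllable, not the shortest. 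Your single formula conflates these two cases, reverses the direction of the maximal extension, attaches $\beta$ at the wrong place, and picks the wrong truncation point; the dual construction of $\suc(\xx)$ would need the same repairs.
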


For $\xx\in H_l(\xx_0)$, the notations $l(\xx)$ and $\lb(\xx)$ were introduced in \cite[\S~2.4]{GKS20} by comparing the length of $\xx$ with that of its immediate successor and predecessor. If the immediate successor of $\xx$ is longer than $\xx$ then there exists an inverse syllable $\alpha$ such that $\alpha\xx$ is a string, and the immediate successor of $\xx$ is the string $l(\xx):=\ww\alpha\xx$, where $\ww$ is the longest string satisfying either $|\ww|=0$ or $\delta(\ww)=-1$ such that $\ww\alpha\xx$ is a string. On the other hand, if the immediate predecessor of $\xx$ is longer than $\xx$ then there exists a direct syllable $\beta$ such that $\beta\xx$ is a string, and the immediate predecessor of $\xx$ is the string $\lb(\xx):=\ww'\beta\xx$, where $\ww'$ is the longest string satisfying either $|\ww'|=0$ or $\delta(\ww')=1$ such that $\ww'\beta\xx$ is a string.

The next result shows that intervals in hammocks contain a unique ``pivotal'' string.
\begin{prop}\label{unique string with minimal length in an interval}
Given a non-empty interval $I$ in $(H_l(\xx_0),<_l)$, there is a unique string $\uu$ in $I$ with minimal length. Moreover, $I\subseteq H_l(\uu)$.
\end{prop}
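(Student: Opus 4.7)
\emph{Plan.} The plan is to first handle closed bounded intervals $[\xx,\yy] \subseteq H_l(\xx_0)$ and then extend to an arbitrary non-empty interval $I$ via left substrings of a chosen reference string. For the closed case the pivotal string will be the maximal common left substring $\ww := \xx \sqcap_l \yy$; for the general case, it will be the shortest element of $I$ that is a left substring of some fixed $\xx_* \in I$.

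For the closed case, I first check that $\ww$ lies in $[\xx,\yy]$: this is trivial when $\xx = \yy$ or when one of $\xx,\yy$ is a proper left substring of the other (clauses~1 or~2 of the definition of $<_l$); otherwise clause~3 writes $\xx = \vv\beta\ww$ and $\yy = \uu\alpha\ww$ with $\beta \in Q_1$ direct and $\alpha \in Q_1^-$ inverse, and clauses~2 and~1 then give $\xx <_l \ww <_l \yy$. Next I show $[\xx,\yy] \subseteq H_l(\ww)$: if some $\zz$ in the interval fails to have $\ww$ as a left substring, set $\pp := \ww \sqcap_l \zz \sqsubset_l \ww$ and write $\ww = \rr\gamma\pp$. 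Then $\xx$ and $\yy$ each contain $\gamma\pp$ as a left substring, while $\zz$ has some syllable $\gamma' \neq \gamma$ immediately after $\pp$, so $\xx \sqcap_l \zz = \pp = \zz \sqcap_l \yy$. Applying clause~3 to $\xx <_l \zz$ forces $\gamma$ direct, whereas applying it to $\zz <_l \yy$ forces $\gamma$ inverse, a contradiction.

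For a general $I$, I fix $\xx_* \in I$ and let $S := \{\ww \in I : \ww \sqsubseteq_l \xx_*\}$. Being a set of left substrings of $\xx_*$, $S$ is a chain under $\sqsubseteq_l$ with lengths bounded above by $|\xx_*|$, hence has a unique minimum-length element $\uu$. Given $\zz \in I$, convexity of $I$ puts the closed interval between $\xx_*$ and $\zz$ inside $I$, and the closed-interval result then places $\xx_* \sqcap_l \zz$ in $I$; being a left substring of $\xx_*$, it lies in $S$. Minimality of $\uu$ in $S$, together with $\sqsubseteq_l$-comparability of left substrings of $\xx_*$, yields $\uu \sqsubseteq_l \xx_* \sqcap_l \zz \sqsubseteq_l \zz$, establishing both $I \subseteq H_l(\uu)$ and $|\uu| \leq |\zz|$. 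Uniqueness is forced because any $\uu' \in I$ with $|\uu'| = |\uu|$ must satisfy $\uu \sqsubseteq_l \uu'$, hence $\uu = \uu'$.

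I expect the only real obstacle to be the two-pronged sign-comparison in the closed case: clause~3 of the definition of $<_l$ must be invoked simultaneously for the pairs $(\xx,\zz)$ and $(\zz,\yy)$ in order to extract the contradiction ``$\gamma$ is both direct and inverse''. The rest of the argument is routine bookkeeping with convexity and length minimality.
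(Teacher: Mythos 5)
Your argument is correct in substance and, like the paper's proof, ultimately rests on one key fact: the maximal common left substring $\xx\sqcap_l\yy$ of two hammock elements lies order-between them, hence belongs to any interval containing both. The organization, however, is different. The paper works directly with the globally minimal length $m$ in $I$: two distinct elements of minimal length would have a strictly shorter meet inside $I$ (uniqueness of $\uu$), and then for any $\zz\in I$ the meet $\zz\sqcap_l\uu$ lies in $I$, has length $m$, and so equals $\uu$, giving $\uu\sqsubseteq_l\zz$. You instead fix a reference element $\xx_*$, take $\uu$ to be the $\sqsubseteq_l$-least element of the finite chain $S=\{\ww\in I:\ww\sqsubseteq_l\xx_*\}$, and push every $\zz\in I$ into $S$ via $\xx_*\sqcap_l\zz$; this is a sound variant, and it has the merit of spelling out (what the paper leaves implicit) why $\xx\sqcap_l\yy\in[\xx,\yy]$ using the three clauses of $<_l$. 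One blemish: in your closed-interval claim $[\xx,\yy]\subseteq H_l(\ww)$, the case where $\zz$ is itself a proper left substring of $\ww$ (i.e.\ $\zz=\pp$, so $\zz$ has no syllable after $\pp$) is not covered by the clause-3 sign comparison; it needs the one-line observation that the common syllable $\gamma$ of $\xx$ and $\yy$ immediately following $\zz$ yields, by the first two clauses, either $\zz<_l\xx$ (if $\gamma\in Q_1^-$) or $\yy<_l\zz$ (if $\gamma\in Q_1$), contradicting $\xx\leq_l\zz\leq_l\yy$. Since your general-interval argument uses only the membership $\xx_*\sqcap_l\zz\in I$ and not this subset claim, the omission does not affect the proof of the proposition itself, but the extra closed-interval step should either be completed or dropped.
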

\begin{proof}
Since $\{|\xx|:\xx\in I\}$ is a non-empty subset of $\N$, it has a minimum, say $m$. If possible, let $\uu_1<_l\uu_2$ be strings in $I$ such that $|\uu_1|=|\uu_2|=m$. Then $\uu_1<_l\uu_1\sqcap_l\uu_2<_l\uu_2$ and $|\uu_1\sqcap_l\uu_2|<m$. Since $I$ is an interval, $\uu_1\sqcap_l\uu_2\in I$, a contradiction to the minimality of $m$, thus showing that $I$ contains a unique string $\uu$ with $|\uu|=m$.

For $\xx\in I$, the string $\xx\sqcap_l\uu$ lies between $\uu$ and $\xx$, and hence in $I$. Therefore, $m\leq|\xx\sqcap_l\uu|\leq|\uu|=m$. Since $\uu$ is the unique string with minimal length in $I$, we conclude that $\xx\sqcap_l\uu=\uu$, i.e., $\uu\sqsubseteq_l\xx$.
\end{proof}

Recall from \cite[\S~2.1]{GKS20} that a left $\N$-string is a sequence of syllables $\cdots\alpha_3\alpha_2\alpha_1$ such that each $\alpha_n\cdots\alpha_2\alpha_1$ is a string. Call $\alpha_i$ the $i^\text{th}$ syllable of $\xx$. Denote the set of left $\N$-strings by $\StN$. A left $\N$-string of the form $^{\infty}\bb\uu$ for some cyclic string $\bb$ and some finite string $\uu$ is called an \emph{almost periodic string}. 
\begin{defn}
Say a sequence $(\xx_n)_{n\geq1}$ of strings in $H_l(\xx_0)$ is \emph{convergent} if there is $\yy\in\StN$ such that  
\begin{enumerate}
    \item $|\xx_n|\to\infty$ as $n\to\infty$;
    \item there is a sequence $\{n_k\mid k\in\N^+\}$ such that the $k^{th}$ syllables of $\yy$ and $\xx_n$ are identical for $n\geq n_k$.
\end{enumerate}
Clearly the limit of a convergent sequence is unique, and we write $\lim_{n \to \infty} \xx_n :=\yy$.
\end{defn}
If $\widehat H_l(\xx_0)$ is the extension of $H_l(\xx_0)$ by all the left $\N$-strings containing $\xx_0$ as a left substring, it is readily noted that the ordering $<_l$ can be extended to a linear order on $\widehat H_l(\xx_0)$.
\begin{prop}\label{completion of hammock}
The linear order $(\HHlx,<_l)$ is the completion of $(H_l(\xx_0),<_l)$.
\end{prop}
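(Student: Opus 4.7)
The plan is to verify the two characterising properties of the completion: Dedekind completeness of $(\HHlx,<_l)$, and that each element of $\HHlx\setminus H_l(\xx_0)$ is the unique filler of a gap of $(H_l(\xx_0),<_l)$. I would first establish a density lemma: for any $\yy_1<_l\yy_2$ in $\HHlx$, some $\xx\in H_l(\xx_0)$ lies weakly between them. The only non-trivial case is when both $\yy_i$ are left $\N$-strings; in that case $\ww:=\yy_1\sqcap_l\yy_2$ is finite, still contains $\xx_0$ as a left substring (hence lies in $H_l(\xx_0)$), and the third clause of Definition~\ref{hammock defn} applied to the syllables of $\yy_1$ and $\yy_2$ adjacent to $\ww$ yields $\yy_1<_l\ww<_l\yy_2$.

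Next, for $\yy\in\HHlx\setminus H_l(\xx_0)$ I would consider $X_\yy:=\{\xx\in H_l(\xx_0)\mid\xx<_l\yy\}$ and $Y_\yy:=\{\xx\in H_l(\xx_0)\mid\xx>_l\yy\}$ and show this cut is a gap. If $\xx^*:=\max X_\yy$ existed, its immediate successor $\xx^{**}\in H_l(\xx_0)$ would lie in $Y_\yy$, forcing $\xx^*<_l\yy<_l\xx^{**}$; the main technical task is to rule out such a $\yy$. Splitting on the relative positions of $\xx^*,\xx^{**},\yy$ as left substrings of one another, and repeatedly invoking the string-algebra uniqueness of a direct (resp.\ inverse) syllable prependable to any given string, one either produces via the third clause of Definition~\ref{hammock defn} a comparison contradicting $\xx^*<_l\yy<_l\xx^{**}$, or (when $\xx^{**}\sqsubseteq_l\yy$) forces $\yy$ to agree with $\xx^{**}$ until some syllable of $\yy$ beyond $\xx^{**}$ which either extends the maximal direct-chain component in the formula for the successor (contradicting its maximality) or is inverse (forcing $\yy>_l\xx^{**}$). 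Dually $Y_\yy$ has no minimum, so $(X_\yy,Y_\yy)\in\gap(H_l(\xx_0))$.

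Conversely, given $(X,Y)\in\gap(H_l(\xx_0))$, pick cofinal $\xx_1<_l\xx_2<_l\cdots$ in $X$ and coinitial $\yy_1>_l\yy_2>_l\cdots$ in $Y$, and let $\uu_n$ be the unique minimum-length string in $[\xx_n,\yy_n]$ granted by Proposition~\ref{unique string with minimal length in an interval}. The nesting of intervals forces $\uu_n\sqsubseteq_l\uu_{n+1}$, and $|\uu_n|\to\infty$ (otherwise the $\uu_n$ would stabilise to some $\uu$ lying in every $[\xx_n,\yy_n]$, forced to be $\max X$ or $\min Y$, contradicting the gap); so the $\uu_n$ converge to a unique left $\N$-string $\yy\in\HHlx\setminus H_l(\xx_0)$, which satisfies $X<_l\yy<_l Y$ by the third clause of Definition~\ref{hammock defn}. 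Uniqueness of the filler follows at once from the density lemma, and combined with the adjacency analysis of the previous paragraph this yields Dedekind completeness of $\HHlx$: every Dedekind cut of $\HHlx$ restricts to $H_l(\xx_0)$ as either a non-gap (whose witness persists in $\HHlx$ by adjacency) or a gap (whose filler becomes the witness in $\HHlx$ by density). The main obstacle is the case analysis ruling out left $\N$-strings strictly between consecutive elements of $H_l(\xx_0)$, which relies critically on the rigid syllable-prependability structure of a string algebra.
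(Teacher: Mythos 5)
Your proposal is correct in substance, and its second half (gap $\Rightarrow$ left $\N$-string) is essentially the paper's argument: both take a cofinal increasing sequence in $X$ and a coinitial decreasing sequence in $Y$, apply Proposition \ref{unique string with minimal length in an interval} to the nested intervals to get a $\sqsubseteq_l$-increasing chain of pivots whose lengths are forced to infinity, pass to the limiting left $\N$-string, and get uniqueness of the filler from the fact that the meet $\zz'\sqcap_l\zz''$ of two distinct fillers would be a finite string of $H_l(\xx_0)$ lying strictly between them. Where you genuinely diverge is the forward direction. The paper shows directly that the cut induced by a left $\N$-string $\zz$ is a gap: since $\{\vv\in\St\mid\delta(\vv)\neq0\}$ is finite, $\zz$ has infinitely many sign changes, so its proper left substrings $\ww$ with $\theta(\zz\mid\ww)=1$ (resp. $-1$) form an infinite cofinal (resp. coinitial) chain in $X$ (resp. $Y$); no case analysis is needed. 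You instead argue by contradiction from discreteness, ruling out a left $\N$-string strictly between $\xx^*$ and its immediate successor via a syllable-level analysis using the explicit formulas for $l$ and $\lb$ and the uniqueness of a prependable syllable of each sign; this works (I checked the cases go through), but is longer, and note you must also cover the boundary situation where $\yy$ would have to lie above $\MM_1(\xx_0)$ or below $\mm_{-1}(\xx_0)$ (so that $\max X_\yy$ actually has a successor) — the same kind of argument, since no inverse syllable extends $\MM_1(\xx_0)$ and no direct syllable extends $\mm_{-1}(\xx_0)$. On the other hand, your plan is more explicit than the paper about why the gap correspondence suffices: you state the density lemma and verify Dedekind completeness and minimality of $\HHlx$ directly, whereas the paper delegates this to Rosenstein's gap-filling construction and uniqueness of the completion. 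So: same skeleton, a more hands-on (and slightly heavier) treatment of the ``left $\N$-string induces a gap'' step, and a more self-contained treatment of the completion axioms.
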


\begin{proof}
Suppose $\zz$ is a left $\N$-string in $\HHlx$. Set $X:=\{\xx\in H_l(\xx_0)\mid\xx<_l\zz\}$ and $Y:=\{\yy\in H_l(\xx_0)\mid\zz<_l\yy\}$. Then clearly $\mm_{-1}(\xx_0)\in X, \MM_1(\xx_0)\in Y$ and $H_l(\xx_0)=X\sqcup Y$. Moreover, $\xx<_l\yy$ for each $\xx\in X$ and $\yy\in Y$. Thus $(X,Y)$ is a Dedekind cut in $H_l(\xx_0)$. We show that $(X,Y)$ is actually a gap in $H_l(\xx_0)$.

Since the set $\{\vv\in\St:\delta(\vv)\neq0\}$ is finite, there are infinitely many inverse as well as direct syllables in $\zz$. Hence $X_1:=\{\xx\in H_l^i(\xx_0)\mid\xx\sqsubset_l\zz,\theta(\zz\mid\xx)=1\}$ and $Y_1:=\{\yy\in H_l^i(\xx_0)\mid\yy\sqsubset_l\zz,\theta(\zz\mid\yy)=-1\}$ are infinite subsets of $X$ and $Y$ respectively.

Let $\xx\in X$ and $\ww:=\xx\sqcap_l\zz$. Since $\xx<_l\zz$, we have $\xx\leq_l\ww<_l\zz$. Hence $\theta(\zz\mid\ww)=1$ which implies $\ww\in X_1$. This shows that $X_1$ is an infinite cofinal subset of $X$, which implies that $X$ does not have a maximum. A dual argument shows that $Y_1$ is a coinitial subset of $Y$, and thus $Y$ does not have a minimum. This completes the proof that $(X,Y)$ is a gap in $H_l(\xx_0)$.

Conversely, suppose $(X,Y)$ is a gap in $H_l(\xx_0)$. Since $X\neq\emptyset$ and $X$ is unbounded above, \cite[Theorem~3.36]{rosenstein1982linear} guarantees the existence of a countably infinite monotone increasing sequence $(\xx_n)_{n\in\omega}$ in $X$ that is cofinal in $X$. Dually we can argue the existence of a countably infinite monotone decreasing sequence $(\yy_n)_{n\in\omega}$ in $Y$ that is coinitial in $Y$. Then $[\xx_n,\yy_n]\supseteq[\xx_{n+1},\yy_{n+1}]$ and $\bigcap_{n\in\omega}[\xx_n,\yy_n]=\emptyset$.

For each $n\in\omega$, Proposition \ref{unique string with minimal length in an interval} guarantees the existence of the unique minimal length string $\zz_n\in[\xx_n,\yy_n]$ such that $\zz_n\sqsubseteq_l\zz_{n+1}$. Since $\bigcap_{n\in\omega}[\xx_n,\yy_n]=\emptyset$, for each $n_k\in\omega$ there is a least $n_{k+1}>n_k$ such that $\zz_{n_k}\notin[\xx_{n_{k+1}},\yy_{n_{k+1}}]$. Hence $\zz_{n_k}\sqsubset_l\zz_{n_{k+1}}$. Thus $|\zz_n|\to\infty$ as $n\to\infty$, which together with $\zz_n\sqsubseteq_l\zz_{n+1}$ for each $n\in\omega$ ensures that $(\zz_n)_{n\in\omega}$ is a convergent sequence with a left $\N$-string, say $\zz$, as its limit. Clearly $\xx_n<_l\zz<_l\yy_n$ for each $n\in\omega$.

Let $\xx\in X$. Since $\bigcap_{n\in\omega}[\xx_n,\yy_n]=\emptyset$, there is some $k\in\omega$ such that $\xx\notin[\xx_k,\yy_k]$. Thus $\xx<_l\xx_k<_l\zz$. Dually we can show that $\zz<_l\yy$ for each $\yy\in Y$.

If $\zz'<_l\zz''$ are two distinct left $\N$-strings satisfying $\xx<_l\zz'<_l\yy$ and $\xx<_l\zz''<_l\yy$ for each $\xx\in X$ and $\yy\in Y$, then $\zz'<_l\zz'\sqcap_l\zz''<_l\zz''$, and hence $\zz'\sqcap_l\zz''\in H_l(\xx_0)\setminus(X\cup Y)$, a contradiction. Thus associated to each gap $(X,Y)$ in $H_l(\xx_0)$ there is a unique left $\N$-string $\zz$ satisfying $\xx<_l\zz<_l\yy$ for each $\xx\in X$ and $\yy\in Y$. If $(X_1,Y_1)$ and $(X_2,Y_2)$ are distinct gaps then it is routine to verify that the left $\N$-strings associated to these gaps are distinct.
\end{proof}

The hammock $H_l(\xx_0)$ can be expressed as $H_l(\xx_0)=H_l^{-1}(\xx_0)\dotplus H_l^1(\xx_0)$, where 
$$H_l^i(\xx_0):=\{\xx\in H_l(\xx_0)\mid\text{ either }\xx=\xx_0\text{ or }\theta(\xx\mid\xx_0)=i\}$$
is a bounded discrete linear suborder of $H_l(\xx_0)$ with minimum element $\mm_i(\xx_0)$ and maximum element $\MM_i(\xx_0)$. It is easily noted that $\MM_{-1}(\xx_0)=\mm_1(\xx_0)=\xx_0$.

Smaller left hammocks can be embedded in bigger hammocks as intervals.
\begin{rem}\label{subhammock is an interval}
If $\yy(\neq\xx_0)\in H_l^i(\xx_0)$ then $H_l(\yy)$ is an interval in $H_l^i(\xx_0)$.
\end{rem}
The concept of $H$-equivalence was introduced in \cite[\S~5]{sardar2021variations} to identify when two left hammocks are isomorphic. Say that two strings $\xx$ and $\yy$ are $H$-equivalent, denoted $\xx\equiv_H\yy$, if for every string $\uu$, $\uu\xx\in\St$ if and only if $\uu\yy\in\St$. Indeed, if $\xx\equiv_H\yy$ then $(H_l(\xx),<_l)\cong (H_l(\yy),<_l)$. As a consequence of \cite[Proposition~5.3]{sardar2021variations}, a criterion for testing $H$-equivalence, we note a useful observation that we will use without mention.

\begin{rem}
If $\xx,\yy$ are strings with $\delta(\yy)=0$ such that $\yy\xx$ is a string then $\yy\equiv_H\yy\xx$. As a consequence, if $\zz$ is a string such that $\zz\yy$ is a string then $\zz\yy\xx$ is a string.
\end{rem}

Some finiteness results are the key to the proof of the main theorem which states that $(H_l^i(\xx_0),<_l )\in\dLOfdb{1}{1}$. Recall the definition of a prime band from \cite[Definition~3.1.1]{GKS20}. It was shown in \cite[Theorem~3.1.6]{GKS20} that there are only finitely many prime bands in $\mathsf{Ba}(\Lambda)$.

For $\sB\in\QBa$, call a string $\yy$ to be \emph{band-free with respect to $\sB$} if there does not exist $\bb\in\CycB$ and strings $\yy_1,\yy_2$ such that $\yy=\yy_2\bb\yy_1$. Recall from \cite[Proposition~3.1.7]{GKS20} that there are only finitely many band-free strings in any string algebra. Call a string $\yy=\zz\xx_0\in H_l^i(\xx_0)$ \emph{band-free relative to} $(\xx_0,i)$ if $\zz$ is band-free. The following is an immediate consequence of \cite[Proposition~3.1.7]{GKS20}. 

\begin{cor}\label{finitely many band free strings rel to x0,i}
There are only finitely many band-free strings relative to $(\xx_0,i)$.
\end{cor}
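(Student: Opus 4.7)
The plan is to observe that this corollary is essentially a direct application of \cite[Proposition~3.1.7]{GKS20}, which guarantees the finiteness of the set of (absolutely) band-free strings in any string algebra. The work reduces to setting up an injection from the set of band-free strings relative to $(\xx_0,i)$ into this finite set.

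First I would note that by Definition, any band-free string relative to $(\xx_0,i)$ has the canonical form $\yy = \zz\xx_0$ with $\zz$ band-free, and this decomposition is unique: the position of $\xx_0$ inside $\yy$ is determined as the right substring of length $|\xx_0|$, so the assignment $\yy = \zz\xx_0 \mapsto \zz$ is a well-defined injection from the set of band-free strings relative to $(\xx_0,i)$ into the set of band-free strings of $\Lambda$.

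Then I would invoke \cite[Proposition~3.1.7]{GKS20} to conclude that the latter set is finite, whence the former set is finite as well. The additional constraints that $\yy \in H_l^i(\xx_0)$ (namely $\zz\xx_0$ is a valid string and either $\yy = \xx_0$ or $\theta(\yy\mid \xx_0)=i$) only further restrict the set and therefore do not affect the finiteness conclusion.

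There is no substantial obstacle here; the statement is a one-step corollary whose purpose is to translate the cited finiteness result from the algebra-wide setting of all band-free strings to the local setting of $H_l^i(\xx_0)$. The only thing to be mindful of in the writeup is to verify that the unique-decomposition observation $\yy = \zz\xx_0 \mapsto \zz$ uses only the fact that $\xx_0$ has fixed length, and does not require any further case analysis on the sign $i$ or on whether $\yy = \xx_0$.
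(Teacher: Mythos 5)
Your proposal is correct and is essentially the paper's argument: the paper treats the corollary as an immediate consequence of \cite[Proposition~3.1.7]{GKS20}, and the injection $\yy=\zz\xx_0\mapsto\zz$ into the finite set of band-free strings is exactly the obvious way to make that precise.
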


We end this section by mentioning a basic result about bands, which shows that a band $\bb$ has exactly $|\bb|$ distinct cyclic permutations. If $\xx=\alpha_n\cdots\alpha_2\alpha_1$ is a finite power of cyclic permutation of a band, call $1\leq k\leq n$ a \emph{period} of $\xx$ if $k=n$ or $\xx=\alpha_k\cdots\alpha_1\alpha_n\cdots\alpha_{k+1}$.
\begin{prop}\label{KMP}
\cite[Lemma~1]{knuth1977fast}
Let $\xx$ be a finite power of a cyclic permutation of a band and $p$ and $q$ be periods of $\xx$ such that $p+q\leq|\xx|+gcd(p,q)$. Then $gcd(p,q)$ is a period of $\xx$.
\end{prop}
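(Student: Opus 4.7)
The plan is to recognize the statement as the classical Fine--Wilf periodicity lemma, so the most direct proof is a citation to \cite[Lemma~1]{knuth1977fast}. For a self-contained argument tailored to the cyclic formulation of ``period'' used in the paper, I would proceed via subgroup closure, as follows.

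The first step is to reinterpret the definition. The condition $\xx = \alpha_k\cdots\alpha_1\alpha_n\cdots\alpha_{k+1}$ amounts to the pointwise identity $\alpha_i = \alpha_{(i+k)\bmod n}$, i.e., the cyclic shift operator $\sigma$ on length-$n$ syllable sequences satisfies $\sigma^k(\xx) = \xx$. Verifying this equivalence is a routine index-by-index check against the right-to-left reading convention, and is the only mildly technical ingredient of the argument.

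Once the identification is in place, I would observe that the set $P := \{m \in \mathbb{Z} : \sigma^m(\xx) = \xx\}$ is closed under addition and negation, hence forms a subgroup $d\mathbb{Z}$ of $\mathbb{Z}$ for some positive integer $d$, with $d \mid n$ because $\sigma^n = \mathrm{id}$. Given periods $p, q \in \{1,\ldots,n\} \cap P$, both $p$ and $q$ are multiples of $d$, so $d \mid \gcd(p, q)$ and consequently $\sigma^{\gcd(p, q)}(\xx) = \xx$. Since $1 \le \gcd(p, q) \le \min(p, q) \le n$, the value $\gcd(p, q)$ lies in the admissible range and is therefore a period.

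A curious feature, rather than a genuine obstacle, is that this group-theoretic argument does not use the Fine--Wilf inequality $p + q \le |\xx| + \gcd(p, q)$ at all: the inequality is automatic in the cyclic setting. It is essential only in the original linear formulation of \cite{knuth1977fast}, where a ``period'' is witnessed purely by within-word overlaps and there is no built-in wrap-around. The paper retains the hypothesis for direct compatibility with that classical statement, so that the citation can be applied verbatim when needed elsewhere.
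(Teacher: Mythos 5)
Your proof is correct, but it takes a genuinely different route from the paper: the paper gives no argument at all for this statement, importing it wholesale as \cite[Lemma~1]{knuth1977fast} (the Knuth--Morris--Pratt/Fine--Wilf periodicity lemma), whereas you supply a self-contained proof tailored to the paper's own, rotational definition of period. Your stabilizer argument is sound: with the convention that $1\le k\le n$ is a period when $k=n$ or $\xx$ equals its cyclic rotation by $k$, the set $P=\{m\in\Z:\sigma^m(\xx)=\xx\}$ is a subgroup $d\Z$ of $\Z$ containing $n$, so any two periods $p,q$ lie in $d\Z$, hence so does $\gcd(p,q)$, which lies in $\{1,\dots,n\}$ and is therefore a period. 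This buys something the bare citation does not: the classical lemma concerns linear (overlap) periods, while the paper's definition is cyclic-shift invariance, and your argument both closes that gap and shows the hypothesis $p+q\le|\xx|+\gcd(p,q)$ is dispensable for the rotational notion --- which matters, since the paper's only application (Corollary \ref{cyclic perm are diff; cor}) invokes the proposition with $p=|\bb|$ and $q=r$, where the inequality reduces to $r\mid|\bb|$ and can fail. One small correction: your closing remark that the inequality is ``automatic in the cyclic setting'' is not accurate --- taking $p=n$ and any cyclic period $q$ with $q\nmid n$ already violates it --- the precise statement is that the inequality is superfluous, i.e., you prove a stronger statement than the one asserted, which of course still establishes the proposition.
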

\begin{cor}\label{cyclic perm are diff; cor}
If $\bb\in\Cyc$ is such that $\bb=\alpha_n...\alpha_1$ then $\alpha_r\cdots\alpha_1\alpha_n\cdots\alpha_{r+1}\neq\alpha_n\cdots\alpha_1$ for any $1\leq r<n$.
\end{cor}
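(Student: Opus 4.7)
The plan is to argue by contradiction, invoking Proposition \ref{KMP} applied not to $\bb$ itself but to its square $\bb^2$. The central idea is that a nontrivial period $r<n$ of $\bb$ will, after passing to $\bb^2$, force $\gcd(r,n)$ to also be a period; this then factors $\bb$ as a proper power of a shorter cyclic string, contradicting the primitivity implicit in $\bb$ being a cyclic permutation of a band.

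First I would suppose for contradiction that $\alpha_r\cdots\alpha_1\alpha_n\cdots\alpha_{r+1}=\alpha_n\cdots\alpha_1$ for some $1\leq r<n$. By the very definition of period, this is the statement that $r$ is a period of $\bb$, and hence of every finite power $\bb^k$; of course $n$ is always a period of any such power as well. Next I would apply Proposition \ref{KMP} to $\xx:=\bb^2$ with $p:=r$ and $q:=n$. The required inequality $p+q\leq|\xx|+\gcd(p,q)$ reduces to $r+n\leq 2n+\gcd(r,n)$, which holds trivially since $r<n$. Hence $d:=\gcd(r,n)$ is a period of $\bb^2$.

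The final step is to transfer this periodicity from $\bb^2$ back to $\bb$ and read off the contradiction. Since $d$ divides $n$, the period condition on $\bb^2$ translates into the equalities $\alpha_i=\alpha_{i+d}$ among the syllables of $\bb$ (indices read cyclically modulo $n$), so $\bb=(\alpha_d\cdots\alpha_1)^{n/d}$. The relation $\alpha_{d+1}=\alpha_1$ together with the composability of $\alpha_{d+1}\alpha_d$ inside $\bb$ guarantees that $\alpha_d\cdots\alpha_1$ is itself a cyclic string. Since $d\leq r<n$ forces $n/d\geq 2$, we have exhibited $\bb$ as a proper power of a shorter cyclic string, contradicting $\bb\in\Cyc$, because bands are by definition primitive and primitivity is preserved under cyclic permutation. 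The main place care is needed is in extracting the factorization $\bb=(\alpha_d\cdots\alpha_1)^{n/d}$ from the period data on $\bb^2$, but once one uses $d\mid n$ to align the two copies of $\bb$ inside $\bb^2$, this drops out cleanly.
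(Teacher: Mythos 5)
Your proof is correct and follows essentially the same strategy as the paper: assume the cyclic shift by $r$ fixes $\bb$, use Proposition \ref{KMP} to upgrade the period $r$ to the period $\gcd(r,n)$, and contradict the primitivity of $\bb$. The one genuine difference is that you apply Proposition \ref{KMP} to $\bb^2$ rather than to $\bb$ itself, precisely so that the hypothesis $p+q\leq|\xx|+\gcd(p,q)$ becomes $r+n\leq 2n+\gcd(r,n)$ and holds trivially; the paper applies the lemma directly to $\bb$, where the literal inequality $r+n\leq n+\gcd(r,n)$ would force $r\mid n$ and so is not checked in general. In that sense your passage to the square, together with the observation that a cyclic period of $\bb$ is a period of every power and that $d\mid n$ lets you descend from $\bb^2$ back to the factorization $\bb=(\alpha_d\cdots\alpha_1)^{n/d}$, is a more careful execution of the same argument rather than a new one, and the concluding contradiction with $\bb$ being a primitive cyclic string is identical to the paper's.
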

\begin{proof}
If not, then $r$ is a period of $\bb$. Since $|\bb|$ is also a period of $\bb$, we have that $gcd(|\bb|,r)=:t$ is a period of $\bb$ by Proposition \ref{KMP}. Therefore, we get that $\alpha_n\cdots\alpha_1=(\alpha_t\cdots\alpha_1)^{|\bb|/t}$, a contradiction to the fact that $\bb$ is a primitive cyclic string.
\end{proof}

\section{A finite poset}\label{sec: a finite poset}
The property which distinguishes a non-domestic string algebra from a domestic string algebra is the existence of a meta-band in its bridge quiver \cite[Proposition~3.4.2]{GKS20}. A generalised meta band defined below captures the complete essence of a building block of a string algebra.

Recall from \cite[Definition~3.2.1]{GKS20} that the vertices of the (weak) bridge quiver are prime bands and the arrows are (weak) bridges.

\begin{defn}
A \emph{generalised meta band} (\emph{GMB}, for short), denoted $\mathsf{B}$, is a strongly connected component of the bridge quiver.
\end{defn}

Call a GMB $\sB$ \emph{domestic} if $\sB$ has only one vertex and \emph{non-domestic} otherwise. Note that a string algebra $\Lambda$ is non-domestic if and only if there is a non-domestic GMB in its bridge quiver (cf. \cite[Proposition~3.4.2]{GKS20}).

Recall the definition of generation of strings from paths in the bridge quiver from \cite[\S~3.3]{GKS20}. Call a string \emph{$\sB$-cycle} if it lies in $\Cyc$ and is generated by a path in $\sB$. Denote the set of all $\sB$-cycles by $\CycB$. Call a string \emph{$\sB$-extendable} if it is a substring of a power of a $\sB$-cycle. Denote the set of all $\sB$-extendable strings by $\ExtB$. Finally, set $\BaB:=\Ba\cap\CycB$.

It is useful to note the following trivial property of $\sB$-extendable strings, which we will always use without mention.
\begin{rem}
Given $\xx_1,\xx_2\in\ExtB$, there exists a string $\uu$ such that $\xx_2\uu\xx_1\in\ExtB$.
\end{rem}

Define a relation $\preceq$ on the set $\Ba$ of bands by declaring $\bb_1\preceq\bb_2$ if there is a string $\uu$ such that $\bb_2\uu\bb_1$ is a string. This relation is clearly reflexive and transitive.

\begin{prop}
The relation $\preceq$
defined above is anti-symmetric if and only if the string algebra $\Lambda$ is domestic. 
\end{prop}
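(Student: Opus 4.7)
The plan is to prove the two directions separately, using the characterization of non-domesticity recorded just above, namely, that $\Lambda$ is non-domestic iff the bridge quiver admits a non-domestic GMB (a strongly connected component with at least two vertices).

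For \emph{non-domestic implies not antisymmetric}: pick a non-domestic GMB $\sB$ containing two distinct prime bands $\bb_1, \bb_2$. Strong connectivity yields directed paths $\bb_1 \to \cdots \to \bb_2$ and $\bb_2 \to \cdots \to \bb_1$ in the bridge quiver; the concatenation-witness interpretation of bridges from \cite[\S 3.2--3.3]{GKS20} then produces strings $\uu, \vv$ with $\bb_2 \uu \bb_1$ and $\bb_1 \vv \bb_2$ both strings. Hence $\bb_1 \preceq \bb_2$ and $\bb_2 \preceq \bb_1$ while $\bb_1 \neq \bb_2$, which breaks antisymmetry.

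For \emph{domestic implies antisymmetric} I argue the contrapositive: starting from $\bb_1 \neq \bb_2$ in $\Ba$ with $\bb_1 \preceq \bb_2$ and $\bb_2 \preceq \bb_1$, witnessed by $\uu, \vv$ making $\bb_2 \uu \bb_1$ and $\bb_1 \vv \bb_2$ strings, I produce infinitely many bands. For each $k \in \N^+$ set $\cc_k := \bb_1^k \vv \bb_2 \uu$; a short induction verifies that $\cc_k$ is a cyclic string. If $\cc_k$ is primitive (i.e., a band in $\Ba$) for infinitely many $k$, the distinct lengths $k|\bb_1| + |\vv \bb_2 \uu|$ already supply infinitely many bands and we are done. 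Otherwise, for all large $k$, write $\cc_k = \dd_k^{m_k}$ with $\dd_k$ primitive and $m_k \geq 2$; choose a cyclic rotation of $\cc_k$ whose first $|\dd_k|$ syllables lie inside its $\bb_1^k$ portion (valid when $|\dd_k| \leq k|\bb_1|$, which holds for large $k$). Writing $|\dd_k| = \ell|\bb_1| + r$ with $0 \leq r < |\bb_1|$, and demanding that the next $|\dd_k|$ syllables of the rotation coincide with this initial segment, forces $\bb_1$ to be invariant under cyclic shift by $r$; Corollary \ref{cyclic perm are diff; cor} gives $r = 0$, so $|\bb_1|$ divides $|\dd_k|$ and the first $|\dd_k|$ syllables form $\bb_1^\ell$, whence primitivity of $\dd_k$ forces $\ell = 1$ and $\dd_k = \bb_1$. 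In particular $\vv \bb_2 \uu = \bb_1^N$ for some $N \geq 0$.

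Running the same argument on $\cc_k' := \bb_2^k \uu \bb_1 \vv$ either exhibits infinitely many bands directly, or forces $\uu \bb_1 \vv = \bb_2^M$ for some $M \geq 0$. In the remaining case, computing $(\vv \bb_2 \uu)(\bb_1 \vv) = \bb_1^{N+1} \vv$ on one side and $(\vv \bb_2)(\uu \bb_1 \vv) = \vv \bb_2^{M+1}$ on the other yields the conjugation equation $\bb_1^{N+1} \vv = \vv \bb_2^{M+1}$, with $(N+1)|\bb_1| = (M+1)|\bb_2|$. The classical Lyndon--Sch\"utzenberger conjugacy theorem applied in the free monoid on the syllable alphabet shows $\bb_2^{M+1}$ is a cyclic rotation of $\bb_1^{N+1}$, and primitivity of both $\bb_1$ and $\bb_2$ then forces $\bb_2$ to be a cyclic rotation of $\bb_1$. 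Since $\Ba$ contains a single representative per cyclic equivalence class, $\bb_1 = \bb_2$, the desired contradiction. The main obstacle will be the period-primitivity step identifying $\dd_k$ (respectively $\dd_k'$) with $\bb_1$ (respectively $\bb_2$), where one must carefully track cyclic rotations against linear representations; the concluding invocation of Lyndon--Sch\"utzenberger is the structural crux that ties the two subcases together.
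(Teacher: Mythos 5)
Your proof is correct, and its skeleton matches the paper's: both directions use the same witnesses. In the forward direction you invoke the GMB formulation recorded just before the proposition and compose bridges along the two directed paths; the paper instead argues directly with meta-bands via \cite[Propositions~3.4.1, 3.4.2]{GKS20}, which forces it to treat separately the case of a single self-bridge $\bb\to\bb$ before reducing to two distinct prime bands — your route sidesteps that case split at the price of leaning on the stated GMB characterization. In the converse direction the paper is a one-liner: it simply asserts that the cyclic strings $\vv\bb_2\uu\bb_1^k$ produce infinitely many distinct bands, whereas you actually justify this, disposing of the possibility that these cyclic strings are proper powers by a two-periods argument (Proposition \ref{KMP} together with Corollary \ref{cyclic perm are diff; cor}) that forces $\vv\bb_2\uu=\bb_1^N$ and $\uu\bb_1\vv=\bb_2^M$, and then closing with Lyndon--Sch\"utzenberger conjugacy to contradict $\bb_1\neq\bb_2$ in $\Ba$; this buys a complete justification of the step the paper leaves implicit, at the cost of extra combinatorics on words. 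One small point of care: in the imprimitive case the ``next $|\dd_k|$ syllables'' of your chosen rotation need not lie inside the $\bb_1^k$ block (e.g.\ when $m_k=2$); what you really use is that the prefix $\bb_1^k$ inherits the period $|\dd_k|$ from $\cc_k=\dd_k^{m_k}$, and once $k$ is large enough that $|\dd_k|\le(k-1)|\bb_1|$, Proposition \ref{KMP} applies to $\bb_1^k$ with the two periods $|\bb_1|$ and $|\dd_k|$ and gives $|\bb_1|\mid|\dd_k|$ directly — since you restrict to large $k$ anyway, the argument goes through, but it should be phrased via this comparison inside the $\bb_1^k$ block rather than literal containment of the second period block.
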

\begin{proof}
Suppose the relation $\preceq$ is anti-symmetric. If possible, let the string algebra be non-domestic. By \cite[Proposition~3.4.2]{GKS20}, $\Lambda$ contains a meta-band. There are two cases.

If the length of the meta-band exceeds $1$ then consider two distinct prime bands $\bb_1$ and $\bb_2$ in that meta-band. The definition of a meta-band ensures the existence of strings $\uu$ and $\vv$ such that $\bb_2\uu\bb_1$ and $\bb_1\vv\bb_2$ are strings. This violates that $\preceq$ is anti-symmetric.

On the other hand, if the meta-band is a non-trivial bridge $\bb\xrightarrow{\uu}\bb$ then, by \cite[Proposition~3.4.1]{GKS20}, we have that $\bb$ is a vertex of a meta-band containing at least two prime bands. The rest of the argument is similar to that in the previous paragraph.

Conversely, suppose $\preceq$ is not anti-symmetric. Then there are distinct bands $\bb_1$ and $\bb_2$ and strings $\uu$ and $\vv$ such that $\bb_2\uu\bb_1$ and $\bb_1\vv\bb_2$ are strings. Now the strings $\vv\bb_2\uu\bb_1^k$ for $k\geq1$ contain an infinite family of cyclic permutations of distinct bands, proving that $\Lambda$ is non-domestic.
\end{proof}

Say that $\bb_1\approx\bb_2$ if $\bb_1\preceq\bb_2$ and $\bb_2\preceq\bb_1$, and set $\Q^{\mathrm{Ba}}:=\Ba/\approx$. Note that $\bb_1\approx\bb_2$ if and only if there is a GMB $\sB$ such that $\bb_1,\bb_2\in\BaB$. Hence we will denote the elements of $\QBa$ using $\sB$, possibly with decoration. Borrowing the adjectives for a GMB, if $\sB\in\QBa$ and $|\sB|$=1, then say that $\sB$ is \emph{domestic}, otherwise say that it is \emph{non-domestic}.

By appropriate manipulation of cyclic permutations, it is trivial to note the following. Let $\bb_1,\bb_2,\bb'_1,\bb'_2\in\Cyc$ such that $\bb'_1$ and $\bb'_2$ are cyclic permutations of $\bb_1$ and $\bb_2$ respectively. If $\bb_2\uu\bb_1$ is a string for some string $\uu$ then $\bb'_2\vv\bb'_1$ is a string for some string $\vv$. Therefore we can extend the relation $\preceq$ on the set $\Cyc$ such that for any $\bb_1,\bb_2\in\Ba$, we have $\bb_1\preceq\bb_2$ if and only if $\bb'_1\preceq\bb'_2$ where $\bb'_1$ and $\bb'_2$ are cyclic permutations of $\bb_1$ and $\bb_2$ respectively.

\begin{prop}\label{composite approx}
If $\bb\in\Ba$ is composite then there is $\bb_1\in\Ba$ with $|\bb_1|<|\bb|$ such that $\bb\approx\bb_1$.
\end{prop}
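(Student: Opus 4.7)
The plan is to exhibit an explicit shorter band $\bb_1$ inside a power of $\bb$ and then read off the two $\preceq$-inequalities directly from the string $\bb^{N+1}$.

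First I would unpack ``composite'': since $\bb$ is not prime, by \cite[Definition~3.1.1]{GKS20} there exists a band $\widetilde{\bb_1}$ with $|\widetilde{\bb_1}|<|\bb|$ that occurs as a substring of $\bb^{N}$ for some $N\geq 1$ (here I use that $\bb\in\Cyc$, so every finite power $\bb^{N}$ is a genuine string). If there is any ambiguity about the decomposition sitting across the ``seam,'' I would simply pass to $\bb^{2N}$ and argue inside that longer string instead, so we may fix a decomposition
\[
\bb^{N}=\yy\,\widetilde{\bb_1}\,\zz
\]
for some (possibly empty) strings $\yy,\zz$.

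Next, I would exploit $\bb^{N+1}$ from two directions. Viewing $\bb^{N+1}=\bb\cdot\bb^{N}=\bb\,\yy\,\widetilde{\bb_1}\,\zz$, the left substring $\bb\,\yy\,\widetilde{\bb_1}$ is a string, so taking $\uu:=\yy$ gives a witness that $\widetilde{\bb_1}\preceq\bb$. Dually, $\bb^{N+1}=\bb^{N}\cdot\bb=\yy\,\widetilde{\bb_1}\,\zz\,\bb$, so the right substring $\widetilde{\bb_1}\,\zz\,\bb$ is a string, and with $\vv:=\zz$ this witnesses $\bb\preceq\widetilde{\bb_1}$. Combining these two inequalities yields $\bb\approx\widetilde{\bb_1}$.

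Finally, let $\bb_1\in\Ba$ be the chosen representative in the cyclic-permutation class of $\widetilde{\bb_1}$; then $|\bb_1|=|\widetilde{\bb_1}|<|\bb|$. The cyclic-permutation invariance of $\preceq$ on $\Cyc$ recorded just before the proposition transfers $\bb\approx\widetilde{\bb_1}$ to $\bb\approx\bb_1$, completing the argument. The main (and essentially only) subtle point is the first step: one needs to know that ``composite'' in the sense of \cite[Definition~3.1.1]{GKS20} truly guarantees a strictly shorter sub-band inside some $\bb^{N}$, so that the two witnesses $\yy$ and $\zz$ are available at all. Once that is in hand, the remainder is a pure bookkeeping exercise on substrings of $\bb^{N+1}$.
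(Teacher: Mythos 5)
Your proof is correct and takes essentially the same route as the paper: there, "composite" is unpacked as a cyclic permutation $\bb'=\bb'_k\cdots\bb'_1$ ($k>1$) of $\bb$ written as a concatenation of cyclic permutations of bands, and the chain $\bb\approx\bb'\approx\bb'_1\approx\bb_1$ is noted, with the two $\preceq$-witnesses implicitly read off $\bb'^2$ exactly as you read yours off $\bb^{N+1}$. The point you flag is not a gap: since composite means precisely such a factorization exists, the factor $\bb'_1$ is a strictly shorter band occurring inside $\bb^2$ (so your $N$ can be taken to be $2$), and the remaining bookkeeping, including the transfer to the representative in $\Ba$ via cyclic-permutation invariance of $\preceq$, is exactly as you describe.
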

\begin{proof}
If $\bb\in\Ba$ is composite then there is a cyclic permutation $\bb'$ of $\bb$ which can be written as $\bb'=\bb'_k\cdots\bb'_1$ for some $k>1$ and cyclic permutations $\bb'_j$ of $\bb_j\in\Ba$. It is then straightforward to note that $|\bb'_1|<|\bb'|=|\bb|$ and $\bb_1\approx\bb'_1\approx\bb'\approx\bb$.
\end{proof}
This simple result has an immediate consequence.
\begin{cor}
If $\sB\in\QBa$ then $\sB$ contains a prime band.
\end{cor}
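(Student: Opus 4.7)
The plan is a straightforward induction on the length $|\bb|$ using Proposition \ref{composite approx} as the inductive step. Pick any representative $\bb \in \sB$; the goal is to exhibit a prime band in the same $\approx$-class.

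First, I would recall from \cite[Definition~3.1.1]{GKS20} that a prime band is, by definition, a band that is not composite. Consequently, any $\bb \in \Ba$ is either prime or composite, so the dichotomy is exhaustive. If the chosen $\bb$ is prime, there is nothing to prove. Otherwise, $\bb$ is composite and Proposition \ref{composite approx} produces some $\bb_1 \in \Ba$ with $\bb_1 \approx \bb$ and $|\bb_1| < |\bb|$; in particular $\bb_1 \in \sB$.

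Now I would iterate: if $\bb_1$ is prime, we are done, and otherwise apply Proposition \ref{composite approx} again to obtain $\bb_2 \in \sB$ with $|\bb_2| < |\bb_1|$. This produces a strictly decreasing sequence $|\bb| > |\bb_1| > |\bb_2| > \cdots$ of positive integers, which must terminate after finitely many steps at some $\bb_k \in \sB$ that cannot be composite. By the dichotomy noted above, $\bb_k$ is prime, and since $\approx$ is transitive, $\bb_k \in \sB$, as required.

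I do not expect any real obstacle here: the only point that needs care is that the equivalence class is closed under the replacement step, which is immediate from the transitivity of $\approx$, and that the recursion halts, which is immediate from the well-ordering of $\N$. The substantive content is entirely encapsulated in Proposition \ref{composite approx}; this corollary is merely a well-founded induction on band length.
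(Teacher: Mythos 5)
Your proof is correct and follows essentially the same argument as the paper: pick $\bb\in\BaB$, apply Proposition \ref{composite approx} repeatedly to strictly decrease length while staying in the $\approx$-class, and conclude by termination that a prime band in $\sB$ is reached. No issues.
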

\begin{proof}
Let $\bb\in\BaB$. If $\bb$ is prime then we are done. Otherwise, by Proposition \ref{composite approx}, there exists $\bb_1\in\Ba$ such that $|\bb_1|<|\bb|$ and $\bb_1\approx\bb$. If $\bb_1$ is a prime band then we are done. Otherwise, we repeat the process on $\bb_1$ to get $\bb_2\in\Ba$ such that $|\bb_2|<|\bb_1|$ and $\bb_2\approx\bb_1$. Thus we get a sequence of bands $\bb_1,\bb_2,\cdots$ such that $\bb\approx\bb_1\approx\bb_2\approx\cdots$ and $|\bb|>|\bb_1|>|\bb_2|>\cdots$. Since $|\bb|$ is finite, this process has to terminate after finitely many steps, thereby giving us a prime band in $\sB$.
\end{proof}

Since \cite[Theorem~3.1.6]{GKS20} gives that there are only finitely many prime bands, the above corollary yields the following finiteness result.
\begin{prop}\label{QBA finite poset}
The poset $(\Q^{\mathrm{Ba}},\preceq)$ is a finite poset.
\end{prop}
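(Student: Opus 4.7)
The plan is to deduce finiteness of $\QBa$ directly from the two ingredients already established immediately before the proposition, namely the corollary showing that every equivalence class $\sB\in\QBa$ contains a prime band, together with the finiteness of the set of prime bands from \cite[Theorem~3.1.6]{GKS20}. The poset structure itself is essentially a formal verification.

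First I would check that the relation $\preceq$ descends to a well-defined relation on the quotient $\QBa$. This amounts to showing that if $\bb_1\approx\bb_1'$ and $\bb_2\approx\bb_2'$ and $\bb_1\preceq\bb_2$, then $\bb_1'\preceq\bb_2'$: this follows at once from the chain $\bb_1'\preceq\bb_1\preceq\bb_2\preceq\bb_2'$ using transitivity of $\preceq$ on $\Ba$ (which in turn was noted to be reflexive and transitive right after its definition). Reflexivity and transitivity then descend to $\QBa$ automatically, and anti-symmetry on $\QBa$ is immediate from the very definition of $\approx$: if $\sB_1\preceq\sB_2$ and $\sB_2\preceq\sB_1$, then picking any representatives $\bb_1\in\sB_1, \bb_2\in\sB_2$ yields $\bb_1\preceq\bb_2$ and $\bb_2\preceq\bb_1$, i.e.\ $\bb_1\approx\bb_2$, so $\sB_1=\sB_2$.

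Next I would establish finiteness. Let $\Prime\subseteq\Ba$ denote the finite set of prime bands, which is finite by \cite[Theorem~3.1.6]{GKS20}. The corollary proved just above exhibits, for each equivalence class $\sB\in\QBa$, at least one prime band $\bb\in\sB\cap\Prime$. Hence the assignment $\bb\mapsto[\bb]$ defines a surjection $\Prime\twoheadrightarrow\QBa$, and finiteness of $\Prime$ forces $\QBa$ to be finite as well.

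There is no real obstacle here; the substantive content sits in the preceding corollary, whose proof used Proposition \ref{composite approx} to iteratively replace a composite band by a strictly shorter $\approx$-equivalent band and thereby reach a prime band in finitely many steps. Given that, the present proposition is essentially a packaging result, and I would present it in a few lines.
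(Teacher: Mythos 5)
Your proposal is correct and follows essentially the same route as the paper, which derives finiteness directly from the corollary that every class in $\QBa$ contains a prime band together with the finiteness of prime bands from \cite[Theorem~3.1.6]{GKS20}; your extra verification that $\preceq$ descends to a well-defined partial order on the quotient is a harmless addition the paper leaves implicit.
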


For a fixed string $\xx_0$ and parity $i\in\{1,-1\}$, say that a band $\bb$ is \emph{reachable} from $(\xx_0,i)$ if there is a string $\uu$ such that $\bb\uu\xx_0\in H_l^i(\xx_0)$. If $\bb_1\approx\bb_2$ then $\bb_1$ is reachable from $(\xx_0,i)$ if and only if $\bb_2$ is reachable from $(\xx_0,i)$. Hence the subset $\Q_i^{\mathrm{Ba}}(\xx_0)$ of $\Q^{\mathrm{Ba}}$ of elements reachable from $(\xx_0,i)$ is also finite. Say that $\sB\in\Q^{\mathrm{Ba}}$ is \emph{minimal for $(\xx_0,i)$} if it is a minimal element of $(\Q_i^{\mathrm{Ba}}(\xx_0),\preceq)$. Since every finite poset contains a minimal element, the existence of a minimal $\sB$ for the pair $(\xx_0,i)$ is guaranteed.

\begin{exmp}\label{exmp: QBa in running example}
Consider the string algebra $\Gamma_0$ in Figure \ref{fig:running example}. There are four elements in $\QBa_1(a_0)$, namely $\sB_1=\{b_1B_4b_3B_2\}$, $\sB_2$ containing bands $d_1D_2$ and $d_3D_4$, $\sB_3$ containing bands $e_3E_2E_1$, $g_4G_3g_2G_1$ and $k_1K_2$, and $\sB_4=\{m_1M_2\}$. Here $\sB_1$ and $\sB_4$ are domestic; whereas $\sB_2$ and $\sB_3$ are non-domestic. We have $\sB_1\prec\sB_2$ and $\sB_3\prec\sB_4$ as the only order relations in $(\QBa_1(a_0),\prec)$. Only $\sB_1$ and $\sB_3$ are minimal for $(a_0,1)$.
\end{exmp}

\begin{figure}[h]
    \centering
\[\begin{tikzcd}[column sep=small]
                     &     &                                          &                          &                                           & v_{20} \arrow[ld, "p"']                                &                                         & v_{19} \arrow[ll, "k_1", bend left] \arrow[ll, "k_2"', bend right] &                                                               &                                                               &                                                                  &        \\
                     &     &                                          &                          & v_{12} \arrow[ld, "e_1"']                 &                                                        &                                         &                                                                    & v_{18} \arrow[lu, "h_2"] \arrow[rd, "h_1"]                    &                                                               &                                                                  &        \\
                     &     &                                          & v_{11} \arrow[ld, "a_2"] &                                           & v_{13} \arrow[ll, "e_3"] \arrow[lu, "e_2"'] &                                         & v_{14} \arrow[ll, "f"]                                             &                                                               & v_{15} \arrow[ll, "g_1"'] \arrow[d, "g_2"]        &                                                                  &        \\
v_0 \arrow[r, "a_0"] & v_1 & v_2 \arrow[rd, "a_3"'] \arrow[l, "a_1"'] &                          &                                           &                                                        &                                         &                                                                    & v_{17} \arrow[r, "g_3"'] \arrow[lu, "g_4"]                    & v_{16} \arrow[r, "q"']                                        & v_{21} \arrow[r, "m_1", bend left] \arrow[r, "m_2"', bend right] & v_{22} \\
                     &     &                                          & v_3 \arrow[rd, "a_4"']   &                                           & v_5 \arrow[ll, "a_5"']                                 & v_6 \arrow[l, "b_2"'] \arrow[rd, "b_3"] &                                                                    &                                                               &                                                               &                                                                  &        \\
                     &     &                                          &                          & v_4 \arrow[rrr, "b_4"'] \arrow[ru, "b_1"] &                                                        &                                         & v_7 \arrow[r, "c"]                                                 & v_8 \arrow[r, "d_1", bend left] \arrow[r, "d_2"', bend right] & v_9 \arrow[r, "d_3", bend left] \arrow[r, "d_4"', bend right] & v_{10}                                                           &       
\end{tikzcd}\]
    \caption{\centering $\Gamma_0$ with $\rho=\{a_3a_2,a_4a_5,b_4a_4,a_5b_1,cb_4,d_2c,d_4d_1,d_3d_2,a_2e_3,e_3f,fg_4,g_2h_1,k_2h_2,qg_3,m_2q,pk_2,e_1p\}$}
    \label{fig:running example}
\end{figure}
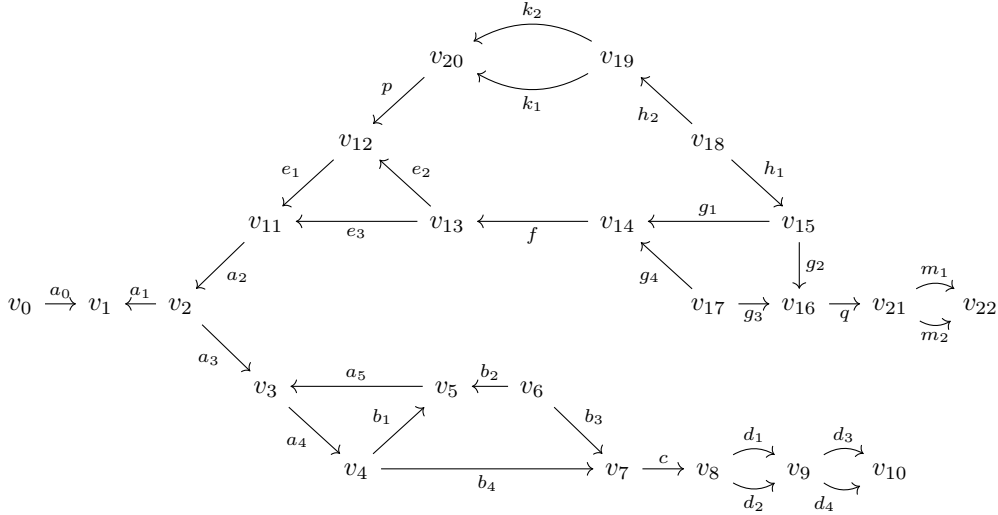
\section{Some finiteness results}\label{sec:finiteness results}
We introduce some sets of strings in a hammock that are close to an element of $\QBa_i(\xx_0)$ and show under a suitable condition that some subsets of those sets are finite.

Given $\sB\in \QBa$ and $j\in\{-1,1\}$, the following set captures the strings which ``touch'' $\sB$ with sign $j$.
$$\StB{j}:=\{\xx\in\St\mid\bb\xx\text{ is a string for some } \bb\in\CycB \text{ such that } \theta(\bb)=j\}.$$

Further set $$\StB{}:=\StB{1}\cup\StB{-1},\ \StB{\pm1}:=\StB{1}\cap\StB{-1}.$$ If $\sB\in\QBa_i(\xx_0)$ then set $$\STB{j}:=\StB{j}\cap H_l^i(\xx_0),\ \STB{}:=\StB{}\cap H_l^i(\xx_0)\text{ and } \STB{\pm1}:=\StB{\pm1}\cap H_l^i(\xx_0).$$
Now we close the above sets of strings under left substrings. For $j\in\{1,-1\}$, define $$\OSt{j}:=\{\xx\in\St\mid\uu\xx\in\StB{}\text{ for some string }\uu\text{ with }\theta(\uu)=j\},$$  $$\OSt{}:=\OSt{1}\cup\OSt{-1} \text{ and } \OSt{\pm1}:=\OSt{1}\cap\OSt{-1}.$$
Further if $\sB\in\QBa_i(\xx_0)$ then set $$\OST1:=\{\xx\in H_l^i(\xx_0)\mid\uu\xx\in\StB{}\cup\{\MM_i(\xx_0)\}\text{ for some string }\uu\text{ with }\theta(\uu)=1\}\cup\{\MM_i(\xx_0)\},$$$$\OST{-1}:=\{\xx\in H_l^i(\xx_0)\mid\uu\xx\in\StB{}\cup\{\mm_i(\xx_0)\}\text{ for some string }\uu\text{ with }\theta(\uu)=-1\}\cup\{\mm_i(\xx_0)\},$$$$\ \OST{}:=\OST1\cup\OST{-1}\text{ and }\OST{\pm1}:=\OST1\cap\OST{-1}.$$

When we use the notations $\STB{j}$ and $\OST{j}$ then we implicitly assume that $\sB\in\QBa_i(\xx_0)$.

It is trivial to note that $\StB{j}\subseteq\OSt{j}$ for any $j\in\{-1,1\}$. The following remarks are straightforward yet useful.

\begin{rem}\label{rem: Strings in hammock WStr}
For any $\xx\in H_l^i(\xx_0)$, $\xx\in\OST{1}$ if and only if $\xx\in\OSt{1}$ or $\xx\sqsubseteq_l\MM_i(\xx_0)$. Dually, for any $\xx\in H_l^i(\xx_0)$, $\xx\in\OST{-1}$ if and only if $\xx\in\OSt{-1}$ or $\xx\sqsubseteq_l\mm_i(\xx_0)$. 
\end{rem}
\begin{rem}\label{rem: WStr is closed by substring}
If $\xx\in\OSt{}$ and $\yy\sqsubset_l\xx$ then $\yy\in\OSt{\theta(\xx\mid\yy)}$.
\end{rem}
\begin{rem}\label{rem: Hammock WStr is closed by substring}
If $\xx\in\OST{}$, $\yy\sqsubset_l\xx$ and $\yy\in H_l^i(\xx_0)$ then $\yy\in\OST{\theta(\xx\mid\yy)}$.
\end{rem}

\begin{prop}\label{x<y implies x in ST1B}
If $\xx\in\OST{1}$, $\yy\in\OST{-1}$ and $\xx<_l\yy$ then $\xx\in\OSt1$ and $\yy\in\OSt{-1}$.
\end{prop}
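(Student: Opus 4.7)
The plan is to establish each of the two conclusions separately by contradiction, leveraging Remark \ref{rem: Strings in hammock WStr}, which implies that $\xx \in \OST{1} \setminus \OSt{1}$ forces $\xx \sqsubseteq_l \MM_i(\xx_0)$, and dually $\yy \in \OST{-1} \setminus \OSt{-1}$ forces $\yy \sqsubseteq_l \mm_i(\xx_0)$.

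For the first conclusion $\xx \in \OSt{1}$, suppose to the contrary that $\xx \notin \OSt{1}$. The equality $\xx = \MM_i(\xx_0)$ is excluded by $\xx <_l \yy$ and the maximality of $\MM_i(\xx_0)$ in $H_l^i(\xx_0)$, so $\xx \sqsubset_l \MM_i(\xx_0)$. Combined with $\xx_0 \sqsubseteq_l \xx$ and $\MM_{-1}(\xx_0) = \xx_0$, this forces $i = 1$, whence $\MM_1(\xx_0) = \uu'\xx_0$ has all inverse syllables above $\xx_0$ and $\xx = \uu''\xx_0$ with $\uu''$ a proper left substring of $\uu'$, hence also all inverse. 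I then analyze the three cases of $\xx <_l \yy$ in Definition \ref{hammock defn}. Cases (b) and (c) would place a direct syllable of $\xx$ above the maximal common left substring $\xx \sqcap_l \yy$, which must contain $\xx_0$ since both $\xx$ and $\yy$ lie in $H_l(\xx_0)$; that direct syllable would then sit inside the all-inverse $\uu''$, a contradiction. Only case (a) survives, giving $\yy = \ww\alpha\xx$ with $\alpha$ inverse. Using $\yy \in \OST{-1}$ together with $\xx_0 \sqsubseteq_l \xx <_l \yy$, I rule out both $\yy = \mm_1(\xx_0) = \xx_0$ and $\vv\yy = \mm_1(\xx_0) = \xx_0$, so $\bb\vv\yy = \bb\vv\ww\alpha\xx$ is a string for some $\bb \in \CycB$ and some $\vv$ with $\theta(\vv) = -1$. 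Since $\alpha$ is the rightmost syllable of $\vv\ww\alpha$ and is inverse, $\theta(\vv\ww\alpha) = 1$, so $\xx \in \OSt{1}$, the desired contradiction.

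For the second conclusion $\yy \in \OSt{-1}$, I run the dual argument. Assuming $\yy \notin \OSt{-1}$, I deduce $i = -1$ and $\yy = \uu_1\xx_0$ with $\uu_1$ a proper left substring of $\uu_0$, where $\mm_{-1}(\xx_0) = \uu_0\xx_0$ has all direct syllables above $\xx_0$. The case analysis for $\xx <_l \yy$ now rules out (a) and (c) by the dual argument (an inverse syllable $\alpha$ would be forced into the all-direct $\uu_1$), leaving only (b): $\xx = \vv\beta\yy$ with $\beta$ direct. Using $\xx \in \OST{1}$, once the possibilities $\xx = \MM_{-1}(\xx_0) = \xx_0$ and $\uu\xx = \xx_0$ are eliminated by length considerations, I obtain $\bb\uu\xx$ a string with $\bb \in \CycB$ and $\theta(\uu) = 1$. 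Then $\bb\uu\vv\beta\yy = \bb\uu\xx$ is a string with $\theta(\uu\vv\beta) = \theta(\beta) = -1$, yielding $\yy \in \OSt{-1}$, a contradiction.

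The main obstacle I anticipate is the bookkeeping in the case eliminations, where one must carefully track the position of the offending direct or inverse syllable relative to $\xx_0$ and to $\xx \sqcap_l \yy$. The crux is that $\xx_0 \sqsubseteq_l \xx \sqcap_l \yy$, together with the all-inverse (respectively all-direct) structure above $\xx_0$ in $\MM_1(\xx_0)$ (respectively $\mm_{-1}(\xx_0)$), precludes the coexistence required by the disallowed cases.
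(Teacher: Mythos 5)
Your proof is correct and follows essentially the same route as the paper: assume $\xx\notin\OSt{1}$ (dually $\yy\notin\OSt{-1}$), use Remark \ref{rem: Strings in hammock WStr} to place $\xx$ inside $\MM_i(\xx_0)$ (resp. $\yy$ inside $\mm_i(\xx_0)$), reduce the comparison $\xx<_l\yy$ to the single case $\yy=\ww\alpha\xx$ (resp. $\xx=\vv\beta\yy$), and then transport the witnessing string for $\yy\in\OST{-1}$ (resp. $\xx\in\OST{1}$) down to the left substring to contradict the assumption. The only difference is cosmetic: where the paper cites Remarks \ref{rem: WStr is closed by substring} and \ref{rem: Hammock WStr is closed by substring}, you unfold the definitions of $\OSt{j}$ and $\StB{}$ by hand.
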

\begin{proof}
If possible, let $\xx\notin\OSt{1}$. Then Remark \ref{rem: Strings in hammock WStr} implies $\xx\sqsubseteq_l\MM_i(\xx_0)$.

If $\xx_0=\MM_i(\xx_0)$ then $i=-1$ and $\xx=\xx_0=\MM_i(\xx_0)$, which contradicts the existence of $\yy>_l\xx$.

Therefore assume that $\xx_0\sqsubset_l\MM_i(\xx_0)$. This together with $\xx<_l\yy$ implies that $\yy=\ww\vv\xx$ for some strings $\ww,\vv$ with $\delta(\vv)=1$. Since $\yy\in\OST{-1}$ and $\xx\sqsubset_l\yy$, Remark \ref{rem: Hammock WStr is closed by substring} implies that $\xx\in\OST{\theta(\yy\mid\xx)}=\OST1$. Since $\xx_0=\mm_i(\xx_0)\sqsubset_l\yy$, we have $\yy\in\OSt{}$ by Remark \ref{rem: Strings in hammock WStr}. Now $\xx\sqsubset_l\yy$ together with Remark \ref{rem: WStr is closed by substring} implies that $\xx\in\OSt{\theta(\yy\mid\xx)}=\OSt1$, a contradiction to our assumption. Therefore $\xx\in\OSt{1}$. Similarly, we can show that $\yy\in\OSt{-1}$.
\end{proof}

In general, $\OST{}$ could be very large compared to $\STB{}$, but it is possible to control this difference when $\sB$ is minimal for $(\xx_0,i)$.

\begin{prop}\label{bandfreewrtB}
For $\sB\in\QBa$ and $j\in\{-1,1\}$, if $\xx\in\OSt{j}\setminus\StB{j}$ then $\xx$ is band-free with respect to $\sB$.
\end{prop}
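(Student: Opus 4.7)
My plan is to prove the contrapositive: assume $\xx\in\OSt{j}$ contains some $\bb'\in\CycB$ as a substring, say $\xx=\yy_2\bb'\yy_1$, and derive $\xx\in\StB{j}$. By hypothesis, pick $\bb\in\CycB$ and a string $\uu$ with $\theta(\uu)=j$ and $\bb\uu\xx\in\St$. I will manufacture a $\sB$-cycle $\tilde{\bb}\in\CycB$ with $\theta(\tilde\bb)=j$ and $\tilde\bb\xx\in\St$, contradicting $\xx\notin\StB{j}$.

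The core construction is to ``close up'' the loop $\bb\leftarrow\uu\yy_2\leftarrow\bb'$ inside $\sB$. Since $\bb,\bb'$ lie in the same strongly connected component $\sB$ of the bridge quiver, there exists a string $\ww_0$ (empty when $\bb=\bb'$) coming from a bridge path from $\bb$ back to $\bb'$, so $\bb'\ww_0\bb\in\St$. Let $\cc:=\bb'\ww_0\bb\uu\yy_2$. Using Remark~\ref{rem: WStr is closed by substring}'s preceding $H$-equivalence remark with pivots $\bb$ and $\bb'$ (both have $\delta=0$), combine $\bb'\ww_0\bb\in\St$ with $\bb\equiv_H\bb\uu\yy_2\bb'$ to obtain $\bb'\ww_0\bb\uu\yy_2\bb'\in\St$; applying the same trick one further time (pivot $\bb'$) yields $\bb'\ww_0\bb\uu\yy_2\bb'\ww_0\bb\uu\yy_2\bb'\in\St$, of which $\cc\cc$ is a left-substring, so $\cc$ is cyclic. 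Strong connectedness further ensures that the intermediate prime bands in any bridge decomposition of $\bb\uu\yy_2\bb'$ lie in $\sB$, so the cyclic path generating $\cc$ stays inside $\sB$; hence the primitive root $\tilde{\cc}_0$ of $\cc$ (with $\cc=\tilde{\cc}_0^{k}$ for some $k\geq 1$) lies in $\CycB$.

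Now consider the cyclic permutation $\cc''':=\yy_2\bb'\ww_0\bb\uu$ of $\cc$; its first (rightmost) syllable coincides with the first syllable of $\uu$, so $\theta(\cc''')=\theta(\uu)=j$. Let $\tilde{\bb}\in\CycB$ be the corresponding cyclic permutation of $\tilde{\cc}_0$ so that $\cc'''=\tilde{\bb}^{k}$; then $\theta(\tilde{\bb})=j$. Extending $\bb'\ww_0\bb\uu\yy_2\bb'\in\St$ on the right by $\yy_1$ via $\bb'\equiv_H\bb'\yy_1$ and then prepending $\yy_2$ on the left via $\bb'\equiv_H\bb'\ww_0\bb\uu\yy_2\bb'\yy_1$ together with $\yy_2\bb'\in\St$ (a substring of $\xx$), I conclude $\cc'''\xx=\yy_2\bb'\ww_0\bb\uu\yy_2\bb'\yy_1\in\St$. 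Since $\tilde{\bb}\xx$ is a right-substring of $\cc'''\xx=\tilde{\bb}^{k}\xx\in\St$, we get $\tilde{\bb}\xx\in\St$, witnessing $\xx\in\StB{j}$—the desired contradiction.

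The main anticipated obstacle is rigorously showing that the primitive root $\tilde{\cc}_0$ really belongs to $\CycB$, i.e., that $\cc$ arises from a cyclic path in $\sB$. This reduces to the structural fact that any string of the form $\bb_1\vv\bb_2$ with $\bb_1,\bb_2\in\CycB$ factors (in the bridge quiver sense) only through prime bands of the same strongly connected component $\sB$, by the SCC property together with the construction of the bridge quiver from \cite{GKS20}; everything else is essentially bookkeeping with $H$-equivalences.
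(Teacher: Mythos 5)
Your argument is correct and essentially identical to the paper's proof: both connect the $\sB$-cycle occurring inside $\xx$ to the one witnessing $\xx\in\OSt{j}$ via a linking string (possible since both lie in $\CycB$), close the resulting word into a power of a $\sB$-cycle using $\delta=0$ and $H$-equivalence, and rotate it so that its sign is $j$ and it concatenates with $\xx$, contradicting $\xx\notin\StB{j}$. The only blemishes are cosmetic: your ``left-substring''/``right-substring'' labels are swapped relative to the paper's convention (harmless, since substrings of strings are strings), and your flagged obstacle about the primitive root lying in $\CycB$ is handled in the paper at the same level of rigor, via its standing remark that concatenations of $\sB$-extendable strings are again $\sB$-extendable.
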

\begin{proof}
If possible, let $\xx=\xx_2\bb\xx_1$ for some strings $\xx_1,\xx_2$ and $\bb\in\CycB$. Since $\xx\in\OSt{j}$, there exist a string $\uu$ and $\bb_1\in\CycB$ with $\theta(\bb_1\uu)=j$ such that $\bb_1\uu\xx_2\bb\xx_1$ is a string. Now $\bb,\bb_1\in\CycB$ implies that there is a string $\vv$ such that $\bb\vv\bb_1$ is a string. Since $\delta(\bb)=\delta(\bb_1)=0$, we have that $\bb\vv\bb_1\uu\xx_2\bb$ is a string implying that $\xx_2\bb\vv\bb_1\uu$ is a power of a $\sB$-cycle. This gives $\xx=\xx_2\bb\xx_1\in\StB{j}$, which is a contradiction.
\end{proof}
\begin{cor}\label{BandfreeOST-OSt}
If $\sB\in\QBa$ is minimal for $(\xx_0,i)$, $j\in\{-1,1\}$ and $\xx\in \OST{j}\setminus\STB{j}$ then $\xx$ is band-free relative to $(\xx_0,i)$.
\end{cor}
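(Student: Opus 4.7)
The plan is to separate into two cases depending on whether $\xx\in\OSt{j}$ or $\xx\in\OST{j}\setminus\OSt{j}$. The second case is handled easily by appealing to the explicit description of the extremal strings of the hammock, while the first case requires invoking the minimality of $\sB$ in $\QBa_i(\xx_0)$.

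\emph{Case 1: $\xx\in\OSt{j}$.} Since $\xx\in H_l^i(\xx_0)$ and $\xx\notin\STB{j}=\StB{j}\cap H_l^i(\xx_0)$, we have $\xx\notin\StB{j}$, so Proposition~\ref{bandfreewrtB} gives that $\xx=\zz\xx_0$ is band-free with respect to $\sB$. The plan is then to upgrade this to band-freeness with respect to every class in $\QBa$. Assume for contradiction that $\zz=\zz_2\bb'\zz_1$ for some cyclic permutation $\bb'$ of some $\bb\in\Ba$. Band-freeness with respect to $\sB$ forces $\bb\notin\BaB$, so the $\approx$-class $\sB'$ of $\bb$ satisfies $\sB'\neq\sB$. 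From $\xx\in\OSt{j}$ we obtain $\bb_\sB\in\CycB$ and a string $\yy$ such that $\bb_\sB\yy\zz_2\bb'\zz_1\xx_0$ is a string; this places $\bb'$ below $\bb_\sB$ in the $\preceq$-relation on $\Cyc$, giving $\sB'\preceq\sB$, and hence $\sB'\prec\sB$. A short sign check (noting that the first syllables of $\zz=\zz_2\bb'\zz_1$ and of $\bb'\zz_1$ coincide) shows $\bb'\zz_1\xx_0\in H_l^i(\xx_0)$, and using the identity $\bb\cdot(\alpha_n\cdots\alpha_{r+1})=(\alpha_n\cdots\alpha_{r+1})\cdot\bb'$ (where $\bb=\alpha_n\cdots\alpha_1$) one lifts this to $\bb\cdot(\alpha_n\cdots\alpha_{r+1})\zz_1\xx_0\in H_l^i(\xx_0)$, witnessing that $\bb$ itself is reachable from $(\xx_0,i)$. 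Thus $\sB'\in\QBa_i(\xx_0)$ with $\sB'\prec\sB$, contradicting the minimality of $\sB$.

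\emph{Case 2: $\xx\in\OST{j}\setminus\OSt{j}$.} Unpacking the definition of $\OST{j}$, this forces $\xx\sqsubseteq_l\MM_i(\xx_0)$ (when $j=1$) or $\xx\sqsubseteq_l\mm_i(\xx_0)$ (when $j=-1$). Writing $\xx=\xx_2\xx_0$, the prefix $\xx_2$ is a right substring of the portion of $\MM_i(\xx_0)$ or $\mm_i(\xx_0)$ lying above $\xx_0$, which by the characterization of these extremal hammock elements consists entirely of inverse (respectively direct) syllables. Since every band satisfies $\delta=0$, no cyclic permutation of any band can appear as a substring of $\xx_2$, so $\xx$ is band-free relative to $(\xx_0,i)$.

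The main obstacle is the sign bookkeeping in Case~1, specifically lifting the reachability of the cyclic permutation $\bb'$ from $(\xx_0,i)$ to reachability of the chosen representative $\bb\in\Ba$ with the correct sign $i$ (since the minimality hypothesis is phrased in terms of the representatives). Once this sign tracking is handled, the contradiction with the minimality of $\sB$ in the finite poset $\QBa_i(\xx_0)$ is immediate.
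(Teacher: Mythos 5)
Your proof is correct and follows essentially the same route as the paper's: the cases $\xx\sqsubseteq_l\MM_i(\xx_0)$ and $\xx\sqsubseteq_l\mm_i(\xx_0)$ are dispatched by the observation that all syllables above $\xx_0$ have the same sign, while the case $\xx\in\OSt{j}$ combines Proposition \ref{bandfreewrtB} with the relation $\preceq$ and the minimality of $\sB$ in $(\QBa_i(\xx_0),\preceq)$. The only differences are cosmetic: the paper derives $\bb\approx\bb_1$ from minimality and then contradicts Proposition \ref{bandfreewrtB}, whereas you invoke band-freeness with respect to $\sB$ first and contradict minimality, and you additionally spell out the reachability of the class of $\bb$ from $(\xx_0,i)$ (with the cyclic-permutation lifting), a step the paper leaves implicit.
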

\begin{proof}
Let $\xx\in \OST{j}\setminus\STB{j}$. In view of Remark \ref{rem: Strings in hammock WStr}, there are three cases. Since $\delta(\MM_i(\xx_0)\mid\xx_0)\neq0$, if $\xx\sqsubseteq_l\MM_i(\xx_0)$ then $\xx$ is band-free relative to $(\xx_0,i)$. A dual argument works when $\xx\sqsubseteq_l\mm_i(\xx_0)$.

Finally, if $\xx\in\OSt{j}$ then there exist a string $\uu$ and $\bb_1\in\CycB$ with $\theta(\bb_1\uu)=j$ such that $\bb_1\uu\xx$ is a string. If possible, let $\xx=\xx_2\bb\xx_1$ for some strings $\xx_1,\xx_2$ and $\bb\in\Cyc$ such that $\xx_0\sqsubseteq_l\xx_1$. Then $\bb_1\uu\xx_2\bb\xx_1$ is a string, which gives $\bb\preceq\bb_1$. Since $\sB$ is minimal for $(\xx_0,i)$, we get $\bb\approx\bb_1$, a contradiction to Proposition \ref{bandfreewrtB}.
\end{proof}

Recall from Corollary \ref{finitely many band free strings rel to x0,i} that there are finitely many strings in $H_l^i(\xx_0)$ which are band-free relative to $(\xx_0,i)$. A simple set theoretic manipulation yields $$\OST{}\setminus\STB{}\subseteq(\OST{1}\setminus\STB{1})\cup(\OST{-1}\setminus\STB{-1}).$$ Therefore we get the following consequence of Corollary \ref{BandfreeOST-OSt}.

\begin{cor}\label{OST-STB is finite}If $\sB\in\QBa$ is minimal for $(\xx_0,i)$ then the set $\OST{}\setminus\STB{}$ is finite.
\end{cor}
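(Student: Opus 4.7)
The plan is to combine the set-theoretic inclusion noted just before the corollary with Corollary \ref{BandfreeOST-OSt} and Corollary \ref{finitely many band free strings rel to x0,i}.

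First, I would verify the inclusion $\OST{}\setminus\STB{}\subseteq(\OST{1}\setminus\STB{1})\cup(\OST{-1}\setminus\STB{-1})$, which is a routine exercise in Boolean algebra: if $\xx\in\OST{}$ then $\xx\in\OST{j}$ for some $j\in\{-1,1\}$, and if additionally $\xx\notin\STB{}=\STB{1}\cup\STB{-1}$ then $\xx\notin\STB{j}$ in particular.

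Next, for each $j\in\{-1,1\}$, Corollary \ref{BandfreeOST-OSt} tells us that every $\xx\in\OST{j}\setminus\STB{j}$ is band-free relative to $(\xx_0,i)$ (this is where we use the hypothesis that $\sB$ is minimal for $(\xx_0,i)$). Hence the union $(\OST{1}\setminus\STB{1})\cup(\OST{-1}\setminus\STB{-1})$ is contained in the set of strings in $H_l^i(\xx_0)$ that are band-free relative to $(\xx_0,i)$.

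Finally, Corollary \ref{finitely many band free strings rel to x0,i} asserts that the latter set is finite, so $\OST{}\setminus\STB{}$ is finite as a subset of a finite set. There is no real obstacle here since all the substantive work has already been done in the two prior results; the corollary is essentially a bookkeeping assembly of them.
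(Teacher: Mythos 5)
Your proposal is correct and matches the paper's argument exactly: the paper establishes the same inclusion $\OST{}\setminus\STB{}\subseteq(\OST{1}\setminus\STB{1})\cup(\OST{-1}\setminus\STB{-1})$ just before the corollary and then cites Corollary \ref{BandfreeOST-OSt} together with Corollary \ref{finitely many band free strings rel to x0,i} (finiteness of strings band-free relative to $(\xx_0,i)$), precisely where the minimality of $\sB$ is used.
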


\begin{exmp}
Continuing with Example \ref{exmp: QBa in running example}, recall that $\sB_1$ and $\sB_3$ are minimal for $(a_0,1)$. We have $\OsT{a_0}{1}{\sB_1}\setminus\stb{a_0}{1}{\sB_1}=\{a_0,A_1a_0,a_3A_1a_0\}$ and $\OsT{a_0}{1}{\sB_3}\setminus\stb{a_0}{1}{\sB_3}=\{a_0,A_1a_0\}$.
\end{exmp}

We prove yet one more conditional finiteness result.
\begin{prop}
Suppose $\sB\in\QBa$ is domestic and minimal for $(\xx_0,i)$. If $\xx\in\OST{\pm1}$ then $\xx$ is band-free relative to $(\xx_0,i)$.
\end{prop}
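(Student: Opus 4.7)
The plan is to split into two cases depending on whether $\xx\in\STB{\pm1}$ or not. If $\xx\in\OST{\pm1}\setminus\STB{\pm1}$, the set-theoretic inclusion
\[
\OST{\pm1}\setminus\STB{\pm1}\subseteq(\OST{1}\setminus\STB{1})\cup(\OST{-1}\setminus\STB{-1})
\]
together with Corollary \ref{BandfreeOST-OSt} immediately implies that $\xx$ is band-free relative to $(\xx_0,i)$. It thus remains to handle the case $\xx\in\STB{\pm1}$, which I would dispose of by proving, under the hypotheses, that $\STB{\pm1}=\emptyset$.

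Suppose toward a contradiction that $\xx\in\STB{\pm1}$. Since $\sB$ is domestic, there is a unique prime band $\bb_0=\alpha_p\cdots\alpha_1\in\sB$, and every element of $\CycB$ is a cyclic permutation of $\bb_0$. The assumption $\xx\in\STB{\pm1}$ yields cyclic permutations $\bb_1,\bb_2$ of $\bb_0$ with $\theta(\bb_1)=1$ and $\theta(\bb_2)=-1$ such that both $\bb_1\xx$ and $\bb_2\xx$ are strings. Both $\bb_1$ and $\bb_2$ attach to $\xx$ at the same vertex $v$---the terminal vertex of the walk $\xx$---and their first syllables are two arrows out of $v$ of opposite direction. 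Since both first syllables are syllables of $\bb_0$, writing the vertex sequence of $\bb_0$ as $v_0,v_1,\ldots,v_p=v_0$, this forces $\bb_0$ to visit $v$ at two distinct positions $k_1<k_2$ in $\{0,\ldots,p-1\}$ with outgoing syllables $\alpha_{k_1+1}$ and $\alpha_{k_2+1}$ of opposite direction.

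The main obstacle is converting this self-intersection into a non-trivial composite decomposition of $\bb_0$. I would consider the cyclic permutation $\bb_0':=\alpha_{k_2}\cdots\alpha_{k_1+1}\cdot\alpha_{k_1}\cdots\alpha_1\alpha_p\cdots\alpha_{k_2+1}$ and observe that it factors as $Y'\cdot X'$, where $Y':=\alpha_{k_2}\cdots\alpha_{k_1+1}$ and $X':=\alpha_{k_1}\cdots\alpha_1\alpha_p\cdots\alpha_{k_2+1}$ are both non-trivial cyclic strings based at $v$, of lengths $k_2-k_1$ and $p-(k_2-k_1)$ respectively (both strictly between $0$ and $p$). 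Expressing each factor as a power of its underlying band then exhibits $\bb_0$ as a composite band. Proposition \ref{composite approx} produces a band $\tilde\bb\in\Ba$ with $|\tilde\bb|<|\bb_0|$ and $\tilde\bb\approx\bb_0$, so $\tilde\bb\in\sB$. But $\sB=\{\bb_0\}$ by domesticity, which forces $\tilde\bb=\bb_0$, contradicting $|\tilde\bb|<|\bb_0|$. This establishes $\STB{\pm1}=\emptyset$ and completes the proof.
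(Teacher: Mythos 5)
Your first case is fine: the inclusion $\OST{\pm1}\setminus\STB{\pm1}\subseteq(\OST{1}\setminus\STB{1})\cup(\OST{-1}\setminus\STB{-1})$ together with Corollary \ref{BandfreeOST-OSt} handles all of $\OST{\pm1}\setminus\STB{\pm1}$, and this matches the paper's reduction. The second case, however, has a genuine gap, and the statement you try to prove there ($\STB{\pm1}=\emptyset$) is actually false. The failing step is the assertion that cutting $\bb_0$ at the two visits of $v$ yields two \emph{cyclic strings} $X'$ and $Y'$: each piece is a string (a contiguous piece of a rotation of $\bb_0$), but nothing forces $X'X'$ or $Y'Y'$ to be strings --- at the new junctions a relation can appear or the sign conditions can fail --- so the self-intersection does not make $\bb_0$ composite. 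Concretely, take the quiver with vertices $1,2,3$ and arrows $a\colon1\to2$, $b\colon2\to1$, $c\colon2\to3$, $d\colon3\to2$, with $\rho=\{ca,\,bd,\,aba,\,bab,\,cdc,\,dcd\}$ and a compatible choice of $\sigma,\varepsilon$. This is a domestic string algebra whose only bands up to cyclic permutation are $\bb_0=AdcB$ and $abCD$, and $\bb_0$ is prime: the two pieces $dc$ and $BA$ obtained by cutting at vertex $2$ are not cyclic strings, since $(dc)^2$ meets $cdc$ and $(BA)^{-2}=abab$ meets $aba$. Yet for $\xx_0=1_{(2,j)}$ with the appropriate $j$ and $i=1$, the class $\sB=\{\bb_0\}$ is domestic and minimal for $(\xx_0,i)$, and both rotations $dcBA$ (first syllable $A$, so $\theta=1$) and $BAdc$ (first syllable $c$, so $\theta=-1$) extend $\xx_0$, so $\xx_0\in\STB{\pm1}\neq\emptyset$. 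Hence no repair of the compositeness argument can close this case; the proposition must be proved by showing that elements of $\STB{\pm1}$ are band-free relative to $(\xx_0,i)$, not that they do not exist (note the counterexample is consistent with the proposition, since $\xx_0$ there is trivially band-free relative to $(\xx_0,i)$).

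For comparison, the paper handles exactly this case directly: assuming $\xx\in\StB{\pm1}\cap H_l^i(\xx_0)$ can be written as $\xx=\xx_2\bb\xx_1$ with $\bb\in\Cyc$ and $\xx_0\sqsubseteq_l\xx_1$, the two $\sB$-cycle extensions of opposite sign give $\bb\preceq\bb_1$ for the unique $\bb_1\in\sB$; minimality yields $\bb\approx\bb_1$, domesticity of $\sB$ makes $\bb$ a cyclic permutation of $\bb_1$, and then ${}^\infty\bb_1\uu\xx_2\bb={}^\infty\bb={}^\infty\bb_1\vv\xx_2\bb$ forces $\theta(\bb_1\uu)=\theta(\bb_1\vv)$, contradicting the choice of $\uu,\vv$. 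You would need an argument of this kind (exploiting that the band occurrence lies \emph{beyond} $\xx_0$ inside $\xx$), rather than an emptiness claim about $\STB{\pm1}$.
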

\begin{proof}
Since $\MM_i(\xx_0)$ and $\mm_i(\xx_0)$ are band-free relative to $(\xx_0,i)$, in view of Remark \ref{rem: Strings in hammock WStr}, it is enough to assume that $\xx\in\StB{\pm1}\cap H_l^i(\xx_0)$.

If possible, let $\xx=\xx_2\bb\xx_1$ for $\bb\in\Cyc$ and strings $\xx_1,\xx_2$ such that $\xx_0\sqsubseteq_l\xx_1$. Since $\xx\in\OSt{\pm1}$, there exist strings $\uu$ and $\vv$ with $\theta(\bb_1\uu)=-\theta(\bb_1\vv)$ such that $\bb_1\uu\xx_2\bb\xx_1$ and $\bb_1\vv\xx_2\bb\xx_1$ are strings, where $\bb_1$ is the unique element in $\sB$ since $\sB$ is domestic. This gives $\bb\preceq\bb_1$, which further implies that $\bb\approx\bb_1$ since $\sB$ is minimal for $(\xx_0,i)$. Moreover, $\sB$ is domestic implies that $\bb$ is a cyclic permutation of $\bb_1$. Then $\,^\infty\bb_1\uu\xx_2\bb=\,^\infty\bb=\,^\infty\bb_2\vv\xx_2\bb$, which implies $\theta(\bb_1\uu)=\theta(\bb_1\vv)$, a contradiction.
\end{proof}

Again by the finiteness of the set of band-free strings, we have the following corollary.
\begin{cor}\label{STB+-1 is finite}
If $\sB\in\QBa$ is domestic and minimal for $(\xx_0,i)$ then the set $\OST{\pm1}$ is finite.
\end{cor}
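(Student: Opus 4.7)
The plan is essentially immediate given the two results already in place. The preceding proposition tells us that every string in $\OST{\pm1}$ is band-free relative to $(\xx_0,i)$, under the standing hypotheses that $\sB$ is domestic and minimal for $(\xx_0,i)$. On the other hand, Corollary \ref{finitely many band free strings rel to x0,i} asserts that there are only finitely many strings in $H_l^i(\xx_0)$ which are band-free relative to $(\xx_0,i)$. So the proof is just a one-line set-theoretic inclusion argument: $\OST{\pm1}$ is contained in the finite set of band-free strings relative to $(\xx_0,i)$, hence finite.

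Concretely, I would write: let $\xx\in\OST{\pm1}$; by the previous proposition $\xx$ is band-free relative to $(\xx_0,i)$; hence $\OST{\pm1}$ is a subset of the set of all strings band-free relative to $(\xx_0,i)$, which is finite by Corollary \ref{finitely many band free strings rel to x0,i}. There is no real obstacle here — both ingredients are already established, so this corollary is just the packaging step that turns the structural ``band-free'' conclusion into a finiteness statement ready to be used downstream (e.g.\ in controlling the difference $\OST{}\setminus\STB{}$ alongside Corollary \ref{OST-STB is finite} in the domestic case).

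If anything, the only judgement call is stylistic: whether to recall explicitly that $\OST{\pm1}\subseteq H_l^i(\xx_0)$ so that the band-free strings in question are indeed among the finitely many enumerated in Corollary \ref{finitely many band free strings rel to x0,i}. I would include that remark in one clause to make the chain of inclusions unambiguous, and then close.
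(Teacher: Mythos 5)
Your proposal is correct and follows exactly the paper's argument: the preceding proposition shows every element of $\OST{\pm1}$ is band-free relative to $(\xx_0,i)$, and Corollary \ref{finitely many band free strings rel to x0,i} bounds the number of such strings, giving finiteness. The extra clause noting $\OST{\pm1}\subseteq H_l^i(\xx_0)$ is a harmless clarification of what the paper leaves implicit.
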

\begin{exmp}
Continuing with Example \ref{exmp: QBa in running example}, recall that $\sB_1$ is domestic and minimal for $(a_0,1)$. The string $a_3A_1a_0$ is the only element in $\overline{\mathsf{St}}_{\pm1}(a_0,1;\sB_1)$. On the other hand, $\sB_3$ is non-domestic and minimal for $(a_0,1)$, and we have $E_2E_1(e_3E_2E_1)^nA_2A_1a_0\in\overline{\mathsf{St}}_{\pm1}(a_0,1;\sB_3)$ for every $n\in\N$.
\end{exmp}

\begin{prop}\label{OSTpm is bounded}
Let $\sB\in\QBa$ be minimal for $(\xx_0,i)$. Then $\OST{\pm1}$ is bounded as a suborder of $(H_l^i(\xx_0),<_l)$.
\end{prop}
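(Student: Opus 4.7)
The goal is to show that $\OST{\pm 1}$, as a linear sub-order of $H_l^i(\xx_0)$, has both a minimum and a maximum. The plan combines the two finiteness results (Corollaries \ref{OST-STB is finite} and \ref{STB+-1 is finite}) with a case split on whether $\sB$ is domestic, after first extracting one of the two bounds for free from the special string $\xx_0$.

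I would start by verifying that $\xx_0 \in \OST{\pm 1}$ always. Since either $\MM_{-1}(\xx_0) = \xx_0$ or $\mm_1(\xx_0) = \xx_0$ depending on $i$, the string $\xx_0$ lies in one of $\OST{1},\OST{-1}$ directly from the explicit $\{\mm_i(\xx_0)\}$/$\{\MM_i(\xx_0)\}$ clause of the definition, and in the other via Remark \ref{rem: Hammock WStr is closed by substring} applied to $\xx_0 \sqsubseteq_l \MM_i(\xx_0)$ (respectively $\xx_0 \sqsubseteq_l \mm_i(\xx_0)$). But $\xx_0$ is already an extreme of the ambient $H_l^i(\xx_0)$, so this immediately supplies one of the two required bounds of $\OST{\pm 1}$. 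By duality I may assume $i = 1$, in which case $\min \OST{\pm 1} = \xx_0$, and the remaining task is to exhibit a maximum.

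If $\sB$ is domestic, Corollary \ref{STB+-1 is finite} directly gives that $\OST{\pm 1}$ is finite, so it trivially has a maximum. If $\sB$ is non-domestic, $\OST{\pm 1}$ may be infinite, and I would first shrink the problem using the observation $\OST{\pm 1} \setminus \STB{\pm 1} \subseteq (\OST{1} \setminus \STB{1}) \cup (\OST{-1} \setminus \STB{-1})$: by Corollary \ref{BandfreeOST-OSt} each summand consists of strings band-free relative to $(\xx_0,i)$, so it is finite by Corollary \ref{finitely many band free strings rel to x0,i}. Hence it is enough to produce an upper bound in $H_l^i(\xx_0)$ for the infinite set $\STB{\pm 1}$ itself, and then take the overall maximum of this bound together with the finitely many remaining elements. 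For any $\xx \in \STB{\pm 1}$ there exist $\bb_+,\bb_- \in \CycB$ of opposite signs with $\bb_\pm \xx$ both being strings, and the proposed structural analysis clusters such $\xx$ according to the ``left-end junction'' that permits both of these opposite-sign prepends. Finiteness of the prime bands of $\sB$ and of the bridges in the bridge quiver restricted to $\sB$ should cut these junctions into finitely many patterns, each pattern yielding a common left substring $\mathfrak{w}$ for all $\xx$ in the corresponding family; Remark \ref{subhammock is an interval} then locates each family inside the bounded interval $H_l(\mathfrak{w})$ of $H_l^i(\xx_0)$, and the maximum over the finitely many $\MM_1(\mathfrak{w})$ produces the desired upper bound.

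The main obstacle is making the above clustering precise in the non-domestic case: one must rule out that arbitrarily long $\sB$-decorations on the left of $\xx$ can produce unboundedly many distinct junction patterns still landing in $\STB{\pm 1}$. The key ingredient should be the minimality of $\sB$ relative to $(\xx_0,i)$, which prevents $\sB$-cycles from ``slipping past'' the bridge into $\xx_0$ and forces stabilisation of the left-end junction after finitely many combinatorial possibilities.
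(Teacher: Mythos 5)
Your opening moves are fine: $\xx_0\in\OST{\pm1}$ holds because $\xx_0\sqsubseteq_l\MM_i(\xx_0)$ and $\xx_0\sqsubseteq_l\mm_i(\xx_0)$ (Remark \ref{rem: Strings in hammock WStr}), and since $\xx_0\in\{\mm_i(\xx_0),\MM_i(\xx_0)\}$ this settles one extreme; the domestic case via Corollary \ref{STB+-1 is finite} is also fine. The genuine gap is in the non-domestic case, and it concerns what the statement actually asserts. Since $H_l^i(\xx_0)$ already has minimum $\mm_i(\xx_0)$ and maximum $\MM_i(\xx_0)$, \emph{every} subset is bounded above and below by ambient elements, so with your reading the proposition would be vacuous; ``bounded as a suborder'' means here (and is used later as meaning) that $\OST{\pm1}$ possesses a least and a greatest element of its own. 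Your plan produces only an upper bound in $H_l^i(\xx_0)$ for $\STB{\pm1}$ (a maximum over finitely many $\MM_1(\ww)$'s, together with the finitely many strings of $\OST{\pm1}\setminus\STB{\pm1}$); such a bound need not lie in $\OST{\pm1}$, so it does not yield the required maximum. Moreover, the clustering step is not sound as stated: strings of $\STB{\pm1}$ admitting the same pair of opposite-sign prepends (this is essentially $\sB$-equivalence, Remark \ref{rem: fin many B equiv classes}) need share no left substring beyond $\xx_0$ — indeed each $\equiv_\sB$-class of $\sB$-centers is dense in every $\sB$-beam (Corollary \ref{Centers are dense in a beam}), so such a family cannot be confined to a proper subinterval $H_l(\ww)$; and even if it could, that would again only give an ambient bound.

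What is needed, and what the paper does (working with $i=-1$; dually for your normalization $i=1$), is an explicit identification of the nontrivial extreme: it is the \emph{longest left substring of} $\mm_i(\xx_0)$ (resp.\ $\MM_i(\xx_0)$) \emph{lying in} $\OST{\pm1}$. Existence of some such substring uses only reachability of $\sB$ from $(\xx_0,i)$ (guaranteed by minimality): there are $\bb\in\BaB$ and a string $\uu$ with $\bb\uu\xx_0\in H_l^i(\xx_0)$, and the common left substring $(\bb\uu\xx_0)\sqcap_l\mm_i(\xx_0)$ lies in $\OST{1}$ by Remark \ref{rem: Hammock WStr is closed by substring} and in $\OST{-1}$ by Remark \ref{rem: Strings in hammock WStr}. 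That the longest such substring is the least (resp.\ greatest) element of the whole of $\OST{\pm1}$ is then a short sign argument from $\delta(\mm_i(\xx_0)\mid\xx_0)=-1$ (resp.\ $\delta(\MM_i(\xx_0)\mid\xx_0)=1$) and closure of $\OST{}$ under left substrings: any element of $\OST{\pm1}$ strictly below (resp.\ above) it would force a longer left substring of $\mm_i(\xx_0)$ (resp.\ $\MM_i(\xx_0)$) into $\OST{\pm1}$ or contradict the maximality defining $\mm_i(\xx_0)$ (resp.\ $\MM_i(\xx_0)$). No finiteness results and no domestic/non-domestic case split are required; your proposal is missing precisely this construction of an extreme element inside $\OST{\pm1}$.
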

\begin{proof}
First note that $\xx_0\in\{\MM_i(\xx_0),\mm_i(\xx_0)\}$. Without loss assume that $\mm_i(\xx_0)<_l\MM_i(\xx_0)$. Then $\xx_0=\MM_i(\xx_0)$ if and only if $i=-1$.

Without loss assume that $i=-1$. Clearly $\xx_0=\MM_i(\xx_0)\in\OST{1}$. On the other hand, since $\sB$ is minimal for $(\xx_0,i)$, there is $\bb\in\BaB$ and a string $\uu$ such that $\bb\uu\xx_0\in H_l^i(\xx_0)$, which gives $\xx_0\in\OST{-1}$. Hence $\xx_0\in\OST{\pm1}$. On the other hand, we have $\theta(\bb\uu\xx_0\mid\mm_i(\xx_0))=1$. Hence there is a left substring of $\mm_i(\xx_0)$ that lies in $\OST{\pm1}$. Since $\delta(\mm_i(\xx_0)\mid\xx_0)=-1$, the longest such left substring will be the least element of $\OST{\pm1}$.
\end{proof}

\section{The condensation operator $c_\sB$}
\label{sec: Condensation}
Recall the concept of condensation from \S~\ref{sec:linear order}. In this section, we define a specific condensation operator $c_\sB$ on a hammock which helps in breaking it into smaller hammocks.

Note that $\xx_0\in \{\mm_i(\xx_0),\MM_i(\xx_0)\}\subseteq\OST{}$ and $\xx_0$ appears as a left substring of every string in $H_l^i(\xx_0)$. Therefore every string in $H_l^i(\xx_0)$ has a left substring in $\OST{}$. Now we use this observation to define the localization/condensation of a string in a hammock with respect to $\sB$.

\begin{defn}
If $\sB\in\QBa_i(\xx_0)$ then define the \emph{$\sB$-condensation} map $$c_\sB:H_l^i(\xx_0)\to\OST{}$$ by associating to each $\xx\in H_l^i(\xx_0)$ its longest left substring in $\OST{}$.
\end{defn}
\begin{rem}\label{cbI=IcapOST}
Note that if $\xx,\yy\in\OST{}$ then $\{\xx,\yy\}\subseteq c_\sB([\xx,\yy])=[\xx,\yy]\cap\OST{}.$ As a consequence, the map $c_\sB$ is surjective. Also for any $\xx\in H_l^i(\xx_0)$, we have $c_\sB(\xx)=\xx$ if and only if $\xx\in\OST{}$.
\end{rem}

Now define a function $\varphi_\sB:H_l^i(\xx_0)\to\{-1,0,1\}$ by $$\varphi_\sB(\xx):=\begin{cases}0&\mbox{ if }c_\sB(\xx)\in\OST{\pm1},\\1 &\mbox{ if }c_\sB(\xx)\in\OST1\setminus\OST{-1},\\-1 &\mbox{ if }c_\sB(\xx)\in\OST{-1}\setminus\OST1.\end{cases}$$

\begin{rem}\label{varphinonzero}
For each $\xx\in H_l^i(\xx_0)\setminus\OST{}$, we have $c_\sB(\xx)\sqsubset_l\xx$. If $\varphi_\sB(\xx)=0$ and $\alpha c_\sB(\xx)\sqsubseteq_l\xx$ for some syllable $\alpha$ then $\alpha c_\sB(\xx)\in\OST{}$ since $c_\sB(\xx)\in\OST{\theta(\alpha)}$. This is a contradiction to the definition of $c_\sB(\xx)$, and hence $\varphi_\sB(\xx)\neq 0$.
\end{rem}

Take the convention $H_l^0(\yy):=\{\yy\}$.

If $\yy\in\OST{}$ then $\yy\in H_l^{-\varphi_\sB(\yy)}(\yy)$. On the other hand, if $\xx\in H_l^i(\xx_0)\setminus\OST{}$ and $j:=\theta(\xx\mid c_\sB(\xx))$ then the definition of $c_\sB(\xx)$ ensures that $c_\sB(\xx)\notin\OST j$. Thus $\varphi_\sB(\xx)=\varphi_\sB(c_\sB(\xx))=-j$. We document this observation in the following result.

\begin{prop}\label{hammockcover}
If $\xx\in H_l^i(\xx_0)$ then $\xx\in H_l^{-\varphi_\sB(\xx)}(c_\sB(\xx))$.
\end{prop}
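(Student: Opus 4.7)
The plan is to formalize the sketch given in the paragraph immediately preceding the Proposition, splitting into two cases depending on whether $\xx\in\OST{}$.

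If $\xx\in\OST{}$, Remark~\ref{cbI=IcapOST} forces $c_\sB(\xx)=\xx$, and the target inclusion $\xx\in H_l^{-\varphi_\sB(\xx)}(\xx)$ is automatic: when $\varphi_\sB(\xx)=0$ it holds by the convention $H_l^0(\xx)=\{\xx\}$, and when $\varphi_\sB(\xx)=\pm 1$ it holds because $H_l^k(\xx)$ contains $\xx$ by definition for $k\in\{-1,1\}$.

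Otherwise $\xx\in H_l^i(\xx_0)\setminus\OST{}$, and Remark~\ref{varphinonzero} yields $c_\sB(\xx)\sqsubset_l\xx$ and $\varphi_\sB(\xx)\neq 0$. I would write $\xx=\ww c_\sB(\xx)$ with $|\ww|>0$, let $\alpha$ be the first syllable of $\ww$, and set $j:=\theta(\alpha)=\theta(\xx\mid c_\sB(\xx))\in\{-1,1\}$. It suffices to show $\varphi_\sB(\xx)=-j$, since then $\xx\in H_l^j(c_\sB(\xx))=H_l^{-\varphi_\sB(\xx)}(c_\sB(\xx))$. Because $c_\sB(\xx)\in\OST{}=\OST{1}\cup\OST{-1}$, this reduces to verifying $c_\sB(\xx)\notin\OST{j}$, for then $c_\sB(\xx)\in\OST{-j}\setminus\OST{j}$ and hence $\varphi_\sB(\xx)=-j$. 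Assuming for contradiction $c_\sB(\xx)\in\OST{j}=\OST{\theta(\alpha)}$, the implication ``$c_\sB(\xx)\in\OST{\theta(\alpha)}$ and $\alpha c_\sB(\xx)\sqsubseteq_l\xx$ together force $\alpha c_\sB(\xx)\in\OST{}$''---precisely the mechanism invoked inside Remark~\ref{varphinonzero}---then produces a left substring $\alpha c_\sB(\xx)$ of $\xx$ sitting in $\OST{}$ and properly extending $c_\sB(\xx)$, contradicting the maximality of $c_\sB(\xx)$.

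The one step meriting careful attention is that displayed implication: one handles the three sources of membership in the definition of $\OST{j}$---the clauses involving $\StB{}$, $\MM_i(\xx_0)$, and $\mm_i(\xx_0)$---uniformly by peeling the syllable $\alpha$ off the witnessing prefix; for the $\StB{}$ clause one additionally uses that $\bb\cdot\bb$ remains a string for any $\bb\in\CycB$, so that a higher power of $\bb$ furnishes the shortened witness. This bookkeeping, together with verifying that $\alpha c_\sB(\xx)$ remains in $H_l^i(\xx_0)$ (immediate from $\xx_0\sqsubseteq_l c_\sB(\xx)$ and the computation $\theta(\alpha c_\sB(\xx)\mid\xx_0)=\theta(c_\sB(\xx)\mid\xx_0)=i$ when $c_\sB(\xx)\sqsupsetneq_l \xx_0$, and from $\theta(\alpha)=i$ otherwise), is the main (mild) obstacle in an otherwise direct argument.
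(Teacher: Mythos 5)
Your proposal is correct and follows essentially the same route as the paper: the paper states this proposition as a record of the observation in the paragraph immediately preceding it, namely that for $\xx\in\OST{}$ the claim is immediate from $c_\sB(\xx)=\xx$, while for $\xx\notin\OST{}$ the maximality of $c_\sB(\xx)$ (via the same syllable-peeling mechanism as in Remark \ref{varphinonzero}) forces $c_\sB(\xx)\notin\OST{j}$ for $j=\theta(\xx\mid c_\sB(\xx))$, hence $\varphi_\sB(\xx)=-j$. Your extra bookkeeping (uniqueness of the one-syllable extension of a given sign, the $\bb^2$ trick for the $\StB{}$ clause, and membership of $\alpha c_\sB(\xx)$ in $H_l^i(\xx_0)$) just spells out what the paper leaves implicit.
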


The function $\varphi_\sB$ is defined in such a way that the following statement is true. This will be the key to showing that the algorithm to compute the order type of a hammock terminates after finitely many steps.
\begin{rem}\label{hammockcoverrepr}
For each $\yy\in\OST{}$, we have $H_l^{-\varphi_\sB(\yy)}(\yy)\cap \OST{}=\{\yy\}$.
\end{rem}

\begin{prop}\label{hammockcoverclassesorder}
If $\xx,\yy\in\OST{}$ and $\xx<_l\yy$ then for each $\xx'\in H_l^{-\varphi_\sB(\xx)}(\xx)$ and $\yy'\in H_l^{-\varphi_\sB(\yy)}(\yy)$ we have $\xx'<_l\yy'$.
\end{prop}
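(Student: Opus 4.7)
The plan is to do a case analysis based on the relative structure of $\xx$ and $\yy$, using the three clauses of Definition~\ref{hammock defn} to deduce that exactly one of three cases occurs: (i) $\xx \sqsubset_l \yy$ with $\theta(\yy\mid\xx) = 1$; (ii) the dual $\yy \sqsubset_l \xx$ with $\theta(\xx\mid\yy) = -1$; or (iii) neither is a left substring of the other, so setting $\ww := \xx \sqcap_l \yy$ we have $\xx = \vv\beta\ww$ and $\yy = \uu\alpha\ww$ with $\beta \in Q_1$ and $\alpha \in Q_1^-$. In each case I compare $\xx'$ and $\yy'$ directly.

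Case (iii) is the easiest: any $\xx' \in H_l^{-\varphi_\sB(\xx)}(\xx)$, being either $\xx$ or a left extension of $\xx$, still has $\beta\ww$ as a left substring; dually $\yy'$ has $\alpha\ww$ as a left substring. Hence $\xx' \sqcap_l \yy' = \ww$, and the third clause of Definition~\ref{hammock defn} gives $\xx' <_l \yy'$ immediately. Cases (i) and (ii) are dual, so I focus on (i). By Remark~\ref{rem: Hammock WStr is closed by substring} applied to $\yy \in \OST{}$ and $\xx \sqsubset_l \yy$, we obtain $\xx \in \OST{1}$, so $\varphi_\sB(\xx) \in \{0,1\}$ and hence $\xx' \leq_l \xx$. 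If $\varphi_\sB(\yy) \in \{-1, 0\}$ then $\yy \leq_l \yy'$, and the chain $\xx' \leq_l \xx <_l \yy \leq_l \yy'$ concludes; the same chain also works when $\varphi_\sB(\yy) = 1$ and $\yy' = \yy$.

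The main obstacle is the remaining subcase of (i): $\varphi_\sB(\yy) = 1$ and $\yy' = \ww_2 \yy$ with $|\ww_2| > 0$ and $\theta(\ww_2) = -1$. Writing $\yy = \zz\xx$ with $\theta(\zz) = 1$, we have $\yy' = \ww_2\zz\xx$; the key computation is that the first (i.e., rightmost) syllable of $\ww_2\zz$ equals the first syllable of $\zz$ since $|\zz| > 0$, and this syllable is inverse, so $\theta(\yy' \mid \xx) = 1$ and thus $\xx <_l \yy'$. If $\xx' = \xx$ we are done; otherwise $\xx' = \ww_1\xx$ with $|\ww_1|>0$ and $\theta(\ww_1) = -1$, so $\xx'$ has a direct syllable immediately above $\xx$ while $\yy'$ has an inverse one, giving $\xx' \sqcap_l \yy' = \xx$ and $\xx' <_l \yy'$ by the third clause of Definition~\ref{hammock defn}. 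Case (ii) follows by a symmetric argument, swapping the roles of $\xx$ and $\yy$ and exchanging direct and inverse syllables throughout.
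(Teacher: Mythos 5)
Your proof is correct and follows essentially the same route as the paper's: split into the two substring cases and the fork case, use the closure of $\OST{}$ under left substrings (Remark \ref{rem: Hammock WStr is closed by substring}) to rule out $\varphi_\sB(\xx)=-1$ when $\xx\sqsubset_l\yy$, and observe that the first syllable above the common part is unchanged under further left extension so the relevant signs persist. The paper merely compresses your subcases on $\varphi_\sB(\yy)$ by noting $\yy\sqsubseteq_l\yy'$ always holds, hence $\theta(\yy'\mid\xx)=\theta(\yy\mid\xx)=1$ uniformly; otherwise the arguments coincide.
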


\begin{proof}

If $\xx\sqsubset_l\yy$ then $\xx\sqsubset_l\yy\sqsubseteq_l\yy'$ for each $\yy'\in H_l^{-\varphi_\sB(\yy)}(\yy)$. Hence $\theta(\yy'\mid\xx)=\theta(\yy\mid\xx)=1$. Moreover, since $\xx\sqsubset_l\yy\in\OST{}$, we conclude that $\varphi_\sB(\xx)\neq-1$. If $\varphi_\sB(\xx)=0$ then the conclusion holds. On the other hand, if $\varphi_\sB(\xx)=1$ then $\theta(\xx'\mid\xx)=-1$ for each $\xx'\in H_l^{-\varphi_\sB(\xx)}(\xx)\setminus\{\xx\}$. Hence $\theta(\yy'\mid\xx')=\theta(\yy'\mid\xx)=1$, and hence the conclusion.

A dual argument can be given when $\xx\sqsupset_l\yy$.

Finally when $\xx$ and $\yy$ are incomparable then $\xx\sqcap_l\yy\in\OSt{\pm1}$. The arguments in the above two paragraphs then give that $\xx'<_l\xx\sqcap_l\yy<_l\yy'$  for each $\xx'\in H_l^{-\varphi_\sB(\xx)}(\xx)$ and $\yy'\in H_l^{-\varphi_\sB(\yy)}(\yy)$, and thus the conclusion follows.
\end{proof}
As a consequence, we get that certain hammocks are disjoint.
\begin{cor}\label{certain hammocks are disjoint}
If $\xx,\yy\in\OST{}$ and $\xx<_l\yy$ then $H_l^{-\varphi_\sB(\xx)}(\xx)\cap H_l^{-\varphi_\sB(\yy)}(\yy)=\emptyset$.
\end{cor}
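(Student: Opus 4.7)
The plan is to derive this immediately from Proposition \ref{hammockcoverclassesorder}, which has just been established. Indeed, that proposition promises that for $\xx<_l\yy$ in $\OST{}$, every element of $H_l^{-\varphi_\sB(\xx)}(\xx)$ strictly precedes every element of $H_l^{-\varphi_\sB(\yy)}(\yy)$ under $<_l$.

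So I would argue by contradiction: suppose there exists $\zz\in H_l^{-\varphi_\sB(\xx)}(\xx)\cap H_l^{-\varphi_\sB(\yy)}(\yy)$. Applying Proposition \ref{hammockcoverclassesorder} with the choices $\xx'=\zz$ and $\yy'=\zz$ yields $\zz<_l\zz$, contradicting the irreflexivity of the strict linear order $<_l$ on $H_l^i(\xx_0)$. Therefore the intersection must be empty.

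There is essentially no obstacle here; the corollary is a one-line consequence of the proposition, and the only thing to check is that taking both elements in the statement of the proposition to coincide is permitted, which it trivially is since the quantifiers are universal over each factor independently. The substantive content lies entirely in Proposition \ref{hammockcoverclassesorder} itself.
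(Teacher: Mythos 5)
Your argument is exactly the intended one: the paper states this corollary as an immediate consequence of Proposition \ref{hammockcoverclassesorder} without further proof, and specializing that proposition to a common element $\zz$ of both hammocks to get the contradiction $\zz<_l\zz$ is precisely the right (and complete) justification.
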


The following is the main result of this section which serves as an ingredient for the main theorem of this paper (Theorem \ref{main}). Loosely speaking, this result states that any hammock can be broken down into smaller hammocks when we localize/condense the hammock away from $\sB\in\QBa_i(\xx_0)$. This result gives a recursive algorithm to compute the order type of a hammock.

\begin{lem}\label{hammockordersum}
Suppose $\sB\in\QBa_i(\xx_0)$. Then $$(H_l^i(\xx_0),<_l)\cong\sum_{\xx\in c_\sB(H_l^i(\xx_0))}(H_l^{-\varphi_\sB(\xx)}(\xx),<_l).$$
\end{lem}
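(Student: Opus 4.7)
The plan is to define the natural candidate map $\Phi:H_l^i(\xx_0)\to\bigsqcup_{\xx\in\OST{}}H_l^{-\varphi_\sB(\xx)}(\xx)$ by $\Phi(\yy)=(c_\sB(\yy),\yy)$ and show it is an order isomorphism onto the order sum. By Remark \ref{cbI=IcapOST} the map $c_\sB$ is surjective, so the index set $c_\sB(H_l^i(\xx_0))$ in the statement coincides with $\OST{}$, which is regarded as a suborder of $(H_l^i(\xx_0),<_l)$.

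First I would verify well-definedness: for every $\yy\in H_l^i(\xx_0)$, Proposition \ref{hammockcover} places $\yy$ inside $H_l^{-\varphi_\sB(c_\sB(\yy))}(c_\sB(\yy))$, so $\Phi(\yy)$ really lands in the disjoint union. Injectivity is immediate from the second coordinate. For surjectivity I need to check that if $\xx\in\OST{}$ and $\zz\in H_l^{-\varphi_\sB(\xx)}(\xx)$ then $c_\sB(\zz)=\xx$. The case $\varphi_\sB(\xx)=0$ is handled by the convention $H_l^0(\xx)=\{\xx\}$, so assume $\varphi_\sB(\xx)\in\{\pm 1\}$. Then $\xx\sqsubseteq_l\zz$ together with $\xx\in\OST{}$ forces $\xx\sqsubseteq_l c_\sB(\zz)\sqsubseteq_l\zz$; if the first containment were strict, Remark \ref{rem: Hammock WStr is closed by substring} applied to $c_\sB(\zz)\in\OST{}$ would yield $\xx\in\OST{\theta(c_\sB(\zz)\mid\xx)}=\OST{-\varphi_\sB(\xx)}$, contradicting the definition of $\varphi_\sB(\xx)$.

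Next I would verify that $\Phi$ respects the order. For $\yy_1,\yy_2\in H_l^i(\xx_0)$, trichotomy for their images in $\OST{}$ gives three cases. If $c_\sB(\yy_1)<_l c_\sB(\yy_2)$ then Proposition \ref{hammockcoverclassesorder} gives $\yy_1<_l\yy_2$, matching the cross-fiber ordering in the order sum; the case $c_\sB(\yy_2)<_l c_\sB(\yy_1)$ is symmetric, and Corollary \ref{certain hammocks are disjoint} ensures there is no overlap between distinct fibers. If $c_\sB(\yy_1)=c_\sB(\yy_2)=:\xx$ then both $\yy_j$ lie in $H_l^{-\varphi_\sB(\xx)}(\xx)$, and the in-fiber order in the order sum is by definition the restriction of $<_l$, so the two orderings agree.

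The main obstacle is the surjectivity step, where we must rule out any intermediate left substring of $\zz$ in $\OST{}$ strictly longer than $\xx$. This is precisely the content of the substring-closure of $\OST{}$ (Remark \ref{rem: Hammock WStr is closed by substring}) combined with the definition of $\varphi_\sB(\xx)$ as encoding the sign in which $\xx$ fails to extend inside $\OST{}$; the remaining verifications are routine.
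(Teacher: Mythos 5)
Your proposal is correct and follows essentially the same route as the paper's proof: you identify the index set via Remark \ref{cbI=IcapOST}, use Proposition \ref{hammockcover} for the covering of the hammock by the fibers, and Proposition \ref{hammockcoverclassesorder} for compatibility with the ordering. The only cosmetic difference is that where the paper cites Corollary \ref{certain hammocks are disjoint} for disjointness of the pieces, you pin down the fibers directly by showing $c_\sB(\zz)=\xx$ for every $\zz\in H_l^{-\varphi_\sB(\xx)}(\xx)$ using Remark \ref{rem: Hammock WStr is closed by substring}, an equivalent verification in the spirit of Remark \ref{hammockcoverrepr}.
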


\begin{proof}
Recall from Remark \ref{cbI=IcapOST} that $c_\sB(H_l^i(\xx_0))=\OST{}$. For any $\xx\in H_l^i(\xx_0)$ and $j\in\{1,0,-1\}$,  Remark \ref{subhammock is an interval} gives that $H_l^j(\xx)$ is an interval in $H_l^i(\xx_0)$. Hence $$ H_l^i(\xx_0)\supseteq\bigcup_{\xx\in c_\sB(H_l^i(\xx_0))}H_l^{-\varphi_\sB(\xx)}(\xx).$$ The inclusion in the other direction is provided by Proposition \ref{hammockcover} while Corollary \ref{certain hammocks are disjoint} ensures that the union on the right-hand side is disjoint. Finally, Proposition \ref{hammockcoverclassesorder} ensures that the above bijection is indeed an order isomorphism.
\end{proof}

\section{Neighbours of strings in $\sB$-condensation}\label{sec: Neighbours}
This section is devoted to defining operators $\lB$ and $\lbB$ on $\StB{}$, which when restricted to $\STB{}$ help us to find the immediate neighbours of strings in it. En route, we define two subsets $\BalB$ and $\BalbB$ of the set of prime bands in $\sB$ and see that the limit of the sequence of such iterated immediate successors (resp. predecessors) are almost periodic strings of the form $\,^\infty\bb\uu\xx_0$, where $\bb\in\BalB$ (resp. $\BalbB$).
 
The concept of an \emph{exit syllable} of a band and the \emph{exit} of a bridge was introduced in \cite[\S~3]{GKS20}. Slightly modifying the former, we introduce an \emph{exit} of a band below.
\begin{defn}
Given a band $\bb$, say that a pair $(\beta,\bb')$ is an \emph{exit} of $\bb$ if $\beta$ is a syllable and $\bb'$ is a cyclic permutation of $\bb$ such that $\beta\bb'$ is a string but $\beta\bb'\not\sqsubseteq_l\bb'^2$.
\end{defn}
It is trivial to note that if $(\beta,\bb')$ is an exit of a band $\bb$ then $\beta$ is an exit syllable of $\bb$. There are some exits of a $\sB$-band for a non-domestic $\sB\in\QBa$; the signs of the corresponding exit syllables are important in the computation of the order type of hammocks.
\begin{defn}
If $\sB\in\QBa$ and $\bb\in\BaB$, say that an exit $(\beta,\bb')$ of $\bb$ is a \emph{non-domestic exit} if $\beta\bb'\in\ExtB$.
\end{defn}

\begin{rem}\label{nondomexitexist}
For non-domestic $\sB\in\QBa$ and $\bb\in\BaB$ there is $\bb'\in\BaB$ such that $\bb\neq\bb'$. Let $\uu$ be a string such that $\bb'\uu\bb\in\ExtB$. Then $\,^\infty\bb'\uu\bb$ and $\,^\infty\bb$ fork. Hence $\bb$ has a non-domestic exit.
\end{rem}

\begin{defn}\label{defn: la, lb bands}
Denote by $\BalB$ the set of all $\sB$-bands having no non-domestic exit $(\beta,\bb')$ with $\beta\in Q_1$. Dually, denote by $\BalbB$ the set of all $\sB$-bands having no non-domestic exits $(\beta,\bb')$ with $\beta\in Q_1^-$.
\end{defn}

\begin{exmp}
Continuing from Example \ref{exmp: QBa in running example}, we have $\mathsf{Ba}_l(\sB_3)=\{e_3e_2E_1,g_4G_3g_2G_1\}$ and $\mathsf{Ba}_{\lb}(\sB_3)=\{k_1K_2\}$.
\end{exmp}

In view of Remark \ref{nondomexitexist}, it is trivial to note that if $\sB$ is non-domestic then $\BalB\cap\BalbB=\emptyset$. We show in Corollary \ref{BalB non empty} and Corollary \ref{BalB prime} that the sets $\BalB$ and $\BalbB$ are non-empty and finite.

The following proposition is key to defining the operator $\lB$.
\begin{prop}
\label{prop:6.1}
If $\xx\in\StB1$ then there exists $\yy\in \StB1$ such that $\xx\sqsubset_l \yy\sqsubseteq_l l(\xx) $.
\end{prop}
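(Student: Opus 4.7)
The plan is to construct $\yy$ explicitly as a right substring of $\bb^{2}\xx$ for a carefully chosen cyclic permutation of the witnessing $\bb$. By hypothesis there exists $\bb=\alpha_n\alpha_{n-1}\cdots\alpha_1\in\CycB$ with $\theta(\bb)=1$ (so $\alpha_1$ is inverse) and $\bb\xx$ a string. Since in a string algebra at most one inverse syllable can be prepended to $\xx$, the formula $l(\xx)=\ww\alpha\xx$ forces $\alpha=\alpha_1$; thus $l(\xx)=\ww\alpha_1\xx$, where $\ww$ is the longest string with $|\ww|=0$ or $\delta(\ww)=-1$ such that $\ww\alpha_1\xx$ is a string.

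Using the cyclic convention $\alpha_{n+1}:=\alpha_1$, set $k:=\min\{j\geq 1\mid \alpha_{j+1}\text{ is inverse}\}$. Since $\alpha_1$ itself is inverse, this minimum exists with $1\leq k\leq n$, and by minimality each of $\alpha_2,\ldots,\alpha_k$ is direct (the range being empty when $k=1$). I then define
\[
\yy:=\alpha_k\alpha_{k-1}\cdots\alpha_1\xx,\qquad\bb':=\alpha_k\alpha_{k-1}\cdots\alpha_1\alpha_n\alpha_{n-1}\cdots\alpha_{k+1}.
\]
Observe that $\yy$ is a right substring of $\bb\xx$ and hence a string, with $\xx\sqsubset_l\yy$; moreover $\bb'$ is a cyclic permutation of $\bb$, so $\bb'\in\CycB$, and its rightmost syllable $\alpha_{k+1}$ is inverse by choice of $k$, so $\theta(\bb')=1$.

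To witness $\yy\in\StB{1}$ via $\bb'$, I would check that $\bb'\yy$ is a string. Expanding,
\[
\bb'\yy=\alpha_k\cdots\alpha_1\alpha_n\cdots\alpha_{k+1}\alpha_k\cdots\alpha_1\xx=\alpha_k\cdots\alpha_1\alpha_n\cdots\alpha_1\xx,
\]
which is exactly the right substring of $\bb^2\xx$ of length $|\xx|+n+k$. Since $\bb\in\CycB$ gives $\bb^2\xx\in\St$, closure of $\St$ under right substrings implies $\bb'\yy\in\St$, confirming $\yy\in\StB{1}$.

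It remains to verify $\yy\sqsubseteq_l l(\xx)$. For $k=1$ this is immediate since $\yy=\alpha_1\xx\sqsubseteq_l\ww\alpha_1\xx=l(\xx)$. For $k\geq 2$, the minimality of $k$ ensures $\alpha_k\cdots\alpha_2$ is a direct string and $\alpha_k\cdots\alpha_2\alpha_1\xx$ is a string (as a right substring of $\bb\xx$); the maximality of $\ww$ then forces $\alpha_k\cdots\alpha_2\sqsubseteq_l\ww$, whence $\yy\sqsubseteq_l l(\xx)$. The only slightly subtle step is the identification of $\bb'\yy$ with a right substring of $\bb^2\xx$, which follows precisely because $\bb'$ is the cyclic rotation of $\bb$ by $k$ positions that matches the segment of $\bb^2$ immediately to the left of $\yy$.
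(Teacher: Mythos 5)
Your proof is correct, but it takes a constructive route where the paper argues by contradiction. You explicitly exhibit the successor candidate $\yy=\alpha_k\cdots\alpha_1\xx$, where $\alpha_{k+1}$ is the first inverse syllable of $\bb$ encountered cyclically after $\alpha_1$, and you certify $\yy\in\StB1$ via the rotation $\bb'$ together with $\bb'\yy\sqsubseteq_l\bb^2\xx$; the paper instead assumes no such $\yy$ exists, sets $\zz:=\bb\xx\sqcap_l l(\xx)$, and rotates $\bb$ so that its first (direct) syllable prolongs $\zz$ inside $\bb\xx$, contradicting the definition of the meet. Both arguments rest on the same underlying facts: uniqueness of one-sided extension of a string by a direct (resp.\ inverse) syllable, closure of $\CycB$ under cyclic permutation, and the unstated $H$-equivalence remark (with $\delta(\bb)=0$) that makes $\bb^2\xx$ a string. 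Your version buys slightly more: it identifies an explicit witness $\yy$ (the left substring of $\bb\xx$ ending just before the next inverse syllable of $\bb$), which is occasionally useful, at the cost of a small case analysis on $k$. Two minor points to fix: in the paper's conventions your ``right substring of $\bb\xx$'' (resp.\ of $\bb^2\xx$) should read \emph{left} substring, i.e.\ $\yy\sqsubseteq_l\bb\xx$ and $\bb'\yy\sqsubseteq_l\bb^2\xx$; and the assertion ``$\bb\in\CycB$ gives $\bb^2\xx\in\St$'' silently uses $\delta(\bb)=0$ and the closure remark, which the paper allows to be used without mention but which you should at least cite.
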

\begin{proof}
If possible, assume that for each $\xx\sqsubset_l\yy\sqsubseteq_ll(\xx)$ we have $\yy \notin \StB1$. Let $\bb\in\CycB$ such that $\bb\xx$ is a string and $\theta(\bb)=1$. Let $\zz:=\bb\xx\sqcap_ll(\xx)$. As $\theta(\bb)=1$, we get $\xx\sqsubset_l\zz\sqsubseteq_ll(\xx)$. Thus by our assumption $\zz\notin\StB1$. This implies that $\zz\neq\bb\xx$. Since $\xx\sqsubset_l\zz\sqsubset_l\bb\xx$, for an appropriate cyclic permutation $\bb'$ of $\bb$, $\bb'\zz$ is a string. Moreover, $\zz\notin\StB1$ implies that $\theta(\bb')=-1$. Therefore $\alpha\zz\sqsubseteq_l\bb\xx$, where $\alpha\in Q_1$ is the first syllable of $\bb'$. Since $\xx\sqsubset_l\zz\sqsubseteq_ll(\xx)$ we get $\alpha\zz\sqsubseteq_ll(\xx)$, which contradicts that $\zz=\bb\xx\sqcap_ll(\xx)$.
\end{proof}

\begin{defn}
Define $\lB:\StB1\to\StB1$ by choosing $\lB(\xx)$ to be the maximal (possibly equal) left substring of $l(\xx)$ such that $\lB(\xx)\in\StB1$.
\end{defn}

\begin{rem}\label{rem:6.4}
For any $\xx\in\StB1$, we have $\lB(\xx)\notin\StB{-1}$.
\end{rem}

For $\xx\in\StB1$ we inductively define the powers of the function $\lB$ by $\lB^0(\xx):=\xx$ and $\lB^{n+1}(\xx):=\lB(\lB^n(\xx))$ for $n\in\N$. Since $\lB^n(\xx)\sqsubset_l\lB^{n+1}(\xx)$ for each $n$ we get that $\lim_{n\to\infty}\lB^n(\xx)$ is a left $\N$-string. Denote this limit by $\brac{1}{\lB}(\xx)$.

The following remark notes that if $|\xx|>0$ then $\lB(\xx)$ depends only on the last syllable of $\xx$. As a consequence, the image of the function $\lB$ restricted to $\STB1$ lies in $\STB1$.
\begin{rem}\label{rem:6.3}
If $\alpha\xx,\alpha\yy\in\StB1$ for some $\alpha \in Q_1\cup Q_1^-$, then  $\lB^n(\alpha\xx)=\uu\alpha\xx$ if and only if $\lB^n(\alpha\yy)=\uu\alpha\yy$. Furthermore, if $\lB^n(\alpha)$ exists then $\lB^n(1_{(t(\alpha),\varepsilon(\alpha))})$ exists and $\lB^n(\alpha)=\lB^n(1_{(t(\alpha),\varepsilon(\alpha))})\alpha$.
\end{rem}
\begin{prop}\label{prop:6.5}
For $\xx\in\StB1$, we have $\brac{1}{\lB}(\xx)=\,^\infty\bb\uu\xx$ for some band $\bb$ and string $\uu$.
\end{prop}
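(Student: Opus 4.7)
The plan is to exploit the state-based nature of $\lB$: by Remark~\ref{rem:6.3}, the action of $\lB$ on a string in $\StB1$ depends only on its last syllable, so iterating $\lB$ reduces to iterating a deterministic map on the finite set $Q_1 \cup Q_1^-$ of syllables. Finiteness then forces eventual periodicity of this ``state'' and hence of the blocks added at each step. Almost periodicity of $\brac{1}{\lB}(\xx)$ will follow by identifying the resulting periodic block with a power of a band.

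Proposition~\ref{prop:6.1} ensures that $\lB^n(\xx) \sqsubset_l \lB^{n+1}(\xx)$ strictly for every $n \geq 0$, so we may write $\lB^{n+1}(\xx) = \uu_{n+1}\, \lB^n(\xx)$ for a unique nonempty string $\uu_{n+1}$. Let $s_n$ denote the last syllable of $\lB^n(\xx)$; equivalently, $s_n$ is the last syllable of $\uu_n$ for $n \geq 1$, and of $\xx$ for $n = 0$. Remark~\ref{rem:6.3} applied with $n = 1$ shows that $\uu_{n+1}$ is determined solely by $s_n$, and consequently $s_{n+1}$ is a deterministic function of $s_n$. Finiteness of $Q_1 \cup Q_1^-$ and the pigeonhole principle yield $n_0 \geq 0$ and $p \geq 1$ such that $s_{n+p} = s_n$ for all $n \geq n_0$, whence $\uu_{n+p} = \uu_n$ for all $n \geq n_0 + 1$.

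Set $\bb_0 := \uu_{n_0+p}\uu_{n_0+p-1}\cdots\uu_{n_0+1}$ and $\uu := \uu_{n_0}\uu_{n_0-1}\cdots\uu_1$, with $\uu$ interpreted as an empty prefix when $n_0 = 0$. A telescoping using the periodicity of $(\uu_n)$ then gives $\lB^{n_0+kp}(\xx) = \bb_0^k\, \uu\, \xx$ for every $k \geq 0$. Since each of these is a string, $\bb_0^k$ occurs as a left substring of a string for every $k$, making $\bb_0$ a cyclic string. Writing $\bb_0 = \bb^m$ with $\bb$ a primitive cyclic string (so $\bb$ is a band), and using ${}^\infty \bb_0 = {}^\infty \bb$, we obtain
$$\brac{1}{\lB}(\xx) \;=\; \lim_{n \to \infty} \lB^n(\xx) \;=\; {}^\infty \bb_0\, \uu\, \xx \;=\; {}^\infty \bb\, \uu\, \xx,$$
as required.

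The only non-routine ingredient is the finite-state reduction: once Remark~\ref{rem:6.3} is invoked to observe that $\uu_{n+1}$ is determined solely by the last syllable of $\lB^n(\xx)$, eventual periodicity of the sequence $(\uu_n)$ is immediate by pigeonhole, and extracting a primitive period (yielding a band) is standard.
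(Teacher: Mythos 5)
Your proposal is correct and follows essentially the same route as the paper's proof: pigeonhole on the last syllable of $\lB^n(\xx)$, Remark \ref{rem:6.3} to see that the appended block depends only on that syllable (hence eventual periodicity), and identification of the periodic block as a power of a cyclic permutation of a band. The only nit is terminological: in $\bb_0^k\uu\xx$ the factor $\bb_0^k$ is a \emph{right} substring rather than a left one, but since any substring of a string is a string the conclusion that $\bb_0$ is a power of a cyclic permutation of a band is unaffected.
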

\begin{proof}
Define a function $f:\N^+\to Q_1\cup Q_1^-$ such that $f(k)$ is the last syllable of $\lB^k(\xx)$. As $Q_1\cup Q_1^-$ is finite, there exist $m,n\in\N^+$ such that $f(m)=f(m+n)$. In view of the fact that $\lB^k(\ww)\sqsubset_l\lB^{k+1}(\ww)$ for any $\ww\in\StB1$, let $\yy\xx:=\lB^n(\xx)$ and $\zz\yy\xx:=\lB^{m+n}(\xx)$, where $\zz$ is a string with $|\zz|>0$. As $\lB^n(\xx)$ and $\lB^{m+n}(\xx)$ have the same last syllable, Remark $\ref{rem:6.3}$ together with induction yields that $\lB^{m+kn}(\xx) =\zz^k\yy\xx$ for every $k\in\N$. Since $\zz^k$ is a string for every $k\in\N$, $\zz$ is a finite power of cyclic permutation of a band, say $\bb$. Since $\brac{1}{\lB}(\xx)=\,^\infty\zz\yy\xx$, we get $\brac{1}{\lB}(\xx)=\,^\infty\bb\yy'\xx$ for some string $\yy'$.
\end{proof}

\begin{exmp}\label{exmp: 1lB}
Recall from Example \ref{exmp: QBa in running example} that $e_3E_2E_1$ is a band that lies in $\sB_3$. For $A_2A_1a_0\in\mathsf{St}(\sB_3)$, a routine computation yields $\brac{1}{\ell_{\sB_3}}(A_2A_1a_0)=\,^\infty(e_3E_2E_1)A_2A_1a_0$.
\end{exmp}
The conclusion of Proposition \ref{prop:6.5} is similar to the hypothesis of \cite[Proposition~3.4.5]{GKS20}, whose proof used the concept of $\la$-strings. However, a statement about $\la$-strings \cite[Remark~3.4.4]{GKS20} that was used in the proof is erroneous as demonstrated by Example \ref{ex: 3.4.4 is wrong}. Nevertheless, it does not render \cite[Proposition~3.4.5]{GKS20} false, as it can still be proven using techniques similar to those in the proof of Proposition \ref{prop:6.14}.

\begin{exmp}\label{ex: 3.4.4 is wrong}
Consider the string algebra $\Gamma$ from Figure \ref{3.4.4 is wrong}. For appropriate $j\in \{ 1, -1\}$, we have $$\brac{1}{l}(1_{(v,j)})=\,^\infty(cbaEbafcbD).$$ Here $Ebafcb$, $DcbaEb$ and $fcbDcb$ are $l$-strings with the same first syllable and same length.
\end{exmp}
\begin{figure}[h]
    \begin{tikzcd}
v_1 \arrow[r, "a"] \arrow[rd, "e"'] & v_2 \arrow[r, "d"] \arrow[d, "b"] & v \arrow[ll, "f"', bend right=49] \\
                                    & v_3 \arrow[ru, "c"']              &                                 
\end{tikzcd}
    \caption{$\Gamma$ with $\rho = \{ ce, da, ef, fd, cbaf, fcba \}$}
    \label{3.4.4 is wrong}
\end{figure}
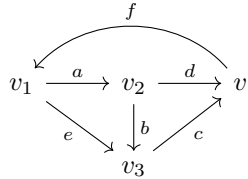
Motivated by the concept of $\la$-strings introduced in \cite[\S~3.4]{GKS20}, now we define $\lB$-strings to prove similar results where $\la$ is replaced with $\lB$.

\begin{defn}
A string $\uu$ is an \emph{$\lB$-string} if $\delta(\uu) = 0$ and $\uu\sqsubseteq\langle 1,\lB\rangle (1_{(v,i)})$ for some $1_{(v,i)}\in\StB1$.
\end{defn}

\begin{rem}\label{rem:6.7}
If $\lB^n(\xx)=\uu\yy\xx$, where $\yy$ and $\uu$ are strings with $|\uu|>0$, then $\theta(\uu)=1$ if and only if there exists $0\leq k<n$ such that $\lB^k(\xx)=\yy\xx$.
\end{rem}

\begin{prop}
Let $\xx\in\StB1$ and $\lB^n(\xx)=\uu\xx$ for some string $\uu$ and $n\in\N$. Then there exist $\bb\in\CycB$, $m\in\N^+$ and $\alpha\in Q_1^-$ such that $\alpha\uu\sqsubseteq_l\bb^m$ and $\bb^m\xx$ is a string.
\end{prop}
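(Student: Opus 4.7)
The plan is to induct on $n$. The base case $n=0$ is immediate: since $\uu$ is empty and $\xx\in\StB1$, pick $\bb\in\CycB$ with $\theta(\bb)=1$ such that $\bb\xx$ is a string; its first syllable $\alpha$ lies in $Q_1^-$, and $\alpha=\alpha\uu\sqsubseteq_l\bb^1$ with $\bb^1\xx$ a string.

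For the inductive step, set $\uu_0\xx:=\lB^{n-1}(\xx)$, so that $\uu\xx=\lB(\uu_0\xx)=\vv\uu_0\xx$ for some $\vv$ with $|\vv|\geq 1$ by Proposition~\ref{prop:6.1}. The inductive hypothesis supplies $\bb_0\in\CycB$, $m_0\in\N^+$, and $\alpha_0\in Q_1^-$ with $\alpha_0\uu_0\sqsubseteq_l\bb_0^{m_0}$ and $\bb_0^{m_0}\xx$ a string. In particular $\alpha_0\uu_0\xx$ is a string as a left substring of $\bb_0^{m_0}\xx$, with $\theta(\alpha_0\uu_0\xx\mid\uu_0\xx)=1$; hence $\alpha_0\uu_0\xx\geq_l l(\uu_0\xx)$. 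Writing $l(\uu_0\xx)=\ww_1\beta\uu_0\xx$ with $\beta\in Q_1^-$ and $\ww_1$ direct (or empty), the uniqueness of the inverse syllable prependable at the source of the last syllable of $\uu_0\xx$ forces $\beta=\alpha_0$; hence $\alpha_0\uu_0\xx\sqsubseteq_l l(\uu_0\xx)$. Combined with $\lB(\uu_0\xx)\sqsubseteq_l l(\uu_0\xx)$, one obtains $\vv=\vv'\alpha_0$ with $\vv'\sqsubseteq_l\ww_1$, so $\vv'$ is direct or empty.

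The crux is aligning $\vv'$ with the cyclic pattern of $\bb_0$. Let $\gamma_1\gamma_2\cdots$ denote the syllables of $\bb_0^{m_0+1}$ at positions $|\uu_0|+2,|\uu_0|+3,\ldots$. Since $\theta(\bb_0)=1$ guarantees that $\bb_0$ contains an inverse syllable, there is a least $K\geq 0$ with $\gamma_{K+1}\in Q_1^-$. Uniqueness of the direct prependable at each vertex implies that the initial direct segment of $\ww_1$ agrees with $\gamma_1\cdots\gamma_K$. Moreover, $\gamma_1\cdots\gamma_K\alpha_0\uu_0\xx$ lies in $\StB1$: it is a left substring of $\bb_0^{m_0+1}\xx$, and the cyclic permutation of $\bb_0$ whose first syllable is $\gamma_{K+1}$ (and hence has $\theta=1$) prepends to it. The maximality of $\lB(\uu_0\xx)$ then forces $\gamma_1\cdots\gamma_K\sqsubseteq_l\vv'$. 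Taking $\alpha:=\gamma_{|\vv'|+1}$, $\bb:=\bb_0$, and $m$ large enough for $\alpha\uu\sqsubseteq_l\bb^m$, the conclusion follows in the simplest case $|\vv'|=K$, with $\alpha=\gamma_{K+1}\in Q_1^-$.

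The main obstacle is the case $|\vv'|>K$, in which $\vv'$ extends past $\bb_0$'s initial direct run. This can only happen when the extra direct syllables of $\vv'$ coincide with the direct pattern of another $\sB$-band (since $\vv'\alpha_0\uu_0\xx$ must remain in $\StB1$); a suitable cyclic permutation $\bb$ of that band then continues the alignment, and an inverse syllable is eventually reached along the pattern, providing the required $\alpha\in Q_1^-$. Careful use of the $\sigma,\varepsilon$-conditions of the string algebra at branching vertices (to justify uniqueness of direct and inverse prependables) underlies the entire argument.
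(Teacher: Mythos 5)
Your induction scheme and base case coincide with the paper's, and your intermediate analysis is sound up to and including the containment $\gamma_1\cdots\gamma_K\sqsubseteq_l\vv'$; in the aligned case $|\vv'|=K$ (which subsumes the paper's case where the $\lB$-step adds only the inverse syllable) your argument works, with $\bb_0$ itself and $\alpha=\gamma_{K+1}$. The genuine gap is exactly the case you flag as the main obstacle, $|\vv'|>K$, and the way you propose to handle it cannot succeed. The conclusion requires a \emph{single} $\bb\in\CycB$, an $m\in\N^+$ and an $\alpha\in Q_1^-$ with $\alpha\uu\sqsubseteq_l\bb^m$ for the \emph{whole} string $\uu=\vv'\alpha_0\uu_0$, together with $\bb^m\xx$ a string. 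Switching to ``another $\sB$-band'' and a suitable cyclic permutation of it does not deliver this: the tail $\alpha_0\uu_0$ of $\uu$ is aligned with powers of the old cycle $\bb_0$, not with the new band, so no power of the new band alone can contain $\alpha\uu$ as a left substring; moreover, the only witness you actually have in this case is that some $\sB$-cycle of positive sign prepends to $\vv'\alpha_0\uu_0\xx$, which does not justify the claim that the extra direct syllables of $\vv'$ ``coincide with the direct pattern'' of a single band.

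What is missing is the paper's key construction for this case: since $\vv'\alpha_0\uu_0\xx=\lB(\uu_0\xx)\in\StB{1}$, choose $\bb_2\in\CycB$ with $\theta(\bb_2)=1$ such that $\bb_2\vv'\alpha_0\uu_0\xx$ is a string, and choose a cyclic permutation $\bb_1$ of the inductively given cycle (or of a power of it) having $\alpha_0\uu_0$ as a right substring. Because $\vv'$ is direct and $\alpha_0\in Q_1^-$, the string $\bb_2\vv'\bb_1$ makes sense, and since $\bb_1,\bb_2$ are both $\sB$-cycles there is a connecting string $\ww$ with $\bb_1\ww\bb_2$ a string; then $\ww\bb_2\vv'\bb_1$ is again an element of $\CycB$ (a \emph{composite} cycle), a suitable cyclic permutation of which contains $\alpha\uu$ as a left substring, and whose powers can still be appended to $\xx$. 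Exploiting that $\CycB$ contains such composite cycles, rather than switching to one other band, is the essential idea your proposal lacks; without it the inductive step fails precisely in the case that gives the statement its content.
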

\begin{proof}
For each $n\in\N$, let $\uu_n\xx:=\lB^n(\xx)$. Let $\alpha_n\in Q_1^-$ satisfy $\alpha_n\uu_n\xx\sqsubseteq_l\lB^{n+1}(\xx)$. We will prove the result by induction on $n$.

For $n=0$, we have $\lB^n(\xx) = \xx$. Since $\xx\in\StB1$, there is $\bb\in\CycB$ with $\theta(\bb)=1$ such that $\bb\xx$ is a string. Taking $\alpha$ to be the first syllable of $\bb$ proves the statement.

For $n>0$, by induction hypothesis, there exists $\bb\in\CycB$ and $m\in\N^+$ such that $\alpha_{n-1}\uu_{n-1}\sqsubseteq_l \bb^m$ and $\bb^m\xx$ is a string. Let  $\uu_n := \yy\alpha_{n-1}\uu_{n-1}$ such that if $|\yy|>0$ then $\delta(\yy) = -1$. There are two cases.
\begin{itemize}
    \item If $|\yy|>0$, let $\bb_1,\bb_2\in\CycB$ be such that $\alpha_{n-1}\uu_{n-1}\sqsubseteq_r\bb_1$, $\theta(\bb_2)=1$ and $\bb_2\yy\alpha_{n-1}\uu_{n-1}\xx$ is a string. Since $\delta(\yy) = -1$ and $\alpha_{n-1}\in Q_1^-$ we get that $\bb_2\yy\bb_1$ is a string. Since $\bb_1,\bb_2\in\CycB$ we get that $\bb_1\ww\bb_2$ is a string for some string $\ww$. Thus $\ww\bb_2\yy\bb_1\in\CycB$ and $\alpha_n\uu_n\sqsubseteq\ww\bb_2\yy\bb_1$. Let $\bb'$ be a cyclic permutation of $\ww\bb_2\yy\bb_1$ such that $\alpha_n\uu_n=\alpha_n\yy\alpha_{n-1}\uu_{n-1} \sqsubseteq_l\bb'$. Since $\delta(\yy) =-1$ and $\theta(\alpha_n)=1$ we get $\delta(\alpha_n\uu_n)=\delta(\alpha_n\yy\alpha_{n-1}\uu_n)=0$. Therefore $\alpha_n\uu_n \sqsubseteq_l\bb'$ and $\bb'\xx$ is a string.
    
    \item If $|\yy| = 0$ then $\uu_n\sqsubseteq_l\bb$. Let $\ww\uu_n:=\bb^m$ for some string $\ww$. Let $\alpha$ be the first syllable of $\bb\ww$. By Remark \ref{rem:6.4}, we have $\uu_n\xx\notin\StB{-1}$. Therefore $\alpha\in Q_1^-$ and $\bb^{m+1}$ satisfy the conditions of the conclusion.
\end{itemize}
This completes the proof.
\end{proof}

\begin{cor}
\label{cor:6.12}
If $\xx\in\StB1$ and $\uu\yy\xx\sqsubset_l\braclB(\xx)$ for some strings $\uu$ and $\yy$ then there exists $\bb\in\CycB$ such that  $\bb\yy\xx$ is a string and $\uu\sqsubseteq_l\bb^m$ for some $m\in\N^+$.
\end{cor}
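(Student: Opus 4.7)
The plan is to reduce to the preceding proposition by a cyclic-rotation argument inside a sufficiently high power of the band it produces.

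Since $\uu\yy\xx$ has finite length and $\uu\yy\xx\sqsubset_l\braclB(\xx)=\lim_{n\to\infty}\lB^n(\xx)$, there exists $n\in\N$ with $\uu\yy\xx\sqsubseteq_l\lB^n(\xx)$; writing $\lB^n(\xx)=\uu'\xx$, this gives $\uu\yy\sqsubseteq_l\uu'$. The preceding proposition, applied to $\lB^n(\xx)=\uu'\xx$, produces $\bb_0\in\CycB$, $m_0\in\N^+$ and $\alpha\in Q_1^-$ with $\alpha\uu'\sqsubseteq_l\bb_0^{m_0}$ and $\bb_0^{m_0}\xx$ a string. Because $\bb_0^{m_0+k}\xx$ remains a string for every $k\geq 0$, I would enlarge $m_0$ if necessary so that $(m_0-1)|\bb_0|\geq|\uu\yy|$; then, writing $\bb_0^{m_0}=\ww\uu\yy$, we get $|\ww|\geq|\bb_0|$.

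Next, let $\bb$ be the cyclic permutation of $\bb_0$ whose syllables are the $|\bb_0|$ syllables of $\bb_0^{m_0}$ immediately to the left of $\yy$; equivalently, $\bb\yy$ is the rightmost substring of $\bb_0^{m_0}$ of length $|\bb_0|+|\yy|$. Then $\bb\in\CycB$, and $\bb\yy\xx$, being a left substring of the string $\bb_0^{m_0}\xx$, is itself a string. To produce $m\in\N^+$ with $\uu\sqsubseteq_l\bb^m$, I would use the $|\bb_0|$-periodicity of the concatenation $\bb_0^{m_0}$ together with the definition of $\bb$ to identify, whenever the window fits inside $\bb_0^{m_0}$ (arrange this by choosing $m_0$ larger at the start), the length-$m|\bb_0|$ substring of $\bb_0^{m_0}$ at positions $|\yy|+1,\dots,|\yy|+m|\bb_0|$ from the right as the string $\bb^m$. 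Since $\uu$ occupies positions $|\yy|+1,\dots,|\yy|+|\uu|$ from the right of $\bb_0^{m_0}$, any $m\in\N^+$ with $m|\bb_0|\geq|\uu|$ then gives $\uu\sqsubseteq_l\bb^m$.

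The main obstacle is the bookkeeping in the final identification: one must verify that shifting the reading frame of the literal $m_0$-fold concatenation $\bb_0^{m_0}$ forward by $|\yy|$ syllables yields exactly the $m$-th power of the specific cyclic rotation $\bb$, and not some other cyclic word. This is immediate once one writes out the syllables of $\bb_0^{m_0}$ in terms of those of $\bb_0$, but it is the one place where careful attention to indices is required.
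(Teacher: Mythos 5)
Your proof is correct and matches the paper's (unwritten) intent: the corollary is stated as an immediate consequence of the preceding proposition, and your route---choose $n$ with $\uu\yy\xx\sqsubseteq_l\lB^n(\xx)$, apply the proposition to get $\bb_0^{m_0}$, then rotate $\bb_0$ to the cyclic permutation aligned with $\yy$ and use $|\bb_0|$-periodicity to read off $\bb\yy\xx$ and $\uu\sqsubseteq_l\bb^m$---is exactly that derivation. The only step you assert without citation, that $\bb_0^{m_0+k}\xx$ stays a string when enlarging $m_0$, is covered by the paper's remark that $\delta(\bb_0^{m_0})=0$ together with $\bb_0^k\bb_0^{m_0}$ and $\bb_0^{m_0}\xx$ being strings implies $\bb_0^{m_0+k}\xx$ is a string.
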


\begin{cor}\label{cor:6.10}
If $\uu$ is an $\lB$-string then $\uu\in\ExtB$.
\end{cor}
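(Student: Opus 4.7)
The plan is to apply Corollary \ref{cor:6.12} essentially as a black box. Given an $\lB$-string $\uu$, by definition there is a vertex string $\xx_0 := 1_{(v,i)}\in\StB1$ such that $\uu$ appears as a (contiguous) substring of the left $\N$-string $\braclB(\xx_0)$ and $\delta(\uu)=0$. Since $\braclB(\xx_0)$ extends infinitely to the left and $|\xx_0|=0$, the syllables of $\uu$ occur at finitely many positions in $\braclB(\xx_0)$, and I can always find at least one further syllable $\alpha$ of $\braclB(\xx_0)$ lying strictly to the left of them. Accordingly, I write a proper left substring of $\braclB(\xx_0)$ in the form $\alpha\uu\yy\xx_0$, where $\yy$ is the (possibly zero-length) string made of the syllables of $\braclB(\xx_0)$ lying to the right of $\uu$ in that left substring.

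With this decomposition in hand, I apply Corollary \ref{cor:6.12} to $\xx_0$, playing the role of its ``$\xx$'', with $\alpha\uu$ in the role of its ``$\uu$'' and $\yy$ in the role of its ``$\yy$''. This yields a $\sB$-cycle $\bb\in\CycB$ and an integer $m\in\N^+$ such that $\bb\yy\xx_0$ is a string and $\alpha\uu\sqsubseteq_l \bb^m$. Since $\uu$ is a right substring of $\alpha\uu$, which is a left substring of $\bb^m$, the string $\uu$ is a substring of $\bb^m$, a power of a $\sB$-cycle. By the very definition of $\ExtB$, this gives $\uu\in\ExtB$, which is the desired conclusion.

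I do not foresee any substantive obstacle: the hypothesis $\delta(\uu)=0$ plays no role in the argument above, which works for any contiguous substring of $\braclB(\xx_0)$. The only point that needs a moment of care is verifying that the finite string $\uu$ can always be strictly sandwiched between some extra syllable $\alpha$ on the left and the terminal $\xx_0$ on the right inside a proper left substring of $\braclB(\xx_0)$; this is immediate since $\braclB(\xx_0)$ is a left $\N$-string, hence infinite to the left, and so the corollary is essentially a direct unpacking of Corollary \ref{cor:6.12}.
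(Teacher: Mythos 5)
Your proof is correct and follows essentially the same route the paper intends: unpack the definition of an $\lB$-string as a substring of $\braclB(1_{(v,i)})$, complete it to a left substring of that left $\N$-string, and apply Corollary \ref{cor:6.12} to land it inside a power of a $\sB$-cycle, hence in $\ExtB$. (The extra syllable $\alpha$ you prepend is harmless but unnecessary, since any finite left substring of the infinite left $\N$-string is automatically proper.)
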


\begin{prop}
\label{prop:6.11}
If $\xx$ and $\alpha\xx$ are $\lB$-strings for some $\alpha\in Q_1\cup Q_{-1}$ then  $\xx\in \StB{-1}$ if and only if $\alpha\in Q_1$.
\end{prop}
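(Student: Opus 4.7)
The plan is to leverage the structural description of $\braclB(1_{(v,i)})$ built into the definition of $\lB$, together with Corollary \ref{cor:6.12} and Remark \ref{rem:6.4}. First I would fix $1_{(v,i)}\in\StB1$ with $\alpha\xx\sqsubseteq_l\braclB(1_{(v,i)})$, guaranteed by $\alpha\xx$ being an $\lB$-string; since $\xx\sqsubseteq_l\alpha\xx$, the same $1_{(v,i)}$ witnesses $\xx$ being an $\lB$-string, and $\delta(\xx)=0$ forces $\xx=\xx''1_{(v,i)}$ with $|\xx''|>0$. The finite string $\alpha\xx$ is a proper left substring of the left $\N$-string $\braclB(1_{(v,i)})$, so applying Corollary \ref{cor:6.12} with $\uu:=\alpha$ and $\yy:=\xx''$ produces $\bb\in\CycB$ such that $\bb\xx$ is a string and $\alpha\sqsubseteq_l\bb^m$ for some $m\in\N^+$; consequently $\alpha$ is the first syllable of $\bb$, giving $\theta(\bb)=\theta(\alpha)$.

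The forward direction $\alpha\in Q_1\Rightarrow\xx\in\StB{-1}$ then drops out: $\theta(\alpha)=-1$ forces $\theta(\bb)=-1$, and $\bb$ itself witnesses $\xx\in\StB{-1}$. For the converse, I plan to argue the contrapositive: assume $\alpha\in Q_1^-$ and deduce $\xx\notin\StB{-1}$. The critical step is to pin down the syllable pattern of $\braclB(1_{(v,i)})$. Unpacking the construction, $\lB(\yy)=\ww'\beta\yy$ where $\beta\in Q_1^-$ and $\ww'$ is a left substring of the all-direct block $\ww$ appearing in $l(\yy)=\ww\beta\yy$ (recall $\delta(\ww)=-1$); so each iteration of $\lB$ contributes exactly one inverse syllable at a predictable boundary, with every other appended syllable direct. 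Setting $m_k:=|\lB^k(1_{(v,i)})|$, the inverse syllables of $\braclB(1_{(v,i)})$ then occupy precisely the positions $m_k+1$ for $k\geq 0$.

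Since $\alpha\in Q_1^-$ sits at position $|\xx|+1$ of $\braclB(1_{(v,i)})$, we must have $|\xx|=m_k$ for some $k\geq 0$, and hence $\xx=\lB^k(1_{(v,i)})$ (both are left substrings of $\braclB(1_{(v,i)})$ of the same length). Nontriviality of $\xx$ rules out $k=0$, so $\xx=\lB(\lB^{k-1}(1_{(v,i)}))$ with $\lB^{k-1}(1_{(v,i)})\in\StB1$; Remark \ref{rem:6.4} then delivers $\xx\notin\StB{-1}$, yielding the desired contradiction. The main obstacle will be justifying the claim that the inverse syllables of $\braclB(1_{(v,i)})$ occur precisely at positions $m_k+1$ for $k\geq 0$; everything else amounts to unwinding the definitions of $l$ and $\lB$ and applying Remark \ref{rem:6.4}.
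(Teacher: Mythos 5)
Your argument hinges on reading ``$\lB$-string'' as meaning that $\alpha\xx$ is a \emph{left} substring of $\braclB(1_{(v,i)})$, so that $\xx$ bottoms out at $1_{(v,i)}$ and, by your positional argument, $\xx$ is literally equal to $\lB^k(1_{(v,i)})$ for some $k\geq 1$. But the definition only requires $\delta(\alpha\xx)=0$ and $\alpha\xx\sqsubseteq\braclB(1_{(v,i)})$ as a \emph{substring} (factor), not a left substring; this is exactly how the notion is used later, e.g.\ in the proofs of Corollary \ref{rem:6.9} and Proposition \ref{prop:6.14}, where the $\lB$-strings in question (cyclic permutations of $\bb$, strings like $\bb_j\bb_k$) sit strictly inside $^\infty\bb\uu$ and do not contain the base point. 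So in general you only know that $\alpha\xx\yy\sqsubseteq_l\braclB(1_{(v,j)})$ for some further string $\yy$, and your boundary-position argument (which is essentially Remark \ref{rem:6.7}) identifies $\xx\yy$, not $\xx$, with $\lB^k(1_{(v,j)})$. Remark \ref{rem:6.4} then gives $\xx\yy\notin\StB{-1}$, which is strictly weaker than the needed conclusion $\xx\notin\StB{-1}$: a $\sB$-cycle $\bb$ with $\theta(\bb)=-1$ and $\bb\xx$ a string need not obviously extend over $\yy$.

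The missing step is precisely the one the paper's proof supplies: since $\delta(\xx)=0$ and $\xx\yy$ is a string, $\xx\equiv_H\xx\yy$, so $\bb\xx$ a string forces $\bb\xx\yy$ to be a string, i.e.\ $\xx\in\StB{-1}$ implies $\xx\yy\in\StB{-1}$; only then does the contradiction with Remark \ref{rem:6.4} (or, as in the paper, the direct argument that the syllable following $\xx\yy$ must be direct) go through. Your other direction has the same defect but is easily repaired: applying Corollary \ref{cor:6.12} with the extra tail $\yy$ gives $\bb\in\CycB$ with first syllable $\alpha\in Q_1$ such that $\bb\xx\yy$ is a string, and then $\bb\xx$, being a factor of it, is a string, so $\xx\in\StB{-1}$ (the paper instead gets this from Corollary \ref{cor:6.10}). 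So the overall strategy can be salvaged, but as written the proof only covers the special case where $\alpha\xx$ contains $1_{(v,i)}$, which is not the statement being proved and would not support its later applications.
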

\begin{proof}
Suppose $\xx\in\StB{-1}$. Since $\alpha\xx$ is an $\lB$-string, there exist a string $\yy,\ n\in\N^+,\ v\in Q_0$ and $j\in\{1,-1\}$ such that $\alpha \xx\yy \sqsubseteq_l \lB^{n+1}(1_{(v,j)})$ and $\lB^{n-1}(1_{(v,j)}) \sqsubset_l \xx\yy\sqsubseteq_l\lB^n(1_{(v,j)})$. Let $\uu:=\lB^{n-1}(1_{(v,j)})$. Hence $\xx\yy\sqsubseteq_l\lB(\uu)$. Since $\xx\yy\in\StB{-1}$, there exists $\bb\in\CycB$ with $\theta(\bb)=-1$ such that $\bb\xx\yy$ is a string. Let $\beta$ be the first syllable of $\bb$. Then $\beta\in Q_1$ gives $\beta\xx\yy\sqsubseteq_l \lB(\uu)$. Let $\zz:=\lB(\uu)\sqcap_l\bb\xx\yy$. Since $\uu\sqsubset_l\beta\xx\yy\sqsubseteq_l\zz\sqsubset_l\bb\xx\yy$ and $\uu<_l\lB(\uu)<_l\bb\xx\yy$, we get that $\xx\yy\sqsubset_l\gamma\zz \sqsubseteq_l\bb\xx\yy$ for some $\gamma\in Q_1^-$. This shows that $\zz\in \StB1$. Since $\beta\xx\yy\sqsubseteq_l \zz$ we get $\beta\xx\yy \sqsubseteq_l\lB(\uu)$. Thus $\beta\xx\yy \sqsubseteq_l \lB^{n+1}(1_{(v,j)})$, which gives $\alpha =\beta\in Q_1$.

Conversely if $\alpha\xx$ is an $\lB$-string for $\alpha\in Q_1$, Corollary \ref{cor:6.10} yields $\bb\in\CycB$ such that $\alpha\xx\sqsubseteq_l\bb$. Thus $\xx\in\StB{-1}$.
\end{proof}
\begin{cor}
\label{rem:6.9}
For strings $\xx,\uu,\vv$, if $\xx$, $\uu \xx$ and $\vv \xx$ are $\lB$-strings  then $\uu \xx$ and $\vv\xx$ do not fork.
\end{cor}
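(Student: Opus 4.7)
The plan is to show, under the stated hypotheses, that $\uu\xx$ and $\vv\xx$ are comparable under $\sqsubseteq_l$, which precludes a fork. I would proceed by induction on $\min(|\uu|,|\vv|)$. The base case is immediate: if WLOG $|\uu|=0$, then $\uu\xx=\xx\sqsubseteq_l\vv\xx$, so they are comparable. So assume $|\uu|,|\vv|>0$ and write $\uu=\uu'\alpha$, $\vv=\vv'\beta$, where $\alpha,\beta$ are the first syllables of $\uu$ and $\vv$ respectively.

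The key inductive step is forcing $\alpha=\beta$. First I would argue that $\alpha\xx$ and $\beta\xx$ are themselves $\lB$-strings. They appear as right-substrings of $\uu\xx$ and $\vv\xx$, which are $\lB$-strings, so each lies inside some $\braclB(1_{(v_0,j_0)})$. Moreover, since $\xx$ is an $\lB$-string we have $\delta(\xx)=0$, so $\xx$ already contains syllables of both signs; prepending $\alpha$ (or $\beta$) preserves this and gives $\delta(\alpha\xx)=\delta(\beta\xx)=0$. Proposition \ref{prop:6.11} now determines the sign of $\alpha$ purely from the condition $\xx\in\StB{-1}$, and likewise for $\beta$, so $\alpha$ and $\beta$ have the same sign. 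The standard string algebra axiom—that at each vertex, at most one syllable of each fixed sign can prepend to a given string—then forces $\alpha=\beta$.

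Setting $\xx':=\alpha\xx=\beta\xx$, I would verify that $\xx'$ is an $\lB$-string (it is a substring of the same $\braclB(1_{(v_0,j_0)})$ and has $\delta=0$), and that $\uu'\xx'=\uu\xx$ and $\vv'\xx'=\vv\xx$ are $\lB$-strings as well. Applying the induction hypothesis to $(\xx',\uu',\vv')$ yields comparability of $\uu'\xx'$ and $\vv'\xx'$ under $\sqsubseteq_l$, which is precisely comparability of $\uu\xx$ and $\vv\xx$, hence the non-forking conclusion. The main subtlety is ensuring $\alpha\xx$ and $\beta\xx$ qualify as $\lB$-strings; this would fail if $\xx$ had only one sign, but the hypothesis that $\xx$ is itself an $\lB$-string rules this out. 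Everything else is a clean invocation of Proposition \ref{prop:6.11} together with the basic rigidity of syllable extensions in a string algebra.
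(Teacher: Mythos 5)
Your proof is correct, and its essential input is the same as the paper's: Proposition \ref{prop:6.11} applied to one-syllable extensions of a common $\lB$-string, together with the facts that a factor of an $\lB$-string with $\delta=0$ is again an $\lB$-string and that a string admits at most one direct and at most one inverse syllable extension. The packaging, however, differs. The paper argues in one step: assuming a fork, it takes $\zz\xx:=\uu\xx\sqcap_l\vv\xx$, notes that $\zz\xx$ is an $\lB$-string (because $\delta(\xx)=0$), and derives an immediate contradiction via Proposition \ref{prop:6.11} (combined with Remarks \ref{rem:6.4} and \ref{rem:6.7}), since the fork would force both a direct and an inverse syllable extension of $\zz\xx$ inside $\lB$-strings. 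Your induction on $\min(|\uu|,|\vv|)$ instead proves the equivalent comparability statement by zipping the two strings together one syllable at a time, with each step being exactly this application of Proposition \ref{prop:6.11} plus the sign-uniqueness of extensions; so your argument is a longer but valid rearrangement of the same idea, and it could be collapsed to a single application at the forking point. Two small remarks: $\alpha\xx$ and $\beta\xx$ are \emph{left} substrings in the paper's convention ($\uu\xx=\uu'(\alpha\xx)$ gives $\alpha\xx\sqsubseteq_l\uu\xx$), not right substrings, though for the $\lB$-string definition only being a factor of $\braclB(1_{(v,j)})$ matters, so nothing breaks; and your appeal to ``at most one prepending syllable of each sign'' is indeed the standard string-algebra fact underlying the linearity of hammocks, so it is legitimately available here.
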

\begin{proof}
Suppose, if possible, $\uu\xx$ and $\vv\xx$ fork. Let $\zz\xx:=\uu\xx\sqcap_l\vv\xx$ with $\theta(\vv\xx\mid\zz\xx)=1$ and $\theta(\uu\xx\mid\zz\xx)=-1$. Since $\xx\sqsubseteq\zz\xx\sqsubset\uu\xx$, we have that $\zz\xx$ is an $\lB$-string. By Proposition \ref{prop:6.11}, we have that $\zz\xx\in\StB{\pm1}$, a contradiction to the combination of Remarks \ref{rem:6.4} and \ref{rem:6.7}.
\end{proof}

\begin{rem}
\label{rem:6.10}
If $\xx$ is an $\lB$-string then there exist $\bb\in\BaB$ and a string $\yy$ such that $\bb\yy\uu$ is a string.
\end{rem}

\begin{prop}\label{prop:6.14}
If every cyclic permutation of a band $\bb$ is an $\lB$-string, then $\bb$ is a prime band.
\end{prop}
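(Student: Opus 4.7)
I plan to argue by contradiction. Suppose $\bb$ is composite, so by Proposition \ref{composite approx} some cyclic permutation $\bb^{\ast}$ of $\bb$ admits a decomposition $\bb^{\ast} = \bb^{\ast}_k \bb^{\ast}_{k-1} \cdots \bb^{\ast}_1$ with $k \geq 2$, where each $\bb^{\ast}_j$ is a cyclic permutation of a band $\bb_j \in \Ba$ of length strictly less than $|\bb|$. For $j = 1, \ldots, k$, let $\bb^{[j]}$ denote the cyclic permutation of $\bb$ whose rightmost block is $\bb^{\ast}_j$; in particular $\bb^{[1]} = \bb^{\ast}$ and $\bb^{[2]} = \bb^{\ast}_1 \bb^{\ast}_k \cdots \bb^{\ast}_2$. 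By hypothesis every $\bb^{[j]}$ is an $\lB$-string, and since any left substring with $\delta = 0$ of an $\lB$-string is itself an $\lB$-string, each individual block $\bb^{\ast}_j$ and each partial product $\bb^{\ast}_l \cdots \bb^{\ast}_{j+1} \bb^{\ast}_j \sqsubseteq_l \bb^{[j]}$ is an $\lB$-string.

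To invoke Corollary \ref{rem:6.9}, I will exhibit an $\lB$-string $\xx$ and two $\lB$-string extensions $\uu\xx, \vv\xx$ that fork in the sense of having first syllables of opposite signs past their maximal common left substring. The natural candidate is $\xx = \bb^{\ast}_1$ with right endpoint $1_{(v_0, i_0)}$: the first extension is the left substring $\bb^{\ast}_2 \bb^{\ast}_1 \sqsubseteq_l \bb^{[1]}$, whose leftmost new syllable is the leftmost syllable of $\bb^{\ast}_2$. The second extension is built by exploiting Corollary \ref{cor:6.12} applied to $\brac{1}{\lB}(1_{(v_0, i_0)})$, which identifies a $\sB$-cycle $\bb_0 \in \CycB$ such that the ``middle portion'' $\bb^{\ast}_k \cdots \bb^{\ast}_2$ of $\bb^{\ast}$ is a left substring of some power $\bb_0^m$; composite\-ness of $\bb$ forces $\bb_0$ to differ from a simple cyclic permutation of $\bb$, and combining this with the hypothesis that $\bb^{[j']}$ is an $\lB$-string for some suitably chosen $j' \neq 1, 2$ supplies the competing left extension of $\bb^{\ast}_1$ whose leftmost new syllable has the opposite sign. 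Then Proposition \ref{prop:6.11} forces $\xx$ into both $\StB{-1}$ and its complement, yielding the contradiction promised by Corollary \ref{rem:6.9}.

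The principal obstacle I anticipate is that all $\lB$-string extensions of a single trivial right endpoint $1_{(v,i)} \in \StB1$ are totally ordered by $\sqsubseteq_l$, because they are all left substrings of the unique $\brac{1}{\lB}$-limit based at that endpoint; consequently the sought fork cannot be produced naively at the outermost right endpoint $1_{(v_0, i_0)}$ of $\bb^{\ast}$, but must be engineered by passing through an interior trivial point arising at a block boundary of the decomposition, where two genuinely different cyclic permutations of $\bb$ become available as $\lB$-string witnesses. Carrying out this bookkeeping---selecting the correct interior boundary, verifying $\lB$-membership of the competing extension via Corollary \ref{cor:6.12}, and using Proposition \ref{prop:6.11} to detect the sign mismatch---constitutes the technical heart of the argument.
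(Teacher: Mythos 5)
Your plan does not go through as stated, and the gap is precisely at the step you flag as the "technical heart". You need a second $\lB$-string extension of $\xx=\bb^{\ast}_1$ whose first new syllable has sign opposite to the first syllable of $\bb^{\ast}_2$, but nothing in your construction produces one. Inside the cyclic word $\bb$, the syllable immediately to the left of the block occurrence $\bb^{\ast}_1$ is always the first syllable of $\bb^{\ast}_2$; passing to another cyclic permutation $\bb^{[j']}$ only changes the right endpoint of the word, not how this occurrence of $\bb^{\ast}_1$ is extended to the left, so the "two genuinely different cyclic permutations" available at an interior block boundary all extend $\bb^{\ast}_1$ identically. Corollary \ref{cor:6.12} likewise only says that what sits to the left of $\bb^{\ast}_1$ inside $\brac{1}{\lB}(1_{(v_0,i_0)})$ is covered by powers of some $\sB$-cycle; it gives no control over signs and no forking competitor. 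Indeed there is no reason a composite band should produce a fork at a single point at all, so the contradiction cannot be forced this way; the obstacle you acknowledge is real and your proposed fix does not resolve it. (A minor point: Proposition \ref{composite approx} does not supply the block decomposition $\bb^{\ast}=\bb^{\ast}_k\cdots\bb^{\ast}_1$; that comes from the definition of a composite band.)

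The paper's proof uses the same local tools but a different global mechanism. Writing $\bb'=\bb_n^{a_n}\cdots\bb_1^{a_1}$, it first proves a comparison claim: if $j\neq k$ and $\bb_j\bb_k$ is an $\lB$-string, then $\bb_j<_l\bb_k$ in the hammock at the common source. Corollary \ref{rem:6.9} is used only inside this claim, to exclude $\delta(\bb_k\sqcap_l\bb_j)=0$ — and note the fork there is between two full cyclic rotations of $\bb'$ based at different block boundaries (i.e., it compares different blocks $\bb_j$ and $\bb_k$), not between two left extensions of one fixed block; Proposition \ref{prop:6.11} then excludes the wrong ordering via the string $\bb_j\bb_k\sqcap_l\bb_k^2$ and the fact that $\bb_k^2\in\ExtB$. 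The actual contradiction is order-theoretic and cyclic: applying the claim to consecutive blocks gives $\bb_n<_l\bb_{n-1}<_l\cdots<_l\bb_1$, while $\bb_1\bb_n$ being an $\lB$-string forces $\bb_1<_l\bb_n$. Your proposal has no counterpart of this transitivity-around-the-cycle argument, and without it (or a genuinely new way to manufacture the opposite-sign extension) the proof is incomplete.
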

\begin{proof}
Suppose, if possible, $\bb$ is a composite band. Then there exist $n>1$, $a_1,\cdots,a_n\in\N^+$, $\bb_1,...,\bb_n\in\Cyc$ satisfying $\bb_n\neq\bb_1$ as well as $\bb_j\neq\bb_{j+1}$ for any $j\in\{1,\cdots,n-1\}$, and a cyclic permutation $\bb'$ of $\bb$ such that $\bb'=\bb_n^{a_n}\cdots\bb_2^{a_2}\bb_1^{a_1}$. The hypothesis implies that $\bb'$ is an $\lB$-string. Note that for $v=s(\bb')$ and  appropriate $j\in\{ 1, -1\}$, $\bb_1,\bb_2,\cdots,\bb_n\in(H_l(1_{(v, j)}),<_l)$. By Corollary \ref{cor:6.10}, $\bb_1,\bb_2,\cdots,\bb_n\in\CycB$.

\noindent{}\textbf{Claim.} If $j\neq k$ and $\bb_j\bb_k$ is an $\lB$-string then $\bb_j<_l\bb_k$ in $(H_l(1_{(v, j)}),<_l)$. 

\noindent{}\textit{Proof of the claim.} Let $\yy:=\bb_k\sqcap_l\bb_j$. If $\delta(\yy)=0$ then $\yy$ is an $\lB$-string. In view of Corollary \ref{cyclic perm are diff; cor}, the strings $\bb_{j-1}^{a_{j-1}} \cdots\bb_1^{a_1}\bb_n^{a_n}\cdots\bb_j^{a_j}$ and $\bb_{k-1}^{a_{k-1}}\cdots\bb_1^{a_1}\bb_n^{a_n} \cdots\bb_k^{a_k}$ fork, which contradicts Corollary \ref{rem:6.9}. Hence $\delta(\yy)\neq 0$ and, in particular, as $\delta(\bb_i)=\delta(\bb_j)=0$ but $\delta(\yy)\neq0$, we get that $\bb_i$ and $\bb_j$ fork. 

Assume, if possible, that we have $\bb_k <_l \bb_j$. Let $\xx:=\bb_j\bb_k\sqcap_l\bb_k^2$, so that $\xx$ is an $\lB$-string. Then $\alpha\xx\sqsubseteq_l\bb_j\bb_k$ and $\beta\xx\sqsubseteq_l\bb_k^2$ for some $\alpha\in Q_1^-$ and $\beta\in Q_1$. As $\bb_k^2\in\ExtB$ and $\beta\xx\sqsubseteq_l\bb_k^2$, we have $\xx\in\StB{-1}$. Further since $\alpha\xx$ is an $\lB$-string, Proposition \ref{prop:6.11} gives that $\alpha\in Q_1$, a contradiction. This completes the proof of the claim.\hfill$\blacksquare$

Since $\bb_{j+1}\bb_j$ is an $\lB$-string, it follows from the claim that $\bb_{j+1}<_l\bb_j $ for every $j\in\{1,2,\cdots ,n-1\}$. Using transitivity of $<_l$, we get $\bb_n <_l \bb_1$. However $\bb_1\bb_n$ being a substring of a cyclic permutation of $\bb$ is also an $\lB$-string, and hence the claim gives that $\bb_1<_l\bb_n$, which is a contradiction. Therefore $n=1$ and $a_1=1$, which shows that $\bb$ is prime.
\end{proof}

\begin{cor}\label{cor:6.15}
If $\xx \in \StB1$ then $\braclB(\xx) = \,^\infty\bb\uu\xx$ for some prime $\sB$-band $\bb$.
\end{cor}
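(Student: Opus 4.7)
My plan is to combine Proposition \ref{prop:6.5} (which produces the almost periodic form $\,^\infty\bb\uu\xx$) with the primality criterion of Proposition \ref{prop:6.14} and Corollary \ref{cor:6.10} ($\lB$-strings lie in $\ExtB$), closing with the periodicity result of Proposition \ref{KMP} to secure the $\sB$-cycle conclusion. I begin by applying Proposition \ref{prop:6.5} to write $\braclB(\xx) = \,^\infty\bb\uu\xx$ for some band $\bb$ and string $\uu$; inspecting the construction in its proof, there exist $n,m,t\in\N^+$, a cyclic permutation $\bb_0$ of $\bb$, and a string $\yy$ with $\lB^{n+km}(\xx) = \bb_0^{tk}\yy\xx$ for every $k\in\N$.

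Next, I transfer this periodic growth to a base-point string in order to recognise $\lB$-strings. If $|\xx|=0$, set $1_{(v,i)}:=\xx$; otherwise let $\alpha$ be the last syllable of $\xx$ and put $(v,i):=(t(\alpha),\varepsilon(\alpha))$. Since $\xx\in\StB1$, some $\bb_1\in\CycB$ with $\theta(\bb_1)=1$ prepends to $\xx$, hence to $\alpha$, yielding $1_{(v,i)}\in\StB1$. By Remark \ref{rem:6.3}, $\lB^N(\xx)=\lB^N(1_{(v,i)})\xx$ for all $N$, so $\bb_0^{tk}\yy=\lB^{n+km}(1_{(v,i)})$ is a substring of $\braclB(1_{(v,i)})$; hence $\bb_0^{tk}$ is an $\lB$-string for every $k\in\N$. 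Since every cyclic permutation of $\bb$ appears as a substring of $\bb_0^{2}$ and has $\delta=0$, each cyclic permutation of $\bb$ is also an $\lB$-string, and Proposition \ref{prop:6.14} delivers primality of $\bb$.

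It remains to place $\bb$ in $\CycB$. By Corollary \ref{cor:6.10}, every power $\bb^k$ lies in $\ExtB$, so there exist $\bb'\in\CycB$ and $N\in\N^+$ with $\bb^k$ a substring of $\bb'^N$. Choosing $k$ large enough that $|\bb|+|\bb'|\leq k|\bb|+\gcd(|\bb|,|\bb'|)$, the string $\bb^k$ carries both its intrinsic period $|\bb|$ and the inherited period $|\bb'|$, so Proposition \ref{KMP} forces $\gcd(|\bb|,|\bb'|)$ to be a period of $\bb^k$; primitivity of $\bb$ upgrades this to $\gcd(|\bb|,|\bb'|)=|\bb|$, i.e., $|\bb|$ divides $|\bb'|$. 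A blockwise argument on $\bb'^N$ using its $|\bb'|$-periodicity together with the $|\bb|$-periodicity inside $\bb^k$ then forces $|\bb|=|\bb'|$, lest $\bb'$ be a proper power in contradiction with its primitivity; hence $\bb$ is a cyclic permutation of $\bb'\in\CycB$, so $\bb\in\CycB$, and being prime, $\bb$ is a prime $\sB$-band. I expect this final periodicity step to be the most delicate part, requiring care in choosing $k$ large enough for Fine--Wilf to apply and in extracting $|\bb|=|\bb'|$ from primitivity of $\bb'$.
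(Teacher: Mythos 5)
Your route to primality is the paper's own: Proposition \ref{prop:6.5} gives $\braclB(\xx)={}^\infty\bb\uu\xx$, Remark \ref{rem:6.3} transfers the limit to the trivial string $1_{(t(\xx),\varepsilon(\xx))}$ so that the powers of (cyclic permutations of) $\bb$ are $\lB$-strings, and Proposition \ref{prop:6.14} then yields that $\bb$ is prime; this part is fine. The divergence, and the genuine gap, is in your last step, where the paper simply invokes Corollary \ref{cor:6.10} to get $\bb\in\BaB$ while you attempt a Fine--Wilf analysis. As written, your choice of $k$ is circular: Corollary \ref{cor:6.10} produces $\bb'\in\CycB$ and $N$ only \emph{after} $k$ is fixed, and both may depend on $k$; since $\CycB$ is infinite when $\sB$ is non-domestic, there is no a priori bound on $|\bb'|$ that would let you ``choose $k$ large enough'' afterwards. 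You would need a single $\sB$-cycle accommodating arbitrarily large powers of $\bb$, which you have not produced. A secondary problem is that the notion of period in Proposition \ref{KMP} is rotation-invariance of the whole word, and that property is not inherited by a factor of $\bb'^N$: being a factor only gives the usual letter-matching period $|\bb'|$, so the ``inherited period'' step is not licensed by the statement you cite (one would have to pass to the standard periodicity lemma instead).

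The repair is much simpler and avoids all length estimates, and is presumably what the paper's one-line citation of Corollary \ref{cor:6.10} intends: a \emph{single} factorization suffices. Since $\bb\in\ExtB$, write $\cc^N=\qq\bb\pp$ for some $\cc\in\CycB$, $N\in\N^+$ and strings $\qq,\pp$. Then $\cc\qq\bb$ and $\bb\pp\cc$ are factors of $\cc^{N+1}$, hence strings, so $\bb\preceq\cc$ and $\cc\preceq\bb$, i.e.\ $\bb\approx\cc$; as $\cc$ is generated by a path in $\sB$, the same factor argument relates $\cc$ to the prime bands of $\sB$, so $\bb$ lies in the $\approx$-class determined by $\sB$ and therefore $\bb\in\BaB$. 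No uniform choice of $\cc$ across the powers $\bb^k$, and no appeal to Proposition \ref{KMP}, is needed.
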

\begin{proof}
By Proposition \ref{prop:6.5}, $\braclB(\xx) = \,^\infty\bb\uu\xx$ for some band $\bb$. By Remark \ref{rem:6.3}, we have $\braclB(1_{(t(\xx), \varepsilon(\xx))})=\,^\infty\bb\uu$. Thus every cyclic permutation of $\bb$ is an $\lB$-string. But then Proposition \ref{prop:6.14} gives that $\bb$ is a prime band. Finally, Corollary \ref{cor:6.10} yields $\bb\in\BaB$.
\end{proof}

In fact, the band appearing in Corollary \ref{cor:6.15} is more than just a prime band. We now show that it lies in $\BalB$.

\begin{prop}\label{prop:6.16}
If $\xx\in\StB1$ and $\braclB(\xx)=\,^\infty\bb\uu\xx$ for some band $\bb$ and a string $\uu$ then $\bb\in\BalB$.
\end{prop}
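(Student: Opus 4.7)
The plan is to proceed by contradiction: assume $\bb\notin\BalB$, so there is a non-domestic exit $(\beta,\bb')$ of $\bb$ with $\beta\in Q_1$. Label the syllables of $\bb'$ cyclically as $\alpha_r\alpha_{r-1}\cdots\alpha_{r+1}$ (indices mod $s:=|\bb|$), so that $\alpha_r$ is the top syllable of $\bb'$ and $\alpha_{r+1}$ is the cyclic continuation immediately above $\bb'$ (that is, the syllable placed on top of $\bb'$ in $\bb'^2$). Using uniqueness of direct extensions at each vertex in a string algebra, I first observe that $\alpha_{r+1}$ must be inverse: otherwise $\beta$ would coincide with the unique direct extension $\alpha_{r+1}$ of $\alpha_r$, contradicting $\beta\bb'\not\sqsubseteq_l\bb'^2$.

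The heart of the argument is to exhibit an iterate $\lB^{N+1}(\xx)$ that deviates from the cyclic pattern $\,^\infty\bb\uu\xx$. Set $r_0$ to be the position obtained by moving backward along $\bb$'s cycle from $\alpha_r$ until the first inverse syllable $\alpha_{r_0+1}$ is encountered, so that $\alpha_{r_0+1}$ is inverse and $\alpha_{r_0+2},\ldots,\alpha_r$ (if any) are all direct. Leveraging the periodic structure of the $\lB$-iteration established in the proof of Proposition \ref{prop:6.5}—and the fact that within one period each cyclic position is realised as the top of some $\lB$-iterate—I would identify $N\in\N$ with $\lB^N(\xx)=\bb''\vv\xx$ for some string $\vv$, where $\bb''$ is the cyclic permutation of $\bb$ topped by $\alpha_{r_0}$. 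Then $l(\lB^N(\xx))$ first prepends the unique inverse extension of $\alpha_{r_0}$—namely the cyclic continuation $\alpha_{r_0+1}$—and subsequently extends maximally by direct syllables. By uniqueness of direct extensions, this direct streak is forced to traverse the cyclic direct syllables $\alpha_{r_0+2},\ldots,\alpha_r$ (if any) and then continue with $\beta$, the unique direct extension of $\alpha_r$ (which exists by hypothesis). Hence $\beta\bb'\vv'\xx\sqsubseteq_l l(\lB^N(\xx))$ for a suitable string $\vv'$, with the top $s$ syllables below $\beta$ constituting precisely $\bb'$.

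Because $(\beta,\bb')$ is non-domestic we have $\beta\bb'\in\ExtB$, and an appropriate cyclic rotation of the $\sB$-cycle witnessing this containment—taken so as to begin with an inverse syllable—certifies $\beta\bb'\vv'\xx\in\StB{1}$. The definition of $\lB$ as the maximal left substring of $l(\lB^N(\xx))$ lying in $\StB{1}$ therefore gives $\lB^{N+1}(\xx)\sqsupseteq_l\beta\bb'\vv'\xx$, so the syllable of $\lB^{N+1}(\xx)$ immediately above this copy of $\bb'$ is the direct syllable $\beta$. However, $\lB^{N+1}(\xx)\sqsubseteq_l\braclB(\xx)=\,^\infty\bb\uu\xx$, in which the syllable immediately above the same copy of $\bb'$ is the inverse cyclic continuation $\alpha_{r+1}$. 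This contradicts $\beta=\alpha_{r+1}$ and therefore establishes $\bb\in\BalB$. The main technical obstacle will be making the selection of $N$ precise: extracting from the periodicity of $\lB$-orbits in the proof of Proposition \ref{prop:6.5} a specific iterate whose top $s$ syllables form exactly $\bb''$, which requires tracking how each single $\lB$-step within one period advances the top syllable along $\bb$'s cycle.
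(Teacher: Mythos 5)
Your overall strategy---contradiction via a non-domestic exit $(\beta,\bb')$ with $\beta\in Q_1$, locating an $\lB$-iterate ending at a well-chosen point of the cycle, and contradicting the fact that all iterates are left substrings of $\braclB(\xx)={}^\infty\bb\uu\xx$---is close in spirit to the paper's, but the decisive step has a genuine gap. You claim $\beta\bb'\vv'\xx\in\StB{1}$ because ``an appropriate cyclic rotation of the $\sB$-cycle witnessing $\beta\bb'\in\ExtB$, taken so as to begin with an inverse syllable,'' extends it. That choice is not available: by the definition of $\StB{1}$ the witnessing cycle must be attached directly on top of $\beta\bb'\vv'\xx$, so its first syllable is forced to be the cyclic successor of $\beta$ inside the witnessing cycle, and nothing in your hypotheses controls the sign of that syllable. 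If it is direct, you only get $\beta\bb'\vv'\xx\in\StB{-1}$; then $\lB^{N+1}(\xx)$, being the maximal left substring of $l(\lB^N(\xx))$ lying in $\StB{1}$, is not forced to contain $\beta$, and your final comparison with ${}^\infty\bb\uu\xx$ produces no contradiction. The argument is repairable---continue the direct streak along the witnessing cycle past $\beta$ up to its first inverse syllable and attach the rotation beginning there, using $\delta(\bb')=0$ to glue, so that this longer string lies in $\StB{1}$ and still exhibits $\beta$ above the copy of $\bb'$---but as written the step fails.

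Two further remarks. First, your auxiliary claim that ``within one period each cyclic position is realised as the top of some $\lB$-iterate'' is false in general: an $\lB$-step adds one inverse syllable together with an entire direct streak, so only positions lying immediately below an inverse syllable of the limit occur as tops of iterates. The position you actually need (just below $\alpha_{r_0+1}$, inverse by your choice of $r_0$) is of this kind, and its realisation is exactly what Remark \ref{rem:6.7} provides; no tracking of the periodicity from the proof of Proposition \ref{prop:6.5} is required, so the ``main technical obstacle'' you flag dissolves once you use that remark. Second, the paper's proof short-circuits the whole construction: writing the limit as ${}^\infty\bb'\vv\xx$ (with $\theta(\bb')=1$, forced exactly by your opening observation), Remark \ref{rem:6.7} gives $\lB^n(\xx)=\bb'\vv\xx$ for some $n\geq1$, the rotation of the witnessing cycle beginning at $\beta$ shows $\bb'\vv\xx\in\StB{-1}$, and this already contradicts Remark \ref{rem:6.4}; your route, even once patched, amounts to a longer detour around that argument.
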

\begin{proof}
The existence of $\bb$ and $\uu$ is guaranteed by Proposition \ref{prop:6.5}, whereas Corollary \ref{cor:6.15} gives that $\bb\in\BaB$. Suppose, if possible, $\bb\notin\BalB$. Then there exists an exit $(\beta,\bb')$ of $\bb$ such that $\theta(\beta)=-1$. Rewrite $\,^\infty\bb\uu\xx$ as $\,^\infty\bb'\vv\xx$ for some string $\vv$. Since $\theta(\bb')=1$, Remark \ref{rem:6.7} yields $n\in\N$ such that $\lB^n(\xx)=\bb'\vv\xx$. Since $\beta\bb'\vv\xx\in\ExtB\subseteq\StB{}$, we have $\bb'\vv\xx\in\StB{-1}$, a contradiction to Remark \ref{rem:6.4}.
\end{proof}

\begin{exmp}
Continuing from Example \ref{exmp: 1lB}, indeed $e_3E_2E_1\in\mathsf{Ba}_l(\sB_3)$ and $e_3E_2E_1$ is a prime band.
\end{exmp}
Combining Propositions \ref{prop:6.5}, \ref{prop:6.16} and Corollary \ref{cor:6.15}, we have the following result.
\begin{cor}\label{BSt1toBalb}
If $\xx\in\StB1$ then $\braclB(\xx)=\,^\infty\bb\uu\xx$ for some string $\uu$ and $\bb\in\BalB$.
\end{cor}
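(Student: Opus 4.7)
The statement is essentially a compilation corollary that stitches together three results already established in this section, so my plan is to write a short three-line argument that invokes them in sequence rather than reprove anything from scratch.

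First I would apply Proposition \ref{prop:6.5} to the given $\xx \in \StB{1}$ to obtain the existence of a band $\bb$ and a string $\uu$ such that $\braclB(\xx) = {}^\infty \bb \uu \xx$. This already pins down the almost periodic shape of the limit; the remaining work is only to upgrade the band $\bb$ from an arbitrary band to an element of the smaller set $\BalB$.

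Next I would cite Corollary \ref{cor:6.15}, which — using Proposition \ref{prop:6.14} together with the fact that every cyclic permutation of $\bb$ appears as an $\lB$-string in $\braclB(1_{(t(\xx),\varepsilon(\xx))})$ via Remark \ref{rem:6.3} — guarantees that this $\bb$ is a prime $\sB$-band, hence in particular $\bb \in \BaB$. Finally I would apply Proposition \ref{prop:6.16} to the same representation $\braclB(\xx) = {}^\infty \bb \uu \xx$; this rules out the existence of a non-domestic exit of $\bb$ with direct exit syllable, and so forces $\bb \in \BalB$ by Definition \ref{defn: la, lb bands}.

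Since each of the three invoked statements has already been proved in full within this section, there is no real obstacle here: the corollary is definitionally a packaging lemma, and the ``hard work'' (the combinatorial argument about exits and the KMP-style primitivity argument of Proposition \ref{prop:6.14}) has been done upstream. The only thing to be mildly careful about is that the $\bb$ and $\uu$ produced by Proposition \ref{prop:6.5} are the \emph{same} $\bb$ and $\uu$ fed into Proposition \ref{prop:6.16}; this is immediate from the statements, so the chaining goes through without modification.
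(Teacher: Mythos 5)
Your proposal is correct and matches the paper exactly: the paper proves this corollary by the same chaining, stating that it follows by combining Proposition \ref{prop:6.5}, Corollary \ref{cor:6.15} and Proposition \ref{prop:6.16}. (If anything, citing Corollary \ref{cor:6.15} separately is slightly redundant, since Proposition \ref{prop:6.16} already invokes it internally, but this is exactly how the paper packages it.)
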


Since $\StB{1}\neq\emptyset$, the above result guarantees the existence of a band in $\BalB$.
\begin{cor}\label{BalB non empty}
If $\sB\in\QBa$ then $\BalB\neq\emptyset$.
\end{cor}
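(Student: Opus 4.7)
The plan is to deduce this immediately from Corollary \ref{BSt1toBalb}, which asserts that for every $\xx \in \StB{1}$ the limit $\braclB(\xx)$ has the form ${}^\infty \bb \uu \xx$ for some $\bb \in \BalB$. So it suffices to exhibit a single element of $\StB{1}$.

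First I would unpack what $\sB \in \QBa$ gives us: by definition $\sB$ is a nonempty equivalence class, so it contains a band $\bb \in \BaB$. Since every band must contain at least one direct and one inverse syllable (otherwise its powers would not all be strings, contradicting primitivity), some cyclic permutation $\bb'$ of $\bb$ begins with an inverse syllable, and hence satisfies $\theta(\bb') = 1$. This cyclic permutation $\bb'$ still lies in $\CycB$ by the definition of $\CycB$ as being closed under cyclic permutation of $\sB$-cycles.

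Next, set $\xx := 1_{(t(\bb'), \varepsilon(\bb'))}$, the trivial string at the appropriate vertex with matching sign, chosen so that $\bb' \xx = \bb'$ is a string. Then $\xx \in \StB{1}$ since $\bb' \in \CycB$ with $\theta(\bb') = 1$ and $\bb' \xx$ is a string. Now Corollary \ref{BSt1toBalb} applied to this $\xx$ yields $\braclB(\xx) = {}^\infty \bb_0 \uu \xx$ for some string $\uu$ and some $\bb_0 \in \BalB$, proving $\BalB \neq \emptyset$.

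The only subtle point is verifying that a suitable cyclic permutation of $\bb$ with sign $1$ exists, which is a standard consequence of the definition of a band in a string algebra; everything else is a direct invocation of the preceding corollary. There is essentially no obstacle here beyond correctly identifying the right trivial string to feed into Corollary \ref{BSt1toBalb}.
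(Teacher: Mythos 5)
Your proof is correct and follows the paper's argument exactly: the paper also deduces the corollary by applying Corollary \ref{BSt1toBalb} to the (implicitly noted) fact that $\StB{1}\neq\emptyset$. You merely spell out the easy verification that $\StB{1}$ is nonempty via a trivial string attached to a suitably cyclically permuted $\sB$-band, which the paper leaves unstated.
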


In fact, we can guarantee that each $\bb\in\BalB$ occurs in the conclusion of Proposition \ref{prop:6.16} for some $\xx$.
\begin{prop}\label{prop:6.16a}
If $\bb\in\BalB$ then there exist $\xx\in\StB1$ and a string $\uu$ such that $\braclB(\xx)=\,^\infty\bb\uu\xx$.
\end{prop}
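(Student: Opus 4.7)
My plan is to exhibit an explicit witness. Since every band contains at least one inverse syllable, pick a cyclic permutation $\bb' = \alpha_n \cdots \alpha_1$ of $\bb$ with $\alpha_1$ inverse, so $\theta(\bb') = 1$. Set $\xx := \bb'$. Because $\bb' \in \CycB$ with $\theta(\bb') = 1$ and $\bb' \cdot \xx = \bb'^2$ is a string, we have $\xx \in \StB1$. Applying Corollary \ref{BSt1toBalb} to $\xx$ produces $\bb_1 \in \BalB$ and a string $\uu_1$ with $\braclB(\xx) = \,^\infty \bb_1 \uu_1 \xx$. The proof then reduces to showing that $\bb_1$ is a cyclic permutation of $\bb$, since the decomposition can be rewritten in the form $\,^\infty \bb \uu \xx$ for a suitable $\uu$.

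To establish this, I would argue by induction on $m$ that $\lB^m(\xx)$ is a left substring of $(\bb')^{k_m}$ for some $k_m \in \N^+$. The base case $m = 0$ is immediate. For the inductive step, assume $\lB^m(\xx) \sqsubseteq_l (\bb')^k$. There is a canonical candidate $\yy_0 \sqsubseteq_l (\bb')^{k+1}$ that extends $\lB^m(\xx)$ by the next inverse syllable within the $\bb'$-pattern followed by the maximal direct run from that same pattern. A suitable cyclic permutation of $\bb$ with $\theta = 1$ serves as a witness cycle showing $\yy_0 \in \StB1$, and $\yy_0 \sqsubseteq_l l(\lB^m(\xx))$, so by the definition of $\lB$ one obtains $\yy_0 \sqsubseteq_l \lB^{m+1}(\xx)$.

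The main obstacle is ruling out that $\lB^{m+1}(\xx)$ extends strictly beyond $\yy_0$: the first extra syllable $\gamma$ would necessarily be direct, and together with an appropriate cyclic permutation $\bb''$ of $\bb$ (aligned with the relevant position in the $\bb'$-pattern) it would form an exit $(\gamma, \bb'')$ of $\bb$. Since $\lB^{m+1}(\xx) \in \StB1$, some $\bb_0 \in \CycB$ with $\theta(\bb_0) = 1$ satisfies that $\bb_0 \cdot \lB^{m+1}(\xx)$ is a string. The cycle $\bb_0$ traces a path in the bridge quiver of $\sB$ which, concatenated with the $\bb$-pattern continuing past $\gamma$, produces a power of a $\sB$-cycle containing $\gamma \bb''$ as a substring; hence $\gamma \bb'' \in \ExtB$, making $(\gamma, \bb'')$ a non-domestic exit of $\bb$ with $\gamma$ direct, contradicting $\bb \in \BalB$. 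Granting the induction, $|\lB^m(\xx)| \to \infty$ forces $\braclB(\xx) = \,^\infty \bb'$, so $\bb_1$ is a cyclic permutation of $\bb$, as desired.
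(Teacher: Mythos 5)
Your overall strategy (take $\xx=\bb'$ with $\theta(\bb')=1$, show by induction that every $\lB$-iterate stays inside $\,^\infty\bb'$, and rule out overshooting the pattern's direct run by manufacturing a non-domestic exit with a direct syllable) is workable, and the second half of your inductive step -- the exit argument contradicting $\bb\in\BalB$ -- is essentially sound and in the spirit of the paper's own manipulations with $\CycB$ and $\ExtB$. The genuine gap is in the first half. You assume, without justification, that the syllable of $\,^\infty\bb'$ immediately above $\lB^m(\xx)$ is inverse, i.e.\ that the unique inverse syllable prepended by $l$ is the one prescribed by the band pattern; this is exactly what is needed for your claim $\yy_0\sqsubseteq_l l(\lB^m(\xx))$. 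But the definition of $\BalB$ only forbids non-domestic exits $(\beta,\bb'')$ with $\beta$ \emph{direct}; it says nothing about inverse exits. So a priori the pattern may continue above $\lB^m(\xx)$ with a direct syllable while an off-pattern inverse syllable $\alpha$ also extends $\lB^m(\xx)$ (such an $\alpha$ exists, being the first syllable of any witness cycle for $\lB^m(\xx)\in\StB{1}$). In that situation $l(\lB^m(\xx))=\vv\alpha\lB^m(\xx)$ leaves the band pattern, your candidate $\yy_0$ is not a left substring of $l(\lB^m(\xx))$ (indeed $\yy_0<_l\lB^m(\xx)$), and the induction collapses.

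The missing ingredient is Remark \ref{rem:6.4}, which you never invoke. For $m\geq1$ the string $\lB^m(\xx)$ is a value of $\lB$, hence $\lB^m(\xx)\notin\StB{-1}$; if the pattern continued with a direct syllable right above $\lB^m(\xx)$, the cyclic permutation $\bb''$ of $\bb$ aligned at that position would satisfy $\theta(\bb'')=-1$ and $\bb''\lB^m(\xx)\sqsubseteq_l\,^\infty\bb'$, giving $\lB^m(\xx)\in\StB{-1}$, a contradiction; for $m=0$ the claim holds simply because $\theta(\bb')=1$. Inserting this observation repairs your inductive step. Note that the paper proves the statement without any induction: it applies Proposition \ref{prop:6.16} to $\xx=\bb$ to write $\braclB(\bb)=\,^\infty\bb_1\uu\bb$ with $\bb_1\in\BalB$, compares this left $\N$-string with $\,^\infty\bb$ at their fork, uses $\bb\in\BalB$ to force the fork to go in the inverse direction on the $\braclB(\bb)$ side, identifies the fork string as $\lB^n(\bb)$ with $n>0$ via Remark \ref{rem:6.7}, and then derives the same contradiction with Remark \ref{rem:6.4}. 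Your route, once patched, is longer but gives the more precise statement that every iterate $\lB^m(\bb')$ is a left substring of a power of $\bb'$.
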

\begin{proof}
Since $\bb\in\BaB$, we have $\theta(\bb) = 1$ and $\bb \in \StB1$. Thus by Proposition \ref{prop:6.16}, there exist $\bb_1\in\BalB$ and a string $\uu$ such that $\braclB(\bb)=\,^\infty\bb_1\uu\bb$. Suppose, if possible, $\,^\infty\bb_1\uu\bb\neq \,^\infty\bb$. Since $\bb \in \BalB$, we get that $\theta(\,^\infty\bb_1\uu\bb \mid \,^\infty\bb)=1$. Let $\zz\bb:=\,^\infty\bb_1\uu\bb\sqcap_l\,^\infty\bb$. By Remark \ref{rem:6.7}, there exists $n \geq 0$ such that $\lB^n(\bb)=\zz\bb$. Since $\theta(\,^\infty\bb_1\uu\bb)=\theta(\bb)=1$, we get that $|\zz|>0$. Hence $n>0$ and, in particular, $\zz\bb\in\StB{-1}$, a contradiction to Remark \ref{rem:6.4}.
\end{proof}

The finiteness of the set $\BalB$ can be concluded from the next result which is obtained by combining Propositions \ref{prop:6.16} and \ref{prop:6.16a}.
\begin{cor}\label{BalB prime}
If $\sB\in\QBa$ and $\bb\in\BalB$ then $\bb$ is prime.
\end{cor}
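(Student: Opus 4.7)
The plan is to deduce the corollary by combining Proposition \ref{prop:6.16a} with the primeness statement already contained in Corollary \ref{cor:6.15}. Both results speak about the almost-periodic limit $\braclB(\xx)$: Proposition \ref{prop:6.16a} realises every element of $\BalB$ as the ``periodic part'' of some such limit, while Corollary \ref{cor:6.15} forces that periodic part to be prime. Matching these two descriptions of the same limit is what yields primeness of $\bb$.

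Concretely, given $\bb \in \BalB$ I would first invoke Proposition \ref{prop:6.16a} to produce $\xx \in \StB{1}$ and a string $\uu$ with $\braclB(\xx) = \,^\infty\bb\uu\xx$. Next I would apply Corollary \ref{cor:6.15} to the very same $\xx$, obtaining a prime $\sB$-band $\bb'$ and a string $\uu'$ with $\braclB(\xx) = \,^\infty\bb'\uu'\xx$. Equating the two right-hand sides, the left $\N$-strings $\,^\infty\bb\uu\xx$ and $\,^\infty\bb'\uu'\xx$ coincide. Reading this equality on the common finite right-hand tail and then on the ultimately periodic far-left tail, the words $\bb$ and $\bb'$ must induce the same periodic sequence of syllables; Corollary \ref{cyclic perm are diff; cor}, underwritten by Proposition \ref{KMP}, then forces $\bb'$ to be a cyclic permutation of $\bb$. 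Since $\Ba$ is fixed once and for all as a set of cyclic-permutation representatives of $\mathsf{Ba}(\Lambda)$, and both $\bb \in \BalB \subseteq \BaB \subseteq \Ba$ and $\bb' \in \BaB \subseteq \Ba$, this pins down $\bb = \bb'$, whence $\bb$ is prime.

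The only mildly delicate point --- not a genuine obstacle --- is the last comparison: one must be precise about why equality of two almost-periodic left $\N$-strings of the form $\,^\infty\bb\uu\xx$ and $\,^\infty\bb'\uu'\xx$ sharing a common finite right tail forces $\bb$ and $\bb'$ to be cyclic rotations. This is exactly the uniqueness-of-the-fundamental-period combinatorics already supplied by Proposition \ref{KMP} and Corollary \ref{cyclic perm are diff; cor}, so once these are invoked the argument closes in two lines.
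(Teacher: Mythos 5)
Your proof is correct and takes essentially the same route as the paper: the corollary is obtained by combining Proposition \ref{prop:6.16a} (every $\bb\in\BalB$ arises as the periodic part of some $\braclB(\xx)$) with the primeness of that periodic part already guaranteed by Corollary \ref{cor:6.15} (via Proposition \ref{prop:6.14}). The only difference is that you make explicit, using Proposition \ref{KMP} and primitivity of bands, the identification of the two periodic parts up to cyclic permutation, a step the paper leaves implicit.
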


\begin{rem}\label{Balalbbandsintersection}
If $\sB\in\QBa$ is non-domestic then $\BalB\cap\BalbB=\emptyset$ in view of Remark \ref{nondomexitexist}. On the other hand, if $\sB$ is domestic then $\BalB=\BalbB=\sB$.
\end{rem}

\section{Extending the $\sB$-neighbour operators}
\label{sec: Extending neighbours}
In section \ref{sec:finiteness results}, we defined sets $\OST{j}$ for $j\in\{-1,1\}$ containing strings which eventually reach $\sB\in\QBa_i(\xx_0)$. We would like to extend the function $\lB$ on this bigger set, $\OST{1}$, and obtain a result similar to Corollary \ref{BSt1toBalb}.

\begin{prop}\label{existenceofwlb}
If $\xx \in\OST{1}\setminus\{\MM_i(\xx_0)\} $ then there exists $\zz\in\OST{1}$ such that $\xx\sqsubset_l\zz\sqsubseteq_ll(\xx)$.
\end{prop}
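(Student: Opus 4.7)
The plan is to adapt the strategy of Proposition \ref{prop:6.1} to the larger set $\OST{1}$. Since $\xx\in\OST{1}\setminus\{\MM_i(\xx_0)\}$, the definition of $\OST{1}$ furnishes a string $\uu$ with $\theta(\uu)=1$ such that $\yy:=\uu\xx\in\StB{}\cup\{\MM_i(\xx_0)\}$. At the vertex $t(\xx)$ with the appropriate parity, the string-algebra axioms guarantee at most one inverse syllable $\alpha_0$ that can be prepended to $\xx$; this $\alpha_0$ must be the first syllable of $\uu$ (since $\theta(\uu)=1$) and is also the inverse syllable appearing in the expression $l(\xx)=\ww\alpha_0\xx$, with $|\ww|=0$ or $\delta(\ww)=-1$. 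Consequently $\alpha_0\xx$ is a common left substring of $\yy$ and $l(\xx)$.

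I would then set $\zz:=\yy\sqcap_l l(\xx)$. The inclusion $\alpha_0\xx\sqsubseteq_l\zz\sqsubseteq_l l(\xx)$ immediately gives $\xx\sqsubset_l\zz\sqsubseteq_l l(\xx)$, and $\zz\in H_l^i(\xx_0)$ since $\zz$ lies between $\xx$ and $l(\xx)$ in the hammock. The main task is to show $\zz\in\OST{1}$. Assume first $\yy\not\sqsubseteq_l l(\xx)$. Writing $\yy=\vv\zz$ with $|\vv|>0$ and using that $\yy>_l l(\xx)$ (since $\yy\neq l(\xx)$ in this case), the ordering conditions force the first syllable of $\vv$ to be inverse, so $\theta(\vv)=1$. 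Then $\vv\zz=\yy\in\StB{}\cup\{\MM_i(\xx_0)\}$ is the required witness for $\zz\in\OST{1}$.

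The remaining case is $\yy\sqsubseteq_l l(\xx)$. If $\yy=\MM_i(\xx_0)$, then $\zz=\yy\in\OST{1}$ directly. Otherwise $\yy\in\StB{}$, and here I would apply a replacement trick: choose $\bb\in\CycB$ with $\bb\yy$ a string, then set $\uu_\ast:=\bb\uu$ and $\yy_\ast:=\uu_\ast\xx=\bb\yy$. One verifies $\theta(\uu_\ast)=\theta(\uu)=1$ and $\yy_\ast\in\StB{}$ (since $\bb^2\uu\xx$ is a string). Crucially, $\yy\sqsubseteq_l l(\xx)$ forces the portion of $\uu$ lying above $\alpha_0$ to be a left substring of the all-direct $\ww$, hence to be empty or all-direct; as the band $\bb$ has some inverse syllables by virtue of $\delta(\bb)=0$, the corresponding portion of $\uu_\ast$ above $\alpha_0$ is not all-direct, so $\yy_\ast\not\sqsubseteq_l l(\xx)$. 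Applying the argument of the previous paragraph to $\yy_\ast$ in place of $\yy$ yields the desired $\zz_\ast\in\OST{1}$ satisfying $\xx\sqsubset_l\zz_\ast\sqsubseteq_l l(\xx)$.

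The main difficulty lies in this replacement step, which hinges on the contrast between the all-direct tail $\ww$ of $l(\xx)$ above $\alpha_0\xx$ and the sign-mixed structure inherent to every $\sB$-band. Once this step is set up, the fork and submersion analysis runs parallel to that in the proof of Proposition \ref{prop:6.1}.
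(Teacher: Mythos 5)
Your proof is correct and takes essentially the same route as the paper's: the paper likewise forms the meet of a band-padded witness ($\bb^2\uu\xx$ with $\theta(\bb\uu)=1$, $\bb\in\CycB$) with $l(\xx)$ and exploits that above the unique inverse syllable $\alpha_0$ the string $l(\xx)$ is all-direct, so the branching syllable on the witness side must be inverse; the paper merely organizes this as a proof by contradiction, pads by $\bb^2$ uniformly instead of padding only when $\uu\xx\sqsubseteq_l l(\xx)$, and splits off the $\MM_i(\xx_0)$ case via Remark \ref{rem: Strings in hammock WStr}. One minor slip in your write-up: $\zz$ does not lie between $\xx$ and $l(\xx)$ in $<_l$ (in fact $\zz\geq_l l(\xx)$ since $l(\xx)$ is the immediate successor of $\xx$), but the needed fact $\zz\in H_l^i(\xx_0)$ is immediate anyway because $\theta(\zz\mid\xx_0)=\theta(\xx\mid\xx_0)=i$ (or $i=1$ when $\xx=\xx_0$), so the argument is unaffected.
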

\begin{proof}
If $\xx\sqsubset_l\MM_i(\xx_0)$ then the result holds trivially. So assume $\xx\not\sqsubseteq_l\MM_i(\xx_0)$. Then Remark \ref{rem: Strings in hammock WStr} yields $\xx\in\OSt{1}$. If possible, suppose that $\yy\notin\OSt{1}$ for each $\xx \sqsubset_l\yy\sqsubseteq_ll(\xx)$.

Since $\xx\in\OSt{1}$, there are $\bb\in\CycB$ and $\uu\in\St$ such that $\bb\uu\xx\in\St$ and $\theta(\bb\uu)=1$. Let $\zz:=\bb^2\uu\xx\sqcap_ll(\xx)$. Clearly $\zz\sqsubset_l\bb^2\uu\xx$. As $\theta(\bb\uu) = 1$, we have $\xx\sqsubset_l\zz\sqsubseteq_ll(\xx)$. By our assumption, $\zz\notin\OSt1$. Hence $\theta(\bb^2\uu\xx\mid\zz)=-1$. Let $\alpha\in Q_1$ be such that $\alpha\zz\sqsubseteq_l\bb^2\uu\xx$. Since $\xx\sqsubset_l\zz\sqsubseteq_ll(\xx)$, we get $\alpha\zz\sqsubseteq_ll(\xx)$, which contradicts that $\zz=\bb\uu\xx\sqcap_ll(\xx)$.
\end{proof}

It immediately follows from Proposition \ref{existenceofwlb} that $\xx\sqsubset_lc_\sB(l(\xx))$, which motivates the following definition.
\begin{defn}
Define $\LB:\OST{1}\setminus\{\MM_i(\xx_0)\}\to\OST{1}$ by $\LB(\xx):=c_\sB(\la(\xx))$, i.e, by choosing $\LB(\xx)$ to be the maximal (possibly equal) left substring of $l(\xx)$ such that $\LB(\xx)\in\OST{1}$.
\end{defn}

\begin{prop}\label{LB Rem 1}
If $\xx\in\OST1\setminus\{\MM_i(\xx_0)\}$ then $\xx<_l\LB(\xx)$, $\varphi_\sB(\LB(\xx))=1$ and $H_l^{-1}(\LB(\xx))=(\xx,\LB(\xx)]$.
\end{prop}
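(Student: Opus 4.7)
For part (a), the plan is to invoke Proposition \ref{existenceofwlb}, which yields $\zz \in \OST{1}$ with $\xx \sqsubset_l \zz \sqsubseteq_l l(\xx)$. Since $\OST{1} \subseteq \OST{}$ and $\LB(\xx) = c_\sB(l(\xx))$ is the longest left substring of $l(\xx)$ in $\OST{}$, we obtain $\xx \sqsubset_l \LB(\xx) \sqsubseteq_l l(\xx)$. Writing $l(\xx) = \ww\alpha\xx$ with $\alpha \in Q_1^-$ and $\ww$ the (possibly trivial) maximal direct prefix from the definition of $l$ recalled before Proposition \ref{unique string with minimal length in an interval}, we get $\LB(\xx) = \vv'\alpha\xx$ for some $\vv' \sqsubseteq_l \ww$; since the rightmost syllable of the extension past $\xx$ is $\alpha \in Q_1^-$, Definition \ref{hammock defn} gives $\xx <_l \LB(\xx)$.

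Part (b) is the main obstacle. Since $\LB(\xx) \in \OST{} = \OST{1} \cup \OST{-1}$, Remark \ref{cbI=IcapOST} gives $c_\sB(\LB(\xx)) = \LB(\xx)$, so $\varphi_\sB(\LB(\xx)) = 1$ iff $\LB(\xx) \notin \OST{-1}$. The strategy is to derive a contradiction from $\LB(\xx) \in \OST{-1}$, exploiting the maximality of $\LB(\xx) = c_\sB(l(\xx))$ in $\OST{}$. First, $\LB(\xx) \not\sqsubseteq_l \mm_i(\xx_0)$ (as the inverse syllable $\alpha$ in $\LB(\xx) = \vv'\alpha\xx$ past $\xx_0$ forbids it from being a left substring of the all-direct $\mm_i(\xx_0)$), so $\LB(\xx) \in \OST{-1}$ forces $\LB(\xx) \in \OSt{-1}$: there exist $\uu_2$ with $\theta(\uu_2) = -1$ and $\bb \in \CycB$ such that $\bb\uu_2\LB(\xx)$ is a string. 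The key tool is the uniqueness in a string algebra of syllables of each sign that may precede a given string: reading $\bb\uu_2$ from the right, its initial direct syllables are forced to match $\beta_1, \beta_2, \ldots$, the direct syllables of $l(\xx)$ past $\LB(\xx)$ (with $\beta_1$ adjacent to $\LB(\xx)$). Since $\bb$ is a band cycle satisfying $\delta(\bb) = 0$, it must contain at least one inverse syllable, so a first inverse $\alpha^*$ appears at some position $j+1$ from the right in $\bb\uu_2$, with $j \geq 1$. Setting $\yy := \beta_j\cdots\beta_1\LB(\xx)$ and writing $\bb\uu_2\LB(\xx) = \uu\yy$, the rightmost syllable of $\uu$ is $\alpha^*$, so $\theta(\uu) = 1$; moreover $\bb\uu\yy = \bb^2\uu_2\LB(\xx)$ is a string because $\bb^2$ is (as $\bb$ is a band cycle). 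Thus $\uu\yy \in \StB{}$, witnessing $\yy \in \OST{1} \subseteq \OST{}$; but $\yy$ is a left substring of $l(\xx)$ strictly beyond $\LB(\xx)$, contradicting the maximality of $\LB(\xx)$. The corner case $\LB(\xx) = l(\xx)$ is immediate: $\LB(\xx) \in \OST{-1}$ would force the direct rightmost syllable of some $\uu_2$ to prepend $l(\xx)$, violating the maximality of $\ww$ in the definition of $l(\xx)$.

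For part (c), using (b), we have $H_l^{-1}(\LB(\xx)) = [\mm_{-1}(\LB(\xx)), \LB(\xx)]$. I would compute $\mm_{-1}(\LB(\xx))$ explicitly as the longest extension of $\LB(\xx) = \vv'\alpha\xx$ on the left by direct syllables: prepending the remaining direct syllables of $\ww$ past $\vv'$ gives $\ww\alpha\xx = l(\xx)$, and the maximality of $\ww$ prevents any further direct syllable from prepending $l(\xx)$. Hence $\mm_{-1}(\LB(\xx)) = l(\xx)$, and since $l(\xx)$ is by definition the immediate successor of $\xx$ in $H_l^i(\xx_0)$, $H_l^{-1}(\LB(\xx)) = [l(\xx), \LB(\xx)] = (\xx, \LB(\xx)]$.
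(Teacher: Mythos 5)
Your proposal is correct in substance, and it is both more explicit than the paper in one place and genuinely different in another. For part (a) you argue exactly as the paper does (via Proposition \ref{existenceofwlb} plus the definition of $\LB$). For part (b) the paper simply declares $\LB(\xx)\notin\OST{-1}$ to be ``clear from the definition''; you supply the actual argument, and it is the right one: rule out $\LB(\xx)\sqsubseteq_l\mm_i(\xx_0)$, reduce via Remark \ref{rem: Strings in hammock WStr} to $\LB(\xx)\in\OSt{-1}$, use uniqueness of the direct extension of a string to match the initial direct run of $\bb\uu_2$ against the remaining (direct) syllables of $l(\xx)$, locate the first inverse syllable using $\delta(\bb)=0$, and produce a longer left substring of $l(\xx)$ lying in $\OST{1}$ (the witness $\bb^2\uu_2\LB(\xx)\in\StB{}$ is correct), contradicting maximality of $c_\sB(l(\xx))$. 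One subcase is not addressed as written: if the direct run at the start of $\bb\uu_2$ is \emph{longer} than the string $\ww''$ with $l(\xx)=\ww''\LB(\xx)$ (i.e.\ $j$ exceeds the number of available $\beta$'s), then $\yy:=\beta_j\cdots\beta_1\LB(\xx)$ is undefined. The patch is immediate and is exactly your own corner-case argument: in that situation a direct syllable would extend $l(\xx)$ itself, contradicting the maximality of $\ww$ in the definition of $l(\xx)$. With that one line added, (b) is complete; note also that your argument yields $\LB(\xx)\in\OST{1}\setminus\OST{-1}$ directly, so $\varphi_\sB(\LB(\xx))=1$ without appealing to the unproved clause in the paper's definition of $\LB$.

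For part (c) your route differs from the paper's. The paper takes an arbitrary $\zz\in(\xx,\LB(\xx))$ and shows directly that $\LB(\xx)\sqsubset_l\zz$ with $\theta(\zz\mid\LB(\xx))=-1$, using that $\LB(\xx)=\xx'\alpha\xx$ with $\xx'$ empty or all-direct. You instead compute $\mm_{-1}(\LB(\xx))=l(\xx)$ (correct, again by maximality of $\ww$ and uniqueness of direct extensions) and identify $H_l^{-1}(\LB(\xx))$ with the interval $[\mm_{-1}(\LB(\xx)),\LB(\xx)]=[l(\xx),\LB(\xx)]=(\xx,\LB(\xx)]$ of $H_l^i(\xx_0)$; this identification needs the convexity of $H_l(\LB(\xx))$ in $H_l^i(\xx_0)$, i.e.\ Remark \ref{subhammock is an interval} (not really part (b), which is what you cite), but that remark is available and the argument goes through. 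Your version has the small bonus of locating the minimum of the fibre explicitly as $l(\xx)$; the paper's version avoids any appeal to convexity. Both are valid.
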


\begin{proof}
By the definition of $\LB(\xx)$, it is clear that $\xx<_l\LB(\xx)$, $\LB(\xx)\notin\OST{-1}$, and hence $\varphi_\sB(\LB(\xx))=1$.

Let $\zz\in(\xx,\LB(\xx))$. If $\xx\not\sqsubseteq_l\zz$ then $\zz\sqcap_l\xx=\zz\sqcap_l\LB(\xx)$. This implies that both $\xx$ and $\LB(\xx)$ lie on the same side of $\zz$ in the hammock, a contradiction. Hence $\xx\sqsubset_l\zz$. Since $\theta(\zz\mid\xx)=1$, we get $a\in Q_1$ such that $A\xx\sqsubseteq_l\zz$. By definition, $A\xx\sqsubseteq_l\LB(\xx)$. Hence $A\xx\sqsubseteq_l\LB(\xx)\sqcap_l\zz$ and $\theta(\LB(\xx)\mid\zz)=1$. If $\LB(\xx)=\xx'A\xx$ then either $|\xx'|=0$ or $\delta(\xx')=-1$. Therefore we conclude that $\LB(\xx)\sqsubset_l\zz$, which together with $\theta(\zz\mid\LB(\xx))=-1$ gives $\zz\in H_l^{-1}(\LB(\xx))$ as required.
\end{proof}

Combining the above with Remark \ref{hammockcoverrepr}, we conclude that $\LB(\xx)$ is the immediate successor of $\xx$ in $\OST{}$.
\begin{cor}\label{neighbourcondenscommute}
If $\xx\in\OST1\setminus\{\MM_i(\xx_0)\}$ then $(\xx,\LB(\xx))\cap\OST{}=\emptyset$.
\end{cor}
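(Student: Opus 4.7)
The plan is to chain together two pieces of information that have already been established just above the corollary. From Proposition \ref{LB Rem 1} we have two key facts: the interval identity $H_l^{-1}(\LB(\xx)) = (\xx, \LB(\xx)]$ and the computation $\varphi_\sB(\LB(\xx)) = 1$. These two facts together mean that the half-open interval $(\xx,\LB(\xx)]$ in $H_l^i(\xx_0)$ coincides with $H_l^{-\varphi_\sB(\LB(\xx))}(\LB(\xx))$.

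Next I would invoke Remark \ref{hammockcoverrepr} with $\yy := \LB(\xx) \in \OST{}$; this remark tells us precisely that $H_l^{-\varphi_\sB(\LB(\xx))}(\LB(\xx)) \cap \OST{} = \{\LB(\xx)\}$. Substituting through the identification from the previous paragraph yields $(\xx,\LB(\xx)]\cap\OST{}=\{\LB(\xx)\}$, and removing the right endpoint from both sides gives exactly $(\xx,\LB(\xx))\cap\OST{}=\emptyset$, as required.

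There is no real obstacle here; the work has essentially already been done in proving Proposition \ref{LB Rem 1}, which carefully arranged the interval $(\xx,\LB(\xx)]$ to match the particular ``hammock representative neighbourhood'' $H_l^{-\varphi_\sB(\LB(\xx))}(\LB(\xx))$ for which Remark \ref{hammockcoverrepr} guarantees that no other element of $\OST{}$ can sneak in. The only thing to be mindful of is matching the sign conventions correctly, namely that $\varphi_\sB(\LB(\xx))=1$ forces the superscript $-\varphi_\sB(\LB(\xx))=-1$ in the application of Remark \ref{hammockcoverrepr}, so that the two hammocks under consideration are literally the same set.
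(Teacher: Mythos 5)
Your argument is correct and is exactly the paper's own route: the corollary is obtained by combining Proposition \ref{LB Rem 1} (which gives $(\xx,\LB(\xx)]=H_l^{-1}(\LB(\xx))$ with $\varphi_\sB(\LB(\xx))=1$) with Remark \ref{hammockcoverrepr} applied to $\yy=\LB(\xx)$. Your sign bookkeeping ($-\varphi_\sB(\LB(\xx))=-1$) is also the right point to check, so nothing further is needed.
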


Now we show that the function $\LB$ is indeed an extension of the function $\lB$ on a larger domain.
\begin{prop}\label{LB is ext of lB}
If $\xx\in\STB1$ then $\lB(\xx)=\LB(\xx)$.
\end{prop}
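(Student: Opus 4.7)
The plan is to establish $\lB(\xx)=\LB(\xx)$ by showing the two inclusions $\lB(\xx)\sqsubseteq_l\LB(\xx)$ and $\LB(\xx)\sqsubseteq_l\lB(\xx)$, exploiting that both are defined as maximal left substrings of $l(\xx)$ under differing membership requirements.

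First I would verify that $\STB{1}\subseteq\OST{1}$. For any $\yy\in\StB{1}$ with witness $\bb\in\CycB$, $\theta(\bb)=1$, $\bb\yy\in\St$, the string $\bb\cdot(\bb\yy)=\bb^2\yy$ is again a string because $\bb\in\CycB$; hence $\bb\yy\in\StB{}$, so $\yy\in\OSt{1}$ with $\uu=\bb$. Applying this to $\lB(\xx)$ (together with $\lB(\xx)\in H_l^i(\xx_0)$, which follows from $\lB(\xx)\in[\xx,l(\xx)]\subseteq H_l^i(\xx_0)$ since $\xx\ne\MM_i(\xx_0)$ forces $l(\xx)\in H_l^i(\xx_0)$ and $H_l^i(\xx_0)$ is an interval in $H_l(\xx_0)$) yields $\lB(\xx)\in\OST{1}$. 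The maximality in the definition of $\LB(\xx)$ then gives $\lB(\xx)\sqsubseteq_l\LB(\xx)$.

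For the reverse inclusion the key step is to prove $\LB(\xx)\in\StB{1}$, since the maximality of $\lB(\xx)$ in $\StB{1}$ will then produce $\LB(\xx)\sqsubseteq_l\lB(\xx)$. If $\LB(\xx)=\xx$ this is immediate; otherwise, writing $l(\xx)=\ww'\alpha\xx$ with $\alpha$ the unique inverse syllable prependable to $\xx$ and $|\ww'|=0$ or $\delta(\ww')=-1$, the shape $\LB(\xx)=\vv'\alpha\xx$ is forced with $\vv'$ a left substring of $\ww'$. Starting from the witness $\bb\in\CycB$, $\theta(\bb)=1$, $\bb\xx\in\St$ coming from $\xx\in\StB{1}$ (so $\bb=\bb''\alpha$), I would use the two cases of $\LB(\xx)\in\OST{1}$: either there exist $\bb_1\in\CycB$ and non-empty $\uu$ with $\theta(\uu)=1$ and $\bb_1\uu\LB(\xx)\in\St$, or $\LB(\xx)\sqsubseteq_l\MM_i(\xx_0)$. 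In the first case invoke $\bb\approx\bb_1$ in the GMB $\sB$ to supply a $\sB$-extendable bridging string that lets us transport the indirect witness into a $\sB$-cycle directly prepended to $\LB(\xx)$; in the second case the shape of $\MM_i(\xx_0)$ combined with uniqueness of $\alpha$ pins $\LB(\xx)$ down tightly enough that an appropriate cyclic permutation of (a power of) $\bb$ itself serves as the required $\sB$-cycle.

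The principal obstacle is this second inclusion: one must convert an \emph{indirect} $\OST{1}$-witness (which may involve an intervening string $\uu$ between the $\sB$-cycle and $\LB(\xx)$) into a \emph{direct} $\StB{1}$-witness, while respecting the uniqueness of inverse syllable extensions in the string algebra, the cyclic structure of prime bands, and ensuring that no monomial relation is violated at the new concatenation boundary.
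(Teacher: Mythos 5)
Your overall strategy (two maximality inclusions) is the same as the paper's, and your first half is fine: $\lB(\xx)\in\STB{1}\subseteq\OST{1}$, so maximality of $\LB(\xx)$ gives $\lB(\xx)\sqsubseteq_l\LB(\xx)$. The problem is the second half, which is where all the content of the proposition lies, and which your proposal only outlines. Writing $\LB(\xx)=\zz\xx$ with $\zz=\vv'\alpha$, $\vv'$ all direct, you must exhibit a $\sB$-cycle attached \emph{directly} to $\LB(\xx)$ with sign $1$. Your plan for the main case, ``invoke $\bb\approx\bb_1$ \dots to transport the indirect witness into a $\sB$-cycle directly prepended to $\LB(\xx)$'', does not follow from the stated ingredients: $\bb\approx\bb_1$ only supplies a connecting string between the two bands, and gives no mechanism for eliminating the intervening string $\uu$ in the witness $\bb_1\uu\LB(\xx)$, nor for crossing the direct syllables $\vv'$ lying between $\alpha\xx$ and the left end of $\LB(\xx)$. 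Nothing in your data ($\bb\xx$ a string with first syllable $\alpha$, plus an indirect witness at the far left end) ties the cyclic structure of $\sB$ to the \emph{right-hand portion} of $\zz$, and in general membership in $\OST{1}$ does not imply membership in $\STB{1}$ (cf.\ Corollary \ref{BandfreeOST-OSt} and the example after Corollary \ref{OST-STB is finite}), so some special property of $\LB(\xx)$ must be invoked; you never identify which one. You explicitly call this conversion ``the principal obstacle'', but naming the obstacle is not the same as overcoming it, so the proof is incomplete at its crucial step.

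The missing idea is the paper's use of the $\lB$-machinery, via the inclusion you already proved. Assuming $\lB(\xx)=\zz'\xx\sqsubset_l\zz\xx$, Remark \ref{rem:6.3} identifies $\zz'$ with $\lB(1_{(t(\xx),\varepsilon(\xx))})$, so $\zz'$ is an $\lB$-string, and Corollary \ref{cor:6.10} makes it a left substring of $\bb^n$ for some $\bb\in\CycB$; hence $\zz'\bb$ is a string, and since the passage from $\zz'$ to $\zz$ adds only direct syllables on the left while the first syllable of $\zz'$ is inverse, $\zz\bb$ is again a string. This is precisely the step that attaches a $\sB$-cycle at the right end of $\zz$. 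One then observes $\delta(\zz)=0$, so $\zz\equiv_H\zz\bb$ and $\zz\equiv_H\zz\xx$, glues the indirect witness $\bb'\uu\zz\xx$ to get $\bb'\uu\zz\bb$ a string, and uses a connecting string between $\bb$ and $\bb'$ inside the GMB to realize $\zz$ inside a power of a $\sB$-cycle of sign $1$; thus $\zz\xx\in\StB{1}$, contradicting maximality of $\lB(\xx)$. (Your second case, $\LB(\xx)\sqsubseteq_l\MM_i(\xx_0)$, is essentially vacuous: there $\vv'$ is forced to be empty, so $\LB(\xx)=\alpha\xx\sqsubseteq_l\lB(\xx)$ already.) Without the $\sB$-extendability of the $\lB$-part, the ``transport'' you describe cannot be carried out, so as written the argument has a genuine gap.
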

\begin{proof}
Let $\lB(\xx)=:\zz'\xx$ and $\LB(\xx)=:\zz\xx$. Since $\lB(\xx)\in\STB{1}\subseteq\OST1$, we have $\lB(\xx)\sqsubseteq_l\zz\xx\sqsubseteq_ll(\xx)$. If $\zz=\zz'$ then there is nothing to prove. Therefore assume that $\zz'\sqsubset_l\zz$. By Remark \ref{rem:6.3}, $1_{(t(\xx), \varepsilon(\xx))}\in\StB1$ and $\lB(1_{(t(\xx), \varepsilon(\xx))})=\zz'$. Then Corollary \ref{cor:6.10} yields $\bb\in\CycB$ such that $\zz'\sqsubseteq_l\bb^n$ for some $n\in\N^+$.

Since $\zz'\bb$ is a string, $\theta(\zz')=1$ and $\theta(\zz\mid\zz')=-1$, we get that $\zz\bb$ is a string. Since $\zz'\sqsubset_l\zz \sqsubseteq_ll(1_{(t(\xx),\varepsilon(\xx))})$ we get $\delta(\zz)=0$. As $\zz\xx\in\OST1$, there is $\bb'\in\CycB$ and a string $\uu$ such that $\bb'\uu\zz\xx\in\St$ and $\theta(\bb'\uu) = 1$. Furthermore, $\delta(\zz) =0$ gives that $\bb'\uu\zz\bb$ is a string. Thus $\zz\bb\uu\zz\bb'\uu$ is a power of a $\mathsf{B}$-cycle, $\theta(\zz\bb\yy\bb'\uu) = 1$ and $(\zz\bb\yy\bb'\uu)\zz$ is a string. Hence $\zz\in\StB1$. Since $\delta(\zz)=0$, we have $\zz\xx\in\StB1$. Since $\zz\xx\sqsubseteq_l\la(\xx)$, we get a contradiction to the definition of $\lB(\xx)$.
\end{proof}

For $\xx\in\OST1$ we inductively define the powers of the function $\LB$ by $\LB^0(\xx):=\xx$ and $\LB^{n+1}(\xx):=\LB(\LB^n(\xx))$ for $n\in\N$, if $\LB^n(\xx)\neq\MM_i(\xx_0)$. Note that $\LB^n(\xx)$ exists for each $n$ if and only if $\xx\in\OSt{1}$. Whenever this happens, using $\LB^n(\xx)\sqsubset_l\LB^{n+1}(\xx)$ we get that $\lim_{n\to\infty}\LB^n(\xx)$ is a left $\N$-string; denote this limit by $\brac{1}{\LB}(\xx)$.

\begin{rem}\label{LB Rem 2}
If $\LB^n(\xx)=\uu\yy\xx$, where $\yy$ and $\uu$ are strings with $|\uu|>0$, then $\theta(\uu)=1$ if and only if there exists $0\leq k<n$ such that $\LB^k(\xx)=\yy\xx$.
\end{rem}

The proof of the following result is along similar lines as the proof of Proposition \ref{prop:6.5}.

\begin{prop}
If $\xx\in\OST1\cap\OSt{1}$ then $\brac{1}{\LB}(\xx)=\,^\infty\bb\uu\xx$ for some band $\bb$ and some string $\uu$.
\end{prop}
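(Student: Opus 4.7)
The proof will closely parallel that of Proposition \ref{prop:6.5}, with $\LB$ in place of $\lB$. I would begin by defining a function $f:\N^+\to Q_1\cup Q_1^-$ by letting $f(k)$ be the last syllable of $\LB^k(\xx)$; this is well-defined for every $k\in\N^+$ because $\xx\in\OSt{1}$ ensures that every iterate $\LB^k(\xx)$ exists. Since $Q_1\cup Q_1^-$ is finite, the pigeonhole principle supplies $m,n\in\N^+$ with $f(m)=f(m+n)$. Writing $\yy\xx:=\LB^m(\xx)$ and $\zz\yy\xx:=\LB^{m+n}(\xx)$, the strict length increase provided by $\LB$ forces $|\zz|>0$.

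The key step, and the main obstacle, is to establish the following analog of Remark \ref{rem:6.3} adapted to $\LB$: if $\alpha\xx'$ and $\alpha\yy'$ both lie in $\OST{1}\cap\OSt{1}$ with the same last syllable $\alpha$, and are long enough that neither is a left substring of $\mm_i(\xx_0)$ or $\MM_i(\xx_0)$, then $\LB^p(\alpha\xx')=\ww\alpha\xx'$ if and only if $\LB^p(\alpha\yy')=\ww\alpha\yy'$. This holds because $\la$ extends a string only by syllables determined entirely by its last syllable, so $\la(\alpha\xx')$ and $\la(\alpha\yy')$ differ only in their rightmost portions $\xx'$ versus $\yy'$; and whether a candidate left substring $\ww'\beta\xx'$ of $\la(\alpha\xx')$ belongs to $\OST{}$ reduces, once interference from left substrings of $\mm_i(\xx_0)$ and $\MM_i(\xx_0)$ is excluded, to whether $\ww'\beta$ admits a left extension to a power of a $\sB$-cycle, a condition depending solely on $\ww'\beta$.

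Combining this analog with induction on $k$ yields $\LB^{m+kn}(\xx)=\zz^k\yy\xx$ for every $k\in\N$: granted the identity for $k$, the string $\zz^k\yy\xx$ shares its last syllable $f(m)$ with $\yy\xx$, so applying $\LB^n$ to each produces the same left extension, namely left-multiplication by $\zz$. Passing to the limit then gives $\brac{1}{\LB}(\xx)=\,^\infty\zz\yy\xx$. Because $\zz^k$ is a string for every $k\in\N$, $\zz$ is a finite power of a cyclic permutation of some band $\bb$, whence $\brac{1}{\LB}(\xx)=\,^\infty\bb\uu\xx$ for a suitable string $\uu$.
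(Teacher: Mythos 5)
Your argument is correct and is essentially the paper's own proof: the paper proves this proposition simply by declaring it to be along the same lines as Proposition \ref{prop:6.5}, and your adaptation---pigeonhole on last syllables together with a last-syllable determinacy statement for $\LB$ playing the role of Remark \ref{rem:6.3}---is exactly that adaptation. The only refinement worth recording is to choose the repeated last syllable at an index $m$ large enough that the iterates $\LB^k(\xx)$ for $k\geq m$ are longer than $\mm_i(\xx_0)$ and $\MM_i(\xx_0)$ (possible, since the pigeonhole argument supplies infinitely many admissible indices), so that membership in $\OST{}$ reduces to membership in $\OSt{}$ as your sketch of the determinacy statement requires.
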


However the band $\bb$ obtained above might not be a $\sB$-band as is evident from the following example.

\begin{exmp}
Continuing from Example \ref{exmp: QBa in running example}, for $a_0\in\overline{\mathsf{St}}_1(a_0,1;\sB_2)\cap\overline{\mathsf{St}}_1(\sB_2)$, we have $\brac{1}{l_{\sB_2}}(a_0)=\,^\infty(b_1B_4b_3B_2)b_1a_4a_3A_1a_0$, but $b_1B_4b_3B_2\notin\sB_2$.
\end{exmp}

This issue is resolved if $\sB$ is minimal for $(\xx_0,i)$.
\begin{prop}\label{OST to infty bux}
Suppose $\sB$ is minimal for $(\xx_0,i)$. If $\xx\in\OST1\cap\OSt{1}$ then $\brac{1}{\LB}(\xx)=\,^\infty\bb\uu\xx$ for some $\bb\in\BalB$ and some string $\uu$.
\end{prop}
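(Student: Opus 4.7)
The plan is to proceed in three stages, leveraging the unnumbered proposition immediately preceding the statement (which gives the conclusion without the refinement $\bb\in\BalB$) and mimicking the structure of Proposition \ref{prop:6.16}. Specifically, I first extract some band $\bb$ and string $\uu$ with $\brac{1}{\LB}(\xx)=\,^\infty\bb\uu\xx$; then I use minimality of $\sB$ to force $\bb\in\BaB$; finally I rule out non-domestic exits of $\bb$ with inverse-type first syllable to conclude $\bb\in\BalB$.

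For the second stage, the key observation is that the period of the left $\N$-string $\,^\infty\bb\uu\xx$ means that for some $n_0\in\N$ and a cyclic permutation $\bb'$ of $\bb$, we have $\LB^{n_0+k}(\xx)=(\bb')^{k}\uu'\xx$ for all $k\in\N$. Since $|\LB^n(\xx)|\to\infty$, for $k$ large enough $\LB^{n_0+k}(\xx)\not\sqsubseteq_l\MM_i(\xx_0)$, so by Remark \ref{rem: Strings in hammock WStr} we have $\LB^{n_0+k}(\xx)\in\OSt{1}$. Unpacking the definition of $\OSt{1}$, there exist $\bb_1\in\CycB$ and a string $\ww$ such that $\bb_1\ww(\bb')^k\uu'\xx$ is a string, which in particular witnesses $\bb\preceq\bb_1$. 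Moreover, writing $\xx=\xx''\xx_0$, the string $\LB^{n_0+k}(\xx)\in\OST{1}\subseteq H_l^i(\xx_0)$ exhibits $\bb$ as reachable from $(\xx_0,i)$, so $[\bb]\in\QBa_i(\xx_0)$. Combined with $[\bb]\preceq[\bb_1]=\sB$ and the minimality of $\sB$ in $(\QBa_i(\xx_0),\preceq)$, this gives $[\bb]=\sB$, i.e., $\bb\in\BaB$.

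For the third stage, suppose for contradiction that $\bb\notin\BalB$. Then there is a non-domestic exit $(\beta,\bb'')$ with $\beta\in Q_1$, so $\theta(\beta)=-1$. Rewriting $\,^\infty\bb\uu\xx=\,^\infty\bb''\vv\xx$ for an appropriate string $\vv$, and noting $\theta(\bb'')=1$, Remark \ref{LB Rem 2} yields some $n\geq 1$ with $\LB^n(\xx)=\bb''\vv\xx$. Since $\beta\bb''\in\ExtB$, the same construction as in the proof of Proposition \ref{prop:6.16} (producing the cyclic permutation of the underlying $\sB$-cycle that begins with $\beta$) gives $\bb_2\in\CycB$ with $\theta(\bb_2)=-1$ and $\bb_2\bb''\vv\xx$ a string, hence $\bb''\vv\xx\in\StB{-1}\subseteq\OST{-1}$. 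This contradicts Proposition \ref{LB Rem 1}, which asserts $\varphi_\sB(\LB^n(\xx))=1$, forcing $\LB^n(\xx)\notin\OST{-1}$.

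The main obstacle is the second stage: the minimality hypothesis is essential and is precisely what fails in the example preceding the proposition, where the limit band turned out to lie outside the intended equivalence class. The argument must simultaneously verify reachability of $\bb$ from $(\xx_0,i)$ (so that $[\bb]$ actually sits in the poset $\QBa_i(\xx_0)$) and exhibit a $\sB$-cycle that dominates $\bb$ under $\preceq$ (so that minimality can be invoked). Both pieces come from carefully unpacking that sufficiently high iterates $\LB^n(\xx)$ live in $\OSt{1}\cap H_l^i(\xx_0)$ and contain arbitrarily large powers of a cyclic permutation of $\bb$; once this is set up, the rest of the argument is a direct adaptation of the $\lB$-level results.
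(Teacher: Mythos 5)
Your proposal is correct, but it follows a genuinely different route from the paper. The paper's proof is a short reduction to the already-established $\lB$-level theory: by Corollary \ref{OST-STB is finite} (which is where minimality enters for the paper) some iterate $\LB^n(\xx)$ lies in $\STB{1}$, Proposition \ref{LB is ext of lB} identifies $\LB$ with $\lB$ from that point on, and Corollary \ref{BSt1toBalb} then gives the limit in the form $\,^\infty\bb\uu\xx$ with $\bb\in\BalB$ at once. You instead start from the unlabelled proposition giving $\brac{1}{\LB}(\xx)=\,^\infty\bb\uu\xx$ for \emph{some} band, make minimality act directly at the level of the poset $(\QBa_i(\xx_0),\preceq)$ (large iterates contain powers of a cyclic permutation of $\bb$ and lie in $\OSt{1}\cap H_l^i(\xx_0)$, giving both reachability of $[\bb]$ from $(\xx_0,i)$ and $[\bb]\preceq\sB$, hence $[\bb]=\sB$), and then rerun the exit-sign argument of Proposition \ref{prop:6.16} at the $\LB$ level, replacing Remark \ref{rem:6.4} by Proposition \ref{LB Rem 1} to exclude a non-domestic exit with $\beta\in Q_1$. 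This costs you a partial duplication of Proposition \ref{prop:6.16}, but it bypasses Corollary \ref{OST-STB is finite} and Proposition \ref{LB is ext of lB} entirely and makes transparent exactly which failure the minimality hypothesis blocks (the phenomenon of the $\sB_2$ example, where the limit band escapes the class); the paper's route is shorter and inherits primality of $\bb$ for free, while yours only needs the definition of $\BalB$. One cosmetic point: the claim $\LB^{n_0+k}(\xx)=(\bb')^{k}\uu'\xx$ with consecutive exponents is stronger than what is true or needed -- traversing one period of $\bb'$ may take several $\LB$-steps -- but Remark \ref{LB Rem 2}, applied with the cyclic permutation $\bb'$ chosen so that its first syllable is inverse, does give that each $(\bb')^{k}\uu'\xx$ equals $\LB^{m_k}(\xx)$ for some $m_k$, and that weaker statement (indeed, even just that long iterates contain a full copy of $\bb'$ to the left of $\xx_0$) is all your argument uses.
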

\begin{proof}
By Corollary \ref{OST-STB is finite}, we have that $\LB^n(\xx)\in\STB{1}$ for some $n\in\N$. Therefore $\brac{1}{\LB}(\xx)=\brac{1}{\LB}(\LB^n(\xx))=\brac{1}{\lB}(\LB^n(\xx))$, where the last equality follows from Proposition \ref{LB is ext of lB}. The conclusion is then immediate from Corollary \ref{BSt1toBalb}.
\end{proof}
We end this section with a useful result that will be used to prove the density of some special strings called $\sB$-centers in Proposition \ref{Density of Cent}.
\begin{prop}\label{1Lb<1LbB}
Suppose $\sB\in\QBa_i(\xx_0)$ is non-domestic and minimal for $(\xx_0,i)$.
If $\xx\in\OST1$ and $\yy\in\OST{-1}$ such that $\xx<_l\yy$ then $\brac{1}{\LB}(\xx)<_l\brac{1}{\LbB}(\yy)$.
\end{prop}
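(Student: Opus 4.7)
The plan is to first establish term-by-term strict separation of the two iterated sequences inside $\OST{}$, pass to the completion $\HHlx$ to obtain a non-strict inequality between the limits, and then promote this to strict inequality by distinguishing the two limiting almost periodic left $\N$-strings via the non-domesticity hypothesis on $\sB$.

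By Proposition \ref{x<y implies x in ST1B} applied to $\xx<_l\yy$, we have $\xx\in\OSt1$ and $\yy\in\OSt{-1}$, so both sequences $(\LB^n(\xx))_{n\in\N}$ and $(\LbB^m(\yy))_{m\in\N}$ exist for every index and their limits $\brac{1}{\LB}(\xx)$ and $\brac{1}{\LbB}(\yy)$ are well-defined left $\N$-strings. I would then prove by induction on $n+m$ that $\LB^n(\xx)<_l\LbB^m(\yy)$ for all $n,m\in\N$. The base case $(0,0)$ is the hypothesis. For the step $(n,m)\to(n+1,m)$: Corollary \ref{neighbourcondenscommute} identifies $\LB^{n+1}(\xx)$ as the immediate successor of $\LB^n(\xx)$ in $\OST{}$, so since $\LbB^m(\yy)\in\OST{}$ lies strictly above $\LB^n(\xx)$ by the inductive hypothesis, one obtains $\LB^{n+1}(\xx)\leq_l\LbB^m(\yy)$. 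Equality is impossible because Proposition \ref{LB Rem 1} gives $\varphi_\sB(\LB^{n+1}(\xx))=1$, hence $\LB^{n+1}(\xx)\notin\OST{-1}$, whereas $\LbB^m(\yy)\in\OST{-1}$. The step $(n,m)\to(n,m+1)$ is dual.

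Since the increasing sequence $(\LB^n(\xx))_n$ is cofinal for $\brac{1}{\LB}(\xx)$ and the decreasing sequence $(\LbB^m(\yy))_m$ is coinitial for $\brac{1}{\LbB}(\yy)$ in the completion $\HHlx$ (Proposition \ref{completion of hammock}), the term-by-term strict separation lifts to $\brac{1}{\LB}(\xx)\leq_l\brac{1}{\LbB}(\yy)$. To promote this to strict inequality, I would show the two limits are distinct left $\N$-strings. By Proposition \ref{OST to infty bux}, $\brac{1}{\LB}(\xx)=\,^\infty\bb\uu\xx$ for some $\bb\in\BalB$ and string $\uu$; dually $\brac{1}{\LbB}(\yy)=\,^\infty\bb'\vv\yy$ for some $\bb'\in\BalbB$ and string $\vv$. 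If the two almost periodic left $\N$-strings coincided, then far enough to the left they would form a common periodic left $\N$-string admitting both $|\bb|$ and $|\bb'|$ as periods; primitivity of $\bb,\bb'$ (Corollary \ref{BalB prime} and its dual) together with Corollary \ref{cyclic perm are diff; cor} would force $\bb$ and $\bb'$ to be cyclic permutations of one another, making them equal as elements of $\Ba$. But $\sB$ non-domestic gives $\BalB\cap\BalbB=\emptyset$ by Remark \ref{Balalbbandsintersection}, a contradiction. The principal delicate point is this final step: extracting the equality $\bb=\bb'$ from a hypothetical coincidence of the two almost periodic left $\N$-strings, which is where the non-domesticity hypothesis is used in an essential way.
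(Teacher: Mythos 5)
Your proof is correct, but the middle of the argument runs along a genuinely different track than the paper's. The paper, after the common opening move (Proposition \ref{x<y implies x in ST1B} plus Proposition \ref{OST to infty bux} to write $\brac{1}{\LB}(\xx)=\,^\infty\bb_1\uu_1\xx$ and $\brac{1}{\LbB}(\yy)=\,^\infty\bb_2\uu_2\yy$ with $\bb_1\in\BalB$, $\bb_2\in\BalbB$), argues in one stroke: disjointness $\BalB\cap\BalbB=\emptyset$ gives that the two left $\N$-strings are distinct, and then a sign analysis at their fork point $\ww=\brac{1}{\LB}(\xx)\sqcap_l\brac{1}{\LbB}(\yy)$ rules out $\theta(\brac{1}{\LB}(\xx)\mid\ww)=1$ via Remark \ref{LB Rem 2} and Proposition \ref{LB Rem 1}, which immediately forces $\brac{1}{\LB}(\xx)<_l\ww<_l\brac{1}{\LbB}(\yy)$. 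You instead obtain the non-strict inequality by a double induction giving $\LB^n(\xx)<_l\LbB^m(\yy)$ for all $n,m$ (using Corollary \ref{neighbourcondenscommute} and the $\varphi_\sB$-values from Proposition \ref{LB Rem 1} to exclude equality) and then pass to the completion; this is valid, though the "lifting" step tacitly uses that $\brac{1}{\LB}(\xx)$ is the supremum of its iterates and $\brac{1}{\LbB}(\yy)$ the infimum of its, a routine fork-point verification of the kind carried out inside the proof of Proposition \ref{completion of hammock}. What your route buys is that the fork-point sign analysis is replaced by the cleaner immediate-successor property in $\OST{}$; what it costs is the extra limit bookkeeping that the paper's one-step argument avoids. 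For strictness you prove in detail what the paper merely asserts, namely that $\,^\infty\bb_1\uu_1\xx=\,^\infty\bb_2\uu_2\yy$ would force $\bb_1$ and $\bb_2$ to be cyclic permutations of one another; your Fine--Wilf-style argument via Proposition \ref{KMP} is right, but note that what it needs is primitivity of bands (built into the definition, as used in Corollary \ref{cyclic perm are diff; cor}), not primeness, so the citation of Corollary \ref{BalB prime} is inessential. The use of non-domesticity is identical in both proofs: it enters only through $\BalB\cap\BalbB=\emptyset$ (Remark \ref{Balalbbandsintersection}).
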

\begin{proof}
Proposition \ref{x<y implies x in ST1B} gives $\xx\in\OSt1$ and $\yy\in\OSt{-1}$.
In view of Proposition \ref{OST to infty bux}, let $\brac{1}{\LB}(\xx)=:\,^\infty\bb_1\uu_1\xx$ and $\brac{1}{\LbB}(\yy)=:\,^\infty\bb_2\uu_2\yy$, where $\bb_1\in\BalB$, $\bb_2\in\BalbB$ and $\uu_1,\uu_2$ are strings. Since $\BalB\cap\BalbB=\emptyset$, we get that $\bb_1$ is not a cyclic permutation of $\bb_2$, which implies $\,^\infty\bb_1\uu_1\xx\neq\,^\infty\bb_2\uu_2\yy$. Let $\ww:=\,^\infty\bb_1\uu_1\xx\sqcap_l\,^\infty\bb_2\uu_2\yy$ so that $\ww\in\OSt{\pm1}$. If $\theta(\,^\infty\bb_1\uu_1\xx\mid\ww)=1$ then Remark \ref{LB Rem 2} yields $n\in\N^+$ such that $\LB^n(\xx)=\ww$, a contradiction to Proposition \ref{LB Rem 1}. Hence $\theta(\,^\infty\bb_1\uu_1\xx\mid\ww)=-1$, which completes the proof.
\end{proof}

\section{$\sB$-centers}\label{sec: Centers}
Given $\sB\in\QBa$, the presence of some special strings in $\STB{\pm1}$, which we shall call \emph{$\sB$-centers}, characterizes non-domesticity of $\sB$. The suborder of such strings is a dense linear order, and it is responsible for the shuffle structure (see clause (4) of Definition \ref{defn: LOFD}) in the hammocks for non-domestic string algebras. The absence of $\sB$-centers in domestic string algebras (Proposition \ref{Absence of B center in domestic}) thus prohibits domestic string algebras to have a shuffle structure.

To define $\sB$-centers, we need a notion called \emph{$\sB$-equivalence} which guarantees that the maximal scattered intervals in $\OST{}$ around $\sB$-centers are canonically isomorphic.

\begin{defn}\label{defn: Bstrand equiv}
Let $\sB\in\QBa_i(\xx_0)$. Two strings $\xx$ and $\yy$ in $\STB{\pm1}$ are said to be \emph{$\sB$-equivalent}, denoted $\xx\equiv_\sB\yy$, if there exist distinct syllables $\alpha$ and $\beta$ such that $\alpha\xx$, $\beta\xx$, $\alpha\yy$ and $\beta\yy$ are strings.
\end{defn}
It is trivial to note that $\equiv_\sB$ is an equivalence relation on $\STB{\pm1}$. The following remark notes that there are finitely many $\equiv_\sB$-classes.

\begin{rem}\label{rem: fin many B equiv classes}
Associated to each string $\xx$ of a $\sB$-equivalence class there is a unique pair $(\alpha,\beta)\in Q_1\times Q_1^-$ for which $\alpha\xx$ and $\beta\xx$ are strings. Therefore the assignment of each $\sB$-equivalence class to its corresponding pair in $Q_1\times Q_1^-$ is injective. Since $Q_1\times Q_1^-$ is finite in every string algebra, we have that there are finitely many $\sB$-equivalence classes.
\end{rem}

\begin{exmp}\label{exmp: Bequiv classes}
Recall from Example \ref{exmp: QBa in running example} that $\sB_3\in\QBa_1(a_0)$ is non-domestic. There are three $\sB_3$-equivalence classes in $\overline{\mathsf{St}}_{\pm1}(a_0,1;\sB)$ with strings $E_1A_2A_1a_0,\ G_1FE_2E_1A_2A_1a_0,\ k_1h_2H_1G_1FE_2E_1A_2A_1a_0$ as their representatives.
\end{exmp}

The notion of $\sB$-equivalence is strictly weaker than $H$-equivalence as demonstrated in Example \ref{Example B equiv weaker than H equiv}. However, these notions coincide in the case of gentle string algebras.

\begin{exmp}\label{Example B equiv weaker than H equiv}
Consider the string algebra $\Gamma'$ from Figure \ref{fig: Hequiv Bequiv}. Choose $j\in\{1,-1\}$ such that $f1_{(v_5,j)}$ is a string. There is only one non-domestic $\sB\in\QBa_{-1}(1_{(v_5,j)})$. Consider the strings $f,feDf\in\mathsf{St}_{\pm1}(1_{(v_5,j)},-1;\sB)$. Since $cf,cfeDf,Df,DfeDf$ are strings, we have $f\equiv_{\sB}feDf$. On the other hand, $f\not\equiv_HfeDf$ since $acf$ is a string but $acfeDf$ is not.
\end{exmp}
\begin{figure}[h]
    \centering
    \begin{tikzcd}
                    & v_2 &                                      & v_4 \arrow[ld, "d"'] \arrow[rd, "e"] &                                                              \\
v_1 \arrow[ru, "a"] &     & v_3 \arrow[ll, "c"] \arrow[lu, "b"'] &                                      & v_5 \arrow[ll, "f"] \arrow[llll, "g", bend left, shift left]
\end{tikzcd}
    \caption{$\Gamma'$ with $\rho=\{bf,cd,ge,ag,acfe\}$}
    \label{fig: Hequiv Bequiv}
\end{figure}
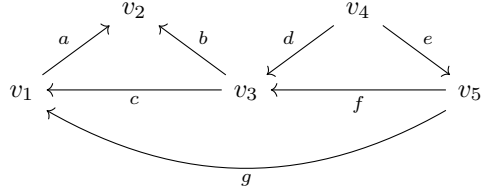

\begin{prop}\label{prop: Bcenter equiv conditions}
Let $\sB\in\QBa_i(\xx_0)$. The following statements are equivalent for a string $\xx\in\STB{\pm1}$.
\begin{enumerate}
    \item There exists $\bb'\in\CycB$ such that $\bb'\xx\in\STB{\pm1}$ and $\bb'\xx\equiv_\sB\xx$.
    
    \item There exist strings $\xx_1,\xx_2$ and $\bb'\in\CycB$ such that $\xx=\xx_2\xx_1$, $\xx_2\bb'\xx_1\in\STB{\pm1}$, $\xx_2\bb'\xx_1\equiv_\sB\xx$ and $\xx_2\bb'\in\ExtB$.
    
    \item There exist strings $\zz,\uu$ such that $\delta(\zz)=0$, $\zz\in\ExtB$, 
    $\zz\uu\in\STB{\pm1}$ and $\zz\uu\equiv_\sB\xx$.
    
    \item There exists a syllable $\gamma\in\StB{\pm1}$ such that $1_{(t(\xx),\varepsilon(\xx))}\gamma$ is a string, $\alpha\gamma,\beta\gamma\in\ExtB$ for distinct syllables $\alpha,\beta$ such that $\alpha\xx,\beta\xx\in\St$.
\end{enumerate}
\end{prop}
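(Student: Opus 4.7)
The plan is to establish the four conditions equivalent by the cyclic chain $(1) \Rightarrow (2) \Rightarrow (3) \Rightarrow (4) \Rightarrow (1)$. The first two implications are essentially formal. For $(1) \Rightarrow (2)$, I would take $\xx_2 := 1_{(t(\xx),\varepsilon(\xx))}$, the trivial string at the left end of $\xx$, and $\xx_1 := \xx$; then $\xx_2\xx_1 = \xx$, $\xx_2\bb'\xx_1 = \bb'\xx$, and $\xx_2\bb' = \bb' \in \CycB \subseteq \ExtB$, so all requisite properties transfer directly. For $(2) \Rightarrow (3)$, set $\zz := \xx_2\bb'$ and $\uu := \xx_1$; then $\zz \in \ExtB$ by hypothesis, and $\delta(\zz) = 0$ since every $\sB$-cycle contains syllables of both signs.

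For $(4) \Rightarrow (1)$, from $\alpha\gamma \in \ExtB$ I would extract $\bb_1 \in \CycB$ and $n_1 \in \mathbb{N}^+$ with $\alpha\gamma \sqsubseteq \bb_1^{n_1}$. The cyclic adjacency of $\alpha$ (on the left) and $\gamma$ (on the right) inside $\bb_1$ forces the cyclic permutation $\bb' \in \CycB$ of $\bb_1$ having $\alpha$ as rightmost syllable to have $\gamma$ as its leftmost syllable. Then $\bb'\xx \in \St$ by gluing at the valid junction $\alpha\xx$, and $\bb'^2\xx \in \St$ together with $\theta(\bb') = -1$ yields $\bb'\xx \in \StB{-1}$. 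For $\bb'\xx \in \StB{1}$, I would use $\beta\gamma \in \ExtB$ to produce the analogous cyclic permutation of the corresponding $\sB$-cycle with $\beta$ as rightmost and $\gamma$ as leftmost syllable; attaching this cycle to the left of $\bb'\xx$ is valid because $\beta\gamma$ is a string and the inverse sign of $\beta$ precludes the formation of any monomial relation at the new junction. The equivalence $\bb'\xx \equiv_\sB \xx$ follows since both $\alpha\gamma$ and $\beta\gamma$ are strings, so the canonical witness pair $(\alpha, \beta)$ extends $\bb'\xx$ (whose left end is $\gamma$) just as it extends $\xx$.

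The principal obstacle is $(3) \Rightarrow (4)$. I would take $\gamma$ to be the leftmost (last-read) syllable of $\zz$. The condition that $1_{(t(\xx),\varepsilon(\xx))}\gamma$ is a string holds because $t(\gamma) = t(\zz) = t(\zz\uu) = t(\xx)$ (the last equality enforced by $\zz\uu \equiv_\sB \xx$), and $\gamma \in \StB{\pm 1}$ follows from $\zz\uu \in \StB{\pm 1}$, since the $\sB$-cycles of both signs attached to $\zz\uu$ also attach to $\gamma$. The crux is to establish $\alpha\gamma, \beta\gamma \in \ExtB$ for the canonical witnesses $\alpha \in Q_1$ and $\beta \in Q_1^-$. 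Since $\zz \sqsubseteq \bb_0^n$ for some $\bb_0 \in \CycB$, the uniqueness of direct and inverse syllable extensions in string algebras forces the syllable immediately to the left of $\zz$ within $\bb_0^{n+1}$ to coincide with either $\alpha$ or $\beta$, yielding one of $\alpha\gamma$ or $\beta\gamma$ as a substring of $\bb_0^{n+1}$ and hence in $\ExtB$ directly. For the remaining extension, the sign-mixing $\delta(\zz) = 0$ must be leveraged alongside the strong connectivity of $\sB$ in the bridge quiver to produce a distinct $\sB$-cycle in whose powers the missing extension appears as a substring; explicitly constructing this second cycle via bridges in $\sB$ and verifying the associated junction conditions is the chief technical step of the entire argument.
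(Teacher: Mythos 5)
Your proposal follows the same cyclic scheme $(1)\Rightarrow(2)\Rightarrow(3)\Rightarrow(4)\Rightarrow(1)$ as the paper, and your arguments for $(1)\Rightarrow(2)$ and $(2)\Rightarrow(3)$ coincide with it. The two remaining implications, however, each contain a genuine gap. In $(4)\Rightarrow(1)$ your gluing claims are unjustified: you take the cyclic permutation $\bb'$ with $\alpha$ as rightmost syllable and assert $\bb'\xx\in\St$ ``by gluing at the valid junction $\alpha\xx$'', and later that attaching the cycle through $\beta\gamma$ is harmless ``because of the inverse sign of $\beta$''. In a string algebra $\rho$ may contain monomial relations of length at least $3$ (the paper's own examples use $acfe$ and $da_2a_1c$), so a relation can span a junction at which the adjacent syllables have the same sign even though every length-two subword involved is a string: $\alpha\xx\in\St$ and $\bb'\in\St$ do not exclude a relation formed by the syllable of $\bb'$ sitting above $\alpha$ together with the last syllables of $\xx$, and the inverse sign of $\beta$ only blocks spanning relations when the syllable $\gamma$ below it is direct. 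This is precisely why the paper normalizes $\theta(\gamma)=\theta(\alpha)=-\theta(\beta)$ and splits on the sign of the last syllable $\gamma'$ of $\xx$: in each case it glues onto $\xx$ the cyclic permutation whose first syllable has sign opposite to $\theta(\gamma')$, so the junction carries a sign change, and it obtains the remaining attachments (such as $\alpha$ on top of the cycle) from membership of $\alpha\gamma\ww'\bb'$ in $\ExtB$ and the $\delta=0$ composition remark, not by naive gluing. Without such an analysis your chosen permutation can fail to yield a string, and the same defect affects your verification of $\bb'\xx\equiv_\sB\xx$.

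In $(3)\Rightarrow(4)$ you establish only half of what is needed. Your observation that the syllable continuing $\zz$ inside $\bb_0^{n+1}$ must be $\alpha$ or $\beta$ (by uniqueness of one-sided extensions of $\gamma$) does give one of $\alpha\gamma,\beta\gamma\in\ExtB$, but you explicitly defer the other one as ``the chief technical step'', which is the heart of the implication. The paper carries it out as follows: since $\zz\uu\in\StB{\pm1}$ there are $\bb'_1,\bb'_2\in\CycB$ of opposite signs with $\bb'_1\zz\uu,\bb'_2\zz\uu\in\St$, whose first syllables are forced to be $\alpha$ and $\beta$; writing $\bb'^n=\ww\zz$ with $\bb'\in\CycB$ and connecting $\bb'$ to $\bb'_1$ by a string $\vv$, the hypothesis $\delta(\zz)=0$ and the $H$-equivalence remark show that $\zz\bb'\vv\bb'_1$ is a power of a cyclic permutation of a $\sB$-band, and reading the wrap-around of its square exhibits $\alpha\gamma$ as a substring of a power of a $\sB$-cycle, hence $\alpha\gamma\in\ExtB$; the argument for $\beta\gamma$ is symmetric. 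Your outline would need this construction (including the junction verifications it is designed to handle) to count as a proof.
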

\begin{proof}
(1)$\implies$(2): This is immediate as we can take $\xx_1=\xx$ and $\xx_2=1_{(v,j)}$ for appropriate $(v,j)$ such that $1_{(v,j)}\xx_1$ is a string.

(2)$\implies$(3): Take $\zz=\xx_2\bb'$ and $\uu=\xx_1$. Since $\bb'$ is a cyclic permutation of a band and $\bb'\sqsubseteq_l\zz$, we have $\delta(\zz)=0$. Moreover, $\zz\uu=\xx_2\bb'\xx_1\equiv_\sB\xx$.

(3)$\implies$(4): Since $\delta(\zz)=0$, we have $|\zz|>0$. Take $\gamma$ to be the last syllable of $\zz$. Without loss, assume $\theta(\alpha)=1=-\theta(\beta)$. Since $\zz\uu\in\STB{\pm1}$, there exist $\bb'_1,\bb'_2\in\CycB$ with $\theta(\bb'_1)=1=-\theta(\bb'_2)$ such that $\bb'_1\zz\uu$ and $\bb'_2\zz\uu$ are strings. Note that the first syllables of $\bb'_1$ and $\bb'_2$ are $\alpha$ and $\beta$ respectively. Also since $\zz\in\ExtB$, there exists $\bb'\in\CycB$ such that $\bb'^n=\ww\zz$ for some string $\ww$ and $n\in\N$. Let $\vv$ be a string such that $\bb'\vv\bb'_1$ is a string. Since $\delta(\zz)=0$ and $\zz\bb'$ and $\bb'_1\zz$ are strings, we conclude that $\zz\bb'\vv\bb'_1$ is a power of a cyclic permutation of a $\sB$-band. Since the last syllable of $\zz$ is $\gamma$ and the first syllable of $\bb'_1$ is $\alpha$, it follows that $\alpha\gamma\in\ExtB$. Similarly we can show that $\beta\gamma\in\ExtB$.

(4)$\implies$(1): Without loss of generality, let $\theta(\gamma)=\theta(\alpha)=-\theta(\beta)$. There are two cases.

\textbf{Case 1:} Either $|\xx|=0$ or $\theta(\gamma')=\theta(\gamma)$, where $\gamma'$ is the last syllable of $\xx$.\\
Since $\beta\gamma\in\ExtB$, there is $\bb'\in\CycB$ and $n\in\N^+$ such that $\gamma\ww\beta=\bb'^n$ for some string $\ww$. Further, since $\alpha\gamma\in\ExtB$, there exists a string $\ww'$ such that $\alpha\gamma\ww'\bb'$ is a string. Since the first syllable of $\bb'$ is $\beta$ and $\theta(\bb')=-\theta(\gamma)$, we get that $\gamma\ww'\bb'$ is a power of a $\sB$-cycle, say $\bb''$, such that $\alpha\bb''\xx$ and $\beta\bb''\xx$ are strings.

\textbf{Case 2:} $\theta(\gamma')=-\theta(\gamma)$, where $\gamma'$ is the last syllable of $\xx$.\\
Since $\alpha\gamma\in\ExtB$, there exists $\bb'\in\CycB$ and $n\in\N^+$ such that $\gamma\ww\alpha=\bb'^n$ for some string $\ww$. Since the first syllable of $\bb'$ is $\alpha$, and thus $\theta(\bb')=-\theta(\gamma')$, we get that $\bb'\xx$ is a string. Further since the last syllable of $\bb'$ is $\gamma$, and $\theta(\gamma)=-\theta(\beta)$ we get that $\beta\bb'$ is a string. Since $\delta(\bb')=0$, it follows that $\beta\bb'\xx$ is a string too, thus completing the proof.
\end{proof}
\begin{defn}
Say that $\xx\in\STB{\pm1}$ is a \emph{$\sB$-center} if it satisfies one of the equivalent conditions of Proposition \ref{prop: Bcenter equiv conditions}. Denote the set of all $\sB$-centers by $\Cent$.
\end{defn}
The following example shows that $\Cent$ can be a proper subset of $\STB{\pm1}$.

\begin{exmp}\label{exmp: Centers proper subset STBpm1}
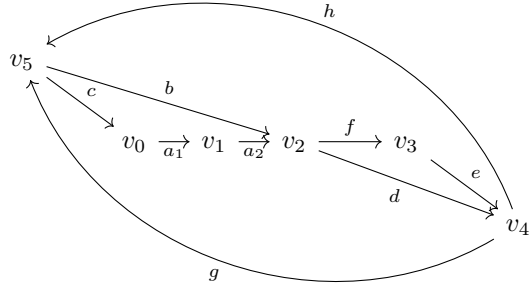
\begin{figure}[h]
\vspace{-10mm}
\[\begin{tikzcd}[row sep=small,column sep=small]
v_5 \arrow[rrdd, "c"] \arrow[rrrrdd, "b"] &  &               &               &                                    &  &                  &  &                                                                                      \\
                                          &  &               &               &                                    &  &                  &  &                                                                                      \\
                                          &  & v_0 \arrow[r,"a_1"'] & v_1 \arrow[r,"a_2"'] & v_2 \arrow[rr,"f"] \arrow[rrrrdd, "d"'] &  & v_3 \arrow[rrdd,"e"] &  &                                                                                      \\
                                          &  &               &               &                                    &  &                  &  &                                                                                      \\
                                          &  &               &               &                                    &  &                  &  & v_4 \arrow[lllllllluuuu, "g", bend left=50] \arrow[lllllllluuuu, "h"', bend right=50]
\end{tikzcd}\]
\vspace{-10mm}
    \caption{$\Gamma''$ with $\rho=\{fa_2,db,ge,hd,bg,ch,da_2a_1c,efb\}$}
    \label{fig:Cent is proper in STBpm1}
\end{figure}
Consider the string algebra $\Gamma''$ from Figure \ref{fig:Cent is proper in STBpm1}. There is a unique non-domestic $\sB\in\QBa_{-1}(a_1)$ with $a_2a_1\in\mathsf{St}_{\pm1}(a_1,-1;\sB)\setminus\mathsf{Cent}(a_1,-1;\sB)$. 
\end{exmp}
\begin{rem}\label{B equiv means same ext}
In view of Remark \ref{rem:6.3}, if $\xx\equiv_\sB\yy$ and $\brac{1}{\LB}(\xx)=\,^\infty\bb\uu\xx$ for some $\bb\in\BalB$ and string $\uu$ then $\brac{1}{\LB}(\yy)=\,^\infty\bb\uu\yy$.
\end{rem}
Given $\xx\in\Cent$, we say that $\xx$ is the center of the interval $\mathcal{I}_{(\xx_0,i;\sB)}(\xx):=(\brac{1}{\LbB}(\xx),\brac{1}{\LB}(\xx))$ in the hammock $(H_l^i(\xx_0),<_l)$. The following result shows that two intervals of the above form are canonically isomorphic if and only if their centers are $\sB$-equivalent.

\begin{prop}\label{shuffle structure of the middle}
Suppose $\xx,\yy\in\Cent$. If $\xx\equiv_\sB\yy$ then for any string $\uu$, $\uu\xx\in\mathcal{I}_{(\xx_0,i;\sB)}(\xx)$ if and only if $\uu\yy\in\mathcal{I}_{(\xx_0,i;\sB)}(\yy)$.
\end{prop}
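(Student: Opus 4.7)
The plan is to reduce the problem to a comparison fact about strings sharing a common right substring, together with a structural identity showing that the two boundary $\N$-strings of $\mathcal{I}_{(\xx_0,i;\sB)}(\xx)$ and $\mathcal{I}_{(\xx_0,i;\sB)}(\yy)$ carry the ``same top'' determined only by the $\sB$-equivalence class of the center.

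First I would extract from $\xx\equiv_\sB\yy$ the identity $1_{(t(\xx),\varepsilon(\xx))}=1_{(t(\yy),\varepsilon(\yy))}$: for any syllable $\alpha$ for which both $\alpha\xx$ and $\alpha\yy$ are strings, necessarily $s(\alpha)=t(\xx)=t(\yy)$, and the usual $\sigma,\varepsilon$ compatibility at the join forces $\varepsilon(\xx)=\varepsilon(\yy)$. Consequently, for any string $\uu$ with $|\uu|>0$, $\uu\xx$ is a string iff $\uu\yy$ is a string; the case $|\uu|=0$ is trivial, since each center lies in its own open interval (the boundary $\N$-strings properly extend it on the left).

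Next I would apply Corollary \ref{BSt1toBalb} (via Proposition \ref{LB is ext of lB}, which identifies $\brac{1}{\lB}$ with $\brac{1}{\LB}$ on $\STB1$) and its dual to write
\[
\brac{1}{\LB}(\xx)=\,^\infty\bb_1\uu_1\xx\text{ and }\brac{1}{\LbB}(\xx)=\,^\infty\bb_2\uu_2\xx
\]
for some $\bb_1\in\BalB$, $\bb_2\in\BalbB$ and strings $\uu_1,\uu_2$. Remark \ref{B equiv means same ext} (and its dual) applied to $\xx\equiv_\sB\yy$ then yields $\brac{1}{\LB}(\yy)=\,^\infty\bb_1\uu_1\yy$ and $\brac{1}{\LbB}(\yy)=\,^\infty\bb_2\uu_2\yy$; that is, the boundary $\N$-strings for $\yy$ differ from those for $\xx$ only in replacing the common right substring $\xx$ with $\yy$.

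Finally I would argue that for strings $\uu\xx$ and $\,^\infty\bb_1\uu_1\xx$ sharing the common right substring $\xx$, the hammock comparison in $(\HHlix,<_l)$ depends only on the pair $(\uu,\,^\infty\bb_1\uu_1)$: either $\uu\sqsubseteq_l\,^\infty\bb_1\uu_1$ and the sign of comparison is read off from the syllable of $\,^\infty\bb_1\uu_1$ immediately to the left of $\uu$, or $\uu$ and $\,^\infty\bb_1\uu_1$ fork above $\xx$ and the sign is read off from the two diverging syllables. This follows directly from the three clauses of Definition \ref{hammock defn}, extended via Proposition \ref{completion of hammock} to accommodate the left $\N$-string on the right. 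Since the outcome is independent of $\xx$, it carries over verbatim to the comparison of $\uu\yy$ with $\,^\infty\bb_1\uu_1\yy$, and the same reasoning applies with $\LbB$ in place of $\LB$; the desired equivalence follows. The main technical nuisance will be this last step---namely, verifying that the hammock comparison of two strings sharing a right substring depends only on the parts above that substring---which is a routine but careful case analysis from Definition \ref{hammock defn}.
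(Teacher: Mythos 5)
There is a genuine gap at the very step that carries the whole weight of the proposition. From $\xx\equiv_\sB\yy$ you correctly deduce $1_{(t(\xx),\varepsilon(\xx))}=1_{(t(\yy),\varepsilon(\yy))}$, but the ensuing claim that ``for any string $\uu$ with $|\uu|>0$, $\uu\xx$ is a string iff $\uu\yy$ is a string'' is exactly the assertion $\xx\equiv_H\yy$, and $\sB$-equivalence is strictly weaker than $H$-equivalence: composability of $\uu$ with $\xx$ is not determined by the end vertex and $\varepsilon$-parity alone, because monomial relations of length $\geq 3$ can overlap both $\uu$ and a proper initial part of $\xx$. The paper's Example \ref{Example B equiv weaker than H equiv} is a counterexample to your claim: there $f\equiv_\sB feDf$ (so the end data match), yet $acf$ is a string while $acfeDf$ is not, owing to the relation $acfe$. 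So step 1 cannot deliver the transfer of string-ness, and your step 3, which compares $\uu\yy$ with $\,^\infty\bb_1\uu_1\yy$ inside the hammock, silently presupposes that $\uu\yy$ is a string --- which is precisely what has to be proved.

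The remainder of your outline coincides with the paper's argument: the boundary identification $\brac{1}{\LB}(\yy)=\,^\infty\bb\uu_1\yy$ via Corollary \ref{BSt1toBalb}, Proposition \ref{LB is ext of lB} and Remark \ref{B equiv means same ext}, and the observation that once $\uu\yy\in\St$ the order comparisons transfer verbatim, are both correct and are how the paper proceeds. What is missing is the paper's actual mechanism for string-ness, which uses the hypothesis $\uu\xx\in\mathcal I_{(\xx_0,i;\sB)}(\xx)$ rather than $\sB$-equivalence alone: reducing to $|\uu|>0$, $\theta(\uu)=1$, either $\uu\sqsubseteq_l\,^\infty\bb\uu_1$, in which case $\uu\yy$ is a left substring of the left $\N$-string $\,^\infty\bb\uu_1\yy$ and hence a string; or $\uu$ forks with $\,^\infty\bb\uu_1$ at $\zz:=\uu\sqcap_l\,^\infty\bb\uu_1$ with $|\zz|>0$ and $\theta(\zz)=1$, whence $\uu=\ww\alpha\zz$ for some $\alpha\in Q_1$, so $\delta(\alpha\zz)=0$ and the sign change prevents any relation from straddling the junction, giving $\uu\yy\in\St$. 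Without an argument of this kind (or some other use of the interval hypothesis), your proof does not go through for non-gentle string algebras.
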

\begin{proof}
Since $\xx\in\StB{1}$, Corollary \ref{BSt1toBalb} and Proposition \ref{LB is ext of lB} together give $\brac{1}{\LB}(\xx)=\,^\infty\bb\uu_1\xx$ for some string $\uu_1$ and $\bb\in\BalB$. Then Remark \ref{B equiv means same ext} gives $\brac{1}{\LB}(\yy)=\,^\infty\bb\uu_1\yy$. Suppose $\uu\xx \in \mathcal{I}_{(\xx_0,i;\sB)}(\xx)$. Without loss we can assume that $|\uu|>0$ and $\theta(\uu)=1$ so that $\uu\xx\in(\xx,\,^\infty\bb\uu_1\xx)$. If $\uu\yy$ is a string then $\uu\xx\in(\xx,\,^\infty\bb\uu_1\xx)$ immediately implies $\uu\yy\in(\yy,\,^\infty\bb\uu_1\yy)$. Hence it remains to show that $\uu\yy$ is a string. There are two possibilities.

Since $\,^\infty\bb\uu_1\yy$ is a left $\N$-string, if $\uu\sqsubset_l\,^\infty\bb\uu_1$ then $\uu\yy$ is a string.

If $\uu$ and $\,^\infty\bb\uu_1$ fork then let $\zz:=\uu\sqcap_l\,^\infty\bb\uu_1$. Since $\theta(\,^\infty\bb\uu_1)=\theta(\uu)=1$, we have $|\zz|>0$. Thus by the above paragraph, $\zz\yy$ is a string. As $\uu<_l\,^\infty\bb\uu_1$, we get that $\alpha\zz\sqsubseteq_l\uu$ for some $\alpha \in Q_1$. Since $\theta(\zz)=1$, $\zz\yy$ is a string and $\delta(\alpha\zz) = 0$, we get that $\uu\yy$ is a string.
\end{proof}

Now we show the existence of $\sB$-centers when $\sB$ is non-domestic using Remark \ref{nondomexitexist}.
\begin{prop}\label{Existence of B cent in nondomestic}
Let $\xx\in H_l^i(\xx_0)$, $\bb\in\BaB$ and $(\beta,\bb')$ be a non-domestic exit of $\bb$ with $\theta(\beta)=1$. If $\beta\bb'\xx\in H_l^i(\xx_0)$ then $\bb'\xx\in\Cent$.
\end{prop}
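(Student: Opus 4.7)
The plan is to verify condition~(3) of Proposition~\ref{prop: Bcenter equiv conditions} for the string $\bb'\xx$, with witnesses $\zz:=\bb'$ and $\uu:=\xx$. Three of the four requirements of that condition are essentially immediate: $\delta(\bb')=0$ since $\bb'$ is a cyclic permutation of the band $\bb$; $\bb'\in\ExtB$ because $\bb'\in\CycB$ is trivially a substring of $(\bb')^1$; and $\bb'\xx\equiv_\sB\bb'\xx$ by reflexivity. The only substantive task is thus to establish $\bb'\xx\in\STB{\pm1}$.

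That $\bb'\xx\in H_l^i(\xx_0)$ follows from $\beta\bb'\xx\in H_l^i(\xx_0)$: when $\xx\ne\xx_0$ the sign $\theta(\bb'\xx\mid\xx_0)$ agrees with $\theta(\xx\mid\xx_0)=i$, whereas the edge case $\xx=\xx_0$ forces $\theta(\bb')=i$. For $\bb'\xx\in\StB{1}$, non-domesticity of the exit gives $\beta\bb'\in\ExtB$, so $\beta\bb'\sqsubseteq_l\bb_0^m$ for some $\bb_0\in\CycB$ and $m\ge 1$; cyclically rotating $\bb_0$ to a $\bb_1\in\CycB$ whose first syllable is $\beta$ gives $\bb_1\bb'\sqsubseteq_l\bb_0^{m+1}$ a string, whence $\bb_1\bb'\xx$ is a string and $\theta(\bb_1)=\theta(\beta)=1$.

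For $\bb'\xx\in\StB{-1}$ the argument splits on $\theta(\bb')$. When $\theta(\bb')=-1$, the string $(\bb')^2\xx=\bb'\cdot\bb'\xx$ is itself the required witness, since $\bb'\in\CycB$ and its first syllable is direct. When $\theta(\bb')=1$, I plan to exploit the observation that $\bb_0$ cannot be a cyclic permutation of $\bb$ alone, because the exit condition $\beta\bb'\not\sqsubseteq_l\bb'^2$ forces $\beta$ to differ from the cyclic continuation $\alpha_1$ of $\bb'$. Consequently $\bb_0$ is a composite $\sB$-cycle, whose syllables immediately preceding $\beta$ belong to a bridge from another prime $\sB$-band into $\bb$; an appropriate cyclic rotation of $\bb_0$ then yields a $\sB$-cycle beginning with a direct syllable that extends $\bb'\xx$ into a string, witnessing $\bb'\xx\in\StB{-1}$.

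The hard part of the proof will be this last construction: producing, in the case $\theta(\bb')=1$, a $\sB$-cycle with direct first syllable that extends $\bb'$. This step relies essentially on the composite structure of $\bb_0$ forced by $\beta$ being a non-cyclic continuation, combined with the non-domesticity of $\sB$ (implicit in the very existence of a non-domestic exit). Once this is in place, assembling the pieces into the verification of condition~(3) is routine.
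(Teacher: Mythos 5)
Your overall strategy agrees with the paper's: everything reduces to showing $\bb'\xx\in\STB{\pm1}$, after which one of the equivalent conditions of Proposition \ref{prop: Bcenter equiv conditions} applies (you use clause (3) with $\zz=\bb'$, $\uu=\xx$; the paper uses clause (1) with witness $\bb'\bb'\xx$ — this difference is immaterial). Your verification that $\bb'\xx\in H_l^i(\xx_0)$ and your argument for $\bb'\xx\in\StB{1}$ (rotate a $\sB$-cycle containing $\beta\bb'$ so that its first syllable is $\beta$, then use $\delta(\bb')=0$ to concatenate onto $\bb'\xx$) are correct and essentially identical to the paper's.

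The gap is in the $\StB{-1}$ part. You split on $\theta(\bb')$ and leave the case $\theta(\bb')=1$ to an unproven "hard" construction (a composite $\bb_0$, a bridge into $\bb$, a rotation beginning with a direct syllable), so as written the essential step is not established. In fact that case is vacuous, and you already record the fact needed to see it: since $(\beta,\bb')$ is an exit, $\beta\bb'\not\sqsubseteq_l\bb'^2$, so $\beta$ is distinct from the first syllable of $\bb'$; both syllables extend $\bb'$ (the first syllable of $\bb'$ does because $\bb'^2$ is a string), and in a string algebra at most one direct and at most one inverse syllable can be appended to a given string, so two distinct extensions must have opposite signs. Hence $\theta(\bb')=-\theta(\beta)=-1$ always, which is exactly the paper's opening observation that $\theta((\bb')^2\xx\mid\bb'\xx)=-1$; your "easy" case, with $\bb'\cdot\bb'\xx$ witnessing $\bb'\xx\in\StB{-1}$, is the only case. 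With that one sentence inserted your argument closes; without it the proposal is incomplete, and the sketched composite-cycle construction is not a viable substitute (it is unnecessary, and the claim that a suitable rotation of $\bb_0$ with direct first syllable extends $\bb'\xx$ is unsupported).
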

\begin{proof}
Since $\theta(\beta)=1$ we have $\theta((\bb')^2\xx\mid\bb'\xx)=-1$. Since $\beta\bb'\in\ExtB$, there exists $\bb''\in\CycB$ such that $\beta\bb'\sqsubseteq_l\bb''$. So for an appropriate cyclic permutation $\bb'''$ of $\bb''$, we have that $\bb'''\bb'\xx$ is a string with $\theta(\bb''')=\theta(\beta)=1$ thus showing $\bb'\xx\in\STB{1}$. Also $\bb'\bb'\xx$ is a string with $\theta(\bb')=-1$ which gives $\bb'\xx\in\STB{-1}$. Finally, since $\delta(\bb')=0$ we  have $\bb'\bb'\xx\equiv_\sB\bb'\xx$. In view of Proposition \ref{prop: Bcenter equiv conditions}(1), we conclude that $\bb'\xx\in\Cent$.
\end{proof}
Contrary to the above result, there are no $\sB$-centers for any domestic $\sB\in\QBa$.
\begin{prop}\label{Absence of B center in domestic}
If $\sB\in\QBa_i(\xx_0)$ is domestic then $\Cent=\emptyset$.
\end{prop}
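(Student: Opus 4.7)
The plan is to argue by contradiction: suppose some $\xx \in \Cent$. Since $\sB$ is domestic, $\sB = \{\bb_0\}$ for a single $\bb_0 \in \Ba$, and Proposition \ref{composite approx} forces $\bb_0$ to be prime, for otherwise a strictly shorter band would also lie in $\sB$. Consequently $\CycB$ consists precisely of the cyclic permutations of $\bb_0$, and $\ExtB$ is exactly the set of substrings of the powers $\bb_0^n$ for $n \in \N^+$.

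Next I would invoke Proposition \ref{prop: Bcenter equiv conditions}(4) to produce a syllable $\gamma$ together with distinct syllables $\alpha, \beta$ such that $\alpha\xx, \beta\xx \in \St$ and $\alpha\gamma, \beta\gamma \in \ExtB$. Each of $\alpha\gamma$ and $\beta\gamma$, being a substring of some power $\bb_0^n$, identifies an occurrence of $\gamma$ in the cyclic word $\bb_0$ together with the syllable immediately to its left in writing order, and this neighbouring syllable is uniquely determined by the position. Since $\alpha \neq \beta$, these two neighbour data cannot come from the same occurrence, so $\gamma$ must appear at (at least) two distinct positions of $\bb_0$.

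The crux, and the main obstacle, is now to derive a contradiction from this repetition, essentially showing that a prime band admits no syllable at two distinct positions. Given two occurrences of $\gamma$, one cuts $\bb_0$ cyclically at the $\gamma$'s so that, up to a cyclic rotation, $\bb_0 = (\gamma\uu_1)(\gamma\uu_2)$, where $\uu_1$ and $\uu_2$ are the two arcs between the $\gamma$'s. Each factor $\gamma\uu_i$ traces a closed walk at $t(\gamma)$ and is thus a cyclic string of length strictly less than $|\bb_0|$, hence a power of a cyclic permutation of some band strictly shorter than $\bb_0$. Substituting these decompositions exhibits a cyclic permutation of $\bb_0$ as a product of at least two cyclic permutations of strictly shorter bands, so $\bb_0$ is composite, contradicting primality.
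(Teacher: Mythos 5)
Your reduction to Proposition \ref{prop: Bcenter equiv conditions}(4) and the observation that $\alpha\gamma,\beta\gamma\in\ExtB$ with $\alpha\neq\beta$ force the syllable $\gamma$ to occur at two distinct positions of the unique band $\bb_0$ (with distinct left neighbours) are fine, as is the remark that $\bb_0$ must be prime. The genuine gap is the final step: from $\bb_0=\gamma\uu_1\gamma\uu_2$ you conclude that each factor $\gamma\uu_i$ "is thus a cyclic string, hence a power of a cyclic permutation of a shorter band". Being a closed walk which is a substring of $\bb_0^2$ does not make $\gamma\uu_i$ a cyclic string: in $(\gamma\uu_1)^2$ the left context of one occurrence of $\gamma$ is glued to the right context of the other occurrence, a configuration that never appears inside any power of $\bb_0$, and since relations in $\rho$ may have length $>2$ this glued window can contain a forbidden path. (The inference is sound for gentle algebras, where string-hood is a $2$-local condition, but not for general string algebras.) Consequently your intermediate target, that a prime band has no repeated syllable, is false: in the paper's own Example \ref{ex: 3.4.4 is wrong} (the algebra $\Gamma$ of Figure \ref{3.4.4 is wrong}) the period $cbaEbafcbD$ of $\brac{1}{l}(1_{(v,j)})$ is a prime band in which $b$ occurs three times with distinct left neighbours $c$ and $E$ (and $c$ occurs twice, with left neighbours $D$ and $f$); every attempted cut at a repeated syllable or vertex produces a factor, such as $cbaEbaf$ or $bafcbDc$, whose square contains the relation $fcba$ or $cbaf$, so no rotation of this band decomposes into cyclic strings and primality survives the repetition. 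Hence the contradiction you aim for cannot be reached along this route.

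Note also that your argument never uses the two-sided sign information carried by a $\sB$-center, and that is exactly what the paper's proof exploits: starting from condition (1) of Proposition \ref{prop: Bcenter equiv conditions} it produces $\bb',\bb''\in\CycB$ with $\theta(\bb')=-\theta(\bb'')$ and $\bb''\bb'\yy$ a string, closes up $\uu\bb''\bb'$ into a cyclic string, invokes domesticity of $\sB$ to force $\uu\bb''\bb'=\bb_1^{n}$ for a single cyclic permutation $\bb_1$ of $\bb'$, and then gets a contradiction with the opposite signs via Corollary \ref{cyclic perm are diff; cor}. Some argument of this kind, using that both a positive and a negative cycle attach, appears unavoidable; pure repetition of a syllable in the band is not enough.
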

\begin{proof}
If possible, let $\yy\in\Cent$. Then Proposition \ref{prop: Bcenter equiv conditions}(1) yields $\bb'\in\CycB$ such that $\bb'\yy\in\STB{\pm1}$ and $\bb'\yy\equiv_\sB\yy$. Since $\bb'\yy\in\STB{\pm1}$ there exists $\bb''\in\CycB$ with $\theta(\bb')=-\theta(\bb'')$ such that $\bb''\bb'\yy$ is a string. Since $\bb',\bb''\in\ExtB$, $\bb'\uu\bb''$ is a string for some string $\uu$. Since $\sB$ is domestic and $\uu\bb''\bb'$ is a mixed cyclic string, we get $\uu\bb''\bb'=\bb_1^n$ for some $n\in\N^+$ and a cyclic permutation $\bb_1$ of $\bb'$. In view of Corollary \ref{cyclic perm are diff; cor}, we get $\bb'=\bb_1$, which gives a contradiction to $\theta(\bb')=-\theta(\bb'')$.
\end{proof}

We end this section with the following proposition, which is the key to investigating the order type of the set of $\sB$-centers.
\begin{prop}\label{Density of Cent}
Suppose $\sB\in\QBa_i(\xx_0)$ is non-domestic and minimal for $(\xx_0,i)$, and $\xx<_l\yy$ for some $\xx\in\OST1$ and $\yy\in\OST{-1}$. Then for any $\zz\in\Cent$ there exists $\zz'\in\Cent$ such that $\zz'\equiv_\sB\zz$ and $\brac{1}{\LB}(\xx)<_l\zz'<_l\brac{1}{\LbB}(\yy).$
\end{prop}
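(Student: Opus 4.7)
The plan is to invoke Proposition \ref{1Lb<1LbB} to pin down the endpoints of the interval, then use the bridge structure of $\sB$ together with a non-domestic exit to construct $\zz'$. Using Proposition \ref{OST to infty bux} and its dual, write $\brac{1}{\LB}(\xx)=\,^\infty\bb_1\uu_1\xx$, $\brac{1}{\LbB}(\yy)=\,^\infty\bb_2\uu_2\yy$, and (applying to $\zz\in\Cent\subseteq\OST{1}\cap\OSt{1}$) $\brac{1}{\LB}(\zz)=\,^\infty\bb_\zz\uu_\zz\zz$, with $\bb_1,\bb_\zz\in\BalB$ and $\bb_2\in\BalbB$. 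By Remark \ref{Balalbbandsintersection}, $\bb_1\neq\bb_2$, so the two bounding left $\N$-strings have distinct primitive periodic parts and fork at a finite common left substring that lies strictly between them in the completion.

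For the construction of $\zz'$, exploit that $\sB$ is a strongly connected component of the bridge quiver containing both $\bb_1$ and $\bb_\zz$ to produce a string $\vv$ with $\bb_\zz\vv\bb_1\in\ExtB$. Moreover, since $\bb_1\notin\BalbB$ by Remark \ref{Balalbbandsintersection}, there is a non-domestic exit $(\beta_1,\bb_1')$ of $\bb_1$ with $\theta(\beta_1)=1$; by inserting $\beta_1\bb_1'$ at the right end of $\vv$, the transition out of the $\bb_1$-cycle occurs through an inverse syllable. Set $\zz':=(\bb_\zz)^m\vv\LB^N(\xx)$ for $m,N\gg 0$, where the cyclic permutation of $\bb_\zz$ is chosen so that the leftmost syllable of $\zz'$ has the same $(t,\varepsilon)$-type as the last syllable of $\zz$.

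Then verify the required properties of $\zz'$. The membership $\zz'\in H_l^i(\xx_0)$ is immediate; $\zz'\equiv_\sB\zz$ follows from Remark \ref{B equiv means same ext} via the matching of last-syllable types; and $\zz'\in\Cent$ follows from Proposition \ref{prop: Bcenter equiv conditions}(3) applied with $\zz_0:=(\bb_\zz)^m\vv\in\ExtB$ and $\uu_0:=\LB^N(\xx)$, once $\zz'\in\STB{\pm1}$ is checked (one sign via prepending $\bb_\zz$, the other via a non-domestic exit of $\bb_\zz\in\BalB\setminus\BalbB$). For the positional claim, $\zz'$ shares $\LB^N(\xx)$ with $\brac{1}{\LB}(\xx)$ on the right, and the embedded non-domestic exit of $\bb_1$ in $\vv$ forces the fork from $\brac{1}{\LB}(\xx)$ to be inverse on the $\zz'$-side: were it otherwise, Remark \ref{LB Rem 2} would realise $\zz'$ as an $\LB$-iterate of $\xx$, contradicting Proposition \ref{LB Rem 1} together with the exit condition. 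Hence $\brac{1}{\LB}(\xx)<_l\zz'$, and a dual analysis using $\bb_\zz\in\BalB\neq\BalbB\ni\bb_2$ yields $\zz'<_l\brac{1}{\LbB}(\yy)$.

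The main obstacle is the simultaneous coordination of the bridge $\vv$ and the cyclic permutations: $\vv$ must realise an $\ExtB$-path from $\bb_1$ to $\bb_\zz$, must begin with a non-domestic exit of $\bb_1$ to orient the fork with $\brac{1}{\LB}(\xx)$ upward, and the cyclic permutations along $\zz'$ must make all concatenations valid while producing the correct leftmost $(t,\varepsilon)$-type to secure $\equiv_\sB\zz$. The $\BalB$/$\BalbB$ asymmetry, which encodes the canonical up/down preference of non-domestic bands, is the essential tool that makes both fork analyses go through.
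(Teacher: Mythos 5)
Your overall architecture is the paper's: bound the target interval via Proposition \ref{1Lb<1LbB}, write $\brac{1}{\LB}(\xx)=\,^\infty\bb\uu_1\xx$ with $\bb\in\BalB$ using Proposition \ref{OST to infty bux}, and build $\zz'$ by splicing a left piece meant to force $\sB$-equivalence onto a connector, a non-domestic exit of $\bb$ (necessarily with inverse exit syllable, as $\bb\in\BalB$), and a long left substring of $\brac{1}{\LB}(\xx)$; the lower comparison $\brac{1}{\LB}(\xx)<_l\zz'$ then comes from the exit syllable exactly as in the paper. The genuine gap is in how you secure $\zz'\equiv_\sB\zz$. Remark \ref{B equiv means same ext} runs in the wrong direction: it \emph{assumes} $\sB$-equivalence and deduces that the $\brac{1}{\LB}$-limits share their periodic part, so it cannot be used to establish $\zz'\equiv_\sB\zz$. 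Moreover, your matching condition is both possibly unachievable and insufficient: the end vertex of $\zz$ need not lie on $\bb_\zz$ at all (the bridge $\uu_\zz$ in $\brac{1}{\LB}(\zz)=\,^\infty\bb_\zz\uu_\zz\zz$ may be nonempty), so no cyclic permutation of $\bb_\zz$ need end in a syllable of the required $(t,\varepsilon)$-type; and even if the last syllables of $\zz$ and $(\bb_\zz)^m\vv\LB^N(\xx)$ literally coincided, relations of length greater than two can prevent the unique pair $(\alpha,\beta)$ that extends $\zz$ from extending your $\zz'$ (the algebra is not assumed gentle; this is precisely the subtlety illustrated by Example \ref{Example B equiv weaker than H equiv}). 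The paper avoids this by invoking Proposition \ref{prop: Bcenter equiv conditions}(3) to get strings $\zz_1,\zz_2$ with $\zz_2\zz_1\equiv_\sB\zz$, $\zz_2\in\ExtB$ and $\delta(\zz_2)=0$, and then placing $\zz_2$ at the left end of $\zz'=\zz_2\vv\beta\bb'\uu\bb^n\uu_1\xx$: since $\delta(\zz_2)=0$, every syllable extending $\zz_2\zz_1$ extends $\zz'$, which gives $\zz'\in\STB{\pm1}$ and $\zz'\equiv_\sB\zz$ in one stroke. Your construction should be repaired along these lines.

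A secondary point: the upper comparison $\zz'<_l\brac{1}{\LbB}(\yy)$ is not a consequence of $\bb_\zz\in\BalB$ and $\bb_2\in\BalbB$ being distinct. The mechanism that works (and is what the paper uses) is to choose the base of the splice long enough: with $\ww:=\brac{1}{\LB}(\xx)\sqcap_l\brac{1}{\LbB}(\yy)$ one has $\theta(\brac{1}{\LB}(\xx)\mid\ww)=-1$ from the proof of Proposition \ref{1Lb<1LbB}, so taking $N$ (or $n$) large enough that $\ww\sqsubset_l\LB^N(\xx)$ forces $\theta(\zz'\mid\ww)=-1$ and hence $\zz'<_l\ww<_l\brac{1}{\LbB}(\yy)$; your $N\gg0$ can be made to do this, but the reason you give is not the one that carries the argument.
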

\begin{proof}
Proposition \ref{1Lb<1LbB} gives $\brac{1}{\LB}(\xx)<_l\brac{1}{\LbB}(\yy)$. Let $\ww:=\brac{1}{\LB}(\xx)\sqcap_l\brac{1}{\LbB}(\yy)$. Proposition \ref{prop: Bcenter equiv conditions}(3) gives the existence of strings $\zz_1,\zz_2$ such that $\delta(\zz_2)=0$, $\zz_2\in\ExtB$, $\zz_2\zz_1\in\STB{\pm1}$ and $\zz_2\zz_1\equiv_\sB\zz$.

In view of Proposition \ref{OST to infty bux}, let $\brac{1}{\LB}(\xx)=\,^\infty\bb\uu_1\xx$ for some $\bb\in\BalB$ and some string $\uu_1$. Since $\theta(\,^\infty\bb\uu_1\xx\mid\ww)=-1$, there exists $n\in\N^+$ such that $\theta(\bb^n\uu_1\xx\mid\ww)=-1$. As $\bb\in\BalB$, consider an exit $(\beta,\bb')$ of $\bb$. There exists a string $\uu\sqsubset_l\bb$ such that $\beta\bb'\uu\bb$ is a string. Since $\beta\bb',\zz_2\in\ExtB$, there exists a string $\vv$ such that $\zz_2\vv\beta\bb'$ is a string. Now $\delta(\bb')=0$ implies $\zz':=\zz_2\vv\beta\bb'\uu\bb^n\uu_1\xx$ is a string, $\zz'\in\STB{\pm1}$ and $\zz'\equiv_\sB\zz$. Since $\theta(\bb^n\uu_1\xx\mid\ww)=-1$, we get $\theta(\zz'\mid\ww)=-1$. Also $\theta(\beta)=1$ implies $\theta(\,^\infty\bb\uu_1\xx\mid\zz')=-1$. Therefore we have $\brac{1}{\LB}(\xx)<_l\zz'<_l\ww<_l\brac{1}{\LbB}(\yy)$ to complete the proof.
\end{proof}

\section{Computation of the order type of hammocks}\label{sec: main thm}
So far we have collected most of the ingredients to prove the main result (Theorem \ref{main}), whose proof we finish in this section. Furthermore, we prove a partial converse followed by a discussion about the potential impossibility of the converse in its full generality.

Since there are finitely many strings that are band-free relative to $(\xx_0,i)$, recall from Corollary \ref{BandfreeOST-OSt} that if $\sB\in\QBa$ is minimal for $(\xx_0,i)$ then there are finitely many strings in $\OST{j}\setminus\STB{j}$ for each $j\in\{-1,1\}$. A simple set theoretic manipulation yields $$\OST{\pm1}\setminus\STB{\pm1}\subseteq(\OST{1}\setminus\STB{1})\cup(\OST{-1}\setminus\STB{-1}),$$ and thus $\OST{\pm1}\setminus\STB{\pm1}$ is finite. The following proposition shows that the set $\STB{\pm1}\setminus\Cent$ is also finite when $\sB$ is minimal for $(\xx_0,i)$.

\begin{prop}\label{rem: STBpm1-Cent is Bbandfree}
If $\sB\in\QBa$ is minimal for $(\xx_0,i)$ and $\xx\in\STB{\pm1}\setminus\Cent$ then $\xx$ is band-free relative to $(\xx_0,i)$.
\end{prop}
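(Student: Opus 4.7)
I plan to prove the contrapositive: assuming $\xx$ is not band-free relative to $(\xx_0, i)$, I will show $\xx \in \Cent$. Writing $\xx = \ww_0\xx_0$, the hypothesis yields a decomposition $\ww_0 = \yy_2\bb\yy_1$ with $\bb \in \Cyc$, hence $\xx = \xx_2\bb\xx_1$ where $\xx_2 := \yy_2$, $\xx_1 := \yy_1\xx_0$, and $\xx_0 \sqsubseteq_l \xx_1$. The strategy is to verify criterion (3) of Proposition \ref{prop: Bcenter equiv conditions} with $\zz := \xx_2\bb$ and $\uu := \xx_1$. Three of its four requirements are immediate: $\delta(\zz) = 0$ since $\bb \in \Cyc$ contains both direct and inverse syllables; $\zz\uu = \xx \in \STB{\pm 1}$ by hypothesis; and $\zz\uu \equiv_\sB \xx$ by reflexivity. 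The proof thus reduces to establishing $\xx_2\bb \in \ExtB$.

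The first key step upgrades $\bb \in \Cyc$ to $\bb \in \CycB$ via the minimality of $\sB$. Since $\xx \in \STB{\pm 1}$, pick $\bb_1 \in \CycB$ such that $\bb_1\xx = \bb_1\xx_2\bb\xx_1$ is a string; this immediately gives $\bb \preceq \bb_1$. Writing $\xx_1 = \uu'\xx_0$, the string $\bb\uu'\xx_0$ is a left substring of $\bb_1\xx \in H_l^i(\xx_0)$; tracking first syllables shows $\bb\uu'\xx_0 \in H_l^i(\xx_0)$, so $\bb$ is reachable from $(\xx_0, i)$. The equivalence class $\sB''$ of $\bb$ then lies in $\QBa_i(\xx_0)$ and satisfies $\sB'' \preceq \sB$; minimality forces $\sB'' = \sB$, placing $\bb$ in $\BaB \subseteq \CycB$.

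The technical heart of the proof is showing $\xx_2\bb \in \ExtB$, and this will be the main obstacle. Using $\bb \approx \bb_1$, choose a string $\vv$ with $\bb\vv\bb_1$ a string; combining with $\bb_1\xx_2\bb$ via $\delta(\bb_1) = 0$ shows $\bb\vv\bb_1\xx_2\bb$ is a string. Hence $\vv\bb_1\xx_2\bb$ can be read as a cyclic string, which is a power of some primitive band $\bb^*$ (up to cyclic permutation). I will then establish $\bb^* \approx \bb_1$: since $\bb_1$ is a substring of a power of $\bb^*$, repeated copies of $\bb^*$ yield substrings of the forms $\bb^*\uu_1\bb_1$ and $\bb_1\uu_2\bb^*$, witnessing $\bb_1 \preceq \bb^*$ and $\bb^* \preceq \bb_1$. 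Therefore $\bb^* \in \sB$, making $\vv\bb_1\xx_2\bb$ a power of a $\sB$-cycle, so $\xx_2\bb$, as one of its substrings, lies in $\ExtB$. Invoking Proposition \ref{prop: Bcenter equiv conditions}(3) concludes $\xx \in \Cent$, contradicting our assumption.
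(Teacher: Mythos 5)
Your proof is correct, and its skeleton matches the paper's: both arguments show that a band occurring in $\xx$ above $\xx_0$ must, by minimality of $\sB$, lie in $\CycB$, and that a $\sB$-cycle occurrence forces $\xx\in\Cent$ via Proposition \ref{prop: Bcenter equiv conditions}, contradicting $\xx\notin\Cent$. The differences are only in execution: the paper first establishes band-freeness with respect to $\sB$ by inserting an extra copy of the band ($\xx\equiv_\sB\xx_2\bb^2\xx_1$) and citing clause (2), and only afterwards invokes minimality in one line, whereas you invoke minimality first (verifying reachability of $\bb$ from $(\xx_0,i)$ by tracking the first syllable) and then apply clause (3) to $\xx$ itself, with the $\equiv_\sB$ requirement discharged by reflexivity. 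Your ``technical heart''---gluing $\bb\vv\bb_1$ onto $\bb_1\xx_2\bb$ to produce a cyclic string containing $\xx_2\bb$ and identifying its primitive root as a $\sB$-cycle via $\bb^*\approx\bb_1$---is essentially the construction already used in the paper's proof of Proposition \ref{bandfreewrtB} and of Corollary \ref{BandfreeOST-OSt}; the paper's terse appeal to clause (2) leaves the corresponding $\ExtB$-condition implicit, so your version is more self-contained at the cost of redoing that verification. One cosmetic slip: since $\bb$ is only a cyclic permutation of a band, the minimality step should conclude $\bb\in\CycB$ rather than $\bb\in\BaB$; this does not affect the argument.
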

\begin{proof}
If $\xx\in\STB{\pm1}\setminus\Cent$ such that $\xx=\xx_2\bb\xx_1$ for some $\bb\in\CycB$, then we have $\xx\equiv_\sB\xx_2\bb^2\xx_1$, which implies $\xx\in\Cent$ by Proposition \ref{prop: Bcenter equiv conditions}(2), a contradiction. This proves that $\xx$ is band-free with respect to $\sB$. As a consequence, if $\sB\in\QBa$ is minimal for $(\xx_0,i)$ then $\xx$ is band-free relative to $(\xx_0,i)$.
\end{proof}

Since $$\OST{\pm1}\setminus\Cent=(\OST{\pm1}\setminus\STB{\pm1})\cup(\STB{\pm1}\setminus\Cent),$$
we get that there are finitely many strings in $\OST{\pm1}\setminus\Cent$. This observation helps us to break the linear order $(\OST{},<_l)$ into finitely many ``irreducible'' intervals.

\begin{defn}\label{defn: B Beam}
Let $\sB\in\QBa$ be minimal for $(\xx_0,i)$. Call an interval $[\xx,\yy]$ in $H_l^i(\xx_0)$ a \emph{$\sB$-beam} if $\xx,\yy\in\OST{\pm1}\setminus\Cent$, $\xx<_l\yy$ and  $(\xx,\yy)\cap\OST{\pm1}\subseteq\Cent$.
\end{defn}
Since the set $\OST{\pm1}\setminus\Cent$ is finite when $\sB\in\QBa$ is minimal for $(\xx_0,i)$, we get that there are finitely many $\sB$-beams. Let $n_\sB$ denote the number of $\sB$-beams.

If $\sB\in\QBa$ is minimal for $(\xx_0,i)$ and $\yy_0<_l\yy_1<_l\cdots<_l\yy_{n_\sB}$ is the complete list of elements in $\OST{\pm1}\setminus\Cent$ then 
\begin{equation}\label{breaking of hammock into beams}
(H_l^i(\xx_0),<_l)=[\mm_i(\xx_0),\yy_0]\dotplus[\yy_0,\yy_1]\dotplus\cdots\dotplus[\yy_{n_\sB-1},\yy_{n_{\sB}}]\dotplus[\yy_{n_{\sB}},\MM_i(\xx_0)],
\end{equation}
where each $[\yy_j,\yy_{j+1}]$ is a $\sB$-beam.

In view of Remark \ref{cbI=IcapOST}, we have
\begin{equation}\label{breaking of OST into beams}
(c_\sB(H_l^i(\xx_0)),<_l)=c_\sB([\mm_i(\xx_0),\yy_0])\dotplus c_\sB([\yy_0,\yy_1])\dotplus\cdots\dotplus c_\sB([\yy_{n_\sB-1},\yy_{n_{\sB}}])\dotplus c_\sB([\yy_{n_{\sB}},\MM_i(\xx_0)]).
\end{equation}

\begin{exmp}\label{exmp:beams for running example}
Continuing Example \ref{exmp: QBa in running example}, since $\sB_1$ is minimal for $(a_0,1)$, Equation \eqref{breaking of hammock into beams} takes the form 
\begin{equation}\label{computation: hammock}
H_l^1(a_0)=[a_0,a_0]\dotplus[a_0,a_3A_1a_0]\dotplus[a_3A_1a_0,A_1a_0]\dotplus[A_1a_0,H_1G_1FE_2E_1A_2A_1a_0].
\end{equation}
and Equation \eqref{breaking of OST into beams} takes the form
\begin{equation}\label{computation: condensation OST}
    c_{\sB_1}(H_l^1(a_0))=c_{\sB_1}([a_0,a_3A_1a_0])\dotplus c_{\sB_1}([a_3A_1a_0,A_1a_0])\dotplus c_{\sB_1}([A_1a_0,H_1G_1FE_2E_1A_2A_1a_0]).
\end{equation}
\end{exmp}

\begin{prop}\label{[mix0,y1] is finite}
The sets $c_\sB([\mm_i(\xx_0),\yy_0])$ and $c_\sB([\yy_{n_\sB},\MM_i(\xx_0)])$ are finite.
\end{prop}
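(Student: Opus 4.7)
The plan is to prove finiteness of $c_\sB([\mm_i(\xx_0), \yy_0])$; the statement for $c_\sB([\yy_{n_\sB}, \MM_i(\xx_0)])$ follows by the evident duality. By Remark \ref{cbI=IcapOST}, the set equals $[\mm_i(\xx_0), \yy_0] \cap \OST{}$, so the task is to bound this intersection. The argument naturally splits based on the parity $i$, since the structure of $\mm_i(\xx_0)$ differs sharply in the two cases.

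In the easy case $i = 1$, we have $\mm_1(\xx_0) = \xx_0$. For any $\xx \in [\xx_0, \yy_0]$ with $\xx \neq \xx_0$, the constraint $\xx \in H_l^1(\xx_0)$ forces $\theta(\xx \mid \xx_0) = 1$. If $\xx$ were a proper left extension of $\yy_0$, writing $\xx = \vv\yy_0$ with $|\vv|>0$, then $\xx <_l \yy_0$ forces $\theta(\vv) = -1$ (its first syllable is direct); but since $\yy_0 = \ww\xx_0$ with $\theta(\ww) = 1$, the first syllable of $\vv\ww$ would be direct, giving $\theta(\xx \mid \xx_0) = -1$, a contradiction. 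So every $\xx$ in the interval satisfies $\xx \sqsubseteq_l \yy_0$, and there are at most $|\yy_0| - |\xx_0| + 1$ such elements.

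In the harder case $i = -1$, the plan is to first locate $\yy_0$ very explicitly. From the proof of Proposition \ref{OSTpm is bounded}, the minimum of $\OST{\pm 1}$ is realized as the longest left substring of $\mm_i(\xx_0)$ lying in $\OST{\pm 1}$; since every such left substring is band-free (being a substring of the band-free string $\mm_i(\xx_0)$), the contrapositive of Proposition \ref{rem: STBpm1-Cent is Bbandfree} shows it lies in $\OST{\pm 1}\setminus\Cent$. Hence $\min \OST{\pm 1} \in \OST{\pm 1}\setminus\Cent$, which together with the minimality of $\yy_0$ in $\OST{\pm 1}\setminus\Cent$ forces $\yy_0 = \min \OST{\pm 1}$ and $\yy_0 \sqsubseteq_l \mm_i(\xx_0)$. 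In particular, no $\sB$-center exists in the open interval $(\mm_i(\xx_0), \yy_0)$, which would otherwise produce infinitely many elements via Proposition \ref{Density of Cent}.

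It remains to show $[\mm_i(\xx_0), \yy_0] \cap \OST{}$ is finite. I would iterate the predecessor operator $\LbB$ starting from $\yy_0$: by the dual of Corollary \ref{neighbourcondenscommute}, successive iterates give the immediate predecessors in $\OST{}$. Combined with the previous paragraph (no centers below $\yy_0$), every element of $[\mm_i(\xx_0), \yy_0] \cap \OST{}$ lies on this $\LbB$-chain. The chain terminates at $\mm_i(\xx_0)$ in finitely many steps: if not, the dual of Proposition \ref{OST to infty bux} would yield a limit $\brac{1}{\LbB}(\yy_0) = \,^\infty\bb\uu\yy_0$ with $\bb \in \BalbB$, and a suitable finite truncation would produce a $\Cent$-element strictly below $\yy_0$ via Proposition \ref{prop: Bcenter equiv conditions}(1) with witness $\bb$, contradicting that no centers exist below $\yy_0$. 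The main obstacle is this last verification: carefully justifying that the truncated string $\bb^k\uu\yy_0$ satisfies the $\equiv_\sB$-equivalence condition to be recognized as a center, and ensuring the sign bookkeeping works cleanly so that we produce an actual element of $\Cent \cap (\mm_i(\xx_0), \yy_0)$.
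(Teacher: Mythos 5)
You have identified the right objects (the interval below $\yy_0$, the predecessor operator, the role of $\mm_i(\xx_0)$), but two steps that carry the whole weight do not hold up. First, your identification of $\yy_0$ with $\min\OST{\pm1}$, and hence the claim ``no centers below $\yy_0$'', is obtained by reading Proposition \ref{rem: STBpm1-Cent is Bbandfree} backwards: that proposition says elements of $\STB{\pm1}\setminus\Cent$ are band-free, so its contrapositive only excludes non-band-free strings from $\STB{\pm1}\setminus\Cent$; it does not say that a band-free string is not a center, and in fact $\sB$-centers are typically band-free (the representatives in Example \ref{exmp: Bequiv classes} are band-free relative to $(a_0,1)$). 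Second, and more seriously, the termination of the $\LbB$-chain---which is exactly the finiteness to be proved---is not delivered by your contradiction. If the chain were infinite its limit would indeed be $\brac{1}{\LbB}(\yy_0)={}^{\infty}\bb\uu\yy_0$ with $\bb\in\BalbB$, but no finite truncation of it can be a $\sB$-center: by the dual of Proposition \ref{LB Rem 1}, every proper iterate $\LbB^m(\yy_0)$ lies in $\OST{-1}\setminus\OST{1}$, hence outside $\STB{\pm1}$, so Proposition \ref{prop: Bcenter equiv conditions}(1) simply does not apply to it; and by the dual of Corollary \ref{neighbourcondenscommute} the strings strictly between consecutive iterates are not even in $\OST{}$. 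So the ``main obstacle'' you flag is not sign bookkeeping: the contradiction is not there, and nothing in your proposal rules out an infinite descending sequence in $[\mm_i(\xx_0),\yy_0]\cap\OST{-1}$.

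The paper's own proof avoids both issues. It reduces to $i=-1$ by duality, where $\yy_{n_\sB}=\xx_0=\MM_i(\xx_0)$ makes the second interval a singleton, and, using that $\yy_0$ is the longest left substring of $\mm_i(\xx_0)$ lying in $\OST{\pm1}$ (from the proof of Proposition \ref{OSTpm is bounded}) together with $\delta(\mm_i(\xx_0)\mid\xx_0)=-1$, it shows that every $\zz\in c_\sB([\mm_i(\xx_0),\yy_0])$ satisfies $\yy_0\sqsubseteq_l\zz\sqsubseteq_l\mm_i(\xx_0)$: an element of $\OST{}$ in this interval that branched off $\mm_i(\xx_0)$ by an inverse syllable would force the branch point, a left substring of $\mm_i(\xx_0)$ longer than $\yy_0$, into $\OST{\pm1}$, contradicting the maximality of $\yy_0$; finiteness follows because $\mm_i(\xx_0)$ has only finitely many left substrings. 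Your proposal contains no substitute for this step. Finally, your $i=1$ case is also incorrect as written: you only exclude elements of $[\xx_0,\yy_0]$ that are proper left extensions of $\yy_0$, but an element forking off $\yy_0$ (with a direct syllable at the fork) also lies strictly between $\xx_0$ and $\yy_0$ without being a left substring of $\yy_0$, so ``every $\xx$ in the interval satisfies $\xx\sqsubseteq_l\yy_0$'' does not follow; the correct statement here, dual to the paper's, is that $\yy_0=\xx_0=\mm_1(\xx_0)$, so the interval is a singleton---which you do not establish.
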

\begin{proof}
Without loss assume that $i=-1$. Recall from the proof of Proposition \ref{OSTpm is bounded} that $\yy_{n_\sB}=\xx_0=\MM_i(\xx_0)$, and hence $|[\yy_{n_\sB},\MM_i(\xx_0)]|=1$. On the other hand, the same proof describes $\yy_0$ as the longest left substring of $\mm_i(\xx_0)$ that lies in $\OST{\pm1}$. Hence $\zz\in c_\sB([\mm_i(\xx_0),\yy_0])$ if and only if $\yy_0\sqsubseteq_l\zz\sqsubseteq_l\mm_i(\xx_0)$. Since there are only finitely left substrings of $\mm_i(\xx_0)$, the proof is complete.
\end{proof}

Recall from Remark \ref{rem: fin many B equiv classes} that the set $\STB{\pm1}/{\equiv_\sB}$ is finite. Let $k_\sB:=|\Cent/{\equiv_\sB}|$. Propositions \ref{Existence of B cent in nondomestic} and \ref{Absence of B center in domestic} together imply that $k_\sB=0$ if and only if $\sB$ is domestic.

The next result is a consequence of Proposition \ref{Density of Cent}, which computes the order type of the suborder of $\sB$-centers inside a $\sB$-beam.
\begin{cor}\label{Centers are dense in a beam}
If $\sB\in\QBa$ is minimal for $(\xx_0,i)$ and $[\xx,\yy]$ be a $\sB$-beam then $$(\Cent\cap[\xx,\yy],<_l)\cong\Xi(\underbrace{1,1,\cdots,1}_{k_\sB\text{ times}}).$$
\end{cor}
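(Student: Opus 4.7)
If $\sB$ is domestic then Proposition \ref{Absence of B center in domestic} gives $\Cent=\emptyset$ and hence $k_\sB=0$, so by the convention in Definition \ref{defn: shuffle} both sides of the claimed isomorphism equal $\mathbf 0$; this case is immediate. Henceforth assume $\sB$ is non-domestic, so $k_\sB\geq 1$. Since $\xx,\yy\notin\Cent$ we have $\Cent\cap[\xx,\yy]=\Cent\cap(\xx,\yy)$, a countable subset of $H_l^i(\xx_0)$.

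The strategy is to verify that $(\Cent\cap(\xx,\yy),<_l)$ is a countable dense linear order without endpoints, and that for each of the $k_\sB$ $\equiv_\sB$-equivalence classes $C_j$ the intersection $C_j\cap(\xx,\yy)$ is dense in $\Cent\cap(\xx,\yy)$; the uniqueness clause of Proposition \ref{partition of Q}, together with Definition \ref{defn: shuffle}, will then identify the order type as $\Xi(\underbrace{1,1,\cdots,1}_{k_\sB\text{ times}})$. The workhorse is Proposition \ref{Density of Cent}. Given $\zz_1<_l\zz_2$ in $\Cent\cap(\xx,\yy)$ and a representative $\zz$ of $C_j$, applying Proposition \ref{Density of Cent} with $\zz_1\in\OST{1}$ and $\zz_2\in\OST{-1}$ (valid since $\Cent\subseteq\OST{\pm 1}$) produces $\zz'\in\Cent$ with $\zz'\equiv_\sB\zz$ and $\brac{1}{\LB}(\zz_1)<_l\zz'<_l\brac{1}{\LbB}(\zz_2)$. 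Combining with $\zz_1<_l\brac{1}{\LB}(\zz_1)$ and $\brac{1}{\LbB}(\zz_2)<_l\zz_2$, obtained from Proposition \ref{LB Rem 1} and its dual, yields $\zz_1<_l\zz'<_l\zz_2$, and the beam condition $(\xx,\yy)\cap\OST{\pm 1}\subseteq\Cent$ then places $\zz'\in C_j\cap(\zz_1,\zz_2)$. Unboundedness on either side of any fixed $\zz\in\Cent\cap(\xx,\yy)$ follows from the same application with the pairs $(\xx,\zz)$ and $(\zz,\yy)$ respectively.

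The main obstacle is the initial nonemptiness of $\Cent$, since Proposition \ref{Density of Cent} is vacuous if $\Cent=\emptyset$. I would dispatch this via Proposition \ref{Existence of B cent in nondomestic}: pick any $\bb\in\BalB$, non-empty by Corollary \ref{BalB non empty}; by Remark \ref{nondomexitexist} the band $\bb$ has a non-domestic exit $(\beta,\bb')$, and the defining property of $\BalB$ forces $\theta(\beta)=1$; reachability of $\bb$ from $(\xx_0,i)$ (which holds since $\sB\in\QBa_i(\xx_0)$, possibly after replacing $\bb$ by an appropriate cyclic permutation) provides a string $\uu$ with $\beta\bb'\uu\xx_0\in H_l^i(\xx_0)$, whence $\bb'\uu\xx_0\in\Cent$. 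A final application of Proposition \ref{Density of Cent} to the pair $(\xx,\yy)\in\OST{1}\times\OST{-1}$ then places a $\sB$-center inside $(\xx,\yy)$, giving nonemptiness of $\Cent\cap(\xx,\yy)$. Combining nonemptiness, density, absence of endpoints, and density of each of the $k_\sB$ equivalence classes, Proposition \ref{partition of Q} completes the proof.
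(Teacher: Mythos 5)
Your proposal is correct and follows essentially the same route as the paper: handle the domestic case via Proposition \ref{Absence of B center in domestic}, and in the non-domestic case use Proposition \ref{Existence of B cent in nondomestic} for nonemptiness of $\Cent$ and Proposition \ref{Density of Cent} (applied to the pairs $(\xx,\yy)$, $(\xx,\zz)$, $(\zz,\yy)$ and $(\zz_1,\zz_2)$, together with $\zz_1<_l\brac{1}{\LB}(\zz_1)$ and its dual) to show each $\equiv_\sB$-class meets the beam in a nonempty, unbounded, dense suborder, then invoke the uniqueness in Proposition \ref{partition of Q}. Your extra details (the explicit construction of a center from $\bb\in\BalB$ and a non-domestic exit, and the appeal to Proposition \ref{LB Rem 1}) only flesh out steps the paper leaves implicit; the mention of the beam condition in the density step is superfluous but harmless.
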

\begin{proof}
Since $H_l^i(\xx_0)$ is countable, the set $\Cent\cap[\xx,\yy]$ is also countable. If $\sB$ is domestic then Proposition \ref{Absence of B center in domestic} implies that both sides are empty linear orders. On the other hand, if $\sB$ is non-domestic then Proposition \ref{Existence of B cent in nondomestic} implies that $\Cent\neq\emptyset$. Therefore it suffices to prove that each $\sB$-equivalence class of $\sB$-centers intersects the beam $[\xx,\yy]$ in a non-empty, unbounded, and dense fashion. Let $\ww\in\Cent$.

\begin{itemize}
    \item[Non-empty:] Since $\xx\in\OST{1}$ and $\yy\in\OST{-1}$, Proposition \ref{Density of Cent} applied on $\xx<_l\yy$ yields $\zz\in\Cent\cap(\xx,\yy)$ such that $\zz\equiv_\sB\ww$.
    \item[Unbounded:] Let $\zz\in\Cent\cap(\xx,\yy)$. Since $\zz\in\STB{\pm1}$, Proposition \ref{Density of Cent} applied on $\xx<_l\zz$ and $\zz<_l\yy$ guarantees the existence of $\zz_1\in\Cent\cap(\zz,\yy)$ and $\zz_2\in\Cent\cap(\xx,\zz)$ respectively such that $\zz_1\equiv_\sB\zz_2\equiv_\sB\ww$.
    \item[Dense:] If $\zz_1,\zz_2\in\Cent\cap(\xx,\yy)$ then Proposition \ref{Density of Cent} applied on $\zz_1<_l\zz_2$ yields $\zz_3\in\Cent\cap(\zz_1,\zz_2)$ such that $\zz_3\equiv_\sB\ww$.
\end{itemize}
\end{proof}

\begin{prop}\label{structure of beams}
Let $\sB\in\QBa$ be minimal for $(\xx_0,i)$. If $\xx\in\OST{1}$ then there exists $\yy\in\OST{\pm1}$ and $n\in\N$ such that $\LB^n(\yy)=\xx$.
\end{prop}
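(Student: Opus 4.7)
I would proceed by strong induction on $|\xx|$. If $\xx\in\OST{\pm1}$, take $n=0$ and $\yy=\xx$. Otherwise $\xx\in\OST{1}\setminus\OST{-1}$, and the strategy is to produce a proper left substring $\yy_1\sqsubset_l\xx$ with $\yy_1\in\OST{1}$, $\yy_1\neq\MM_i(\xx_0)$, and $\LB(\yy_1)=\xx$; the inductive hypothesis applied to the strictly shorter $\yy_1$ then yields $\zz\in\OST{\pm1}$ and $m\geq 0$ with $\LB^m(\zz)=\yy_1$, giving $\LB^{m+1}(\zz)=\xx$.

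To construct $\yy_1$, first note that $\xx_0\in\OST{\pm1}$ (argued as in the proof of Proposition \ref{OSTpm is bounded}), so $\xx\neq\xx_0$. Writing $\xx=\alpha_m\cdots\alpha_1$ with $\xx_0=\alpha_{|\xx_0|}\cdots\alpha_1$, a short case analysis shows that some $\alpha_j$ with $j>|\xx_0|$ is inverse: for $i=1$, the condition $\theta(\xx\mid\xx_0)=1$ forces $\alpha_{|\xx_0|+1}\in Q_1^-$; for $i=-1$, if every $\alpha_j$ for $j>|\xx_0|$ were direct then $\xx\sqsubseteq_l\mm_{-1}(\xx_0)$ by uniqueness of maximal direct extensions in a string algebra, whence $\xx\in\OST{-1}$ by Remark \ref{rem: Hammock WStr is closed by substring}, contradicting the assumption. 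Let $k>|\xx_0|$ be the largest index with $\alpha_k\in Q_1^-$ and put $\yy_1:=\alpha_{k-1}\cdots\alpha_1$ and $\vv':=\alpha_m\cdots\alpha_{k+1}$, so $\xx=\vv'\alpha_k\yy_1$ with $\vv'$ empty or direct. Then $\xx_0\sqsubseteq_l\yy_1$ gives $\yy_1\in H_l^i(\xx_0)$, and $\theta(\xx\mid\yy_1)=\theta(\alpha_k)=1$ combined with Remark \ref{rem: Hammock WStr is closed by substring} yields $\yy_1\in\OST{1}$. The inequality $\yy_1\neq\MM_i(\xx_0)$ follows from the definition of $\MM_i(\xx_0)$: for $i=-1$, $\MM_i(\xx_0)=\xx_0$ but $|\yy_1|>|\xx_0|$ since $\alpha_{|\xx_0|+1}$ is direct forces $k>|\xx_0|+1$; for $i=1$, having $\yy_1=\MM_1(\xx_0)$ would make $\alpha_k\yy_1$ a strictly longer all-inverse extension of $\xx_0$, violating the maximality of $\MM_1(\xx_0)$.

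The verification $\LB(\yy_1)=\xx$ is the main technical step. Uniqueness of inverse-syllable extensions in a string algebra gives $l(\yy_1)=\ww\alpha_k\yy_1$ where $\ww$ is the longest (possibly empty) direct string with $\ww\alpha_k\yy_1\in\St$; uniqueness of direct extensions then forces $\vv'\sqsubseteq_l\ww$, so $\xx\sqsubseteq_l l(\yy_1)$ and $\LB(\yy_1)=c_\sB(l(\yy_1))\sqsupseteq_l\xx$. If $\LB(\yy_1)$ were strictly longer than $\xx$, it would extend $\xx$ by a non-empty direct string and hence satisfy $\theta(\LB(\yy_1)\mid\xx)=-1$; Remark \ref{rem: Hammock WStr is closed by substring} would then force $\xx\in\OST{-1}$, contradicting $\xx\notin\OST{-1}$. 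Hence $\LB(\yy_1)=\xx$, completing the induction. The main obstacle is the case analysis securing $k>|\xx_0|$ (and the auxiliary check $\yy_1\neq\MM_i(\xx_0)$), both of which exploit structural rigidity of extensions in a string algebra together with the assumption $\xx\notin\OST{-1}$.
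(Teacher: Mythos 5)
Your proof is correct, but it takes a genuinely different route from the paper. You argue by strong induction on $|\xx|$, exhibiting for each $\xx\in\OST{1}\setminus\OST{-1}$ a strictly shorter $\yy_1\in\OST{1}\setminus\{\MM_i(\xx_0)\}$ with $\LB(\yy_1)=\xx$ (locating the last inverse syllable past $\xx_0$, comparing $\xx$ with $l(\yy_1)$ via uniqueness of one-letter extensions, and using Remark \ref{rem: Hammock WStr is closed by substring} against $\xx\notin\OST{-1}$ to pin down $\LB(\yy_1)$ exactly). The paper instead works top-down: it forms the limit $\brac{1}{\LB}(\xx)=\,^\infty\bb\uu\xx$ via Proposition \ref{OST to infty bux}, takes the shortest $\yy\in\OST{1}$ with the same limit, and shows $\yy\in\OST{\pm1}$ by ruling out any proper left substring $\zz\sqsupseteq_l\xx_0$ of $\yy$ with $\theta(\yy\mid\zz)=1$ (forcing $\yy\sqsubseteq_l\mm_i(\xx_0)$ otherwise); the local case analysis there is essentially your $\LB(\yy_1)=\xx$ computation, but packaged around the infinite string rather than a one-step descent. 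Your version is more elementary: it never invokes $\brac{1}{\LB}$, Proposition \ref{OST to infty bux} or the finiteness result Corollary \ref{OST-STB is finite}, and in fact it only uses reachability of $\sB$ from $(\xx_0,i)$ (to get $\xx_0\in\OST{\pm1}$, and for $i=1$ even that is automatic since $\xx_0=\mm_i(\xx_0)\in\OST{-1}$), so minimality of $\sB$ is not needed; it also sidesteps the fact that $\brac{1}{\LB}(\xx)$ is only defined for $\xx\in\OSt{1}$. What the paper's route buys is that its chosen $\yy$ is the shortest element on the $\LB$-ray through $\xx$ (essentially $C_\sB(\xx)$) together with the almost periodic form of the limit, which is the shape in which the statement is used afterwards, e.g.\ in Proposition \ref{OT(beam)}. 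One cosmetic remark: for the step ``$\xx\sqsubseteq_l\mm_{-1}(\xx_0)$ implies $\xx\in\OST{-1}$'' the cleaner citation is Remark \ref{rem: Strings in hammock WStr}, though the remark you cite also suffices since $\mm_i(\xx_0)\in\OST{-1}$.
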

\begin{proof}
In view of Proposition \ref{OST to infty bux}, let $\brac{1}{\LB}(\xx)=:\,^\infty\bb\uu\xx$ for some string $\uu$ and $\bb\in\BalB$. Consider the shortest string $\yy\in\OST{1}$ such that $\brac{1}{\LB}(\yy)=\,^\infty\bb\uu\xx$. We claim that $\yy\in\OST{-1}$.

If $\yy\in\OSt{-1}$ then we are done. Otherwise, in view of Remark \ref{rem: Strings in hammock WStr}, we need to show that $\yy\sqsubseteq_l\mm_i(\xx_0)$.

If possible, let $\xx_0\sqsubseteq_l\zz\sqsubset_l\yy$ be a string such that $\theta(\yy\mid\zz)=1$. Without loss, take $\zz$ to be the longest such string. Then $\yy=\ww\alpha\zz$, where $\alpha\in Q_1^-$ and $\ww$ satisfies either $|\ww|=0$ or $\delta(\ww)=-1$. Clearly $\zz\in\OST{1}$ and $\yy\sqsubseteq_l\LB(\zz)$.

If $\yy\sqsubset_l\LB(\zz)$ then there exists $\beta\in Q_1$ such that $\LB(\zz)\sqsupseteq_l\beta\yy\in\OST1$. Thus $\yy\in\OST{-1}$. Since $\delta(\LB(\zz))=0$, in view of Remark \ref{rem: Strings in hammock WStr}, we get $\LB(\zz)\in\OSt1$ implying that $\yy\in\OSt{-1}$, a contradiction to our assumption. On the other hand, if $\yy=\LB(\zz)$ then $\brac{1}{\LB}(\yy)=\brac{1}{\LB}(\zz)$, a contradiction to the minimality of $|\yy|$.

Therefore there does not exist any string $\zz$ with $\xx_0\sqsubseteq_l\zz\sqsubset_l\yy$ and $\theta(\yy\mid\zz)=1$, which gives $\yy\sqsubseteq_l\mm_i(\xx_0)$.

Finally, since $\yy\sqsubseteq_l\xx$ and $\theta(\brac{1}{\LB}(\yy)\mid\xx)=1$, we have $\LB^n(\yy)=\xx$ by Remark \ref{rem:6.3}. This completes the proof.
\end{proof}

Proposition \ref{structure of beams} and its dual can be used to define a further condensation operator $$C_B:\OST{}\to\OST{\pm1}.$$ The next result shows that $C_\sB$ is compatible with the partition given by Equation \eqref{breaking of OST into beams}.

\begin{prop}\label{OT(beam)}
If $\sB\in\QBa$ is minimal for $(\xx_0,i)$, $[\xx,\yy]$ is a $\sB$-beam and $\zz\in c_\sB([\xx,\yy])$ then $C_\sB(\zz)\in[\xx,\yy]$. 
\end{prop}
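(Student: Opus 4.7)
The plan is to split on whether $\zz\in\OST{1}\setminus\OST{\pm1}$, $\zz\in\OST{-1}\setminus\OST{\pm1}$, or $\zz\in\OST{\pm1}$. The last case is immediate: if $\zz\in\OST{\pm1}$, then Proposition \ref{LB Rem 1} and its dual force the immediate predecessor and successor of $\zz$ in $\OST{}$ to be $\LbB(\zz)\in\OST{-1}\setminus\OST{1}$ and $\LB(\zz)\in\OST{1}\setminus\OST{-1}$, so both the backward $\LB$-chain and the backward $\LbB$-chain from $\zz$ are trivial, giving $C_\sB(\zz)=\zz\in[\xx,\yy]$. I will detail $\zz\in\OST{1}\setminus\OST{\pm1}$; the remaining case is entirely dual.

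Writing $\zz_0:=C_\sB(\zz)\in\OST{\pm1}$ with $\LB^n(\zz_0)=\zz$ (existence from Proposition \ref{structure of beams}, with $n\geq 1$ since $\zz\notin\OST{\pm1}$), iterating $\xx'<_l\LB(\xx')$ from Proposition \ref{LB Rem 1} gives $\zz_0<_l\LB(\zz_0)<_l\cdots<_l\LB^n(\zz_0)=\zz\leq_l\yy$, which settles the upper bound $\zz_0\leq_l\yy$. For the lower bound, the decisive observation is that the chain $\{\LB^k(\zz_0)\}_{k=0}^n$ exhausts $[\zz_0,\zz]\cap\OST{}$: by Corollary \ref{neighbourcondenscommute}, each $\LB^{k+1}(\zz_0)$ is the immediate successor of $\LB^k(\zz_0)$ in $\OST{}$, leaving no room for any other $\OST{}$-element strictly between them. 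Suppose, for contradiction, $\zz_0<_l\xx$; since $\xx\in\OST{\pm1}\subseteq\OST{}$ and $\zz_0<_l\xx\leq_l\zz$, there must exist $k\in\{1,\ldots,n\}$ with $\xx=\LB^k(\zz_0)$. Then Proposition \ref{LB Rem 1} applied to $\LB^{k-1}(\zz_0)$ yields $\varphi_\sB(\xx)=1$, so $\xx\in\OST{1}\setminus\OST{-1}$, contradicting $\xx\in\OST{\pm1}$.

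The case $\zz\in\OST{-1}\setminus\OST{\pm1}$ is entirely dual: setting $\zz_0:=C_\sB(\zz)$ we have $\zz<_l\zz_0$ and $\LbB^n(\zz_0)=\zz$, so $\xx\leq_l\zz_0$ is automatic, and the analogous argument (invoking the dual of Proposition \ref{LB Rem 1}, which gives $\varphi_\sB(\LbB(\cdot))=-1$) forbids $\zz_0>_l\yy$. There is no real obstacle here: once the chain structure from Corollary \ref{neighbourcondenscommute} and the $\varphi_\sB$-computation from Proposition \ref{LB Rem 1} are in hand, the $\OST{\pm1}$-membership of the beam endpoints $\xx$ and $\yy$ is incompatible with either sitting strictly inside a nontrivial $\LB$- or $\LbB$-chain, which is exactly what would have to happen for $C_\sB(\zz)$ to escape $[\xx,\yy]$.
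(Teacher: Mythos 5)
Your proof is correct, and its skeleton matches the paper's: reduce via Proposition \ref{structure of beams} to writing $\zz$ as an $\LB$- (or $\LbB$-) iterate of $C_\sB(\zz)$, observe that one of the two bounds is automatic, and derive a contradiction from the beam endpoint lying strictly between $C_\sB(\zz)$ and $\zz$ by using that proper $\LB$-images have $\varphi_\sB=1$ (Proposition \ref{LB Rem 1}), hence cannot lie in $\OST{\pm1}$. Where you genuinely diverge is in how you force the endpoint $\xx$ into the $\LB$-chain: you invoke Corollary \ref{neighbourcondenscommute} to say that consecutive iterates $\LB^k(\zz_0)<_l\LB^{k+1}(\zz_0)$ are immediate neighbours in $\OST{}$, so the chain exhausts $[\zz_0,\zz]\cap\OST{}$ and $\xx$ must equal some $\LB^k(\zz_0)$ with $k\geq1$. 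The paper instead works at the level of strings: assuming $\ww:=C_\sB(\zz)<_l\xx<_l\zz$, it compares $\ww\sqcap_l\xx$ and $\xx\sqcap_l\zz$ and runs a case analysis using Remark \ref{LB Rem 2} (which characterizes when a left substring is an $\LB$-iterate) and Remark \ref{rem: Hammock WStr is closed by substring}, one branch of which is exactly your conclusion $\LB^k(\ww)=\xx$, contradicting Proposition \ref{LB Rem 1}. Your route is shorter and more order-theoretic, since Corollary \ref{neighbourcondenscommute} packages precisely the substring bookkeeping the paper redoes by hand; the paper's version is self-contained at the string level. One tiny point of hygiene in your $\zz\in\OST{\pm1}$ case: you do not need $\LB(\zz)$ and $\LbB(\zz)$ to exist (i.e.\ $\zz$ could be $\MM_i(\xx_0)$ or $\mm_i(\xx_0)$); all that is needed, and all you actually use, is that $\zz$ cannot be a proper $\LB$- or $\LbB$-iterate of anything because $\varphi_\sB$ of such an iterate is $\pm1$, which already gives $C_\sB(\zz)=\zz$.
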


\begin{proof}
Without loss assume that $\zz\in\OST1$. Then Proposition \ref{structure of beams} gives some $n\in\N$ such that $\LB^n(C_\sB(\zz))=\zz$. Let $\ww:=C_\sB(\zz)$ for brevity so that $\ww\leq_l\zz$. We claim that $\ww\in[\xx,\yy]$.

Now $\varphi_\sB(\zz)=0$ if and only if $\ww=\zz$. On the other hand, if $\varphi_\sB(\zz)=1$, then assume the claim fails, i.e., assume $\ww<_l\xx<_l\zz$. Then clearly $\ww\sqsubset_l\zz$. Let $\vv:=\ww\sqcap_l\xx$. There are two cases.

If $\vv\sqsubset_l\ww$ then $\ww\sqsubset_l\zz$ gives $\theta(\zz\mid\xx)=\theta(\zz\mid\vv)=-1$, a contradiction to $\xx<_l\zz$.

On the other hand, if $\vv=\ww$ then we have $\ww\sqsubset_l\xx$ and $\ww\sqsubset_l\zz$. Thus $\ww\sqsubset_l\xx\sqcap_l\zz$. If $\xx\sqcap_l\zz=\xx$ then Remark \ref{LB Rem 2} yields $\LB^k(\ww)=\xx$ for some $k\in\N^+$, a contradiction to Proposition \ref{LB Rem 1} as $\xx\in\OST{-1}$. Thus $\xx\sqcap_l\zz\sqsubset_l\xx$. If $\xx\sqcap_l\zz=\zz$ then $\zz\sqsubset_l\xx$ together with $\theta(\xx\mid\zz)=-1$ implies that $\zz\in\OST{-1}$ by Remark \ref{rem: Hammock WStr is closed by substring}, a contradiction to $\varphi_\sB(\zz)=1$. On the other hand, if $\xx\sqcap_l\zz\sqsubset_l\zz$ then $\ww\sqsubset_l\xx\sqcap_l\zz$ together with $\theta(\zz\mid\xx\sqcap_l\zz)=1$ implies $\LB^k(\ww)=\xx\sqcap_l\zz\in\OST{\pm1}$ for some $k\in\N^+$ by Remark \ref{LB Rem 2}, a contradiction to Proposition \ref{LB Rem 1}.

The definition of a $\sB$-beam gives $\ww\in\OST{\pm1}\cap[\xx,\yy]=(\Cent\cup\{\xx,\yy\})\cap[\xx,\yy]$.

Dually, we can show that if $\zz\in\OST{-1}$ we get $\ww\in\OST{\pm1}\cap[\xx,\yy]=(\Cent\cup\{\xx,\yy\})\cap[\xx,\yy]$ such that $\LbB^n(\ww)=\zz$ for some $n\in\N$.
\end{proof}

Using Equation \eqref{breaking of OST into beams} together with Propositions \ref{[mix0,y1] is finite}, \ref{Centers are dense in a beam}, \ref{OT(beam)}, it is possible to compute the order type of the $\sB$-condensation of a $\sB$-beam, and hence that of $\OST{}$.

\begin{cor}\label{wstrordertype}
Suppose $\sB\in\QBa$ is minimal for $(\xx_0,i)$. If $[\xx,\yy]$ is a $\sB$-beam then 
$$(c_\sB([\xx,\yy]),<_l)\cong
    \omega+\Xi(\underbrace{\zeta,\zeta,\cdots,\zeta}_{k_\sB\text{ times}})+\omega^*.$$ As a consequence, $$(c_\sB(H_l^i(\xx_0)),<_l)\cong(\omega+\Xi(\underbrace{\zeta,\zeta,\cdots,\zeta}_{k_\sB\text{ times}})+\omega^*)\cdot\mathbf{n}_\sB.$$
\end{cor}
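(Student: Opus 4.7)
My plan is to analyze the further condensation $C_\sB$ restricted to $c_\sB([\xx,\yy])$, compute its fibers, and assemble them as an order sum. By Proposition \ref{OT(beam)}, the image of this restriction lands inside $\OST{\pm1} \cap [\xx,\yy]$, which by the definition of a $\sB$-beam equals $\{\xx,\yy\} \cup (\Cent \cap (\xx,\yy))$.

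First I compute each fiber $C_\sB^{-1}(\ww) \cap [\xx,\yy]$. Proposition \ref{x<y implies x in ST1B} applied to $\xx <_l \yy$ yields $\xx \in \OSt{1}$ and $\yy \in \OSt{-1}$, so $\brac{1}{\LB}(\xx)$ and $\brac{1}{\LbB}(\yy)$ exist as left $\N$-strings. By Proposition \ref{LB Rem 1}, $\LB^n(\xx) \in \OST{1}\setminus\OST{\pm1}$ for $n\geq 1$, while Corollary \ref{neighbourcondenscommute} says $(\LB^{n-1}(\xx),\LB^n(\xx))\cap\OST{}=\emptyset$; since $\yy \in \OST{}$ and $\yy\notin\{\LB^n(\xx) : n\geq 0\}$, every $\LB^n(\xx)$ must lie below $\yy$. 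Hence $C_\sB^{-1}(\xx) \cap [\xx,\yy] = \{\LB^n(\xx) : n \geq 0\}$, of order type $\omega$. Dually, $C_\sB^{-1}(\yy) \cap [\xx,\yy]$ has order type $\omega^*$. For $\ww \in \Cent \cap (\xx,\yy)$ (which forces $\sB$ non-domestic), two applications of Proposition \ref{1Lb<1LbB} to the pairs $\xx <_l \ww$ and $\ww <_l \yy$ confine both sequences $\LB^n(\ww), \LbB^n(\ww)$ inside $(\xx,\yy)$, giving a fiber $\{\LbB^n(\ww) : n \geq 1\} \cup \{\LB^n(\ww) : n \geq 0\}$ of order type $\zeta$.

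The fibers assemble correctly: Proposition \ref{1Lb<1LbB} (non-domestic case) together with Corollary \ref{neighbourcondenscommute} (which applies uniformly) ensures that whenever $\ww_1 <_l \ww_2$ in $\OST{\pm1} \cap [\xx,\yy]$, the entire $\ww_1$-fiber precedes the entire $\ww_2$-fiber. Combined with Corollary \ref{Centers are dense in a beam}, which identifies $(\Cent \cap (\xx,\yy), <_l)$ with $\Xi(\underbrace{\mathbf{1},\ldots,\mathbf{1}}_{k_\sB})$, we obtain
\[
(c_\sB([\xx,\yy]), <_l) \cong \omega + \sum_{\ww \in \Cent \cap (\xx,\yy)} \zeta + \omega^*.
\]
Since every center-fiber equals $\zeta$, the middle sum collapses, by Definition \ref{defn: shuffle} and the identity $\Xi(L)\cong L\cdot\eta$, to $\Xi(\underbrace{\zeta,\ldots,\zeta}_{k_\sB})$, interpreted as $\mathbf{0}$ when $k_\sB = 0$ (consistent with the domestic case, where Proposition \ref{Absence of B center in domestic} kills the middle).

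For the consequence, Equation \eqref{breaking of OST into beams} expresses $c_\sB(H_l^i(\xx_0))$ as a $\dotplus$-chain of two finite endpoint pieces (finite by Proposition \ref{[mix0,y1] is finite}) and $n_\sB$ beams, each isomorphic to $\omega + \Xi(\zeta,\ldots,\zeta) + \omega^*$ by the first part. The order-type identities $\mathbf{k} \dotplus \omega \cong \omega$, $\omega^* \dotplus \mathbf{k} \cong \omega^*$, and $\omega^* \dotplus \omega \cong \omega^* + \omega$ absorb the finite endpoints and reduce the $\dotplus$-chain to the $n_\sB$-fold plain concatenation, yielding $(\omega + \Xi(\zeta,\ldots,\zeta) + \omega^*) \cdot \mathbf{n}_\sB$. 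The principal technical hurdle throughout is confining the $\LB$/$\LbB$ orbits within the beam boundaries; this is handled cleanly by Corollary \ref{neighbourcondenscommute} at the endpoints and by Proposition \ref{1Lb<1LbB} at interior centers, where the disjointness $\BalB\cap\BalbB=\emptyset$ forces the almost-periodic limits associated to distinct prime bands to separate.
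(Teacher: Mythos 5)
Your proposal is correct and follows essentially the same route the paper intends: the corollary is stated there as a direct consequence of Equation \eqref{breaking of OST into beams} together with Propositions \ref{[mix0,y1] is finite}, \ref{Centers are dense in a beam} and \ref{OT(beam)}, and your argument simply fills in those details (fibers of $C_\sB$ over the endpoints and over $\sB$-centers of types $\omega$, $\omega^*$, $\zeta$, separated via Corollary \ref{neighbourcondenscommute} and Proposition \ref{1Lb<1LbB}, then the $\dotplus$-chain absorption). No gaps worth flagging.
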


Recall the definition of $\mathcal I_{(\xx_0,i;\sB)}(\xx)$ for $\xx\in\Cent$. If $\sB$ is minimal for $(\xx_0,i)$, we can extend this definition to all $\xx\in\OST{\pm1}$ as follows:
$$\mathcal I_{(\xx_0,i;\sB)}(\yy_k):=\begin{cases}[\mm_i(\xx_0),\MM_i(\xx_0)]&\text{if }0=k=n_\sB,\\ [\mm_i(\xx_0),\braclB(\yy_0))&\text{if }0=k<n_\sB,\\ (\braclbB(\yy_0),\MM_i(\xx_0)]&\text{if }0<k=n_\sB,\\(\braclbB(\yy_k),\braclB(\yy_k))&\text{if }0<k<n_\sB.\end{cases}$$

Let $\mathbf{c}_\sB:=C_\sB\circ c_\sB$. It is straightforward to verify that $\mathbf{c}_\sB^{-1}(\xx)=\mathcal I_{(\xx_0,i;\sB)}(\xx)$ for each $\xx\in\OST{\pm1}$. Thus 
\begin{equation}\label{ordersumC_B}
H_l^i(\xx_0)\cong\sum_{\xx\in\OST{\pm1}}\mathbf{c}_\sB^{-1}(\xx)\cong\sum_{\xx\in\OST{\pm1}}\mathcal I_{(\xx_0,i;\sB)}(\xx).
\end{equation}

Now we have all the tools necessary for proving the main result of this paper.
\begin{thm}\label{main}
Given a string $\xx_0$ and a parity $i\in\{1,-1\}$, we have $(H_l^i(\xx_0),<_l)\in\dLOfdb{1}{1}$.
\end{thm}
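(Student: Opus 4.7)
The plan is to prove $(H_l^i(\xx_0), <_l) \in \dLOfdb{1}{1}$ by induction on $|\QBa_i(\xx_0)|$. The base case $|\QBa_i(\xx_0)| = 0$ forces every string in $H_l^i(\xx_0)$ to be band-free relative to $(\xx_0, i)$ (any band substring would produce some $\approx$-class in $\QBa_i(\xx_0)$), so the hammock is finite by Corollary \ref{finitely many band free strings rel to x0,i} and trivially lies in $\dLOfdb{1}{1}$.

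For the inductive step I would pick $\sB \in \QBa_i(\xx_0)$ minimal for $(\xx_0, i)$ and use Equation \eqref{breaking of hammock into beams} to decompose $(H_l^i(\xx_0), <_l)$ as a finite $\dotplus$-sum of the two boundary pieces (which have finite $c_\sB$-image by Proposition \ref{[mix0,y1] is finite}, and therefore reduce to finite $\dotplus$-sums of fibers) together with the $\sB$-beams $[\yy_k, \yy_{k+1}]$. Since $\dLOfdb{1}{1}$ is closed under finite $\dotplus$, it suffices to handle a single $\sB$-beam. For a beam I would apply Lemma \ref{recdlofdb} with $n = k_\sB$: Corollary \ref{wstrordertype} identifies $c_\sB([\yy_k, \yy_{k+1}])$ with $\omega + \Xi(\underbrace{\zeta, \ldots, \zeta}_{k_\sB}) + \omega^*$, and I would take $L_0(m) = H_l^{-1}(\LB^m(\yy_k))$ for $m > 0$ (with $L_0(0) = \{\yy_k\}$), $L_{k_\sB+1}(-m) = H_l^{1}(\LbB^m(\yy_{k+1}))$ dually, and for each middle coordinate $j$ a $\zeta$-indexed family around a representative $\sB$-center $\xx^{(j)}$ of the $j$-th $\equiv_\sB$-class (well-defined up to isomorphism on the class by Proposition \ref{shuffle structure of the middle}).

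The crucial inductive ingredient is that every non-singleton fiber $H_l^{-\varphi_\sB(\zz)}(\zz)$ satisfies $|\QBa_{-\varphi_\sB(\zz)}(\zz)| < |\QBa_i(\xx_0)|$. Clearly $\QBa_{-\varphi_\sB(\zz)}(\zz) \subseteq \QBa_i(\xx_0)$ since $H_l^{-\varphi_\sB(\zz)}(\zz) \subseteq H_l^i(\xx_0)$, and $\sB$ itself drops out: if, say, $\varphi_\sB(\zz) = 1$ and $\bb \uu \zz \in H_l^{-1}(\zz)$ with $\bb \in \CycB$, then $\theta(\bb \uu) = -1$, so $\bb \uu$ witnesses $\zz \in \OSt{-1}$, which together with $\zz \in H_l^i(\xx_0)$ forces $\zz \in \OST{-1}$ and contradicts $\varphi_\sB(\zz) = 1$; the case $\varphi_\sB(\zz) = -1$ is dual. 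The induction hypothesis then places each fiber in $\dLOfdb{1}{1}$.

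The main obstacle I anticipate is verifying the eventual periodicity hypotheses of Lemma \ref{recdlofdb}. By Proposition \ref{OST to infty bux}, $\brac{1}{\LB}(\yy_k) = \,^\infty \bb \uu \yy_k$ for some $\bb \in \BalB$, and Corollary \ref{OST-STB is finite} ensures $\LB^m(\yy_k) \in \STB{1}$ for all large $m$, where Proposition \ref{LB is ext of lB} gives $\LB = \lB$. An iterative application of Remark \ref{rem:6.3} along the lines of the proof of Proposition \ref{prop:6.5} then produces constants $s_0, p_0$ such that $\LB^{s_0 + p_0 + m}(\yy_k)$ is obtained from $\LB^{s_0 + m}(\yy_k)$ by prepending a fixed finite power of a cyclic permutation of $\bb$, which has $\delta = 0$; the $H$-equivalence principle recorded after Remark \ref{subhammock is an interval} then yields order-isomorphic fibers with period $p_0$. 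The dual setup using $\LbB$ and $\BalbB$ handles $L_{k_\sB+1}$ as well as both directions of each middle $L_j$. With all hypotheses of Lemma \ref{recdlofdb} verified, each beam lies in $\dLOfdb{1}{1}$, and the induction is complete.
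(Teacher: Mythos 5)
Your proposal is correct and follows essentially the same route as the paper's proof: induction on $|\QBa_i(\xx_0)|$ with a minimal $\sB$, the decomposition \eqref{breaking of hammock into beams} combined with Corollary \ref{wstrordertype}, fibers handled by the induction hypothesis because $\sB$ drops out of $\QBa_{-\varphi_\sB(\zz)}(\zz)$, and eventual periodicity of the fibers via $\brac{1}{\LB}$, Remark \ref{rem:6.3} and $H$-equivalence feeding into Lemma \ref{recdlofdb}. The only cosmetic differences are that you verify $\sB\notin\QBa_{-\varphi_\sB(\zz)}(\zz)$ directly from the definition of $\varphi_\sB$ where the paper invokes Remark \ref{hammockcoverrepr}, and the periodic increment is really inserted above $\LB^{s_0}(\yy_k)$ (as in the paper's use of Remark \ref{LB Rem 2}) rather than literally prepended, which does not affect the $H$-equivalence argument.
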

\begin{proof}
The proof is by induction on the size of $\QBa_i(\xx_0)$, which is a finite poset by Proposition \ref{QBA finite poset}.

\textbf{Base Step.} If $\QBa_i(\xx_0)=\emptyset$ then the strings in $H_l^i(\xx_0)$ are band-free relative to $(\xx_0,i)$, implying that $H_l^i(\xx_0)$ is finite in view of Corollary \ref{finitely many band free strings rel to x0,i}. Thus $H_l^i(\xx_0)\in\dLOfdb{1}{1}$.

\textbf{Inductive Step.} Assume that $\QBa_i(\xx_0)\neq\emptyset$ and for any $\xx\in\St$ and $j\in\{1,-1\}$ with $|\QBa_j(\xx)|<|\QBa_i(\xx_0)|$, we have $H_l^j(\xx)\in\dLOfdb{1}{1}$.

Since $\QBa_i(\xx_0)\neq\emptyset$, choose $\sB\in\QBa$ that is minimal for $(\xx_0,i)$. Then Lemma \ref{hammockordersum} gives $$(H_l^i(\xx_0),<_l)\cong\sum_{\xx\in c_\sB(H_l^i(\xx_0))}(H_l^{-\varphi_\sB(\xx)}(\xx),<_l).$$

If $\xx\in c_\sB(H_l^i(\xx_0))\subseteq H_l^i(\xx_0)$ then $\QBa_{-\varphi_\sB(\xx)}(\xx)\subseteq\QBa_i(\xx_0)$. Moreover, for any such $\xx$, we have $\sB\notin\QBa_{-\varphi_\sB(\xx)}(\xx)$ thanks to Remark \ref{hammockcoverrepr}, and hence $\QBa_{-\varphi_\sB(\xx)}(\xx)\subsetneq\QBa_i(\xx_0)$. Therefore by the induction hypothesis, we get $H_l^{-\varphi_\sB(\xx)}(\xx)\in\dLOfdb{1}{1}$ for each $\xx\in c_\sB(H_l^i(\xx_0))$.

In view of Equation \eqref{breaking of OST into beams}, Proposition \ref{hammockcoverclassesorder} and the fact that $H_l^{-\varphi_\sB(\yy_p)}(\yy_p)=H_l^0(\yy_p)=\{\yy_p\}$ for each $0\leq p\leq n_\sB$, we can write $$H_l^i(\xx_0)=\widetilde{L}_0\dotplus\widetilde{L}_1\dotplus\cdots\dotplus\widetilde{L}_{n_{\sB}+1},$$ where $$\widetilde{L}_k:=\begin{cases}\sum_{\xx\in[\mm_i(\xx_0),\yy_0]}(H_l^{-\varphi_\sB(\xx)}(\xx),<_l) &\text{if }k=0,\\\sum_{\xx\in[\yy_{k-1},\yy_k]}(H_l^{-\varphi_\sB(\xx)}(\xx),<_l)&\text{if }1\leq k\leq n_\sB,\\\sum_{\xx\in[\yy_{n_\sB},\MM_i(\xx_0)]}(H_l^{-\varphi_\sB(\xx)}(\xx),<_l)&\text{if } k=n_\sB+1.\end{cases}$$

Since a finite order sum of linear orders in $\dLOfdb{1}{1}$ lies in $\dLOfdb{1}{1}$, using the induction hypothesis and Proposition \ref{[mix0,y1] is finite}, we see that $\widetilde{L}_0,\widetilde{L}_{n_\sB+1}\in\dLOfdb{1}{1}$. We will use Lemma \ref{recdlofdb} to show that $\widetilde{L}_k\in\dLOfdb{1}{1}$ for $1\leq k\leq n_\sB$.

Proposition \ref{OT(beam)} showed $c_\sB([\yy_0,\yy_1])=[\yy_0,\yy_1]\cap\OST{}\cong(\omega+\Xi(\underbrace{\zeta,\zeta,\cdots,\zeta}_{k_\sB\text{ times}})+\omega^*)$. Its proof together with Equation \eqref{ordersumC_B} helps us to write $$\widetilde{L}_1=\overline L_1+\overline L_2+\overline L_3,$$
where
\begin{eqnarray*}
\overline L_1&:=&\mathcal I_{(\xx_0,i;\sB)}(\yy_0)\cap[\yy_0,\yy_1]=\sum_{n\in\omega}H_l^{-\varphi_\sB(\LB^n(\yy_0))}(\LB^n(\yy_0)),\\\overline{L}_2&:=&\sum_{\xx\in\Cent\cap[\yy_0,\yy_1]}\mathcal I_{(\xx_0,i;\sB)}(\xx)\\&=&\sum_{\xx\in\Cent\cap[\yy_0,\yy_1]}\left(\sum_{n\in\omega^*,\,n\neq0}H_l^{-\varphi_\sB(\LbB^n(\xx))}(\LbB^n(\xx))+\sum_{n\in\omega}H_l^{-\varphi_\sB(\LB^n(\xx))}(\LB^n(\xx))\right),\\\overline L_3&:=&\mathcal I_{(\xx_0,i;\sB)}(\yy_1)\cap[\yy_0,\yy_1]=\sum_{n\in\omega^*}H_l^{-\varphi_\sB(\LbB^n(\yy_1))}(\LbB^n(\yy_1)).\end{eqnarray*}

Let $\{\xx_1,\cdots,\xx_{k_\sB}\}$ be a set of representatives of distinct $\sB$-equivalence classes of $\Cent\cap[\yy_0,\yy_1]$. Proposition \ref{shuffle structure of the middle} states that the order type of the interval $\mathcal I_{(\xx_0,i;\sB)}(\xx_j)$ in $H_l^i(\xx_0)$ is independent of the choice of the representative $\xx_j$ for each $j\in\{1,\cdots,k_\sB\}$. Thus if $L_j$ denotes the order type of $\mathcal I_{(\xx_0,i;\sB)}(\xx_j)$, then $$\overline{L}_2\cong\Xi(L_1,\cdots,L_{k_\sB}).$$

For any string $\zz\in\ c_\sB([\yy_0,\yy_1])$ with $\varphi_\sB(\zz)\neq -1$, Proposition \ref{OST to infty bux} ensures the existence of a string $\uu$ and $\bb\in\BalB$ such that $\brac{1}{\LB}(\zz)=\,^\infty\bb\uu\zz$. Since $\theta(\bb)=1$, Remark \ref{LB Rem 2} yields $s'\in\N$ and $p\in\N^+$ such that $\LB^{s'}(\zz)=\bb\uu\zz$ and $\LB^{s'+p}(\zz)=\bb^2\uu\zz=\bb\LB^{s'}(\zz)$. Since $\delta(\bb)=0$, we have $\LB^{s'+p}(\zz)\equiv_H\LB^{s'}(\zz)$. Since $\bb\in\StB1$, Remark \ref{rem:6.3} and Proposition \ref{LB is ext of lB} together imply $\LB^{s'+p+k}(\zz)\equiv_H\LB^{s'+k}(\zz)$ for each $k\in\N$. Since $\varphi_\sB(\LB^q(\zz))=1$ for each $q\in\N^+$, we get $H_l^{-\varphi_\sB(\LB^{s'+p+k}(\zz))}(\LB^{s'+p+k}(\zz))\cong H_l^{-\varphi_\sB(\LB^{s'+k}(\zz))}(\LB^{s'+k}(\zz))$. A dual result can be shown for $\zz\in\ c_\sB([\yy_0,\yy_1])$ with $\varphi_\sB(\zz)\neq 1$. Thus we have shown that all the hypotheses of Lemma \ref{recdlofdb} are satisfied, and hence we get that $\widetilde{L}_1=\overline L_1+\overline L_2+\overline L_3\in\dLOfdb{1}{1}$. A similar argument shows that $\widetilde{L}_k\in\dLOfdb{1}{1}$ for each $1\leq k\leq n_\sB$, and this completes the proof.
\end{proof}

\begin{exmp}
We compute the order type of $H_l^1(a_0)$ from Example \ref{exmp: QBa in running example}. Continuing from Example \ref{exmp:beams for running example}, recall that $\sB_1$ is minimal and domestic for $(a_0,1)$. By domesticity of $\sB_1$, we have $k_{\sB_1}=0$, and thus by Corollary \ref{wstrordertype} we have
\begin{equation}
    c_{\sB_1}([a_0,a_3A_1a_0])=\sum_{k\in\omega}\{l^k_{\sB_1}(a_0)\}+\sum_{k\in\omega^*}\{\lb_{\sB_1}^k(a_3A_1a_0)\}\cong \omega+\omega^*.
\end{equation}
Using Lemma \ref{hammockordersum}, we obtain
\begin{equation}\label{computation: B1 beam}
[a_0,a_3A_1a_0]=H_l^0(a_0)+\sum_{k\in\omega,\ k\neq0}H_l^{-1}(l^k_{\sB_1}(a_0))+\sum_{k\in\omega^*,\ k\neq0} H_l^1(\lb_{\sB_1}^k(a_3A_1a_0))+H_l^0(a_3A_1a_0).
\end{equation}
Note that $l^{2k+r}_{\sB_1}(a_0)\equiv_Hl^{2+r}_{\sB_1}(a_0)$ for every $k\geq 0$ and $0\leq r\leq1$ with $(k,r)\neq(0,0)$. Moreover, $H_l^1(\lb_{\sB_1}^k(a_3A_1a_0))=\{\lb_{\sB_1}^k(a_3A_1a_0)\}$ for every $k\in\omega^*,\ k\neq0$, and $H_l^{-1}(l_{\sB_1}(a_0))=\{l_{\sB_1}(a_0)\}$.  Plugging these in Equation \eqref{computation: B1 beam}, we get
\begin{equation}
[a_0,a_3A_1a_0]\cong\mathbf1+(\mathbf1+H_l^{-1}(l^2_{\sB_1}(a_0)))\cdot\omega+\omega^*.
\end{equation}
To compute $H_l^{-1}(l^2_{\sB_1}(a_0))$, note that non-domestic $\sB_2$ is the unique element of $\QBa_{-1}(l^2_{\sB_1}(a_0))$, and thus minimal for $(l^2_{\sB_1}(a_0),-1)$. Since $k_{\sB_2}=1$, applying Lemma \ref{hammockordersum} to Corollary \ref{wstrordertype} with the help of the base case of the proof of Theorem \ref{main}, we get $H_l^{-1}(l^2_{\sB_1}(a_0))\cong\omega+\Xi(\zeta)+\omega^*$, so that Equation \eqref{computation: B1 beam} takes the form
\begin{equation}\label{computation: a}
[a_0,a_3A_1a_0]\cong\mathbf1+(\mathbf1+\omega+\Xi(\zeta)+\omega^*)\cdot\omega+\omega^*\cong(\omega+\Xi(\zeta)+\omega^*)\cdot\omega+\omega^*.
\end{equation}
Similarly we can obtain 
\begin{equation}\label{computation: b}
[a_3A_1a_0,A_1a_0]\cong(\omega+\Xi(\zeta)+\omega^*)\cdot\omega+\omega^*.
\end{equation}
Again applying Lemma \ref{hammockordersum} to the last term of the right-hand side of Equation \eqref{computation: condensation OST}, we obtain
\begin{equation}\label{computation: last finite interval}
\begin{split}
[A_1a_0,H_1G_1FE_2E_1A_2A_1a_0]=\{A_1a_0\}+\{A_2A_1a_0\}+\{E_1A_2A_1a_0\}+H_l^{-1}(E_2E_1A_2A_1a_0)\\+\{FE_2E_1A_2A_1a_0\}+H_l^{-1}(G_1FE_2E_1A_2A_1a_0)+H_l^{-1}(H_1G_1FE_2E_1A_2A_1a_0).
\end{split}
\end{equation}
To compute $H_l^{-1}(E_2E_1A_2A_1a_0)$, note that $\sB_3$ is non-domestic and minimal for $(E_2E_1A_2A_1a_0,-1)$. Recall from Example \ref{exmp: Bequiv classes} that $k_{\sB_3}=3$. It is easy to verify that
\begin{equation}\label{computation: I(Bcenter)=zeta}
\begin{split}
\mathcal{I}_{(E_2E_1A_2A_1a_0,-1;\sB_3)}(G_1FE_2E_1A_2A_1a_0)&\cong\mathcal{I}_{(E_2E_1A_2A_1a_0,-1;\sB_3)}(k_1h_2H_1G_1FE_2E_1A_2A_1a_0)\cong\zeta,\\
\mathcal{I}_{(E_2E_1A_2A_1a_0,-1;\sB_3)}(E_2E_1e_3E_2E_1A_2A_1a_0)&\cong(\omega^*+(\omega+\omega^*)\cdot\omega).
\end{split}
\end{equation}
Using Equations \eqref{computation: I(Bcenter)=zeta}, Corollary \ref{wstrordertype}, and Lemma \ref{hammockordersum}, we obtain
\begin{equation}\label{computation: 1}
\begin{split}
H_l^{-1}(E_2E_1A_2A_1a_0)&\cong\omega+\Xi(\zeta,\zeta,\omega^*+(\omega+\omega^*)\cdot\omega)+\omega^*,\\
H_l^{-1}(G_1FE_2E_1A_2A_1a_0)&\cong(\omega+\omega^*)\cdot\omega+\Xi(\zeta,\zeta,\omega^*+(\omega+\omega^*)\cdot\omega)+\omega^*,\\
H_l^{-1}(H_1G_1FE_2E_1A_2A_1a_0)&\cong\omega+\Xi(\zeta,\zeta,\omega^*+(\omega+\omega^*)\cdot\omega)+\omega^*.
\end{split}
\end{equation}
Plugging Equations \eqref{computation: 1} in Equation \eqref{computation: last finite interval} while using Equation \eqref{shuffle property finite} we get
\begin{equation}\label{computation: c}
\begin{split}
[A_1a_0,H_1G_1FE_2E_1A_2A_1a_0]&\cong\mathbf3+\omega+\Xi(\zeta,\zeta,\omega^*+(\omega+\omega^*)\cdot\omega)+\omega^*\\&\ \ \ +\mathbf1+(\omega+\omega^*)\cdot\omega+\Xi(\zeta,\zeta,\omega^*+(\omega+\omega^*)\cdot\omega)+\omega^*\\&\
 \ \ +\omega+\Xi(\zeta,\zeta,\omega^*+(\omega+\omega^*)\cdot\omega)+\omega^*\\&\cong\omega+\Xi(\zeta,\zeta,\omega^*+(\omega+\omega^*)\cdot\omega)+\omega^*.
\end{split}
\end{equation}
Plugging Equations \eqref{computation: a}, \eqref{computation: b} and \eqref{computation: c} in Equation \eqref{computation: hammock}, we obtain
\begin{equation}
H_l^1(a_0)\cong((\omega+\Xi(\zeta)+\omega^*)\cdot\omega+\omega^*)\cdot\mathbf2+\omega+\Xi(\zeta,\zeta,\omega^*+(\omega+\omega^*)\cdot\omega)+\omega^*.
\end{equation}
\end{exmp}

Since Theorem \ref{main} generalizes \cite[Corollary~10.14]{SardarKuberHamforDom} and the latter has a converse in \cite[Proposition~10.18]{SardarKuberHamforDom} for linear orders in $\dLOfpb{1}{1}$, it is natural to ask if the converse to the former is true. Proposition \ref{mainpartialconverse} proves a special case of the converse for which the next result is essential.
\begin{prop}
\label{prop: partition of dlofp}
    Suppose $L(\neq\mathbf 0)\in\dLOfpb{}{}$.
    \begin{itemize}
        \item If $L\in\dLOfpb{0}{1}$ there exist $L_1\in \dLOfpb{1}{1}$ and $L_2\in\dLOfpb{1}{1}\cup\{\mathbf{0}\}$ such that $L \cong L_1 \cdot \omega^* + L_2$;
        \item If $L\in\dLOfpb{1}{0}$ then there exist $L_2 \in \dLOfpb{1}{1}$ and $L_1\in\dLOfpb{1}{1}\cup\{\mathbf{0}\}$ such that $L \cong L_1 + L_2 \cdot \omega$;
        \item If $L \in \dLOfpb{0}{0}$, then there exist $L_1,L_3 \in \dLOfpb{1}{1}$ and $L_2\in\dLOfpb{1}{1}\cup\{\mathbf{0}\}$ such that $L \cong L_1 \cdot \omega^* + L_2 + L_3 \cdot \omega$.
    \end{itemize}
\end{prop}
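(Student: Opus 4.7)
The plan is to argue by structural induction on $L$ as an element of $\LOfp$, following the inductive definition: closure of $\{\mathbf{0},\mathbf{1}\}$ under finite sums and anti-lexicographic products with $\omega$ and $\omega^*$. The base case $L=\mathbf{1}\in\dLOfpb{1}{1}$ makes the three claims vacuous. For the inductive step I would branch on the outermost operation used to build $L$; crucially, not every $\LOfp$-expression yields a discrete order, so discreteness of $L$ constrains which intermediate shapes can occur in the inductive derivation.

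For $L=L'+L''$, discreteness at the junction forces either one summand to be $\mathbf{0}$, or $L'$ to have a maximum and $L''$ a minimum. Hence $L'$ lies in $\dLOfpb{1}{1}\cup\dLOfpb{0}{1}\cup\{\mathbf{0}\}$ and $L''$ in $\dLOfpb{1}{1}\cup\dLOfpb{1}{0}\cup\{\mathbf{0}\}$, and a case distinction on these classes together with the IH yields the decomposition, absorbing the internal boundary pieces into the bounded middle slot. This step is essentially bookkeeping and uses only that $\dLOfpb{1}{1}\cup\{\mathbf{0}\}$ is closed under finite sums.

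For $L=L'\cdot\omega$ (the case $L=L'\cdot\omega^*$ being symmetric), a direct inspection of successive-copy junctions shows that discreteness of $L$ forces $L'\in\dLOfpb{1}{1}\cup\dLOfpb{0}{0}$. When $L'\in\dLOfpb{1}{1}$, the presentation $L=\mathbf{0}+L'\cdot\omega$ is already of the required form. When $L'\in\dLOfpb{0}{0}$, the IH gives $L'\cong L_1'\cdot\omega^*+L_2'+L_3'\cdot\omega$ with $L_1',L_3'\in\dLOfpb{1}{1}$ and $L_2'\in\dLOfpb{1}{1}\cup\{\mathbf{0}\}$, and the rewriting identity
$$(A+B+C)\cdot\omega\;\cong\;A+B+(C+A+B)\cdot\omega,$$
verified by matching the underlying flattened sequences, produces
$$L\;\cong\;L_1'\cdot\omega^*+L_2'+(L_3'\cdot\omega+L_1'\cdot\omega^*+L_2')\cdot\omega.$$
The symmetric identity $(A+B+C)\cdot\omega^*\cong(B+C+A)\cdot\omega^*+B+C$ handles the $\omega^*$ case.

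The delicate step is checking that the compound middle factor $L_3:=L_3'\cdot\omega+L_1'\cdot\omega^*+L_2'$ really lies in $\dLOfpb{1}{1}$. Its minimum comes from $L_3'$ and its maximum from $L_2'$ (or from $L_1'$ when $L_2'=\mathbf{0}$); the main point is discreteness at the junction between $L_3'\cdot\omega$ (which has no maximum) and $L_1'\cdot\omega^*$ (which has no minimum). These two pieces fit together discretely because every element of $L_3'\cdot\omega$ retains its immediate successor within that suborder, and every element of $L_1'\cdot\omega^*$ retains its immediate predecessor within that suborder, so no non-discrete element is introduced at the boundary. Modulo this verification and its dual for $\omega^*$, all subcases assemble routinely from the IH.
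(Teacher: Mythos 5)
Your overall route is genuinely different from the paper's: the paper deduces the decomposition from the tree presentations of $\LOfp$ developed in the reference [AKS] (the $\mathrm{LIN}$/$\mathsf{EXUDE}$ machinery, inducting on the depth of the first node labelled $-$ along the leftmost branch), whereas you run a self-contained structural induction on an $\LOfp$-expression for $L$. That strategy is viable -- convex suborders of a discrete order are discrete, so the subexpressions do fall under the inductive hypothesis -- and your product case is correct: the rewriting identity $(A+B+C)\cdot\omega\cong A+B+(C+A+B)\cdot\omega$, its dual, and the verification that $L_3'\cdot\omega+L_1'\cdot\omega^*+L_2'$ is bounded, discrete and finitely presented all check out. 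The gain over the paper's argument is that no external machinery is needed; the cost is a longer case analysis.

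There is, however, a concrete error in your sum case. It is not true that discreteness of $L'+L''$ forces one summand to be $\mathbf 0$ or $L'$ to have a maximum and $L''$ a minimum: the order $\omega+\omega^*$ is discrete (it lies in $\dLOfpb{1}{1}$) although $\omega$ has no maximum and $\omega^*$ has no minimum. The correct dichotomy is that either $L'$ has a maximum and $L''$ has a minimum, or $L'$ has no maximum and $L''$ has no minimum -- exactly the phenomenon you yourself invoke in the product case when you argue that the junction of $L_3'\cdot\omega$ with $L_1'\cdot\omega^*$ is discrete. Consequently your assertion that $L'\in\dLOfpb{1}{1}\cup\dLOfpb{0}{1}\cup\{\mathbf 0\}$ and $L''\in\dLOfpb{1}{1}\cup\dLOfpb{1}{0}\cup\{\mathbf 0\}$ is false, and the case analysis built on it misses instances such as the expression $\omega+(\omega^*+\omega)$, which presents $\omega+\zeta\in\dLOfpb{1}{0}$ with $L'=\omega$ lacking a maximum and $L''=\zeta$ lacking a minimum. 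The omission is repairable with tools you already use: in the missing case apply the induction hypothesis to both summands and absorb the resulting internal block of shape $B\cdot\omega+C\cdot\omega^*$ (padded by the bounded pieces on either side) into a single bounded slot, checking as in your $L_3$ verification that it lies in $\dLOfpb{1}{1}$. With that extra case the induction closes, but as written the sum step does not cover all discrete sums.
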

\begin{proof}
We use the notations and results from \cite{AKS} to prove the first result; the proofs of the rest are similar.

Recall from \cite[Proposition~5.6]{AKS} that for any $L\in\LOfp$ there is $(T,s_T)\in\mathrm{3}\mathbf{ST}_\omega$ such that $\mathrm{LIN}(T,s_T)\cong L$. Suppose for $n\in\N^+$ the notation $0^n$ denotes $\underbrace{00\cdots0}_{n \mbox{ times}}$. It is easy to note that $L$ has a minimum if and only if whenever $0^n\in T$ for some $n\in\N ^+$ then $s_T(0^n)\neq-$.

Now if $L\in\dLOfpb{0}{1}$ then choose the least $N(T)\in\N^+$ such that $0^{N(T)}\in T$ and $s_T(0^{N(T)})=-$. We use induction on ${N(T)}$ to obtain required $L_1$ and $L_2$.

\noindent{\textbf{Base step}} (${N(T)}=1$): Let $w$ be the width of $(T,s_T)$. Then $L\cong L'_1\cdot\omega^*+L'_2$, where $L'_1:=\mathrm{LIN}(\widehat T_0,s_{\widehat T_0})$ and $L'_2:=\sum_{1\leq k<w}\mathrm{LIN}(T_k,s_{T_k})$. If $L'_2\neq\mathbf{0}$ then it has a maximum.

If $L'_1$ has a maximum then by discreteness of $L$, it also has a minimum. Furthermore, if $L'_2\neq\mathbf{0}$ then it also has a minimum. Thus irrespective of whether $L'_2=\mathbf 0$ or not, we can choose $L_1:=L'_1$ and $L_2:=L'_2$.

On the other hand, if $L'_2\neq\mathbf{0}$ and $L'_1$ does not have a maximum then let $x\in L'_1$ be any element. Since $L'_1\cdot\omega^*$ is discrete, $x$ has an immediate successor, say $y$. Thus we can write $L'_1=L'_{11}+L'_{12}$, where $x$ is the maximum of $L'_{11}$ and $y$ is the minimum of $L'_{12}$. Then $L\cong L'_1\cdot\omega^*+L_2\cong (L'_{12}+L'_{11})\cdot\omega^*+(L'_{12}+L'_2)$, so that $L_1:=L'_{12}+L'_{11}$ and $L_2:=L'_{12}+L'_2$ are as required.

\noindent{\textbf{Inductive step}} (${N(T)}>1$): Here $s_T(0^{N(T)-1})=+$. Note that $N(\mathsf{EXUDE}((T,s_T);0^{N(T)-1}))<N(T)$. Moreover, recall from \cite[Proposition~6.5]{AKS} that $\mathrm{LIN}(\mathsf{EXUDE}((T,s_T);0^{N(T)-1}))\cong L$. Thus the induction hypothesis applied to $\mathsf{EXUDE}((T,s_T);0^{N(T)-1})$ produces the required orders $L_1$ and $L_2$.
\end{proof}
\begin{prop}\label{mainpartialconverse}
    If $L_0\in\dLOfpb{1}{0},L_1\in\dLOfpb{0}{0}$ and $L_2\in\dLOfpb{0}{1}$ then there is a non-domestic string algebra $\Lambda$, a string $\xx_0$ for $\Lambda$ and a parity $i\in\{1,-1\}$ such that $(H_l^i(\xx_0),<_l)\cong L_0+\Xi(L_1)+L_2$.
\end{prop}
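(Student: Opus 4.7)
The plan is to combine the domestic-case converse \cite[Proposition~10.18]{SardarKuberHamforDom} with an explicit gluing construction that installs a single non-domestic generalised meta band $\sB$ having exactly one $\equiv_\sB$-class of $\sB$-centers and exactly one $\sB$-beam. Once this is arranged, Corollary~\ref{wstrordertype} and Equation~\eqref{ordersumC_B} force the hammock to have the shape $L_0+\Xi(L_1)+L_2$, provided the fibers of the composite condensation $\mathbf c_\sB$ match the three given orders.

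First I would invoke Proposition~\ref{prop: partition of dlofp} to decompose $L_0\cong L_0'+L_0''\cdot\omega$, $L_1\cong L_1^a\cdot\omega^*+L_1^b+L_1^c\cdot\omega$, and $L_2\cong L_2'\cdot\omega^*+L_2''$, with each of the six pieces lying in $\dLOfpb{1}{1}\cup\{\mathbf 0\}$. By \cite[Proposition~10.18]{SardarKuberHamforDom} each non-zero piece is realised as a hammock of some domestic string algebra, and any $\mathbf 0$-piece can be replaced by the singleton hammock of a vertex. Next I would hand-craft a non-domestic string algebra $\Lambda$ whose bridge quiver contains a unique non-trivial generalised meta band $\sB=\{\bb_1,\bb_2\}$, with $\bb_1\in\BalB$ and $\bb_2\in\BalbB$, and graft the six domestic sub-algebras at carefully chosen vertices on and near $\bb_1,\bb_2$. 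The design goals are: $\sB$ is minimal for the chosen $(\xx_0,i)$, $n_\sB=1$ and $k_\sB=1$; the initial interval $[\mm_i(\xx_0),\braclB(\yy_0))$ has order type $L_0$, with $L_0''$ appearing as the common eventually periodic fiber of the $\LB$-iterates from the beam's left endpoint $\yy_0$; dually the terminal interval realises $L_2$ via the $\LbB$-iterates from $\yy_1$; and at each $\sB$-center $\xx$ the fiber $\mathcal I_{(\xx_0,i;\sB)}(\xx)$ has order type $L_1^a\cdot\omega^*+L_1^b+L_1^c\cdot\omega\cong L_1$, using $\bb_1,\bb_2$ to generate the $\omega^*$ and $\omega$ sides and the $L_1^b$-subalgebra to supply the band-free middle. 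With these fibers in place, Equation~\eqref{ordersumC_B} combined with Proposition~\ref{shuffle structure of the middle}, which certifies that all $\sB$-center fibers share the common type $L_1$, yields $H_l^i(\xx_0)\cong L_0+\Xi(L_1)+L_2$.

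The main obstacle is the explicit bridge-quiver engineering of $\Lambda$ in the second step. One must arrange arrows, signs and monomial relations so that $\bb_1,\bb_2$ have exactly the prescribed exit behaviour (forcing $\bb_1\in\BalB$ and $\bb_2\in\BalbB$), so that exactly one pair $(\alpha,\beta)\in Q_1\times Q_1^-$ (in the sense of Remark~\ref{rem: fin many B equiv classes}) is realised by $\sB$-centers, and so that the grafted domestic sub-algebras introduce no new bands, no unintended $\preceq$-relations beyond those inside $\sB$, and no disturbance to the periodicity of the $\LB$- and $\LbB$-iterates whose eventual periods are dictated by $\bb_1,\bb_2$. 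Matching each eventually periodic fiber on the nose with its target $\dLOfpb{1}{1}$-order is then a finite combinatorial verification, for which the machinery of \S\ref{sec: main thm} provides all the necessary bookkeeping.
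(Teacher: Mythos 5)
Your overall strategy is the paper's: decompose $L_0,L_1,L_2$ via Proposition \ref{prop: partition of dlofp}, realise the resulting pieces in $\dLOfpb{1}{1}$ as hammocks of domestic (gentle) algebras using \cite[Proposition~10.18]{SardarKuberHamforDom}, graft these onto a hand-built non-domestic skeleton, and then read off the hammock with the condensation machinery (Lemma \ref{hammockordersum}, Corollary \ref{wstrordertype}, Proposition \ref{shuffle structure of the middle}). But there is a genuine gap: the entire content of the proposition is the existence of the grafted algebra, and your proposal only states design goals for it ($\sB=\{\bb_1,\bb_2\}$ non-domestic and minimal for $(\xx_0,i)$, $n_\sB=1$, $k_\sB=1$, prescribed exit behaviour so that $\bb_1\in\BalB$, $\bb_2\in\BalbB$, no new bands or $\preceq$-relations, fibers of $\mathbf c_\sB$ matching $L_0$, $L_1$, $L_2$) while explicitly deferring their simultaneous realisation as ``the main obstacle.'' Meeting all of these constraints at once is delicate because of the string-algebra vertex conditions (at most two arrows in and out, compatibility of $\sigma,\varepsilon$, gentleness of the glued relations) and because the grafting points must not create composite bands or extra $\sB$-equivalence classes; asserting that this ``is then a finite combinatorial verification'' does not discharge it. The paper resolves exactly this by exhibiting a concrete gentle quiver with relations ($\Gamma'''$ of Figure \ref{fig:counter example}) and computing its hammock step by step.

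It is also worth noting that the paper's construction does not follow your blueprint of making the non-domestic GMB minimal at the top level with a single $\sB$-beam. Instead, the minimal classes for the chosen $(\xx_0,i)$ are two \emph{domestic} bands; the unique non-domestic GMB sits inside a nested hammock $H_l(N_1)$ whose order type is $L_{10}+L_{12}\cdot\omega+\Xi(L_1)+L_{11}\cdot\omega^*$, and the factor $H_l(N_1)\cdot(\omega^*+\omega)$ produced by the outer domestic condensation is collapsed to $\Xi(L_1)$ by the absorption identity \eqref{shuffle property infinite}. This indirection is not cosmetic: it means one only has to control a single nested fiber rather than the fibers over every $\LB$- and $\LbB$-iterate and every $\sB$-center of a top-level non-domestic beam, which is what makes the explicit verification tractable. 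If you want to keep your more direct design, you would need to actually produce such a quiver and check the listed properties; as written, the proof is incomplete at its crucial step.
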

\begin{proof}

Proposition \ref{prop: partition of dlofp} yields $L_{00} , L_{01}, L_{10}, L_{11}, L_{12}, L_{20}, L_{21} \in \dLOfpb{1}{1}$ such that
$$L_0 = L_{00} + L_{01} \cdot \omega,\ L_1 = L_{11}\cdot \omega^* + L_{10} + L_{12} \cdot \omega,\text{ and }L_2=L_{21}\cdot\omega^*+L_{20}.$$

\begin{figure}[h]\label{fig: counter example}
\[\begin{tikzcd}
                   &                                    &                       &                                           & v_5 \arrow[r, "m_1"]                    & Q_{01}                   &                                       &                                        & Q_{10}                                  & Q_{11} \arrow[d, "x_2"]                \\
v_0 \arrow[d, "a"] &                                    &                       & v_4 \arrow[ru, "j_1"] \arrow[rr, "k_1"]   &                                         & v_6 \arrow[lu, "l_1"']   &                                       & v_{12} \arrow[r, "q"] \arrow[rd, "t"'] & v_{13} \arrow[u, "x_1"]                 & v_{14} \arrow[l, "r"'] \arrow[ld, "s"] \\
v_1 \arrow[d, "d"] & v_2 \arrow[l, "b"'] \arrow[r, "c"] & v_3 \arrow[ru, "f_1"] &                                           &                                         &                          & v_7 \arrow[lu, "n_1"] \arrow[ru, "p"] &                                        & v_{15} \arrow[ld, "v"'] \arrow[rd, "w"] &                                        \\
Q_{00}             & Q_{20} \arrow[u, "e"']             &                       & v_{10} \arrow[lu, "f_2"] \arrow[r, "h_2"] & v_9 \arrow[r, "k_2"] \arrow[rd, "j_2"'] & v_8 \arrow[ru, "n_2"]    &                                       & v_{16} \arrow[r, "z"']                 & v_{17} \arrow[d, "x_3"]                 & v_{18} \arrow[l, "y"]                  \\
                   &                                    &                       &                                           & Q_{21} \arrow[r, "m_2"']                & v_{11} \arrow[u, "l_2"'] &                                       &                                        & Q_{12}                                  &                                       
\end{tikzcd}\]
\caption{$\Gamma'''$ with $\rho = \{ db,ce,f_1f_2, j_1f_1, l_1n_1, n_1n_2, m_1j_1, k_2h_2, l_2m_2, n_2k_2, tp, x_1 r, sx_2, wt, vs, x_3 z \}$}
\label{fig:counter example}
\end{figure}
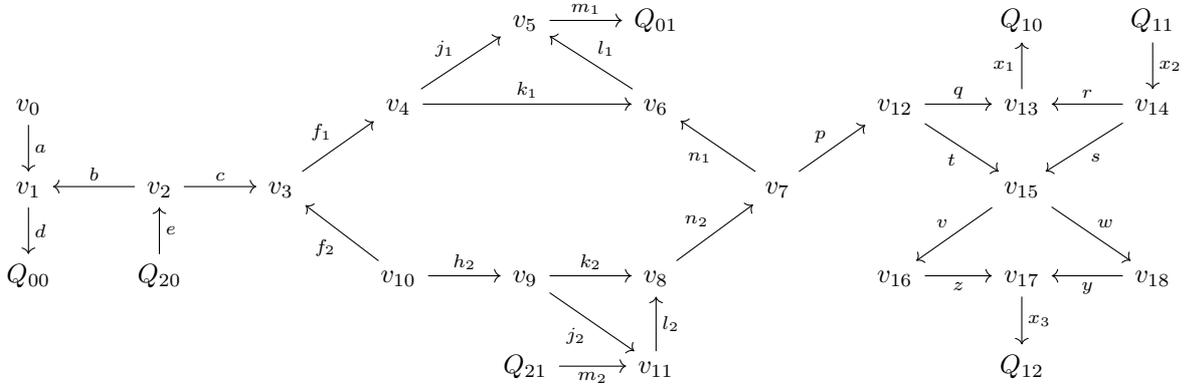

Consider the string algebra $\Gamma'''$ from Figure \ref{fig:counter example}, where \cite[Proposition~10.18]{SardarKuberHamforDom} allows us to choose quivers with relations $(Q_{00},\rho_{00})$, $(Q_{01},\rho_{01})$, $(Q_{20},\rho_{20})$, $(Q_{21},\rho_{21})$, $(Q_{10},\rho_{10})$, $(Q_{10},\rho_{10})$, $(Q_{11},\rho_{11})$ and $(Q_{12},\rho_{12})$ for domestic gentle algebras such that
\begin{equation}
\begin{split}
H_l^{-1}(a)\cong L_{00}&,\ H_l^{-1}(l_1)\cong L_{01},\\
H_l^{-1}(q)\cong L_{10},\ H_l^1(R)&\cong L_{11},\ H_l^{-1}(y)\cong L_{12},\\
H_l^1(B)\cong L_{20},&\text{ and } H_l^1(j_2)\cong L_{21}.
\end{split}
\end{equation}
Since the algebras presented by $(Q_{mn},\rho_{mn})$ are gentle and $\rho$ for $\Gamma'''$ consists only of paths of length $2$, we conclude that $\Gamma'''$ is a gentle algebra.

We will show for an appropriate $j\in\{-1,1\}$ that $H_l^{-1}(1_{(v_0,j)})\cong L_0+\Xi(L_1)+L_2$.

Since $\Gamma'''$ is gentle, for any string $\uu\in H_l^{-1}(1_{(v_0,j)})$ with $|\uu|>0$, we have $\uu\equiv_H\alpha$, where $\alpha$ is the last syllable of $\uu$. Consequently, $H_l(\uu)\cong H_l(\alpha)$.

We have that $\sB_1=\{l_1k_1J_1\}$ and $\sB_2=\{k_2J_2L_2\}$ are minimal for $(1_{(v_0,j)},-1)$. Choosing $\sB_1$ as minimal for $(1_{(v_0,j)},-1)$, we have
\begin{equation}\label{cec: hammock}
H_l^{-1}(1_{(v_0,j)})=[\mm_{-1}(1_{(v_0,j)}),a]\dotplus[a,(1_{(v_0,j)})]\cong H_l^{-1}(a)\dotplus[a,(1_{(v_0,j)})]\cong L_{00}\dotplus[a,(1_{(v_0,j)})].
\end{equation}
Since $\sB_1$ is domestic, using Corollary \ref{wstrordertype}, we have 
\begin{equation}
c_{\sB_1}([a,1_{(v_0,j)}])=\sum_{k\in\omega}\{l^k_{\sB_1}(a)\}+\sum_{k\in\omega^*}\{\lb^k_{\sB_1}(1_{v_0,j})\}\cong\omega+\omega^*.
\end{equation}
Applying Lemma \ref{hammockordersum} to the above equation,
\begin{equation}
[a,1_{(v_0,j)}]\cong\sum_{k\in\omega,k\neq0}H_l^{-1}(l^k_{\sB_1}(a))+\sum_{k\in\omega^*,k\neq0}H_l^1(\lb^k_{\sB_1}(1_{v_0,j})).
\end{equation}
Using appropriate $H$-equivalences, we obtain
\begin{equation}\label{cec: beam}
\begin{split}
[a,1_{(v_0,j)}]&\cong\{j_1f_1cBa\}+H_l^{-1}(l_1)\cdot\omega+H_l^1(k_1)\cdot\omega^*+H_l^1(c)+H_l^1(B)+\{a\}\\&\cong L_{01}\cdot\omega+H_l(N_1)\cdot\omega^*+H_l^1(c)+L_{20}+\mathbf1.
\end{split}
\end{equation}
To compute $H_l^1(c)$, note that $\sB_2$ is minimal for $(c,1)$. Using appropriate $H$-equivalences, we have 
\begin{equation}\label{cec:subbeam}
H_l^1(c)\cong \{cBa\}+H_l^{-1}(l_2)\cdot\omega+H_l^1(j_2)\cdot\omega^*\cong \mathbf1+H_l(n_2)\cdot\omega+L_{21}\cdot\omega^*.
\end{equation}
Note that $N_1\equiv_H n_1$ and therefore $H_l(N_1)\cong H_l(n_2)\cong H_l^{-1}(N_1)$, which we now compute.

Let $\sB\in\QBa_{-1}(1_{(v_0,j)})$ be the only non-domestic element with $\BalB=\{ywVZ\}$ and $\BalbB=\{qTsR\}$. Since $\sB\in\QBa$ is minimal for $(N_1,-1)$ and $k_{\sB}=1$ with $\zz:=sRqpN_1$ as a representative for the unique $\sB$-equivalence class, it is easy to verify that
\begin{equation}
\mathcal{I}_{(N_1,-1;\sB)}(\zz)\cong H_l^1(R)\cdot\omega^*+H_l^{-1}(q)+H_l^{-1}(y)\cdot\omega\cong L_{11}\cdot\omega^*+L_{10}+L_{12}\cdot\omega.
\end{equation}
Using Corollary \ref{wstrordertype} and Lemma \ref{hammockordersum}, we get
\begin{equation}\label{cec: subsubbeam}
\begin{split}
H_l(N_1)&\cong H_l^{-1}(q)+H_l^{-1}(y)\cdot\omega+\Xi(\mathcal{I}_{(N_1,-1;\sB)}(\zz))+H_l^1(r)\cdot\omega^*\\&\cong L_{10}+L_{12}\cdot\omega+\Xi(L_{11}\cdot\omega^*+L_{10}+L_{12}\cdot\omega)+L_{11}\cdot\omega^*.
\end{split}
\end{equation}
Using the isomorphism $H_l(N_1)\cong H_l(n_1)$ and Equation \ref{shuffle property infinite} while plugging Equations \eqref{cec: beam}, \eqref{cec:subbeam} and \eqref{cec: subsubbeam} in Equation \eqref{cec: hammock}, we get
\begin{equation}
\begin{split}
H_l(1_{(v_0,j)})&\cong L_{00}+L_{01}\cdot\omega+H_l(N_1)\cdot\omega^*+\mathbf1+H_l(n_2)\cdot\omega+L_{21}\cdot\omega^*+L_{20}+\mathbf1
\\&\cong L_{00}+L_{01}\cdot\omega+H_l(N_1)\cdot(\omega^*+\omega)+L_{21}\cdot\omega^*+L_{20}
\\&\cong L_0+(L_{10} + L_{12}\cdot \omega + \Xi( L_{11}\cdot \omega^* + L_{10} + L_{12}\cdot \omega) + L_{11}\cdot \omega^*)\cdot(\omega^*+\omega)+L_2
\\&\cong L_0+\Xi( L_{11}\cdot \omega^* + L_{10} + L_{12}\cdot \omega)+L_2
\\&\cong L_0+\Xi(L_1)+L_2.
\end{split}
\end{equation}
This completes the proof.
\end{proof}

\begin{que}\label{mainconversefail}
Does there exist a non-domestic string algebra $\Lambda$, a string $\xx_0$ for $\Lambda$ and a parity $i\in\{1,-1\}$ such that $(H_l^i(\xx_0),<_l)\cong\omega+\Xi((\omega+\omega^*)\cdot\omega^*+\omega,\omega^*+(\omega+\omega^*)\cdot\omega)+\omega^*$?
\end{que}
We believe that the answer to the above question is negative; however, currently, we do not have any methods to show this. In fact, it is an interesting problem to determine the subclass of $\dLOfdb{1}{1}$ which consists only of the order types of hammocks for string algebras.

\section{Locating left $\N$-strings in the completion of a hammock}\label{sec:completionhammock}
This last section is devoted to computing the order type of the completion of the $\sB$-condensation of a hammock and describing the location of some left $\N$-strings therein. As a consequence, we characterize (Proposition \ref{position of infinite strings in OST}) some almost periodic left $\N$-strings in terms of scattered subintervals of the condensations.

Given $\sB\in\QBa$ we identify two subsets of $\StN$ associated to $\sB$.
$$\EStB:=\{\xx\in\StN\mid\text{all but finitely many left substrings of }\xx\text{ are in }\StB{}\}.$$$$\EOSt:=\{\xx\in\StN\mid\text{all proper left substrings are in }\OSt{}\}.$$Further set $\ESTB:=\EStB\cap\HHlix$ and $\EOST{}:=\EOSt\cap\HHlix$.

The following remarks are straightforward.
\begin{rem}\label{EOSTsubsetESTB and equal iff minimal}
Note that $\EStB\subseteq\EOSt$, and therefore $\ESTB\subseteq\EOST$. Moreover, it follows from Corollary \ref{OST-STB is finite} that $\ESTB=\EOST$ if and only if $\sB$ is minimal for $(\xx_0,i)$.
\end{rem}
\begin{rem}\label{EST(x,j,B)subsetofEST(x0,i,B)}
If $\xx\in H_l^i(\xx_0)\setminus\{\xx_0\}$ and $\sB\in\QBa_j(\xx)$ then for any $j\in\{-1,1\}$, we have $\EST{\xx}{j}{\sB}\subseteq\ESTB$ and $\EOsT{\xx}{j}{\sB}\subseteq\EOST$.
\end{rem}

Now we show that every left $\N$-string in $\HHlix$ lies in $\ESTB$ for some $\sB\in\QBa_i(\xx_0)$.

\begin{prop}\label{eventually left N strings are in OST minimal}
Given $\xx\in\HHlix\setminus H_l^i(\xx_0)$, there exists $\zz\in H_l^i(\xx_0)$ with $\zz\sqsubset_l\xx$ and $\sB\in \QBa_i(\xx_0)$ minimal for $(\zz,\theta(\xx\mid\zz))$ such that $\xx\in\N\mbox-\mathsf{St}(\zz,\theta(\xx\mid\zz);\sB)$.
\end{prop}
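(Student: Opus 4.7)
The plan is to extract from $\xx$ a unique ``eventual'' equivalence class $\sB^* \in \QBa$ capturing the asymptotic band structure of $\xx$, and then to realize $\sB^*$ as the required minimal class for a sufficiently long left substring $\zz$ of $\xx$. Writing $\xx = \cdots\alpha_3\alpha_2\alpha_1$ and $\uu_n := \alpha_n\cdots\alpha_1$, Corollary \ref{finitely many band free strings rel to x0,i} implies that all but finitely many $\uu_n$ fail to be band-free relative to $(\xx_0, i)$, so $\xx$ admits infinitely many cycle occurrences past $\xx_0$. For each $N \geq |\xx_0|$, define $B(N) := \{\sB' \in \QBa : \text{some } \sB'\text{-cycle of } \xx \text{ has starting position } \geq N\}$; being non-increasing in $N$ and valued in the finite poset $\QBa$ (Proposition \ref{QBA finite poset}), $B(N)$ stabilizes to some $B^*$. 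I would show $B^* = \{\sB^*\}$ is a singleton: if $\sB_1 \neq \sB_2$ both lay in $B^*$, each admits cycles at arbitrarily large positions, so one could find a $\sB_1$-cycle followed further left by a $\sB_2$-cycle in $\xx$; the intervening substring of $\xx$ then exhibits a bridge $\bb_2 \vv \bb_1$ yielding $\sB_1 \preceq \sB_2$, and the symmetric configuration gives $\sB_2 \preceq \sB_1$, so antisymmetry of $\preceq$ on $\QBa$ forces $\sB_1 = \sB_2$, a contradiction.

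Next I would choose $N$ large enough that $\uu_N$ itself contains a $\sB^*$-cycle, say at positions $[a_0, b_0] \subseteq [1, N]$, and set $\zz := \uu_N$, $j := \theta(\xx \mid \zz)$, and $\sB := \sB^*$. Then $\sB \in \QBa_i(\xx_0)$ is immediate from the left substring $\alpha_{b_0}\cdots\alpha_1$ of $\xx$. For each $n \geq N$, pick any $\sB^*$-cycle of $\xx$ at $[a, b]$ with $a \geq n$ (which exists since $\sB^* \in B^*$); the left substring $\alpha_b\cdots\alpha_1$ of $\xx$ then factors as $\bb^*\,\vv\,\uu_n$ with $\bb^* := \alpha_b\cdots\alpha_{a+1} \in \mathsf{Cyc}(\sB^*)$ and $\vv := \alpha_a\cdots\alpha_{n+1}$, so $\uu_n \in \OSt{}$ (with $\sB = \sB^*$). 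Hence all proper left substrings of $\xx$ lying in $\widehat H_l^j(\zz)$ are in $\OsT{\zz}{j}{\sB}$, giving $\xx \in \EOsT{\zz}{j}{\sB}$.

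To upgrade this to $\xx \in \EST{\zz}{j}{\sB}$ via Remark \ref{EOSTsubsetESTB and equal iff minimal}, I must verify $\sB^*$ is minimal in $(\QBa_j(\zz), \preceq)$. Suppose for contradiction some $\sB' \prec \sB^*$ lies in $\QBa_j(\zz)$; then $\bb'\vv'\zz$ is a string for some $\bb' \in \sB'$ and string $\vv'$. The built-in $\sB^*$-cycle $\bb^* = \alpha_{b_0}\cdots\alpha_{a_0+1}$ inside $\zz$ then makes $\bb'(\vv'\alpha_N\cdots\alpha_{b_0+1})\bb^*$ a left substring of $\bb'\vv'\zz$, and hence itself a string; this exhibits $\sB^* \preceq \sB'$, contradicting $\sB' \prec \sB^*$. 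The main obstacle will be the uniqueness of $B^*$ in the first paragraph: one must verify carefully that reading off the intervening substring between consecutive cycles of distinct classes genuinely produces a bridge in the sense of $\preceq$, after which the rest of the argument proceeds mechanically.
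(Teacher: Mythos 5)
Your argument is correct in substance, but it takes a more hands-on route than the paper. The paper never looks at cycle occurrences inside $\xx$ at all: it sets $\mathcal B_n:=\QBa_{\theta(\xx\mid\xx_n)}(\xx_n)$ for the left substrings $\xx_n$ of $\xx$, observes that these reachability sets are non-increasing and finite (Proposition \ref{QBA finite poset}), lets them stabilize at some $N$, and picks $\sB$ to be any $\preceq$-minimal element of $\mathcal B_N$. With that choice, minimality for $(\xx_N,\theta(\xx\mid\xx_N))$ is true by definition, and $\xx_n\in\OsT{\xx_N}{\theta(\xx\mid\xx_N)}{\sB}$ for all $n\ge N$ is immediate from $\sB\in\mathcal B_n=\mathcal B_N$; the proof then closes exactly as yours does, via Remark \ref{EOSTsubsetESTB and equal iff minimal}. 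Your version replaces the reachability sets by the classes realized by actual cycle occurrences in $\xx$, which buys a concrete picture (and as a by-product identifies a canonical ``eventual'' class $\sB^*$), but costs you three extra verifications the paper gets for free: that occurrences persist beyond every position, that the eventual class is unique, and the separate factor/bridge argument for minimality. Note also that the uniqueness of $B^*$, which you single out as the main obstacle, is actually both fine as you sketch it and unnecessary: your minimality argument shows that any class with a cycle occurring as a factor of $\zz$ is $\preceq$ every element of $\QBa_j(\zz)$, so any element of $B^*$ would do.

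The one step that needs patching is earlier than you think: Corollary \ref{finitely many band free strings rel to x0,i} applied to the $\uu_n$ only tells you that some $\uu_n$ contains a cycle (a single occurrence already makes every longer $\uu_m$ non-band-free), whereas your $B(N)$ construction needs cycle occurrences starting beyond every position $N$, i.e.\ $B(N)\neq\mathbf\emptyset$ for all $N$, which is also what guarantees $B^*\neq\emptyset$ after stabilization. The fix is easy and uses the same finiteness: for each $N$ apply the finiteness of band-free strings relative to $(\uu_N,\theta(\xx\mid\uu_N))$ (or \cite[Proposition~3.1.7]{GKS20} to the factors $\alpha_m\cdots\alpha_{N+1}$) to produce, among the infinitely many left substrings $\uu_m=\ww\uu_N$ of $\xx$, one whose $\ww$ contains a cycle, i.e.\ an occurrence lying entirely at positions greater than $N$. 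With this repair, and the routine checks that the witnessing extensions can be taken of positive length and of the correct parity, your proof is complete.
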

\begin{proof}
Since $\xx_0\sqsubset_l\xx$ and $\xx$ is a left $\N$-string, there are infinitely many strings $\vv$ such that $\vv\xx_0$ is a string. Therefore by the observation in the base case of the proof of Theorem \ref{main}, we have $\QBa_i(\xx_0)\neq\emptyset$.

It is trivial to note that if $\vv\sqsubset_l\vv'$ then $\QBa_j(\vv')\subseteq\QBa_{\theta(\vv'\mid\vv)}(\vv)$ for each $j\in\{-1,1\}$. For each $n\in\N$, let $\xx_n\sqsubset_l\xx$ satisfy $|\xx_n|=|\xx_0|+n$. Therefore the sequence of sets $\mathcal{B}_n:=\QBa_{\theta(\xx\mid\xx_n)}(\xx_n)$ satisfies $\mathcal B_{n+1}\subseteq\mathcal B_n$ for every $n\in\N$. Since $\mathcal B_0$ is finite, thanks to Proposition \ref{QBA finite poset}, we get that there is $N\in\N$ such that $\mathcal{B}_n=\mathcal{B}_N$ for every $n\geq N$. Choose a minimal element $\sB$ of $\mathcal B_N$ with respect to $\preceq$. This implies that $\sB$ is minimal for $(\xx_N,\theta(\xx\mid\xx_N))$ and $\xx_n\in\overline{\mathsf{St}}(\xx_N,\theta(\xx\mid\xx_N);\sB)$ for every $n\geq N$. Thus $\xx\in\EOsT{\xx_N}{\theta(\xx\mid\xx_N)}{\sB}$. Finally, since $\sB$ is minimal for $(\xx_N,\theta(\xx\mid\xx_N))$, it follows from Remark \ref{EOSTsubsetESTB and equal iff minimal} that $\xx\in\EST{\xx_N}{\theta(\xx\mid\xx_N)}{\sB}$.
\end{proof}

Recall from Proposition \ref{completion of hammock} that $\gap(H_l^i(\xx_0))\cong\HHlix\setminus H_l^i(\xx_0)$. Proposition \ref{eventually left N strings are in OST minimal} together with Remark \ref{EST(x,j,B)subsetofEST(x0,i,B)} allows us to describe the latter set in two possible ways.
\begin{equation}\label{expression 1 of left N strings}
    \HHlix\setminus H_l^i(\xx_0)=\bigcup_{\substack{\xx\in H_l^i(\xx_0)\setminus\{\xx_0\},\ j\in\{-1,1\},\\\sB\text{ is minimal for $(\xx,j)$}\\j=i\text{ if }\xx=\xx_0}}\EST{\xx}{j}{\sB},
\end{equation}
\begin{equation}\label{expression 2 of left N strings}
    \HHlix\setminus H_l^i(\xx_0)=\bigsqcup_{\sB\in\QBa_i(\xx_0)}\ESTB.
\end{equation}
Equation \eqref{expression 1 of left N strings} will be used to locate the position of a left $\N$-string in the extended hammock $\HHlix$, whereas Equation \eqref{expression 2 of left N strings} is a generalization of \cite[Proposition~2]{ringalgcom95} which states that every left $\N$-string in a domestic string algebra is almost periodic.

As a consequence of Proposition \ref{eventually left N strings are in OST minimal}, we show that each gap in the hammock $H_l^i(\xx_0)$ corresponds to a gap in $\OsT{\xx}{j}{\sB}$ for some $\xx\in H_l^i(\xx_0)$, a parity $j\in\{-1,1\}$ and $\sB\in\QBa$ minimal for $(\xx,j)$.
\begin{prop}\label{gaps of hammocks are gaps of OST}
If $(X,Y)$ is a gap in $H_l^i(\xx_0)$ then there is a string $\xx\in H_l^i(\xx_0)$, a parity $j\in\{-1,1\}$ and $B\in\QBa$ minimal for $(\xx,j)$ such that $(X\cap\overline{\mathsf{St}}(\xx,j;\sB),Y\cap\overline{\mathsf{St}}(\xx,j;\sB))$ is a gap in $\overline{\mathsf{St}}(\xx,j;\sB)$.
\end{prop}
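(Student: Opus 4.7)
The plan is to exploit the bijection between $\gap(H_l^i(\xx_0))$ and $\HHlix\setminus H_l^i(\xx_0)$ established in Proposition \ref{completion of hammock}. First, given the gap $(X,Y)$, extract its unique associated left $\N$-string $\zz\in\HHlix\setminus H_l^i(\xx_0)$, characterized by $\ww<_l\zz<_l\ww'$ for every $\ww\in X$ and $\ww'\in Y$. Then apply Proposition \ref{eventually left N strings are in OST minimal} to obtain $\xx\in H_l^i(\xx_0)$ with $\xx\sqsubset_l\zz$, together with $j:=\theta(\zz\mid\xx)$ and $\sB\in\QBa$ minimal for $(\xx,j)$ such that $\zz\in\EST{\xx}{j}{\sB}$. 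By Remark \ref{EOSTsubsetESTB and equal iff minimal}, minimality upgrades this to $\zz\in\EOsT{\xx}{j}{\sB}$, so every proper left substring of $\zz$ containing $\xx$ lies in $\OsT{\xx}{j}{\sB}$.

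Next, verify the four gap conditions for the pair $(X\cap\OsT{\xx}{j}{\sB},Y\cap\OsT{\xx}{j}{\sB})$ inside $\OsT{\xx}{j}{\sB}$. Inheritance of the order-separation condition from the ambient gap $(X,Y)$ is immediate. For the union being all of $\OsT{\xx}{j}{\sB}$, note that by Remark \ref{subhammock is an interval} we have $\OsT{\xx}{j}{\sB}\subseteq H_l^j(\xx)\subseteq H_l^i(\xx_0)=X\sqcup Y$ (the inclusion $H_l^j(\xx)\subseteq H_l^i(\xx_0)$ being trivial in the edge case $\xx=\xx_0$). For non-maximality of the left part and non-minimality of the right part, mimic the argument from the proof of Proposition \ref{completion of hammock}: since $\zz$ contains infinitely many direct and inverse syllables, the sets
\[
X':=\{\ww\in H_l^i(\xx_0)\mid\xx\sqsubseteq_l\ww\sqsubset_l\zz,\ \theta(\zz\mid\ww)=1\},\quad Y':=\{\ww\in H_l^i(\xx_0)\mid\xx\sqsubseteq_l\ww\sqsubset_l\zz,\ \theta(\zz\mid\ww)=-1\}
\]
are infinite, entirely contained in $\OsT{\xx}{j}{\sB}$ (as proper left substrings of $\zz$ containing $\xx$), and respectively cofinal in $X$ and coinitial in $Y$ by the same substring argument as in the proof of Proposition \ref{completion of hammock}. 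Consequently they are cofinal and coinitial in $X\cap\OsT{\xx}{j}{\sB}$ and $Y\cap\OsT{\xx}{j}{\sB}$ respectively, giving both non-emptiness and the absence of extrema on the appropriate sides.

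The main subtlety is purely bookkeeping: ensuring that the cofinal/coinitial sequences produced in the bigger hammock $H_l^i(\xx_0)$ actually land in the distinguished subset $\OsT{\xx}{j}{\sB}$, which is controlled by a potentially smaller hammock $H_l^j(\xx)$. This is precisely where the upgrade from $\EST{\xx}{j}{\sB}$ to $\EOsT{\xx}{j}{\sB}$ via minimality of $\sB$ plays its role, as it guarantees that \emph{all} (not merely eventually all) proper left substrings of $\zz$ containing $\xx$ are in $\OsT{\xx}{j}{\sB}$, so the truncated substring sequences $X'$ and $Y'$ automatically serve as witnesses.
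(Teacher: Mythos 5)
Your proposal is correct and follows essentially the same route as the paper's proof: pass from the gap to its unique left $\N$-string via Proposition \ref{completion of hammock}, invoke Proposition \ref{eventually left N strings are in OST minimal} to obtain $\xx$, $j$ and a minimal $\sB$, and then use the sign-$+1$ and sign-$-1$ proper left substrings of that left $\N$-string as cofinal/coinitial witnesses inside $X\cap\OsT{\xx}{j}{\sB}$ and $Y\cap\OsT{\xx}{j}{\sB}$. The only cosmetic slip is attributing the inclusion $\EST{\xx}{j}{\sB}\subseteq\EOsT{\xx}{j}{\sB}$ to minimality of $\sB$: that inclusion holds unconditionally by Remark \ref{EOSTsubsetESTB and equal iff minimal} (minimality is only needed for the reverse containment), so nothing in your argument is affected.
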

\begin{proof}
The gap $(X,Y)$ in $H_l^i(\xx_0)$ corresponds to a unique $\yy\in\HHlix\setminus H_l^i(\xx_0)$ by Proposition \ref{completion of hammock}. Further Proposition \ref{eventually left N strings are in OST minimal} yields $\xx\in H_l^i(\xx_0)$ such that $\xx\sqsubset_l\yy$ and $\sB$ minimal for $(\xx,\theta(\yy\mid\xx))$ and $\xx\in\EST{\xx}{\theta(\yy\mid\xx)}{\sB}$.

Let $\yy=:\zz\xx$ and $j:=\theta(\yy\mid\xx)$. Since the set $\{\vv\in\St:\delta(\vv)\neq0\}$ is finite, there are infinitely many inverse as well as direct syllables in $\zz$. It is easy to see that the set $\{\vv\xx\in\St:\vv\xx\sqsubset_l\yy,\theta(\yy\mid\vv\xx)=1\}$ is a cofinal subset of $X\cap\overline{\mathsf{St}}(\xx,j;\sB)$ having no maximum element and the set $\{\vv\xx\in\St:\vv\xx\sqsubset_l\yy,\theta(\yy\mid\vv\xx)=-1\}$ is a coinitial subset of $Y\cap\overline{\mathsf{St}}(\xx,j;\sB)$ having no minimum element. Therefore we conclude that $(X\cap\overline{\mathsf{St}}(\xx,j;\sB),Y\cap\overline{\mathsf{St}}(\xx,j;\sB))$ is a gap in $\overline{\mathsf{St}}(\xx,j;\sB)$.
\end{proof}

Conversely we show that each gap of $\OsT{\xx}{j}{\sB}$ corresponds to a gap in $H_l^i(\xx_0)$, where $\xx\in H_l^i(\xx_0)$, $j\in\{-1,1\}$ and $\sB\in\QBa_j(\xx)$, with the restriction that $j=i$ if $\xx=\xx_0$. Note that we do not require $\sB$ to be minimal for $(\xx,j)$.
\begin{prop}\label{gaps of OST are gaps of hammocks}
Let $\xx\in H_l^i(\xx_0)$, $j\in\{-1,1\}$ and $\sB\in \QBa_i(\xx_0)$. If $(X,Y)$ is a gap in $\overline{\mathsf{St}}(\xx,j;\sB)$ then there exists a unique $X'\supseteq X$ and a unique $Y'\supseteq Y$ such that $(X',Y')$ is a gap in $H_l^i(\xx_0)$.
\end{prop}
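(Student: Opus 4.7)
The proof will be a clean application of the decomposition provided by Lemma \ref{hammockordersum}. First I will lift the gap $(X,Y)$ in $\overline{\mathsf{St}}(\xx,j;\sB)$ to a gap in the sub-hammock $H_l^j(\xx)$ and then extend it to a gap in $H_l^i(\xx_0)$ using the fact that $H_l^j(\xx)$ is an interval in $H_l^i(\xx_0)$ (Remark \ref{subhammock is an interval}).

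For the lift, apply Lemma \ref{hammockordersum} to $H_l^j(\xx)$ with respect to $\sB$: it writes $H_l^j(\xx)$ as the monotone order sum of the fibers $H_l^{-\varphi_\sB(\zz)}(\zz)$ indexed by $\zz \in \overline{\mathsf{St}}(\xx,j;\sB)$. Define
\[ \widetilde X := \bigcup_{\zz \in X} H_l^{-\varphi_\sB(\zz)}(\zz), \qquad \widetilde Y := \bigcup_{\zz \in Y} H_l^{-\varphi_\sB(\zz)}(\zz). \]
Since each fiber contains its index $\zz$ and is non-empty, Proposition \ref{hammockcoverclassesorder} gives $\widetilde X <_l \widetilde Y$, and the absence of a maximum in $X$ (together with the non-emptiness of each fiber) propagates to $\widetilde X$ having no maximum; dually $\widetilde Y$ has no minimum. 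Because $X \cup Y = \overline{\mathsf{St}}(\xx,j;\sB)$ and the fibers partition $H_l^j(\xx)$, we obtain $\widetilde X \cup \widetilde Y = H_l^j(\xx)$, so $(\widetilde X, \widetilde Y)$ is a gap in $H_l^j(\xx)$.

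For the extension, by Remark \ref{subhammock is an interval} the complement of $H_l^j(\xx)$ in $H_l^i(\xx_0)$ is a disjoint union $L \sqcup R$ with $L <_l H_l^j(\xx) <_l R$. Set $X' := L \cup \widetilde X$ and $Y' := \widetilde Y \cup R$. Then $X' \cup Y' = H_l^i(\xx_0)$, $X' \cap Y' = \emptyset$, and $X' <_l Y'$; the no-extremum properties pass from $\widetilde X, \widetilde Y$ to $X', Y'$ because $L$ sits strictly below $\widetilde X$ and $R$ strictly above $\widetilde Y$. So $(X',Y')$ is a gap extending $(X,Y)$.

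For uniqueness, I will show that any gap $(X'_1, Y'_1)$ in $H_l^i(\xx_0)$ with $X'_1 \supseteq X$ and $Y'_1 \supseteq Y$ must coincide with $(X', Y')$. Any $\vv \in L$ satisfies $\vv <_l \xx'$ for some $\xx' \in X \subseteq X'_1$, and since $X'_1 <_l Y'_1$ forces $\vv \in X'_1$; similarly $R \subseteq Y'_1$. For $\vv \in \widetilde X$, write $\vv \in H_l^{-\varphi_\sB(\zz)}(\zz)$ with $\zz \in X$, and pick $\zz' \in X$ with $\zz <_l \zz'$ (possible because $X$ has no maximum); Proposition \ref{hammockcoverclassesorder} gives $\vv <_l \zz' \in X'_1$, forcing $\vv \in X'_1$, so $\widetilde X \subseteq X'_1$; dually $\widetilde Y \subseteq Y'_1$. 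Hence $X' \subseteq X'_1$ and $Y' \subseteq Y'_1$, and disjointness of each cut forces equality. The only delicate point in the argument is ensuring the extremum-conditions survive the lifting — and this works precisely because every fiber of $c_\sB$ is non-empty and the fibers are ordered strictly according to the order on $\overline{\mathsf{St}}(\xx,j;\sB)$. No deeper obstacle arises.
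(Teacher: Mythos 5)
Your argument is correct, but it runs along a genuinely different track from the paper's. You lift the gap purely order-theoretically: apply Lemma \ref{hammockordersum} to $(\xx,j)$, use Proposition \ref{hammockcoverclassesorder} to see that the fibers inherit the order of their indices (so that no-maximum/no-minimum propagates from $X$, $Y$ to the unions of fibers), and then pad with the two side-pieces of the complement of the interval $H_l^j(\xx)$ in $H_l^i(\xx_0)$ (Remark \ref{subhammock is an interval}); uniqueness follows because $X$ stays cofinal in $X'$ and $Y$ coinitial in $Y'$. The paper instead routes everything through Proposition \ref{completion of hammock}: it constructs the left $\N$-string filling the gap $(X,Y)$ by repeating the pivotal-string argument inside $\OsT{\xx}{j}{\sB}$ (using Remark \ref{rem: Hammock WStr is closed by substring} to keep the construction inside that set), and gets uniqueness from the observation that the meet of two distinct candidate left $\N$-strings would itself lie in $\OsT{\xx}{j}{\sB}$ strictly between them. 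Both proofs are valid; the trade-off is that the paper's construction yields as a by-product that the filling string lies in $\EOsT{\xx}{j}{\sB}$, which is exactly what is used immediately afterwards to obtain Corollary \ref{completion of OST} and to extend $c_\sB$ to the completion, whereas your proof establishes the stated existence and uniqueness more economically (no completion machinery) but would need a supplementary argument to recover that by-product. Two small points you leave implicit: applying Lemma \ref{hammockordersum} at $(\xx,j)$ presupposes $\sB\in\QBa_j(\xx)$ (as in the prose preceding the proposition; this is automatic whenever $\OsT{\xx}{j}{\sB}$ has a gap, since otherwise it is finite), and when $\xx=\xx_0$ one must take $j=i$, in which case your sets $L$ and $R$ are empty — neither affects the argument.
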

\begin{proof}
In view of Proposition \ref{completion of hammock}, it suffices to show that there is a unique left $\N$-string $\yy$ such that $X\subseteq\{\vv\in H_l^i(\xx_0)\mid\vv<_l\yy\}$ and $Y\subseteq\{\vv\in H_l^i(\xx_0)\mid\yy<_l\vv\}$.

The technique to get a left $\N$-string $\yy$ by ``filling up'' the gap $(X,Y)$ in $\OST{}$ is similar to the proof of the converse part of Proposition \ref{completion of hammock}, keeping in mind that $\overline{\mathsf{St}}(\xx,j;\sB)$ is closed under substrings in $H_l^j(\xx)$, thanks to Remark \ref{rem: Hammock WStr is closed by substring}. The construction of $\yy$ ensures that $\yy\in\EOsT{\xx}{j}{\sB}$.

If $\yy_1$ and $\yy_2$ are two distinct left $\N$-strings in $\EOsT{\xx}{j}{\sB}$ then the string $\yy_3:=\yy_1\sqcap_l\yy_2$ lies in $\overline{\mathsf{St}}(\xx,j;\sB)$ and between $\yy_1$ and $\yy_2$ in $(\HHlix,<_l)$. Therefore $\yy_1$ and $\yy_2$ cannot correspond to the same gap in $\overline{\mathsf{St}}(\xx,j;\sB)$, thus proving the uniqueness of $\yy$.
\end{proof}

Note that the left $\N$-string produced in Proposition \ref{gaps of OST are gaps of hammocks} corresponding to a gap in $\OST{}$ lies in $\EOST$. Conversely, given $\xx\in\EOST$, the technique used in Proposition \ref{gaps of hammocks are gaps of OST} produces a gap in $\OST{}$.
\begin{cor}\label{completion of OST}
Suppose $\sB\in\QBa_i(\xx_0)$. Then $\com(\OST{})\cong\EOST\sqcup\OST{}$. In particular, if $\sB$ is minimal for $(\xx_0,i)$ then $\com(\OST{})\cong\ESTB\sqcup\OST{}$ thanks to Remark \ref{EOSTsubsetESTB and equal iff minimal}.
\end{cor}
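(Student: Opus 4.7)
The plan is to establish an order isomorphism $\gap(\OST{}) \cong \EOST$, which combined with the construction of $\com(L)$ by ``filling up'' the gaps of $L$ (recalled in \S~\ref{Completion of linear orders}) immediately yields $\com(\OST{}) \cong \OST{} \sqcup \EOST$ as linear orders.

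I would first define $\Phi : \gap(\OST{}) \to \EOST$ as follows. Given a gap $(X,Y)$ of $\OST{}$, Proposition \ref{gaps of OST are gaps of hammocks} applied with $\xx = \xx_0$ and $j = i$ produces a unique extension to a gap $(X', Y')$ of $H_l^i(\xx_0)$, which by Proposition \ref{completion of hammock} corresponds to a unique left $\N$-string $\yy \in \HHlix \setminus H_l^i(\xx_0)$. Inspecting the proof of Proposition \ref{gaps of OST are gaps of hammocks} shows that the $\yy$ produced in that construction already lies in $\EOST$, so I set $\Phi(X,Y) := \yy$. Conversely, I would define $\Psi : \EOST \to \gap(\OST{})$ by the rule $\Psi(\yy) := (X_\yy, Y_\yy)$, where $X_\yy := \{\vv \in \OST{} \mid \vv <_l \yy\}$ and $Y_\yy := \{\vv \in \OST{} \mid \yy <_l \vv\}$.

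The main obstacle is verifying that $(X_\yy, Y_\yy)$ is actually a gap of $\OST{}$, and not merely a Dedekind cut. For this I would mimic the argument in Proposition \ref{gaps of hammocks are gaps of OST}: since $\{\vv \in \St : \delta(\vv) \neq 0\}$ is finite, $\yy$ contains infinitely many inverse syllables and infinitely many direct syllables, producing a cofinal sequence of proper left substrings of $\yy$ inside $X_\yy$ with no maximum, and dually a coinitial sequence inside $Y_\yy$ with no minimum. These substrings automatically lie in $\OST{}$ because $\yy \in \EOST \subseteq \EOSt$ forces every proper left substring of $\yy$ into $\OSt{}$, hence into $\OST{}$ once intersected with $H_l^i(\xx_0)$.

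Finally, it should be routine to check that $\Phi$ and $\Psi$ are mutually inverse and order-preserving. For $\Phi \circ \Psi = \mathrm{id}$, both $\Phi(\Psi(\yy))$ and $\yy$ lie strictly between every element of $X_\yy$ and every element of $Y_\yy$ in $\HHlix$, so the uniqueness clause of Proposition \ref{completion of hammock} forces them to coincide; the identity $\Psi \circ \Phi = \mathrm{id}$ is even more direct from the definitions. For order-preservation, if $\yy_1 <_l \yy_2$ in $\EOST$ then $\yy_1 \sqcap_l \yy_2$ is a proper left substring of both (again lying in $\OST{}$ by the previous paragraph) strictly between $\yy_1$ and $\yy_2$, witnessing $X_{\yy_1} \subsetneq X_{\yy_2}$. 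The ``in particular'' clause is then immediate from Remark \ref{EOSTsubsetESTB and equal iff minimal}.
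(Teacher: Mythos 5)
Your proposal is correct and takes essentially the paper's own route: the paper obtains this corollary precisely by observing that Proposition \ref{gaps of OST are gaps of hammocks} assigns to each gap of $\OST{}$ a left $\N$-string in $\EOST$, while the cofinal/coinitial-substring technique of Proposition \ref{gaps of hammocks are gaps of OST} produces the inverse assignment, and you merely make the resulting bijection, its order-compatibility, and the minimality clause explicit. One small nitpick: for $\Phi\circ\Psi=\mathrm{id}$ the uniqueness you need is the one proved inside Proposition \ref{gaps of OST are gaps of hammocks} (two distinct elements of $\EOST$ have their meet in $\OST{}$, so they cannot fill the same gap of $\OST{}$) rather than literally the uniqueness clause of Proposition \ref{completion of hammock}, but the meet argument you give for order-preservation already supplies exactly this.
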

As a consequence of the above corollary, the map $c_\sB: H_l^i(\xx_0)\to\OST{}$ can be extended to a map $\HHlix\to\EOST\sqcup\OST{}$, which we again denote by $c_\sB$, where $c_\sB(\xx)$ is the longest left (possibly left $\N$-) substring of $\xx$ that lies in $\EOST\sqcup\OST{}$. As a consequence, $\com(\OST{})$ is a condensation of $\com(H_l^i(\xx_0))$ via the composition
$$\com(H_l^i(\xx_0))\xrightarrow{\cong}\HHlix\xrightarrow{c_\sB}\EOST\sqcup\OST{}\xrightarrow{\cong}\com(\OST{}).$$

Propositions \ref{gaps of hammocks are gaps of OST} and \ref{gaps of OST are gaps of hammocks} yield bijections
\begin{equation}\label{eqn: gaps of hammocks in terms of gaps of OST minimal}
\com(H_l^i(\xx_0))\cong\bigcup_{\substack{\xx\in H_l^i(\xx_0),\ j\in\{-1,1\},\\\sB\in\QBa_j(\xx),\\j=i\text{ if }\xx=\xx_0}}\com(\OsT{\xx}{j}{\sB})\cong\bigcup_{\substack{\xx\in H_l^i(\xx_0),\ j\in\{-1,1\},\\ \sB\text{ is minimal for }(\xx,j),\\j=i\text{ if }\xx=\xx_0}}\com(\OsT{\xx}{j}{\sB}).  
\end{equation}
Equation \eqref{eqn: gaps of hammocks in terms of gaps of OST minimal} shows that it is sufficient to study the order type of $\com(\OsT{\xx}{j}{\sB})$, where $\sB\in\QBa$ minimal for $(\xx,j)$ to understand the position of left $\N$-strings (or equivalently gaps in $H_l^i(\xx_0)$) in the extended hammock $\HHlix$. Henceforth we study the order type of $\com(\OST{})$, where $\sB\in\QBa$ is minimal for $(\xx_0,i)$, and subsequently the positions of the left $\N$-strings in it.

The following result is useful in determining the position of an almost periodic left $\N$-string of a certain form in the extended hammock $\HHlix$.
\begin{prop}\label{position of infinite strings in OST}
Suppose $\sB\in\QBa_i(\xx_0)$ and $\yy\in\ESTB$. Then the following statements are equivalent.
\begin{enumerate}
    \item There exists $\bb\in\BalB$ and a string $\uu$ such that  $\yy=\,^\infty\bb\uu\xx_0$.
    \item There exists $\xx\in H_l^i(\xx_0)$ with $\xx<_l\yy$ such that $c_\sB([\xx,\yy))\cong\omega$.
\end{enumerate}
\end{prop}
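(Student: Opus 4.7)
The plan is to prove each direction by connecting $\omega$-chains in $c_\sB([\xx,\yy))$ to iterations of $\LB$, and then invoking Corollary~\ref{BSt1toBalb} to identify the associated band as an element of $\BalB$.

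For the forward direction $(1)\Rightarrow(2)$, I would take $\xx:=\bb^N\uu\xx_0$ for some $N\geq 1$; a sign check confirms $\xx\in H_l^i(\xx_0)$ and $\xx<_l\yy$. Since $\bb\in\BaB$ has $\theta(\bb)=1$, the string $\bb\xx=\bb^{N+1}\uu\xx_0$ witnesses $\xx\in\STB{1}$, so $\LB^k(\xx)=\lB^k(\xx)$ is defined for every $k\in\N$ by Proposition~\ref{LB is ext of lB}, and these iterates form a strictly increasing $\omega$-chain in $\STB{1}\subseteq\OST{}$ with no elements of $\OST{}$ between consecutive terms (Corollary~\ref{neighbourcondenscommute}). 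The key technical step is the identity $\brac{1}{\lB}(\xx)=\yy$: since $\xx$ and $\bb$ share their last syllable (the last syllable of $\bb$, because $N\geq 1$), Remark~\ref{rem:6.3} forces $\lB^n(\xx)=\uu_n\xx$ and $\lB^n(\bb)=\uu_n\bb$ for the same prepended string $\uu_n$, and the proof of Proposition~\ref{prop:6.16a} gives $\brac{1}{\lB}(\bb)=\,^\infty\bb$; these combine to yield $\brac{1}{\lB}(\xx)=\,^\infty\bb\cdot\xx=\,^\infty\bb\uu\xx_0=\yy$. Consequently the $\LB$-chain is cofinal in $\OST{}\cap[\xx,\yy)=c_\sB([\xx,\yy))$, giving $c_\sB([\xx,\yy))\cong\omega$.

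For the reverse direction $(2)\Rightarrow(1)$, let $\zz_0<_l\zz_1<_l\cdots$ enumerate the $\omega$-chain $c_\sB([\xx,\yy))$. Every element of $\EOST\setminus\{\yy\}$ corresponds to a gap of $\OST{}$ and therefore has neither an immediate predecessor nor an immediate successor in $\com(\OST{})\cong\OST{}\sqcup\EOST$ (Corollary~\ref{completion of OST}); hence every $\zz_k$ lies in $\OST{}$, and $\zz_{k+1}$ is the immediate successor of $\zz_k$ in $\OST{}$. The plan is then to show that eventually $\zz_k\in\STB{1}\cap\OST{1}$, so that $\zz_{k+1}=\LB(\zz_k)=\lB(\zz_k)$ for $k\geq k_0$; taking the supremum in $\HHlix$ then gives $\yy=\brac{1}{\lB}(\zz_{k_0})$, and Corollary~\ref{BSt1toBalb} yields $\yy=\,^\infty\bb\uu'\zz_{k_0}$ for some $\bb\in\BalB$, which after writing $\zz_{k_0}=\vv\xx_0$ gives $\yy=\,^\infty\bb\uu\xx_0$ with $\uu=\uu'\vv$.

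The main obstacle will be verifying that eventually $\zz_k\in\STB{1}$. The natural approach combines $\yy\in\ESTB$ (which gives that all sufficiently long left substrings of $\yy$ lie in $\StB{}$) with a length-growth argument for the common prefix $\zz_k\sqcap_l\yy$: since the $\zz_k$ form a cofinal $\omega$-chain in $\OST{}\cap[\xx,\yy)$ approaching the gap corresponding to $\yy$, one expects $|\zz_k\sqcap_l\yy|\to\infty$, which can be extracted using Proposition~\ref{unique string with minimal length in an interval} applied to the shrinking intervals around $\yy$ in $\HHlix$. Once this is in place, the remaining bookkeeping with the cyclic representative of $\bb$ in $\Ba$ and the compatibility of $\uu$ is routine.
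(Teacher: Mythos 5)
Your direction $(1)\Rightarrow(2)$ is essentially correct and close in spirit to the paper's: you base the $\omega$-chain at $\bb^N\uu\xx_0\in\STB{1}$, identify it with the $\lB$-iterates via Proposition \ref{LB is ext of lB}, use Corollary \ref{neighbourcondenscommute} for the "no points in between" statement, and get $\brac{1}{\lB}(\bb^N\uu\xx_0)=\yy$ from Remark \ref{rem:6.3} together with the identity $\brac{1}{\lB}(\bb)={}^\infty\bb$ established inside the proof of Proposition \ref{prop:6.16a}; this is a legitimate (and slightly more self-contained) route than the paper's appeal to Remark \ref{LB Rem 2} and $C_\sB$.

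The direction $(2)\Rightarrow(1)$, however, has a genuine gap precisely at the step you flag as the ``main obstacle'', and the tool you propose does not close it. From $\yy\in\ESTB$ plus growth of $|\zz_k\sqcap_l\yy|$ you only get that all sufficiently long left substrings of $\yy$ lie in $\StB{}=\StB{1}\cup\StB{-1}$; nothing in your sketch rules out that every such substring (and hence every relevant chain element) lies in $\StB{-1}\setminus\StB{1}$. In that scenario $\lB$ is never applicable, Corollary \ref{BSt1toBalb} cannot be invoked, and all you can say is that $\brac{1}{\LB}(\zz_{k_0})={}^\infty\bb\uu\zz_{k_0}$ for \emph{some} band $\bb$ — which, when $\sB$ is not minimal for the relevant pair, need not even be a $\sB$-band, as the paper's example $\brac{1}{l_{\sB_2}}(a_0)={}^\infty(b_1B_4b_3B_2)b_1a_4a_3A_1a_0$ with $b_1B_4b_3B_2\notin\sB_2$ shows. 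The missing idea is the paper's reduction step: by Proposition \ref{eventually left N strings are in OST minimal} one first finds $\ww\sqsubset_l\yy$ in the hammock such that $\sB$ is minimal for $(\ww,\theta(\yy\mid\ww))$ and $\yy\in\EST{\ww}{\theta(\yy\mid\ww)}{\sB}$; working relative to this base pair, Corollary \ref{OST-STB is finite} forces the tail of the $\LB$-chain into $\mathsf{St}_1(\sB)$, and Proposition \ref{OST to infty bux} then delivers $\bb\in\BalB$. Your proposal contains no substitute for this minimality argument, so the sign upgrade from $\StB{}$ to $\StB{1}$ — which is exactly where the hypothesis $\bb\in\BalB$ versus $\BalbB$ is decided — remains unproved. (The rest of your $(2)\Rightarrow(1)$ skeleton — that the $\zz_k$ are consecutive in $\OST{}$, that $\LB^n(\zz_{k_0})<_l\yy$ for all $n$, and that the limit must be $\yy$ on pain of producing an extra chain element — matches the paper's argument and is fine once the base point is in $\STB{1}$.)
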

\begin{proof}
By Proposition \ref{eventually left N strings are in OST minimal}, there exists $\yy\sqsupset_l\ww\in H_l^i(\xx_0)$ such that $\sB$ is minimal for $(\ww,\theta(\yy\mid\ww))$ and $\yy\in\EST{\ww}{\theta(\yy\mid\ww)}{\sB}$.

$(1)\Rightarrow(2).$ Since $\yy=\,^\infty\bb\uu\xx_0$, there exists $N\in\N^+$ such that $\ww\sqsubset_l\bb^N\uu\xx_0\sqsubset_l\yy$. Since 
$\theta(\bb)=1$, Remark \ref{LB Rem 2} along with the fact that $\bb\in\BalB$ implies that for each $n\geq N$, $\LB^{k_n}(\bb^N\uu\xx_0)=\bb^n\uu\xx_0$ for some $k_n\in\N$. Recall the definition of $C_\sB$ stated before Proposition \ref{OT(beam)}. The preceding arguments in this proof imply $C_\sB(\bb^N\uu\xx_0)=C_\sB(\bb^n\uu\xx_0)$ for every $n\geq N$. Consequently, $\brac{1}{\LB}(C_\sB(\bb^N\uu\xx_0))=\,^\infty\bb\uu\xx_0$. Remark \ref{hammockcoverrepr} gives that the interval $c_\sB([C_\sB(\bb^N\uu\xx_0),\,^\infty\bb\uu\xx_0))\cong\omega$.

$(2)\Rightarrow(1).$ Since $\xx<_l\yy$, we have $\theta(\yy\mid\xx)=1$. Both $\ww$ and $\xx\sqcap_l\yy$ are left substrings of $\yy$. Choose $\zz\sqsubset_l\yy$ with $\theta(\yy\mid\zz)=1$ such that $\xx\sqcap_l\yy\sqsubset_l\zz$ and $\ww\sqsubset_l\zz$. Clearly, $\xx<_l\zz<_l\yy$. Since $\yy\in\EST{\ww}{\theta(\yy\mid\ww)}{\sB}$, we have $\zz\in\OsT{\ww}{\theta(\yy\mid\ww)}{\sB}$. Therefore the interval $c_\sB([\zz,\yy))$ being an infinite suborder of $c_\sB([\xx,\yy))$ is also isomorphic to $\omega$. Note that $\LB^n(\zz)<_l\yy$ for every $n\in\N$, and therefore $\brac{1}{\LB}(\zz)\leq_l\yy$. If $\brac{1}{\LB}(\zz)\neq\yy$ then the string $\zz':=\brac{1}{\LB}(\zz)\sqcap_l\yy$ satisfies $\brac{1}{\LB}(\zz)<_l\zz'<_l\yy$, which is a contradiction to $c_\sB([\zz,\yy))\cong\omega$. Therefore $\yy=\brac{1}{\LB}(\zz)$, which implies $\yy=\,^\infty\bb\vv\zz$ for some $\bb\in\BalB$ and a string $\vv$ by Proposition \ref{OST to infty bux}. Since $\zz\in H_l^i(\xx_0)$, we have $\yy=\,^\infty\bb\uu\xx_0$ for some string $\uu$.
\end{proof}

Now we compute $\com(\OST{})$ when $\sB\in\QBa$ is minimal for $(\xx_0,i)$ and understand the position of gaps and the form of the left $\N$-strings corresponding to them.

When $\sB$ is non-domestic and minimal for $(\xx_0,i)$, the order type of $\OST{}$ was computed in Corollary \ref{wstrordertype} to be $\mathcal O:=\omega+\zeta\cdot\eta+\omega^*$. Recall from Corollary \ref{completion nondomestic beam} that 
$$\com(\mathcal O)\cong\omega+\mathbf{1}+\left(\sum_{r\in\lambda}T_r\right)+\mathbf{1}+\omega^*,\text{ where }T_r:=\begin{cases}
    \mathbf 1+\zeta+\mathbf 1&\text{ if $r\in\eta$},\\
    \mathbf 1&\text{ otherwise}.
\end{cases}$$
Recall from the end of \S~\ref{Completion of linear orders} that we partitioned the set $\gap(\mathcal O)$ as $\gap(\mathcal O)=\gap^+(\mathcal O)\sqcup \gap^-(\mathcal O)\sqcup\gap^0(\mathcal O)$. Along similar lines, we partition the set $\ESTB$ into three classes as follows.
\begin{align*}
\esTB{l}&:=\{\xx\in\ESTB\mid\xx=\,^\infty\bb\uu\xx_0\text{ for some }\bb\in\BalB\text{ and some string }\uu\},\\
\esTB{\lb}&:=\{\xx\in\ESTB\mid\xx=\,^\infty\bb\uu\xx_0\text{ for some }\bb\in\BalbB\text{ and some string }\uu\},\\
\esTB{0}&:=\ESTB\setminus(\esTB{l}\cup\esTB{\lb}).
\end{align*}

The following is a straightforward consequence of Corollary \ref{completion nondomestic beam}, Proposition \ref{position of infinite strings in OST} and its dual, and describes the positions of all left $\N$-strings in $\com(\OST{})$.
\begin{prop}\label{correspondence between diff forms of left N strings and gaps}
Let $\sB$ be non-domestic and minimal for $(\xx_0,i)$. Then using the notations described above, the order isomorphism $\ESTB\cup\OST{}\cong\com(\OST{})\cong\com(\mathcal O)$ restricts to the following order isomorphisms:
$$\esTB{l}\cong\gap^+(\mathcal O),\ \esTB{\lb}\cong\gap^-(\mathcal O)\text{ and }\esTB{0}\cong\gap^0(\mathcal O).$$
\end{prop}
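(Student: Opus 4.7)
The strategy is to match each left $\N$-string in $\ESTB$ with its associated gap in $\com(\OST{})$ and then invoke Proposition \ref{position of infinite strings in OST} together with its dual to distinguish $\esTB{l}$, $\esTB{\lb}$, $\esTB{0}$ via intrinsic properties of the condensation. By Equation \eqref{breaking of OST into beams}, $\OST{}$ is a finite $\dotplus$-sum of $\sB$-beams; by Corollary \ref{wstrordertype} each beam has order type $\omega + \Xi(\zeta,\ldots,\zeta) + \omega^* \cong \omega + \zeta \cdot \eta + \omega^* = \mathcal O$, since $\Xi$ ignores repetitions and $\Xi(\zeta) \cong \zeta \cdot \eta$. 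Every gap of $\OST{}$ therefore sits inside a unique beam, and by Corollary \ref{completion of OST} is in bijection with a unique element of $\ESTB$; hence it suffices to argue inside a fixed beam.

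The key step is an intrinsic characterization of the gap classes of $\com(\mathcal O)$ described in Corollary \ref{completion nondomestic beam}: a gap $g$ lies in $\gap^+(\mathcal O)$ if and only if there exists $z \in \mathcal O$ with $z < g$ in $\com(\mathcal O)$ such that $[z,g)_{\mathcal O} \cong \omega$. Indeed, for the minimum of $\gap(\mathcal O)$ take $z$ to be the minimum of $\mathcal O$, giving the whole opening $\omega$-piece; for a gap of the form ``top of $T_r$'' with $r \in \eta$, take $z$ to be the minimum of the $\zeta$-fiber at $r$, giving the final segment of that $\zeta$. For every remaining gap, the interval $[z,g)_{\mathcal O}$ must either contain fibers indexed by a nondegenerate subinterval of $\eta$ (hence a copy of $\eta$, so non-scattered and not isomorphic to $\omega$), or, in the case of the final gap with $z$ in the terminal $\omega^*$, be finite. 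A dual characterization identifies $\gap^-(\mathcal O)$ as the gaps $g$ admitting $z > g$ with $(g,z]_{\mathcal O} \cong \omega^*$.

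Via the isomorphism $c_\sB(\text{beam}) \cong \mathcal O$, for $\xx \in H_l^i(\xx_0)$ and $\yy \in \ESTB$ with $\xx <_l \yy$ in the same beam, $c_\sB([\xx,\yy))$ maps to the interval $[c_\sB(\xx), g_\yy)$ in $\mathcal O$, where $g_\yy$ is the gap associated to $\yy$. Combining this with Proposition \ref{position of infinite strings in OST} and the characterization above yields $\yy \in \esTB{l} \iff g_\yy \in \gap^+(\mathcal O)$, and dually $\yy \in \esTB{\lb} \iff g_\yy \in \gap^-(\mathcal O)$. Remark \ref{Balalbbandsintersection} gives $\BalB \cap \BalbB = \emptyset$ since $\sB$ is non-domestic, so $\esTB{l} \cap \esTB{\lb} = \emptyset$; together with the definitional partition $\ESTB = \esTB{l} \sqcup \esTB{\lb} \sqcup \esTB{0}$ and the disjoint partition $\gap(\mathcal O) = \gap^+(\mathcal O) \sqcup \gap^-(\mathcal O) \sqcup \gap^0(\mathcal O)$, the remaining correspondence $\esTB{0} \cong \gap^0(\mathcal O)$ follows by elimination.

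The principal technical obstacle is the classification of gap types in the second paragraph. Ruling out an $\omega$-type interval $[z,g)_{\mathcal O}$ for gaps of type ``bottom of $T_r$'' with $r \in \eta$ and for irrational gaps $g_r$ with $r \in \vartheta$ hinges on the density of $\eta$ in $\lambda$: any such interval necessarily traverses fibers indexed by an open subinterval of $\eta$, hence contains a copy of $\eta$ and is therefore non-scattered, definitively not isomorphic to $\omega$.
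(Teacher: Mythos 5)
Your argument is correct and takes essentially the same route the paper intends, namely deriving the statement from the explicit description of $\com(\mathcal O)$ in Corollary \ref{completion nondomestic beam} combined with Proposition \ref{position of infinite strings in OST} and its dual, with your intrinsic ``$\omega$-approach from below/above'' reading of $\gap^{\pm}(\mathcal O)$ simply making that match explicit. Two harmless slips worth noting: the $\zeta$-fiber at a rational $r$ has no minimum, so the witness $z$ for a gap atop such a fiber should be an arbitrary element of that fiber (its final segment is still $\omega$), and the ``finite interval'' case for the maximal gap is vacuous, since no $z<g$ lies in the terminal $\omega^*$ and every admissible $[z,g)$ there already contains a copy of $\eta$.
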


\begin{rem}\label{leftNstring in domestic}
When $\sB\in\QBa_i(\xx_0)$ is domestic and $\bb$ is the unique element of $\sB$, then $\yy\in\ESTB$ if and only if there exists a string $\uu$ such that $\yy=\,^\infty\bb\uu\xx_0$. 

Furthermore, if $\sB$ is minimal for $(\xx_0,i)$, Proposition \ref{OST-STB is finite} gives that the set $\ESTB$ is finite. Corollary \ref{completion of OST} gives that $\com(\OST{})=\ESTB\sqcup\OST{}$, and hence $\gap(\OST{})\cong\ESTB$. In order to understand the location of finitely many elements of $\ESTB$ in $\com(\OST{})$, recall from Corollary \ref{wstrordertype} that $\OST{}\cong(\omega+\omega^*)\cdot\mathbf{n_\sB}$. The completion of the latter is $(\omega+\mathbf 1+\omega^*)\cdot \mathbf n_\sB$ (Example \ref{C(OT(domestic OST))}), which contains only finitely many extra points than in $\OST{}$.

Finally, Proposition \ref{position of infinite strings in OST} and its dual applied to the explicit form of the order type of $\com(\OST{})$ gives that each element $\yy\in\ESTB$ is of the form $\brac{1}{\LB}(\zz)$ for some $\zz\in\OST{\pm1}$ as well as of the form $\brac{1}{\LbB}(\zz')$ for some $\zz'\in\OST{\pm1}$, which is in agreement with the fact that $\BalB=\BalbB=\{\bb\}$ since $\sB$ is domestic.
\end{rem}
\bibliographystyle{plain}
\bibliography{main}

\begin{thebibliography}{10}

\bibitem{AKS}
Shashwat Agrawal, Amit Kuber, and Esha Gupta.
\newblock Euclidean algorithm for a class of linear orders.
\newblock {\em arXiv:2202.04282}, 2022.

\bibitem{brenner1986combinatorial}
Sheila Brenner.
\newblock A combinatorial characterisation of finite auslander-reiten quivers.
\newblock In {\em Proc. ICRA 4, Ottawa 1984, SLNM 1177}, pages 13--49, 1986.

\bibitem{GKS20}
Esha Gupta, Amit Kuber, and Shantanu Sardar.
\newblock On the stable radical of some non-domestic string algebras.
\newblock {\em Algebras and Representation Theory}, 25(5):1207--1230, 2022.

\bibitem{knuth1977fast}
Donald~E Knuth, James~H Morris, Jr, and Vaughan~R Pratt.
\newblock Fast pattern matching in strings.
\newblock {\em SIAM journal on computing}, 6(2):323--350, 1977.

\bibitem{leonard1968elementary}
Hans L\"auchli and John Leonard.
\newblock On the elementary theory of linear order.
\newblock {\em Fundamenta Mathematicae}, 59(1):109--116, 1966.

\bibitem{prest1998morphisms}
Mike Prest.
\newblock Morphisms between finitely presented modules and infinite-dimensional
  representations.
\newblock In {\em Canad. Math. Soc. Conf. Proc}, volume~24, pages 447--455,
  1998.

\bibitem{ringalgcom95}
Claus~Michael Ringel.
\newblock Some algebraically compact modules i.
\newblock In {\em Abelian groups and modules}, pages 419--439. Springer, 1995.

\bibitem{rosenstein1982linear}
Joseph~G Rosenstein.
\newblock {\em Linear orderings}.
\newblock Academic press, 1982.

\bibitem{sardar2021variations}
Shantanu Sardar and Amit Kuber.
\newblock Variations of the bridge quiver for domestic string algebras.
\newblock {\em arXiv:2109.06592}, 2021.

\bibitem{SardarKuberHamforDom}
Shantanu Sardar and Amit Kuber.
\newblock On the computation of order types of hammocks for domestic string
  algebras.
\newblock {\em arXiv:2210.17077}, 2022.

\bibitem{SchHam98}
Jan Schr\"{o}er.
\newblock {\em Hammocks for string algebras}.
\newblock PhD thesis, Bielefeld University (Germany), 1998.

\bibitem{schroer2000infinite}
Jan Schr{\"o}er.
\newblock On the infinite radical of a module category.
\newblock {\em Proceedings of the London Mathematical Society}, 81(3):651--674,
  2000.

\end{thebibliography}
\vspace{0.2in}
\noindent{}Vinit Sinha\\
Indian Institute of Technology Kanpur\\
Uttar Pradesh, India\\
Email: \texttt{vinitsinha20@iitk.ac.in}

\vspace{0.2in}
\noindent{}Corresponding Author: Amit Kuber\\
Indian Institute of Technology Kanpur\\
Uttar Pradesh, India\\
Email: \texttt{askuber@iitk.ac.in}\\
Phone: (+91) 512 259 6721\\
Fax: (+91) 512 259 7500

\vspace{0.2in}
\noindent{}Annoy Sengupta\\
Indian Institute of Technology Kanpur\\
Uttar Pradesh, India\\
Email: \texttt{annoysgp20@iitk.ac.in}

\vspace{0.2in}
\noindent{}Bhargav Kale\\
Indian Institute of Technology Kanpur\\
Uttar Pradesh, India\\
Email: \texttt{kalebhargav@gmail.com}
\end{document}